	\crefname{equation}{}{}  % No prefix before equation 
	\Crefname{equation}{}{}
\newcounter{panel}[figure]
\renewcommand{\thepanel}{\alph{panel}}
\newcommand{\panelcaption}[1]{\par\footnotesize\textbf{(\thepanel)}~#1}
\newcommand{\tablecontentfont}{\fontsize{9.4}{10.7}\selectfont}
\colorlet{valuecolor}{Gray}
\definecolor{mycolor1}{RGB}{33,113,181} % blue
\definecolor{mycolor2}{RGB}{250,150,100}  % orange
\definecolor{revision}{RGB}{228,26,28}  % red
\colorlet{subtree1}{mycolor1}
\colorlet{subtree2}{mycolor2}
\tikzset{
    general node/.style={circle, draw, minimum size=0.75cm, scale=0.9, align=center},
    small node/.style={circle, draw, inner sep=1.5pt},
    invisible node/.style={circle, inner sep=1.5pt},
    value-1 node/.style={fill=valuecolor},
    value-half node/.style={pattern={Lines[angle=45, distance=1.6mm, line width=0.6mm]}, pattern color=valuecolor},
    value-23rd node/.style={pattern={Lines[angle=45, distance=1mm, line width=0.6mm]}, pattern color=valuecolor},
    revised node/.style={label={[yshift=-0.8em, xshift=-1.0em] \textcolor{revision}{\scalebox{0.75}{\faStar}}}},
    small revised node/.style={label={[yshift=-0.5em, xshift=-0.5em]\textcolor{revision}{\scalebox{0.5}{\faStar}}}},
    half-revised node/.style={label={[yshift=-0.8em, xshift=-1.0em] \textcolor{revision}{\scalebox{0.75}{\faStarHalf*}}}},
    level-1 node/.style={draw=mycolor1, line width=1.5pt},
    level-2 node/.style={draw=mycolor2, line width=1.5pt},
    level 1/.style={level distance=30mm, sibling distance=30mm},
    level 2/.style={level distance=30mm, sibling distance=15mm},
    level 3/.style={level distance=30mm, sibling distance=13mm},
    subtree1 node/.style={rectangle, minimum size=1.5mm, fill=subtree1},
    subtree2 node/.style={diamond, fill=subtree2},
    spade node/.style={rectangle, draw, fill=subtree1, inner sep=3pt}, 
    club node/.style={rectangle, draw, inner sep=3pt}, 
    heart node/.style={diamond, draw, fill=subtree1, inner sep=2pt}, 
    diamond node/.style={diamond, draw, inner sep=2pt},
    sunny node/.style={circle, draw, fill=subtree2, inner sep=2.3pt},
    snowy node/.style={circle, draw, inner sep=2.3pt}
}
\theoremstyle{plain}%
\newtheorem{theorem}{Theorem}[section]%  meant for continuous numbers
\newtheorem{proposition}[theorem]{Proposition}% 
\newtheorem{corollary}[theorem]{Corollary}%
\newtheorem{lemma}[theorem]{Lemma}%
\theoremstyle{definition}%
\newtheorem{example}{Example}%
\newtheorem{claim}{Claim}
\newtheorem{assumption}{Assumption}
\newtheorem{conjecture}{Conjecture}
\newtheorem{definition}{Definition}%
\definecolor{dandelion}{RGB}{255, 215,   0}
\definecolor{brown}{RGB}{139,  69,  19}
\definecolor{slate}{RGB}{47,  79 , 79}
\definecolor{pink}{RGB}{255,  20, 147}
\definecolor{violet}{RGB}{138,  43, 226}
\definecolor{olive}{RGB}{128, 128,   0}
\definecolor{aqua}{RGB}{127, 255, 212}
\definecolor{navy}{RGB}{0, 0, 128}
\definecolor{indigo}{RGB}{75,   0, 130}
\definecolor{gold}{RGB}{165, 124, 0}
\definecolor{silver}{RGB}{131, 137, 0}
\definecolor{apple}{RGB}{34, 139, 34}
\def\red#1{{\color{red} #1}}
\def\oldemptyset\emptyset
\def\emptyset\varnothing
\def\red#1{{\color{red}#1}}
\def\root{\rho}
\def\tr{^{\intercal}}
\newcommand{\tree}{\mathcal{T}}
\newcommand{\nstages}{T}
\newcommand{\nrevisions}{K}
\newcommand\nodes{\mathcal{N}}
\newcommand\scenario{\omega}
\newcommand\scenarioset{\Omega}
\newcommand\height{\nstages}
\newcommand{\parent}[1]{\operatorname{pa}(#1)}
\newcommand{\descendant}[1]{\operatorname{des}(#1)}
\newcommand{\stage}[1]{\tau(#1)}
\newcommand{\dimstrategic}[1]{d^1_{#1}}
\newcommand{\dimoperationalint}[1]{d^2_{#1}}
\newcommand{\dimoperationalcon}[1]{d^3_{#1}}
\newcommand{\set}[1]{\{#1\}}
\newcommand{\order}[1]{\calO(#1)}
\newcommand{\revisableset}[1]{\mathcal{X}_{#1}}
\newcommand{\children}[1]{\operatorname{ch}(#1)}
\newcommand{\indicator}[1]{\mathbf{1}_{\{#1\}}}
\newcommand{\setdiff}{\backslash}
\newcommand{\join}{\vee}
\newcommand{\norm}[1]{\left\lVert#1\right\rVert}
\newcommand{\sib}[1]{\operatorname{sib}(#1)}
\newcommand{\proj}[2]{\operatorname{proj}_{#1}\left(#2\right)}
\newcommand\Real{\mathbb{R}}
\newcommand\Bin{\{0,1\}}
\newcommand\Int{\mathbb{Z}}
\newcommand\NPhard{\mathcal{NP}\text{-hard}}
\newcommand\NPcomplete{\mathcal{NP}\text{-complete}}
\newcommand{\subheight}[1]{\norm{#1}}
\newcommand{\subtreefamily}[2]{\scrS_{#1}(#2)}
\newcommand{\hypercube}[2]{\operatorname{HC}_{#1}(#2)}
\newcommand{\subroot}[1]{\root(#1)}
\newcommand{\leftnodes}[1]{\mathcal{L}(#1)}
\newcommand{\conv}[1]{\operatorname{conv}(#1)}
\newcommand{\MSP}{MSP}
\newcommand{\DP}{DP}
\def\Btree{{$B$-tree}}
\def\Stree{{$S$-tree}}
\DeclareMathOperator*{\st}{\text{s.t.}}
\DeclareMathOperator*{\LP}{\operatorname{LP}}
\DeclareMathOperator*{\Binomial}{\operatorname{Binomial}}
\newcommand{\x}{%
  \@ifnextchar\bgroup{\x@one}{\ensuremath{x}}%
}
\newcommand{\x@one}[1]{%
  \@ifnextchar\bgroup
    {\x@two{#1}}               % we saw a second set of braces
    {\ensuremath{x(#1)}}       % only one argument
}
\newcommand{\x@two}[2]{%
  \ensuremath{x(#1)_{#2}}      % two arguments
}
\newcommand{\y}{%
  \@ifnextchar\bgroup{\y@one}{\ensuremath{y}}%
}
\newcommand{\y@one}[1]{%
  \@ifnextchar\bgroup
    {\y@two{#1}}
    {\ensuremath{y(#1)}}
}
\newcommand{\y@two}[2]{%
  \ensuremath{y(#1)_{#2}}

}
\newcommand{\cc}{%
  \@ifnextchar\bgroup{\cc@one}{\ensuremath{c}}%
}
\newcommand{\cc@one}[1]{%
  \@ifnextchar\bgroup
    {\cc@two{#1}}               % we saw a second set of braces
    {\ensuremath{c(#1)}}       % only one argument
}
\newcommand{\cc@two}[2]{%
  \ensuremath{c(#1)_{#2}}      % two arguments
}
\newcommand{\plan}{%
  \@ifnextchar\bgroup{\plan@one}{\ensuremath{\pi}}%
}
\newcommand{\plan@one}[1]{%
  \@ifnextchar\bgroup
    {\plan@two{#1}}               % we saw a second set of braces
    {\ensuremath{\pi(#1)}}       % only one argument
}
\newcommand{\plan@two}[2]{%
  \ensuremath{\pi(#1,#2)}      % two arguments
}
\newcommand{\rv}{%
  \@ifnextchar\bgroup{\rv@one}{\ensuremath{r}}%
}
\newcommand{\rv@one}[1]{%
  \@ifnextchar\bgroup
    {\rv@two{#1}}               % we saw a second set of braces
    {\ensuremath{r(#1)}}       % only one argument
}
\newcommand{\rv@two}[2]{%
  \ensuremath{r(#1)_{#2}}      % two arguments
}
\newcommand{\calE}{\mathcal{E}}
\newcommand\calL{\mathcal{L}}
\newcommand\calS{\mathcal{S}}
\newcommand{\calO}{\mathcal{O}}
\newcommand{\calH}{\mathcal{H}}
\newcommand\scrS{\mathscr{S}}
\newcommand{\overbar}[1]{\mkern 1.5mu\overline{\mkern-1.5mu#1\mkern-1.5mu}\mkern 1.5mu}
\title{Balancing adaptability and predictability: $\nrevisions$-revision multistage stochastic programming}
\author{
 Chengwenjian Wang \\
  Department of Industrial and Systems Engineering\\
  University of Minnesota, Twin Cities\\
  Minnesota, MN, 55455 \\
  \texttt{wcwj@umn.edu} \\
  %% examples of more authors
   \And
 Alexander S. Estes \\
  Robert H. Smith School of Business and Institute of Systems Research\\
  University of Maryland\\
  College Park, MD, 20742 \\
  \texttt{aestes@umd.edu} \\
  \And
 Jean-Philippe P. Richard \\
  Department of Industrial and Systems Engineering\\
  University of Minnesota, Twin Cities\\
  Minnesota, MN, 55455 \\
  \texttt{jrichar@umn.edu} \\
  %% \AND
  %% Coauthor \\
  %% Affiliation \\
  %% Address \\
  %% \texttt{email} \\
  %% \And
  %% Coauthor \\
  %% Affiliation \\
  %% Address \\
  %% \texttt{email} \\
  %% \And
  %% Coauthor \\
  %% Affiliation \\
  %% Address \\
  %% \texttt{email} \\
}
\begin{document}

\maketitle

\begin{abstract}
A standard assumption in multistage stochastic programming is that decisions are made after observing the uncertainty from the prior stage. 
The resulting solutions can be difficult to implement in practice, as they leave practitioners ill-prepared for future stages. 
To provide better foresight, we 
introduce the $K$-revision approach.
This new framework requires plans to be specified in advance. 
To maintain flexibility, we allow plans to be revised a maximum of $K$ times as new information becomes available. 
We analyze the complexity of $\nrevisions$-revision problems, showing $\mathcal{NP}$-hardness even in a simple setting.
We examine, both theoretically and computationally, the impact of the $\nrevisions$-revision approach on the objective compared with classical multistage stochastic programming models and the partially adaptive approach introduced in \cite{basciftci2024,kayacik2025}.
We develop two MIP formulations, one directly from our definition and the other based on a combinatorial characterization.
We analyze the tightness of these formulations and propose several methods to strengthen them.
%using polyhedral approaches.
Computational experiments on synthetic problems and practical applications demonstrate that our approach is both computationally tractable and effective in reaching near-optimal performance while increasing the predictability of the solutions produced.

\keywords{Multistage Stochastic Programming \and $K$-revision \and Integer Programming \and Polyhedral Theory}
\end{abstract}

\newpage
%%%%%%%%%%%%%%%%%%%%%%%
%%%%%%%%%%%%%%%%%%%%%%%%%%%%%%%%%%%%%%%%%%%%%%
%%%%%%%%%%%%%%%%%%%%%%%%%%%%%%%%%%%%%%%%%%%%%%%%%%%%%%%%%%%%%%%%%%%%%
\section{Introduction} \label{sec: introduction}
%%%%%%%%%%%%%%%%%%%%%%%%%%%%%%%%%%%%%%%%%%%%%%%%%%%%%%%%%%%%%%%%%%%%%
%%%%%%%%%%%%%%%%%%%%%%%%%%%%%%%%%%%%%%%%%%%%%%
%%%%%%%%%%%%%%%%%%%%%%%
Optimizing decisions over time in the presence of uncertainty is an important and well-studied problem. 
Multistage stochastic programming (\MSP) is a fundamental paradigm to tackle this problem, allowing decisions to be made sequentially across multiple stages.
This paradigm produces fully adaptive policies, where actions made in one stage depend on the information observed and actions made in all previous stages. % smooth: It thus -> This paradigm
% \byjp{Should we add the actions in previous stages?}

Despite its flexibility, classical \MSP{} has notable limitations. 
In particular, fully adaptive policies generated by these models can vary significantly across scenarios. 
Such variability may cause operational challenges, including difficulties in communication, coordination, and execution. % consistency: inconsistency -> variability (~ vary)
Moreover, practical decision-making processes often require a certain degree of commitment to facilitate planning and allocate resources. 
Decision makers typically prefer an initial strategic plan that is robust enough to handle uncertainty without frequent changes. 
Thus, there is a need for modeling approaches that take advantage of the adaptivity afforded by \MSP{}  but also incorporate considerations of consistency and commitment. % Make the meaning clear: "combine adaptivity and consistency and commitment" -> "take advantage of ..."

In this paper, we propose a novel approach that balances these conflicting considerations. % More precise
In our approach, we provide a \emph{plan} at the start of the planning horizon. 
The plan specifies the strategic decisions in each future stage and must be followed unless a \emph{revision} is applied. % grammar: in each stage
When a revision is applied, a new plan for the remaining future decisions is created, and this plan will guide all future decisions until another revision is applied.
To promote consistency of the solution, we require the number of revisions applied in each realized scenario to be bounded above by some non-negative integer $\nrevisions$.

The $\nrevisions$-revision approach we propose has several appealing features. 
It provides decision makers with explicit control over the flexibility of the implemented policy through the choice of parameter $\nrevisions$. 
By evaluating the gain achieved when increasing $\nrevisions$ from smaller to larger values, decision makers can identify the trade-off that best fits their application between expected objective value and predictability in the solution. 
Furthermore, this approach aligns with human decision-making processes, where an initial plan is usually established first and revised as uncertainty resolves. 

Many problems may benefit from being formulated and solved with the $\nrevisions$-revision approach. 
One example is the \emph{single airport ground-holding program} (SAGHP), which is concerned with the management of a single airport and the flights destined for that airport \cite{richetta1993, richetta1994, ball2003}.
Under bad weather or other factors, the capacity of an airport can decrease. 
To prevent inbound aircraft from exceeding the capacity of the airport, the Federal Aviation Administration (FAA) can hold flights destined to that airport on the ground,  transferring delays from air to ground.
\MSP{} is a popular solution approach for this problem, since it is desirable for decisions to be made dynamically in response to weather and other changes \cite{mukherjee2007, estes2020}.
However, classical \MSP{} models, being highly adaptive, lack commitment guarantees, making them challenging to operationalize. 
In practice, the FAA would prefer to revise the assigned delays only sparingly.
Our $\nrevisions$-revision approach is useful in this situation, as it allows the administration some dynamism and flexibility in its traffic management initiatives, while avoiding an excessive number of revisions, which pose logistical challenges for the airlines involved. 

Another example is the dynamic \emph{lot-sizing} problem, a classical production planning problem \cite{belvaux2001, loparic2001}. 
Production scheduling decisions often require commitment in advance due to staffing or equipment setup considerations. 
However, applying the same schedule to each scenario is economically inefficient when demand is uncertain \cite{hu2018}. 
Applying the $\nrevisions$-revision approach would allow manufacturers to initially commit to certain production decisions and then strategically revise their schedules a limited number of times in response to realized demand and updated demand forecasts. 

%%%%%%%%%%%%%%%%%%%%%%%
%%%%%%%%%%%%%%%%%%%%%%%%%%%%%%%%%%%%%%%%%%%%%%
\subsection{Related work}
%%%%%%%%%%%%%%%%%%%%%%%%%%%%%%%%%%%%%%%%%%%%%%
%%%%%%%%%%%%%%%%%%%%%%%

\MSP{} is a popular modeling framework for various applications, including production scheduling, capacity planning, transportation, and energy system management \cite{mukherjee2007, estes2020, bhattacharya2016, ding2017, housh2013, hu2018, huang2005, huang2009, nagar2008, pereira1991, xu2015}.
Recently, several studies have proposed variants of \MSP{} models aimed at addressing practical issues related to the lack of predictability and consistency of decisions across scenarios.
Among those approaches, the works of \cite{basciftci2024, kayacik2025} are particularly relevant to our study.
The authors of \cite{basciftci2024} propose a partially adaptive two-stage stochastic programming method.
A \emph{revision point} is selected for each decision at the start of the program, and the decision can only be revised at this point of time.
Subsequently, \cite{kayacik2025} generalizes this idea to a partially adaptive multistage framework, allowing multiple predetermined revision points for each decision.

Our $\nrevisions$-revision approach differs notably from the aforementioned partial adaptivity approach. 
In the $\nrevisions$-revision approach, the timing of revision decisions can vary scenario-by-scenario, whereas under partial adaptivity, revision points are predetermined and uniform across scenarios.
The flexibility of our approach can be highly beneficial because practical decision revisions often correspond not to a specific predetermined stage, but rather to key scenario-dependent events.
For example, consider SAGHP, where scenarios represent airport capacity fluctuations driven by weather conditions \cite{mukherjee2007, estes2020}.
Under a partially adaptive approach, one must select a fixed stage to adjust the plan.
In these situations, however, it is more practical to revise decisions precisely when critical scenario-dependent events, such as significant weather shifts, occur, rather than restricting revisions to stages fixed across all scenarios.

Another approach for balancing optimality and predictability is the $K$-adaptability approach, in which $K$ candidate solutions are computed upfront and the best one is selected after observing the realized uncertainty. 
This approach was introduced in \cite{bertsimas2010} with complexity results, exact and approximate solution methods for two-stage $K$-adaptable stochastic optimization later provided in \cite{buchheim2019, malaguti2022} .
Similarly, \cite{hanasusanto2015, subramanyam2020} apply the $K$-adaptability approach to two-stage robust optimization.

In contrast to our proposed $\nrevisions$-revision approach, which applies to multistage problems, the $K$-adaptability approach is designed for two-stage problems.
Further, the $\nrevisions$-revision approach focuses on limiting the number of plan changes along each scenario, whereas the $K$-adaptability approach limits the number of plans being used across the scenarios. 
Hence, the  $\nrevisions$-revision approach has the additional dimension of timing when an original plan must be substituted for another along each scenario.

%%%%%%%%%%%%%%%%%%%%%%%
%%%%%%%%%%%%%%%%%%%%%%%%%%%%%%%%%%%%%%%%%%%%%%
\subsection{Contributions}
%%%%%%%%%%%%%%%%%%%%%%%%%%%%%%%%%%%%%%%%%%%%%%
%%%%%%%%%%%%%%%%%%%%%%%
In this paper, we lay the theoretical and algorithmic foundation for the solution of 
\MSP{} models with $\nrevisions$-revision constraints.
We make the following contributions:
\begin{enumerate}
\item 
We introduce and rigorously define $\nrevisions$-revision \MSP. 
\item We analyze the computational complexity of $\nrevisions$-revision \MSP. 
Specifically, we establish that even simple problems with $\nrevisions$-revision constraints are $\NPhard$.
\item 
We introduce a combinatorial description of the $\nrevisions$-revision constraint that helps in developing formulations. 
\item 
We propose two mixed integer programming (MIP) formulations and establish tightness results for both. 
We further describe the type of scenario tree structures for which they are most appropriate to use. 
\item 
We develop techniques to reduce the sizes of both formulations and to strengthen the relaxation bounds they provide.
Specifically, for the first formulation, we propose a variable-fixing procedure and a cut generation approach that uses facet-defining inequalities. 
For the other, we propose a constraint generation algorithm and an extended formulation.
\item 
We demonstrate the effectiveness of our results on synthetic and practical problems, including \nrevisions-revision variants of 0-1 stochastic unconstrained optimization, lot-sizing, capacity planning, and SAGHP problems. 
For practical SAGHP instances, we show that \nrevisions-revision models can be solved to optimality without significant deterioration from the fully adaptive optimal objective, and with clear improvement compared to the optimal objectives from partially adaptive models.

\end{enumerate}
% \byjp{Compare to 2-stage?}
%\byjp{Should we mention CLOBE subtrees somewhere?}

%%%%%%%%%%%%%%%%%%%%%%%
%%%%%%%%%%%%%%%%%%%%%%%%%%%%%%%%%%%%%%%%%%%%%%
\subsection{Organization}
%%%%%%%%%%%%%%%%%%%%%%%%%%%%%%%%%%%%%%%%%%%%%%
%%%%%%%%%%%%%%%%%%%%%%%
The rest of this paper is organized as follows. 
We first introduce background concepts and notation in \Cref{sec:preliminary}.
In Section~\ref{sec: description}, we formally define the $\nrevisions$-revision problem, investigate its complexity, analyze the trade-off between optimal objective value and decision consistency provided by $\nrevisions$-revision constraints,
and present an alternate combinatorial characterization of the $\nrevisions$-revision requirement.
In \Cref{sec: formulations}, we introduce two formulations of the $\nrevisions$-revision problem. 
In \Cref{sec: methods}, we describe methods to strengthen them and to reduce their sizes.
We investigate variants of these two formulations computationally across a range of problems in \Cref{sec: experiments}.
We give concluding remarks in \Cref{sec: conclusion}.

%%%%%%%%%%%%%%%%%%%%%%%
%%%%%%%%%%%%%%%%%%%%%%%%%%%%%%%%%%%%%%%%%%%%%%
%%%%%%%%%%%%%%%%%%%%%%%%%%%%%%%%%%%%%%%%%%%%%%%%%%%%%%%%%%%%%%%%%%%%%
\section{Preliminaries}\label{sec:preliminary}
%%%%%%%%%%%%%%%%%%%%%%%%%%%%%%%%%%%%%%%%%%%%%%
%%%%%%%%%%%%%%%%%%%%%%%

%%%%%%%%%%%%%%%%%%%%%%%
%%%%%%%%%%%%%%%%%%%%%%%%%%%%%%%%%%%%%%%%%%%%%%
\paragraph{Notation.}\label{subsec:notation}
%%%%%%%%%%%%%%%%%%%%%%%
%%%%%%%%%%%%%%%%%%%%%%%%%%%%%%%%%%%%%%%%%%%%%%
For positive integers $a$ and $b$, we use $[a]$ to denote the set $\{1, 2, \ldots, a\}$ and $[a:b]$ to denote the set $\{a, a+1, \ldots,b\}$.
% \byjp{Would $[a:b]$ look better than $[a..b]$?}
%We denote the set of real numbers and the set of integers by $\Real$ and $\Int$, respectively.
% We use $q\tr$ to denote the transpose of a vector or matrix $q$.
Given a set $S \subseteq [n]$, we use $\mathbf{1}_S$ to represent the vector $x$ in $\Real^n$ for which $x_i=1$ when $i\in S$ and $x_i=0$, otherwise. 
%We refer to the indicator function of a set $S$ as $\mathbf{1}_S$.  \byjp{Do we use an indicator for a function?}
We use the notation $\norm{x}$ for the norm of a vector $x$ and  $\proj{x}{Q}$ for the projection of a set $Q$ onto the space spanned by vector $x$.
Finally, for a polytope $P \subseteq \Real^n$, we define $P^I = \conv{P \cap \Int^n}$, where $\conv{S}$ is used to represent the convex hull of set $S$.  

%%%%%%%%%%%%%%%%%%%%%%%
%%%%%%%%%%%%%%%%%%%%%%%%%%%%%%%%%%%%%%%%%%%%%%
\paragraph{Strength of MIP formulations.}\label{subsec:mip_formulation}
%%%%%%%%%%%%%%%%%%%%%%%
%%%%%%%%%%%%%%%%%%%%%%%%%%%%%%%%%%%%%%%%%%%%%%

Let $\mathcal{X} \subseteq \Bin^n$ be a set of feasible binary solutions of interest. 
A system of linear inequalities with integrality requirements
% \byjp{What is below is linear inequalities + integrality requirements}
\begin{equation} \label{eq:mip_formulation}
    A_x x + A_\lambda \lambda + A_\nu \nu  \le b, \quad
    x \in \Bin^n, \quad
    \lambda  \in \Real^s, \quad
    \nu  \in \Int^t, 
\end{equation}
% \byjp{The alignment of the above display is odd}
is an \emph{MIP formulation} for $\mathcal{X}$ if the projection of \Cref{eq:mip_formulation} onto the $x$-variables is precisely $\mathcal{X}$. 
This MIP formulation is \emph{sharp} if and only if the projection onto the $x$-variables of its LP relaxation is exactly $\conv{\mathcal{X}}$.
It is called \emph{ideal} if and only if all basic feasible solutions of its LP relaxation are integral.
Hence, an ideal formulation is sharp, but a sharp formulation is not necessarily ideal.
We refer to \cite{vielma2015} for a discussion.
We define the \emph{size} of an MIP formulation to be the total number of its variables and constraints.

%%%%%%%%%%%%%%%%%%%%%%%
%%%%%%%%%%%%%%%%%%%%%%%%%%%%%%%%%%%%%%%%%%%%%%
\paragraph{Scenario tree.}\label{subsec:scenario tree}
%%%%%%%%%%%%%%%%%%%%%%%
%%%%%%%%%%%%%%%%%%%%%%%%%%%%%%%%%%%%%%%%%%%%%%
The uncertainty present in an \MSP{}  model is often represented using a \emph{scenario tree} \cite{heitsch2009,heitsch2009a,pflug2015}. 
Each \emph{node} of the tree represents a specific \emph{state}, \emph{i.e.}, a particular realization of all uncertain parameters up to its corresponding \emph{stage}.
In this paper, we use $\tree$ to denote a scenario tree and $\nstages$ to denote the number of its stages.
% \byjp{Maybe knitpicking but what if the tree is shorter in areas that in others? Should we refer to T as the number of stages or the max number of stages across scenarios?}
We let $\root$ be the root of the tree.
We use $\nodes$ to denote the set of all nodes of $\tree$.
%and $\nodesnoroot$ to denote the set of all non-root nodes.
We denote the stage of node $v \in \nodes$ by $\stage{v}$.
%For each node $v \in \nodes$, we use $\stage{v}$ to denote its stage.
As it is common practice, we assume without loss of generality that all leaves of $\tree$ are at the same final stage, \emph{i.e.}, each root-to-leaf path has length $\nstages$. 
We denote the set of all nodes at stage $t$ by $\nodes_t$.
A \emph{scenario} $\scenario$ is a complete path from the root node $\root$ to a leaf node of the tree. 
Each scenario $\scenario$ represents one possible complete unfolding of all uncertainties over the entire planning horizon.
We denote the set of all such scenarios by $\scenarioset$ and the probability of scenario $\scenario$ by $p_{\scenario}$. 

We also make use of common graph theory terminology \cite{BondyMurty2008}.
We refer to the set of children of a non-leaf node $v$ as $\children{v}$. 
The unique \emph{parent} of a non-root node $v$ is denoted by $\parent{v}$.
We use $\descendant{v}$ to denote the set of \emph{descendants} of a node $v$.
% And we use the notation $\descendant{v} \cup \{v\}:=\descendant{v} \cup \set{v}$ and $\ancestorplus{v}:=\ancestor{v}\cup \set{v}$.
There is a natural partial order on the nodes of a tree, where $u < v$ (\emph{resp.} $u > v$) if and only if $u$ is an ancestor (\emph{resp.} descendant) of $v$.
Naturally, $u \leq v$ (\emph{resp.} $u \geq v$) denotes that $u$ is an ancestor (\emph{resp.} descendant) of $v$ or is $v$ itself.
The \emph{join} of two nodes $u$ and $v$,  $u \join v$, is 
the greatest lower bound of $u$ and $v$ in this partial order, \emph{i.e.},
the common ancestor of $u$ and $v$ at the maximum stage.
We define the \emph{subtree of $\tree$ rooted at $v$}, $\tree(v)$, to be the subgraph of $\tree$ induced by $v$ and all its descendants.
% The node set of $\tree(v)$ is thus $\descendant{v} \cup \{v\}$.
An \emph{embedded subtree} of a tree is obtained by deleting or contracting some edges from the original tree \cite{lozano2004}.
We denote the set of nodes of an embedded subtree $\calS$ by $\nodes(\calS)$.

%%%%%%%%%%%%%%%%%%%%%%%
%%%%%%%%%%%%%%%%%%%%%%%%%%%%%%%%%%%%%%%%%%%%%%
%%%%%%%%%%%%%%%%%%%%%%%%%%%%%%%%%%%%%%%%%%%%%%%%%%%%%%%%%%%%%%%%%%%%%
\section{Problem description}\label{sec: description}
%%%%%%%%%%%%%%%%%%%%%%%%%%%%%%%%%%%%%%%%%%%%%%%%%%%%%%%%%%%%%%%%%%%%%
%%%%%%%%%%%%%%%%%%%%%%%%%%%%%%%%%%%%%%%%%%%%%%
%%%%%%%%%%%%%%%%%%%%%%%

%%%%%%%%%%%%%%%%%%%%%%%
%%%%%%%%%%%%%%%%%%%%%%%%%%%%%%%%%%%%%%%%%%%%%%
\subsection{Defining \texorpdfstring{$\nrevisions$}--revision constraints} \label{subsec: definition}
%%%%%%%%%%%%%%%%%%%%%%%%%%%%%%%%%%%%%%%%%%%%%%
%%%%%%%%%%%%%%%%%%%%%%%

Our $\nrevisions$-revision approach for a $\nstages$-stage decision process can be described in the following dynamic way.
The process begins by constructing an initial plan, which prescribes the decisions for all stages.
Then, at the beginning of each subsequent stage, after the current state of the system is revealed, a decision is made to either adhere to the existing plan or to revise it.
If the plan is not revised, the decisions for the current stage are implemented as specified by the current plan. 
If a revision occurs, a new plan is created for all remaining stages. 
The decisions for the current stage are then implemented according to this new plan. 
The $\nrevisions$-revision approach implements this procedure with the constraint that, for any scenario, the plan is revised no more than $\nrevisions$ times by the end of the decision horizon.

We distinguish between two types of decisions: \emph{strategic} and \emph{operational}. 
Strategic decisions represent higher-level decisions that are critical to know in advance for planning purposes, whereas operational decisions represent lower-level and less critical decisions.
For instance, in a lot-sizing problem, whether to produce at a given time is a strategic decision, whereas how much to produce is an operational one.
We model situations where the decision maker wishes to commit to a plan for strategic decisions and is reluctant to deviate from it, but is flexible with changes to operational decisions.

To describe a generic formulation for the $\nrevisions$-revision approach, we model a sequential decision-making problem with $\nstages$ periods as an \MSP{} model on a scenario tree $\tree$.
%For a stage $t\in [\nstages]$, there are $\dimstrategic{t}$ strategic decisions. 
%We assume these decisions are binary. 
We assume that each stage $t\in [\nstages]$ has $\dimstrategic{t}$ strategic binary decisions.
Since binary encodings can represent integer numbers, this setting easily generalizes to integer decision variables.
Furthermore, we assume there are $\dimoperationalint{v}$ integer and $\dimoperationalcon{v}$ continuous operational decisions at node $v$.
Notably, we assume that the dimensions of the strategic decisions are consistent across all nodes in the same stage but we allow the dimensions of the operational variables to vary from node to node.
We let $\x{v} \in \Bin^{\dimstrategic{\stage{v}}}$  and $\y{v} \in \Int^{\dimoperationalint{v}} \times \Real^{\dimoperationalcon{v}}$ be the strategic and operational decision variables at node $v$, respectively.
We refer to the collection $\{(\x{v}, \y{v}\}_{v \in \nodes}$ as a \emph{policy}, which maps each node to a specific decision. 
Similarly, we refer to $\{\x{v}\}_{v \in \nodes}$ as a \emph{strategic policy}.

We say a \emph{plan} $\pi(v,\cdot)$ at a node $v$ in stage $\tau(v)$ is a collection $\{\pi(v,t)\}_{t\in[\tau(v):\nstages]}$ specifying the committed strategic decisions for stages $\tau(v)$ to $\nstages$ based on the information realized at state $v$. %\byjp{SHOULD WE USE CURLY BRACES FOR THE COLLECTION ABOVE?}
For an \MSP{} problem, we define a \emph{plan adjustment policy} $\{\plan{v,\cdot}\}_{v\in\nodes}$ to be a collection of plans, with one plan for each node.

\begin{example}
Consider a 3-stage problem where $d^1_t=1$ for $t \in [3]$ on the scenario tree depicted in the left panel of \Cref{fig: 1-revision illustration}. 
A plan adjustment policy specifies a plan at each node, such as $\pi(\root,\cdot)=[1,0,1]$, $\pi(v_1,\cdot)=[0,1]$, and $\pi(v_2,\cdot)=[1,0]$.
\end{example}

Any component of $\plan{v,\cdot}$ following the first one specifies a strategic decision to be executed in future stages that we reserve the right to revise if the need arises. 
For the current stage $\stage{v}$, however, the strategic decision has to coincide with $\plan{v,\stage{v}}$.
%\byjp{WHAT IS $\plan{v}$. ARE THERE ENTRIES MISSING?}
Hence, we say that the plan adjustment policy $\plan$ is \emph{compatible} with strategic policy $\x$ if 
%\byjp{Let's talk about the three formulas below}
\begin{eqnarray*}
\plan{v}{\stage{v}} = \x{v}, \quad \forall v \in \nodes.
\end{eqnarray*}
We say that the plan adjustment policy $\plan$ requires a \emph{revision} at node $v$ if the active plan at node $v$ is not consistent with the plan active at the parent of $v$.
Specifically, $\plan{v,\cdot}$ requires a revision at $v$ if there exists some $t \in [\stage{v}:\nstages]$ such that $\plan{v}{t} \neq \plan{\parent{v}}{t}$.
We use $r_{\plan}$ to denote the \emph{revision policy} associated with $\plan$, in which $r_{\plan}(v)$ describes whether a revision is required at node $v$, so that
\begin{equation*}
    r_{\plan}(v) = 
    \begin{cases}
        1, \quad \exists \, t\in [\stage{v}:\nstages]\,\, \textrm{s.t. } \plan{v,t} \neq \plan{\parent{v},t},   \\
        0, \quad \text{otherwise}.
    \end{cases}
\end{equation*}
It is worth mentioning that even though a plan adjustment policy corresponds to a unique strategic policy and a unique revision policy, a strategic policy may have multiple compatible plan adjustment policies with different revision policies.

\addtocounter{example}{-1}
\begin{example}[continued]
Consider the scenario tree in the left panel of \Cref{fig: 1-revision illustration} and the
strategic policy $\x$ given by $\x{\root}=\x{v_2}=\x{v_4}=1$ and $\x{v_1}=\x{v_3}=\x{v_5}=\x{v_6}=0$.
The plan adjustment policy $\plan$ shown in the figure is compatible with this strategic policy $\x$.
% \byjp{Were the values of $\x$ given before?}
For instance, the first entry of $\plan{v_1}$ is equal to $\x{v_1}=0$, and the first entry of $\plan{v_2}$ is equal to $\x{v_2}$.
Further, $r_{\plan}(v_2)=1$ because the last two components of $\plan{\root}$ form a vector that is different from $\plan{v_2}$.  
\end{example}

As the decision maker values both optimality and consistency, we seek to limit the number of revisions under each scenario.
Formally, a plan adjustment policy $\plan$ is \emph{$\nrevisions$-revisable} if $\sum_{v \in \scenario}r_{\plan}(v) \leqslant \nrevisions$ for all $\scenario \in \scenarioset$.
This leads to the following definition.
%of \emph{$\nrevisions$-revision constraint} or \emph{$\nrevisions$-revisable} policy.

\begin{definition}[$\nrevisions$-revision constraint/$\nrevisions$-revisable policy] \label{def: revision constraint}
Given a scenario tree $\tree$ and a non-negative integer $\nrevisions$,
a strategic policy $\{\x{v}\}_{v \in \nodes}$ is said to satisfy the \emph{$\nrevisions$-revision constraint} (or equivalently to be \emph{$\nrevisions$-revisable}) if there exists a $\nrevisions$-revisable plan adjustment policy $\{\plan{v}\}_{v\in \nodes}$ that it is compatible with $\x$.
We denote the set of all $\nrevisions$-revisable strategic policies for $\tree$ by $\revisableset{\nrevisions}(\tree)$ or simply by $\revisableset{\nrevisions}$ when $\tree$ is clear from the context.
\end{definition}

We refer to the minimum integer $\nrevisions$ such that a given strategic policy $x$ is $\nrevisions$-revisable as the \emph{minimum revisability number} of $x$.
There is a polynomial algorithm, which we describe in \Cref{subsec: subtree enhancement}, to find the minimum revisability number of a strategic policy; see \Cref{cor: minimum revisability number}.

% \byjp{Does it fit? Check out the following example.}

\addtocounter{example}{-1}
\begin{example}[continued] 
In the left panel of \Cref{fig: 1-revision illustration}, the plan adjustment policy $\plan$ is such that only two nodes need a revision, and for every scenario, there is either $1$ or $0$ revision in total.
% \byjp{Instead of "the plans," should we say "plan adjustment policy"; same below}
Hence, the strategic policy shown in this panel is $1$-revisable.
For the plan adjustment policy given in the right panel, the scenario $\root \xrightarrow{} v_2 \xrightarrow{} v_5$ contains two nodes requiring a revision.
We will argue in Section~\ref{subsec: combinatorial characterization} 
% that no plan adjustment policy can be constructed for this choice of strategic policy that satisfies the $1$-revision constraint, and 
that the minimum revisability number of the strategic policy in this panel is $2$.
\end{example}

\begin{figure}[htbp]
\begin{minipage}[c]{0.48\textwidth}
\centering
\scalebox{1}{
\begin{tikzpicture}[
    scale=0.8,
    grow=right,	
    every node/.style={general node}
    ]
    
    \node [value-1 node, label={[yshift=-5.1em]$[1, \color{mycolor1}0 \color{black}, \color{mycolor2} 1 \color{black} ]$}]{$\root$}
        child {node [revised node, value-1 node, level-1 node, label={[yshift=-4.7em] $[\color{mycolor1}1 \color{black}, \color{mycolor2} 0 \color{black} ]$}] {$v_2$}
            child {node [level-2 node, label={[yshift=-4.2em] $[ \color{mycolor2} 0 \color{black} ]$}] {$v_6$}}
            child {node [level-2 node, label={[yshift=-4.2em] $[ \color{mycolor2} 0 \color{black} ]$}] {$v_5$}}
        }
        child {node [level-1 node, label={[yshift=-4.7em] $ [\color{mycolor1}0 \color{black}, \color{mycolor2} 1 \color{black} ]$}] {$v_1$}
            child {node [value-1 node, level-2 node, label={[yshift=-4.2em] $[ \color{mycolor2} 1 \color{black} ]$}] {$v_4$}}
            child {node [revised node, level-2 node, label={[yshift=-4.2em] $[ \color{mycolor2} 0 \color{black} ]$}] {$v_3$} }
        };

\end{tikzpicture}
}
\end{minipage}
\begin{minipage}{0.48\textwidth}

\begin{tikzpicture}[
    scale=0.8,
    grow=right,	
    every node/.style={general node}    
    ]
    
    \node [value-1 node, label={[yshift=-5.1em]$[1, \color{mycolor1}0 \color{black}, \color{mycolor2} 1 \color{black} ]$}]{$\root$}
        child {node [revised node, value-1 node, level-1 node, label={[yshift=-4.7em] $[\color{mycolor1}1 \color{black}, \color{mycolor2} 1 \color{black} ]$}] {$v_2$}
            child {node [value-1 node, level-2 node, label={[yshift=-4.2em] $[ \color{mycolor2} 1 \color{black} ]$}] {$v_6$}}
            child {node [revised node, level-2 node, label={[yshift=-4.2em] $[ \color{mycolor2} 0 \color{black} ]$}] {$v_5$}}
        }
        child {node [level-1 node, label={[yshift=-4.7em] $ [\color{mycolor1}0 \color{black}, \color{mycolor2} 1 \color{black} ]$}] {$v_1$}
            child {node [value-1 node, level-2 node, label={[yshift=-4.2em] $[ \color{mycolor2} 1 \color{black} ]$}] {$v_4$}}
            child {node [revised node, level-2 node, label={[yshift=-4.2em] $[ \color{mycolor2} 0 \color{black} ]$}] {$v_3$}}
        };
\end{tikzpicture}
\end{minipage}
\caption{Illustration of plan adjustment and revision policies.}
\footnotesize Note. A node $v$ is shaded gray if $\x{v}=1$, whereas a node is blank if $\x{v}=0$. The vectors beneath the nodes specify a plan adjustment policy. The last entry of any vector corresponds to the strategic decision being made/planned for stage $3$, the first entry for any second-stage node and the second entry for the root node relate to stage $2$, and the first entry for the root node relates to stage $1$. A node is marked with a star if the plan adjustment policy produces a revision at that node.
\label{fig: 1-revision illustration}
\end{figure}
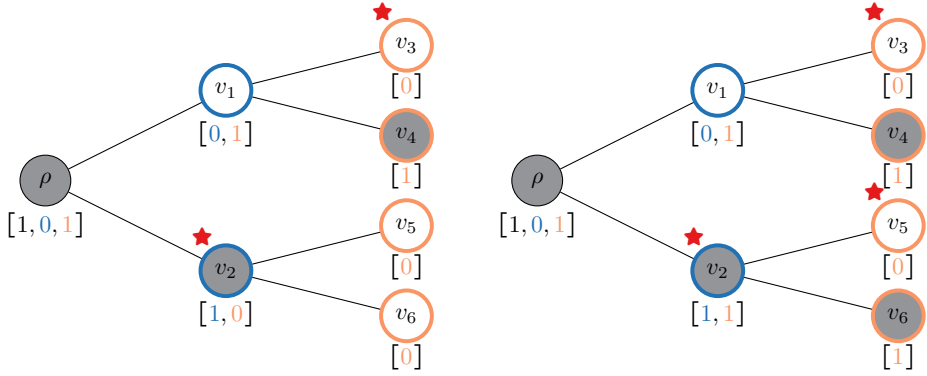

We now define a $\nrevisions$-revision \MSP{}  problem as follows:
\begin{samepage}
\begin{subequations} \label{eq: K-revision MSP}
\begin{align}
	\max \quad  & \textstyle \sum_{v \in \nodes} \big(\cc{v}\tr \x{v} + f(v)\tr \y{v}\big) \label{eq: obj} \\
	\st  \quad & \textstyle  
        \mathrlap{\sum_{u \leq v} \big( D(u,v) \x{u} + H(u,v) \y{u} \big) \leqslant b(v) ,} 
        \qquad &&  \forall v \in \nodes, \label{eq: base constr}   \\
			   & \x{v} \in \Bin^{\dimstrategic{\stage{v}}}, \qquad && \forall v \in \nodes, \label{eq: IVRx} \\
			   & \y{v} \in \Int^{\dimoperationalint{v}} \times \Real^{\dimoperationalcon{v}},  \qquad && \forall v \in \nodes,  \label{eq: IVRy} \\
			   & \x \in \revisableset{\nrevisions}. \label{eq: K-revision}
\end{align}	
\end{subequations}
\end{samepage}
In this formulation, objective \eqref{eq: obj} results from simplifying its natural expression
$$\textstyle \sum_{\scenario \in \scenarioset} p_{\scenario} \bigg(\sum_{v \in \scenario} \left(\cc{v}\tr \x{v} + f(v)\tr \y{v}\right)\bigg)$$
% in which $p_\scenario$ denotes the probability that $\scenario \in \scenarioset$ occurs.
% \byjp{Is the definition of $p_\scenario$ needed here?}
by incorporating probabilities inside of objective coefficients. 
Constraints \Cref{eq: base constr} specify the formulation of the base problem, which is an \MSP{}  problem.
For example, it can be the stochastic lot-sizing problem or the stochastic dynamic SAGHP problem.
There, $D(u,v)$ and $H(u,v)$ are matrices that captures how decisions taken at node $u$ propagate and affect the feasibility of decisions at a downstream node $v$.
Inequalities \Cref{eq: IVRx} and \Cref{eq: IVRy} specify the types of the variables.
%where $\stage{v}$ denotes the stage of a node $v$.
Constraint \Cref{eq: K-revision} imposes the $\nrevisions$-revision requirement, as described in \Cref{def: revision constraint}.

There are several ways to impose a $\nrevisions$-revision constraint on the strategic decision variables when $d^1_t > 1$ for some $t$.
In some cases, we can consider that a revision is triggered whenever any of the variables $\plan{v,t}_i$ at node $v$ deviates from the plan of its parent.
Hence, the revision budget can be seen to be shared across all the strategic decision variables.
If one strategic decision in a plan is changed, this would take one revision from our global revision budget. 
This naturally fits applications such as SAGHP, where the FAA does not adjust delays for flights on an individual basis, but instead issues a single revision that could adjust the delay assigned to any affected flight.
However, in other contexts, it may be beneficial to handle revisions for different groups of strategic decisions separately. 
Our formulation \Cref{eq: K-revision MSP} could be easily extended to accommodate this situation by introducing multiple constraints of the form \Cref{eq: K-revision}, each corresponding to a distinct group of strategic decisions.
%For example, consider the case where $\dimstrategic{t}=\dimstrategic{}>1$ for all $t \in [\nstages]$ and we can view the $i^{\small \textrm{th}}$ strategic variable at all nodes as a group, and have a $\nrevisions$-revision budget for it.
%That is, we can replace \Cref{eq: K-revision} with $[\x{v}{i}]_{v \in \nodes} \in \revisableset{\nrevisions}$ for all $i \in [\dimstrategic{}]$. \byjp{The previous few sentences (starting at ``For example") are not clear. Remove?}
We focus on the first setting in this paper.

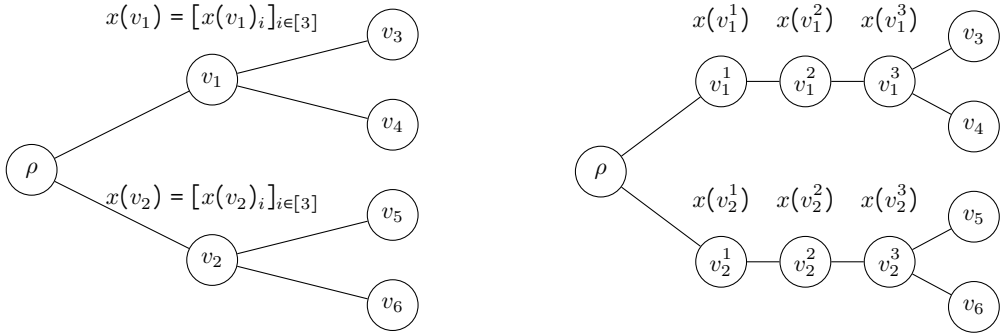
\begin{figure}[tbp]
  \centering
  \begin{minipage}{0.38\textwidth}
    \centering
    \scalebox{1}{
    \begin{tikzpicture}[
        scale=0.8,
        grow=right,
        every node/.style={general node}
      ]
      \node {$\root$}
        child {node [label={[yshift=-3.4em] $\x{v_2}=[\x{v_2}{i}]_{i\in [3]}$}] {$v_2$}
          child {node  {$v_6$}}
          child {node  {$v_5$}}
        }
        child {node [label={[yshift=-3.4em] $\x{v_1}=[\x{v_1}{i}]_{i\in[3]}$}] {$v_1$}
          child {node  {$v_4$}}
          child {node  {$v_3$}}
        };
    \end{tikzpicture}}
  \end{minipage}\hspace{1cm}%
  \raisebox{-0.5cm}{
  \begin{minipage}{0.52\textwidth}
    \centering
    \scalebox{1}{
    \begin{tikzpicture}[
        scale=0.8,
        grow=right,
        every node/.style={general node}
      ]
      % Define specific level settings
      \tikzstyle{level 1}=[level distance=20mm, sibling distance=30mm]
      \tikzstyle{level 2}=[level distance=14mm, sibling distance=15mm]
      \tikzstyle{level 3}=[level distance=14mm]
      \tikzstyle{level 4}=[level distance=14mm]
      \tikzstyle{level 5}=[level distance=18mm, sibling distance=13mm]

      \node {$\root$}
        child {node [inner sep=0.2em, label={[yshift=-0.25em] $\x{v_2^1}$}] {$v_2^1$}
          child {node [inner sep=0.2em,label={[yshift=-0.25em] $\x{v_2^2}$}] {$v_2^2$}
            child {node [inner sep=0.2em,label={[yshift=-0.25em] $\x{v_2^3}$}] {$v_2^3$}
              child {node  {$v_6$}}
              child {node  {$v_5$}}
            }
          }
        }
        child {node [inner sep=0.2em,label={[yshift=-0.25em] $\x{v_1^1}$}] {$v_1^1$}
          child {node [inner sep=0.2em,label={[yshift=-0.25em] $\x{v_1^2}$}] {$v_1^2$}
            child {node [inner sep=0.2em,label={[yshift=-0.25em] $\x{v_1^3}$}] {$v_1^3$}
              child {node  {$v_4$}}
              child {node  {$v_3$}}
            }
          }
        };
    \end{tikzpicture}}
  \end{minipage} }
    \vspace{+0.1cm}
  % \caption{An example of transforming a problem with multidimensional strategic decisions to a problem where the dimension of strategic decisions is 1 for all stages.}
  \caption{Transforming multi-dimensional to one-dimensional strategic decisions.}
  \label{fig: transform multi-d to 1-d}
\end{figure}

In this setting, we can transform a problem with $\dimstrategic{t}>1$ at stage $t$ to a problem where $\dimstrategic{t}=1$ for all stages $t$. 
% \byjp{Not sure that it is so clear what multidimensional decisions means}
This can be done simply by splitting a stage $t$ with $\dimstrategic{t}>1$ into $\dimstrategic{t}$ artificial stages with a 1-dimensional decision for each; 
see \Cref{fig: transform multi-d to 1-d}. 
Because of the transformation, we make the following simplifying assumption without loss of generality throughout the remainder of the paper. 
%This assumption is relaxed only for the discussion of complexity in \Cref{subsec: complexity}  and for some practical applications studied in \Cref{sec: experiments}, which will be specified explicitly.
We only relax this assumption in two specific contexts, both stated explicitly: the analysis of complexity in \Cref{subsec: complexity} and certain practical applications in \Cref{sec: experiments}.
 % \byjp{This is not very precise. Do you mean everywhere but in the computation section?}
\begin{assumption} \label{assumption: 1-dimension}
For each $t\in[\nstages]$, $\dimstrategic{t}=1$, \emph{i.e.}, there is exactly one strategic decision variable at each node in the scenario tree.
\end{assumption}
%\byjp{Should we switch the order of Corollary 3.4 and Assumption 1?}

We now discuss basic properties of the problem. 
Let $z_\nrevisions$ denote the optimal objective value of \Cref{eq: K-revision MSP} where we make the dependence on the revision limit $\nrevisions$ explicit.
Increasing $\nrevisions$ allows greater flexibility in adapting strategic decisions, which results in improved objective values, as the model becomes less restrictive. 
% This is because a strategic policy that satisfies the $\nrevisions$-revision constraint also satisfies the $(\nrevisions+1)$-revision constraint. 
The following monotonicity property naturally follows. %\byjp{The phrasing of Proposition 3.1, before ``that is" is not precise}
\begin{proposition}
%The sequence of optimal values is non-decreasing with respect to $\nrevisions$, that is,
It holds that 
$z_0 \leqslant z_1 \leqslant \ldots \leqslant z_{\nstages-2} \leqslant z_{\nstages-1}=z_\infty$.
\hfill $\qed$
\end{proposition}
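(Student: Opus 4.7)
The plan is to establish the inequalities by showing that the feasible region $\revisableset{\nrevisions}$ of $\nrevisions$-revisable strategic policies grows monotonically with $\nrevisions$, and then show that $\revisableset{\nstages-1}$ already contains every strategic policy, so additional revisions cannot help.

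First, I would prove the chain $z_0 \leqslant z_1 \leqslant \cdots \leqslant z_{\nstages-1}$. For any integer $\nrevisions \geqslant 0$, if $\x \in \revisableset{\nrevisions}$ with a compatible $\nrevisions$-revisable plan adjustment policy $\plan$, then the same $\plan$ is also $(\nrevisions+1)$-revisable since $\sum_{v\in\scenario} r_{\plan}(v) \leqslant \nrevisions \leqslant \nrevisions+1$ for every $\scenario \in \scenarioset$. Hence $\revisableset{\nrevisions} \subseteq \revisableset{\nrevisions+1}$. Since constraints \eqref{eq: base constr}--\eqref{eq: IVRy} are unaffected by $\nrevisions$, the feasible region of \eqref{eq: K-revision MSP} grows with $\nrevisions$, and because \eqref{eq: K-revision MSP} is a maximization problem the optimal value is nondecreasing in $\nrevisions$.

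Next, I would establish that $z_{\nstages-1} = z_{\infty}$, where $z_\infty$ is the optimal value of the fully adaptive \MSP{} (no revision constraint). By the first part it suffices to show that every strategic policy $\x$ satisfies $\x \in \revisableset{\nstages-1}$. Given an arbitrary strategic policy $\{\x{v}\}_{v\in\nodes}$, define the plan adjustment policy $\plan$ that at every node $v$ sets $\plan{v,t} = \x{u}$, where $u$ is the unique ancestor (or descendant) of $v$ at stage $t$ chosen, for instance, along a fixed path through $\tree(v)$. This $\plan$ is compatible with $\x$ by construction. Every scenario $\scenario \in \scenarioset$ is a path from the root through exactly $\nstages$ nodes, only $\nstages-1$ of which are non-root; since $r_{\plan}(\root)=0$ always and every other node contributes at most one to $\sum_{v\in\scenario} r_{\plan}(v)$, we obtain $\sum_{v\in\scenario} r_{\plan}(v) \leqslant \nstages-1$. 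Therefore $\plan$ is $(\nstages-1)$-revisable, so $\x \in \revisableset{\nstages-1}$.

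The main (minor) obstacle is the second step, which requires carefully constructing a compatible plan adjustment policy for an arbitrary $\x$ without introducing an extra revision at the root; the root contributes no revision because the definition of $r_{\plan}$ compares with a parent plan, which does not exist for $\root$. This argument shows $\revisableset{\nstages-1}$ coincides with the unrestricted set of strategic policies, so further enlarging the revision budget cannot improve the objective, giving $z_{\nstages-1} = z_\infty$ and completing the proof. \hfill $\qed$
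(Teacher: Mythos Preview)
Your proposal is correct and matches the paper's (implicit) argument: the paper states this proposition without proof beyond the $\qed$ symbol, relying on the preceding remarks that increasing $\nrevisions$ relaxes the feasible region and that when $\nrevisions \geq \nstages-1$ the constraint is non-binding since at most $\nstages-1$ non-root nodes can be revised along any scenario. Your only minor over-elaboration is in the second step: there is no need to construct a specific plan via a fixed path in $\tree(v)$, since \emph{any} compatible plan adjustment policy (e.g., $\plan{v,\stage{v}}=\x{v}$ and $\plan{v,t}=0$ for $t>\stage{v}$) automatically satisfies $\sum_{v\in\scenario} r_\plan(v)\leq \nstages-1$ by the counting argument you already give.
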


Two extreme cases %are of particular interest since they 
correspond to known models.  
When $\nrevisions=0$, strategic decisions for all stages must be fully specified at the root node.
In problems where operational variables $y$ only affect their corresponding stages and serve purely as recourse decisions, this is equivalent to a two-stage stochastic program with a first-stage decision over $x$ and where recourse decisions $y$ are made in the second stage based on scenario outcomes.
When $\nrevisions \geqslant \nstages-1$, the $\nrevisions$-revision constraint becomes non-binding because revisions can be made at all nodes except the root, where a revision is never needed.
In this case, the model permits full adaptivity of the strategic decisions $x$, and the problem reduces to the classical \MSP{} model formulated over the original scenario tree.
Therefore, we develop complexity and polyhedral results for \Cref{eq: K-revision MSP} in the nontrivial intermediate regimes where $1 \leqslant \nrevisions \leqslant \nstages-2$.
% We also assume the bound $0 \leqslant \nrevisions \leqslant \nstages-1$ naturally holds.

%%%%%%%%%%%%%%%%%%%%%%%
%%%%%%%%%%%%%%%%%%%%%%%%%%%%%%%%%%%%%%%%%%%%%%
\subsection{Complexity analysis}  \label{subsec: complexity}
%%%%%%%%%%%%%%%%%%%%%%%%%%%%%%%%%%%%%%%%%%%%%%
%%%%%%%%%%%%%%%%%%%%%%%

When the base \MSP{} model \Cref{eq: obj}-\Cref{eq: IVRy} is difficult to solve, we would expect the problem to remain difficult when adding a $\nrevisions$-revision constraint in most typical cases.
Therefore, we consider next the simplest possible $\nrevisions$-revision \MSP{} model, where there are no operational variables and the only constraint imposed on the strategic variables, besides variable bounds, is the $\nrevisions$-revision constraint.
We argue that, even in this simple setting, the problem is $\NPhard$.
We also describe that the difficulty inherently comes when many decisions are made at each stage or when the number of stages is large. 
In contrast, we also show that there is a polynomial algorithm when these characteristics are fixed.
We do not impose \Cref{assumption: 1-dimension} in this section as the dimensionality of the decision variables is a non-negligible factor for computational complexity. 
%\byjp{Should we preview a little more saying when it is hard and what fixings make it polynomial?}

Consider the problem
\begin{equation} \label{eq: hypercube}
\begin{aligned}
	\max \quad  & \textstyle \sum_{v \in \nodes} \cc{v}\tr \x{v}  \\
	\st  \quad & \x{v} \in \Bin^{\dimstrategic{\stage{v}}}, \quad \forall v \in \nodes,  \\
			   & \x \in \revisableset{\nrevisions},
\end{aligned}
\tag{$\text{HC}_\nrevisions$}
\end{equation}
which we refer to as the \emph{$\nrevisions$-revision hypercube problem}.
In a slight overload of notation, we also use $\hypercube{\nrevisions}{\tree, \cc}$ to denote the optimal value of the $\nrevisions$-revision hypercube problem on scenario tree $\tree$ with objective vector $\cc = \{\cc{v}\}_{v \in \nodes}$, where $\cc{v}$ has dimension $\dimstrategic{\stage{v}}$. 
%\byjp{Is it a good idea to give the same name to problem and optimal objective? }
In particular, the number of decisions in each stage $\set{\dimstrategic{t}}_{t \in [\nstages]}$ is encoded in $\cc$ and $\tree$.
This model is of independent interest as it appears in the formulation of any $\nrevisions$-revision problem,  and the strength of different formulations can be assessed through their integrality gap for this problem.
% its integrality gap reflects the strength of the formulation of set $\revisableset{\nrevisions}$. 
% \byjp{the model itself does not have an integrality gap. Do we mean to say something like: "the strength of different formulations can be assessed through their integrality gap for this problem...}

% \byjp{Maybe say something like: This model is of independent interest as it appears in the formulation of any K-revision problem with a single strategic variable per stage}
% For the extreme case where $\nrevisions \geqslant \nstages - 1$, $\hypercube{\nrevisions}{\tree, c}$ reduces to optimizing a linear objective with no other constraints than variable bounds.
% For the other extreme case where $\nrevisions = 0$, $\hypercube{\nrevisions}{\tree, c}$ involves determining a common plan for all scenarios, which reduces to the maximization of a linear objective over a set of binary variables. 
% We focus on the non-trivial case where $1 \leqslant \nrevisions \leqslant \nstages - 2$ and show it is intractable.
We examine the complexity of the $\nrevisions$-revision hypercube problem in the non-trivial cases where $1 \leqslant \nrevisions \leqslant \nstages - 2$.
Three parameters affect the problem scale: the number of stages $\nstages$, the number $|\nodes_t|$ of nodes per stage $t$, and the dimensions $\set{\dimstrategic{t}}_{t \in [\nstages]}$ of decisions at each stage.
We show that the problem is strongly $\NPhard$ even when $\nstages$ is fixed.
\begin{restatable}{theorem}{ThmNPhardGeneral}
\label{thm: nphard general}
    The $\nrevisions$-revision hypercube problem is strongly $\NPhard$ for any fixed integer $\nrevisions$ and $\nstages$ such that $1 \leqslant \nrevisions \leqslant \nstages-2$ when $\set{\dimstrategic{t}}_{t \in [\nstages]}$ is part of the input.
\end{restatable}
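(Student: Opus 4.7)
The plan is to prove strong NP-hardness by reduction from MAX-CUT, which is known to be strongly NP-hard. I would proceed in two steps: first establish the base case $(K, T) = (1, 3)$, then extend to arbitrary $(K, T)$ satisfying $1 \leq K \leq T - 2$.

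For the base case, given a MAX-CUT instance $G = (V, E)$ with $n = |V|$ and $m = |E|$, I would construct a three-stage scenario tree where $\rho$ has $2m$ stage-2 children $\{u_e^{+}, u_e^{-}\}_{e \in E}$, and each $u_e^{\sigma}$ has two stage-3 children. I would set $\dimstrategic{2} = n$ and $\dimstrategic{3} = 1$, with coefficients $c(u_e^{\sigma}) = \sigma (\mathbf{1}_{\{a\}} - \mathbf{1}_{\{b\}})$ for edge $e = \{a, b\}$ and $\sigma \in \{+1, -1\}$, and coefficients $+M$ and $-M$ on the two stage-3 children of each $u_e^{\sigma}$ for a fixed $M \in (0, 1)$. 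At each $u_i$, the $K = 1$ budget forces a binary choice: either (A) do not revise at $u_i$, so $\x{u_i} = \pi(\rho, 2)$ while each stage-3 child is freely choosable via a scenario-local revision (contribution $c(u_i)\tr \pi(\rho, 2) + M$), or (B) revise at $u_i$, freeing $\x{u_i}$ but forcing both stage-3 children to the common value $\pi(u_i, 3)$ and annihilating the $\pm M$ structure (contribution $1$). Maximizing $\sum_{e, \sigma} \max(c(u_e^{\sigma})\tr \pi(\rho, 2) + M, 1)$ over $\pi(\rho, 2) \in \Bin^{n}$ collapses, per edge $\{a, b\}$, to $2 + M \cdot \indicator{y_a \neq y_b}$, giving total $2m + M \cdot \text{MAX-CUT}(G)$. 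Since the reduction is polynomial and $M$ is a fixed constant, this establishes strong NP-hardness for $(K, T) = (1, 3)$.

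For the extension to general $(K, T)$, I would treat two regimes separately. For $K = 1$ and $T > 3$, append $T - 3$ trivial single-child stages with $\dimstrategic{t} = 0$ below the stage-3 leaves; since no decisions exist at these stages, no revisions can occur there and the base reduction is preserved verbatim. For $K \geq 2$, I would precede the MAX-CUT gadget with $K - 1$ \emph{budget-consumption gadgets}, each occupying one stage and constructed so that in any optimal solution exactly one revision is committed per scenario inside the gadget. After stacking $K - 1$ such gadgets, the effective revision budget remaining when a scenario enters the MAX-CUT subtree is exactly $1$, matching the base case. Trivial padding adjusts the total depth to $T$.

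The main obstacle is designing and analyzing the budget-consumption gadget for $K \geq 2$. The gadget must uniformly consume one revision per scenario reaching the MAX-CUT subtree, independently of the MAX-CUT instance's coefficients, while keeping the construction polynomial in size. A natural approach uses branching nodes with sharply conflicting large objective coefficients that dominate the MAX-CUT magnitudes, so that the gadget's revision behavior is forced and its contribution decouples from downstream decisions. Verifying this uniformity and decoupling via a case analysis on revision patterns within each gadget is the principal technical step.
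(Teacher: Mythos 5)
Your base case is correct and is essentially the paper's reduction in a thin disguise: the paper reduces from MAX-DICUT with one stage-2 node per arc, while you reduce from undirected MAX-CUT by introducing two oppositely signed stage-2 nodes per edge; your per-subtree accounting $\max\bigl(c(u_e^{\sigma})\tr\pi(\root,2)+M,\,1\bigr)$ is sound, and the trivial padding for $\nrevisions=1$, $\nstages>3$ matches the paper's first step.

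The gap is in the extension to $\nrevisions\geqslant 2$. A budget-consumption gadget placed \emph{above} the MAX-CUT instance cannot achieve what you ask of it: for any plan adjustment policy, the initial plan can always be followed along at least one root-to-leaf path, so some scenario incurs \emph{zero} revisions in the gadget stages and enters the MAX-CUT region with its full budget $\nrevisions$. Concretely, if the gadget branches into children with conflicting large coefficients, the root's plan can be chosen to match one child exactly; only that child's siblings are forced to revise, and the scenarios through the matched child retain budget $\nrevisions\geqslant 2$, under which the three-stage MAX-CUT subtree is fully adaptive and no longer encodes the cut value. So the stated property ("exactly one revision per scenario inside the gadget") is unachievable, not merely unverified, and the objective would mix residual budgets from $1$ to $\nrevisions$ across entry points. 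The paper flips the construction: under each leaf $\ell$ of the budget-$\nrevisions$ instance it hangs two children with coefficients $+M$ and $-M$, where $M$ exceeds the total absolute coefficient mass. Any optimal policy must revise at one of these two children (otherwise both inherit a common value, the pair contributes $0$ instead of $M$, and the loss cannot be recouped), and because that revision sits \emph{below} $\ell$ it caps the revisions available on the entire shared path above $\ell$ at $\nrevisions$ --- uniformly for every leaf, hence for every scenario of the original tree. This yields $\hypercube{\nrevisions+1}{\tree'',c''}=\hypercube{\nrevisions}{\tree',c'}+M\cdot L$ with $L$ the number of leaves, and the induction on $\nrevisions$ goes through. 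Your argument needs this bottom-attachment idea (or an equivalent) to close.
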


The proof of \Cref{thm: nphard general} directly follows from the special case where $\nrevisions = 1$ and $\nstages = 3$ as we argue in \Cref{proof: nphard general}.
We thus focus on this special case next. 

%To prove this theorem, we first establish the special case where $\nrevisions = 1$ and $\nstages = 3$.
%Then \Cref{thm: nphard general} can be derived immediately.
%We refer the reader to \Cref{subsec: proof thm: nphard general} for details.
\begin{theorem} \label{thm: nphard K=1 T=3}
The $\nrevisions$-revision hypercube problem is strongly $\NPhard$ when $\nrevisions=1$ and $\nstages=3$ and when both $\set{|\nodes_t|}_{t \in [\nstages]}$ and $\set{\dimstrategic{t}}_{t \in [\nstages]}$ are part of the input.
\end{theorem}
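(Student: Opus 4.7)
The plan is to reduce from \emph{maximum cut}, which is strongly $\NPhard$ even on unweighted graphs. Given an instance $G = (V, E)$, I would build a three-stage scenario tree in which the root $\root$ carries a trivial strategic variable ($\dimstrategic{1} = 1$, $\cc{\root} = 0$), the middle stage has dimension $\dimstrategic{2} = |V|$, and the leaf stage has dimension $\dimstrategic{3} = 1$. For each edge $e = \{u, v\} \in E$ I would introduce two stage-2 nodes $v_e^+, v_e^-$ with strategic cost vectors $\cc{v_e^+} = \mathbf{1}_{\{u\}} - \mathbf{1}_{\{v\}}$ and $\cc{v_e^-} = \mathbf{1}_{\{v\}} - \mathbf{1}_{\{u\}}$, and I would attach to each of them exactly two leaf children with scalar costs $+1$ and $-1$, respectively. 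The resulting instance has $\order{|E|}$ nodes and costs of unit magnitude, so the reduction is strongly polynomial.

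The central structural observation, which I would establish first, is that when $\nrevisions = 1$ and $\nstages = 3$ each stage-2 node $v_i$ in a plan adjustment policy falls into exactly one of two regimes given the committed plan $a := \plan{\root}{2}$. In the no-revision regime at $v_i$, compatibility forces $\x{v_i} = a$, yet each leaf child may independently spend the unique revision available on its scenario, so its strategic variable is unconstrained. In the revision regime at $v_i$, both $\x{v_i}$ and $\plan{v_i}{3}$ are free but the budget is consumed, so every leaf child of $v_i$ must take the common value $\plan{v_i}{3}$. Because $\plan{\root}{3}$ imposes no binding constraint in either regime and the decisions at distinct stage-2 nodes decouple once $a$ is fixed, the hypercube objective collapses to $\max_{a \in \Bin^{|V|}} \sum_i \max\set{f_i^A(a),\, f_i^B}$, where $f_i^A$ and $f_i^B$ denote the optimal contributions from $v_i$ together with its leaves in the two regimes.

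A direct computation on the gadget then shows that $f_{v_e^+}^A(a) = (a_u - a_v) + 1$, with the $+1$ arising because the positive leaf is chosen and the negative leaf is set to zero, and $f_{v_e^+}^B = 1$, since the common leaf value contributes nothing when the two leaf costs cancel; the symmetric statement holds at $v_e^-$. The per-edge contribution is therefore $2 + \indicator{a_u \neq a_v}$, so the optimum of \Cref{eq: hypercube} on the constructed tree equals $2|E| + \mathrm{MaxCut}(G)$, which yields the reduction. The main obstacle is calibrating the leaf gadget: without the $(+1, -1)$ leaf pair, the revision regime would dominate for every $a$ (since it already gives free rein over $\x{v_i}$), and no information about $G$ would be encoded. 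The pair is tuned to supply exactly the single unit of advantage the no-revision regime needs to strictly beat the revision regime precisely when $a_u \neq a_v$; checking that $\plan{\root}{3}$ indeed plays no role, and that the informal two-case split is genuinely exhaustive once redundant plans have been normalized, are the secondary steps that must be completed to ensure the reduction is tight.
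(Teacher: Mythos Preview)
Your proposal is correct and follows essentially the same route as the paper: both build a three-stage tree where each stage-2 node carries a cost vector of the form $\mathbf{1}_{\{u\}}-\mathbf{1}_{\{v\}}$ and has two leaf children with scalar costs $+1$ and $-1$, and both exploit the same dichotomy (revise at stage~2 and force both leaves to agree, or keep the root plan at stage~2 and spend the budget freely at the leaves). The only difference is the source problem: the paper reduces from MAX-DICUT with one stage-2 node per arc, whereas you reduce from undirected MAX-CUT and therefore introduce the pair $v_e^+, v_e^-$ per edge; your construction is exactly the paper's applied to the bidirected version of $G$, so the gadget and the analysis are the same.
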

%\byjp{?plus?}
\begin{proof}
% \byjp{maybe the following (instead of ensuing line)}
% \color{red}
Given a tree $\tree$ with $\nstages=3$, a cost vector $c$, and a profit threshold $\theta$, we consider the decision version of problem $\hypercube{1}{\tree, c}$, which consists of determining whether $\hypercube{1}{\tree, c} \geqslant \theta$.

% We prove that this problem is $\NPhard$. 
We use a reduction from the decision version of the maximum directed cut (MAX-DICUT) problem, where we consider a directed graph $G = (V, A)$ and answer whether there is a subset of vertices $C \subset V$ such that the number of arcs in $A(C, \bar{C}):=\{a=(i,j), i\in C, j\in \bar{C}\}$ is no less than some number $\eta$.
MAX-DICUT is known to be $\NPcomplete$ \cite{papadimitriou1988}.
%\byjp{If we talk about weighted sum, weights should also be part of the problem, right?}
%\byjp{The proof does not seem to use weights though}
% \byjp{Is there a reference for directed maxcut being NP-hard?}
% Consider a decision version of the directed max-cut problem: ``whether $\text{MAX-DICUT}(G) \geqslant \eta$?'' for some positive integer $\eta$.  

% \byjp{Should we go directly from the decision version being $\NPcomplete$}
% We propose the following reduction from the directed max-cut problem. 

Consider an instance of MAX-DICUT and its associated directed graph $G=(V,A)$. 
We construct a scenario tree $\tree$ with a root at the first stage and an objective vector $\cc$ as follows.  
On the second stage of $\tree$, we define $|A|$ nodes, \textit{i.e.}, 
for each $a = (i, j) \in A$, we define a node named $v_a$  and connect it to the root $\root$ with $\cc{\root}=0$.
% \byjp{or second?}
We let the decisions on each second-stage node to be $|V|$-dimensional (\textit{i.e.}, we let $\dimstrategic{2}=|V|$) and we assign the objective vector $\cc{v_a} = \indicator{i} - \indicator{j}$ to node $v_a$. 
Here, $\indicator{i}\in \{0, 1\}^{|V|}$ is a binary vector where the entry corresponding to vertex $i$ is equal to $1$ and all others are equal to $0$.
Lastly, for each second-stage node $v_a$, we define two children $w_{a}^+$ and $w_a^{-}$, each with a 1-dimensional decision (\emph{i.e.}, $\dimstrategic{3}=1$) and objective coefficients $\cc{w_{a}^+}=+1$ and $\cc{w_a^{-}}=-1$; 
see \Cref{fig: proof of np-complete} for an example. 
This reduction is polynomial in the size of the input instance $G$ of MAX-DICUT.

Next we show that, for $\tree$ and $\cc$ constructed as above, the following two statements are equivalent:
\begin{enumerate}[label=(\Roman*)]
\item $\operatorname{MAX-DICUT}(G) \geqslant \eta$; \label{dicutcase1}
\item $\hypercube{1}{\tree, c} \geqslant |A| + \eta$.
\label{dicutcase2}
\end{enumerate}
% and hence, we find a polynomial reduction from the MAX-DICUT problem to the $1$-revision hypercube problem. 
% \byjp{should we remove that last sentence?}
% \byjp{Note that (i) and (ii) do not require optimality. Is that going to be an issue?}

First, we show that 
\ref{dicutcase1} implies \ref{dicutcase2}.
Take a cut $C \subseteq V$ such that $|A(C, \overbar{C})| \geqslant \eta$. 
We define the plan adjustment policy at the root node to be $\plan{\root}{1} = 0$, $\plan{\root}{2} = \mathbf{1}_C$, and $\plan{\root}{3} = 0$. 
%\byjp{Why give period 3 before period 2?}
Then, consider a second-stage node $v_a$ where:
 \begin{enumerate}[label=(\roman*)]
    \item $a = (i,j) \in A(C, \overbar{C})$. We let $\plan{v_a}{2} = \plan{\root}{2}$ and $\plan{v_a}{3}=\plan{\root, 3}$, and thus $r_{\plan}(v_a)=0$.
    Hence, the objective at $v_a$ is $\cc{v_a}\tr \x{v_a} = (\indicator{i}-\indicator{j})\tr \mathbf{1}_C = 1 - 0 = 1$.
    For the children of $v_a$, $w_{a}^+$ and $w_a^-$, we can set $\plan{w_{a}^+}{3}=1$, $\plan{w_{a}^-}{3}=0$, and thus $r_{\plan}(w_{a}^+)=1$.
    The corresponding objective at these two children is $\cc{w_{a}^+}\tr \x{w_{a}^+} + \cc{w_a^-}\tr \x{w_a^-} = 1 - 0 = 1$.
    \item $a=(i,j) \in A \backslash A(C, \overbar{C})$. 
    %\byjp{SHOULD WE SAY INSTEAD ``$a=(i,j) \in A \backslash A(C, \overbar{C})$"}
    We take $\plan{v_{a}}{2} = \indicator{i} \neq \mathbf{1}_C$ and $\plan{v_{a}}{3} = 0$, which implies that $r_{\plan}(v_{a})=1$.
    Hence, we cannot revise at the children of $v_{a}$, where we have $\x{w_{a}^+} = \plan{w_{a}^+}{3} = \plan{v_a}{2} = \plan{w_{a}^-}{3} = \x{w_{a}^-}$ and $\cc{w_{a}^+}\tr \x{w_{a}^+} + \cc{w_{a}^-}\tr \x{w_{a}^-} = \x{w_{a}^+} - \x{w_{a}^-} = 0$.
\end{enumerate}
The above construction gives a $1$-revisable plan adjustment policy $\plan$ and a strategic policy $\x$ that is compatible with $\plan$.
The objective value of any subtree $\tree(v_a)$ is $2$ when $a \in A(C, \overbar{C})$  and $1$ when $a \notin A(C,\overbar{C})$.
Therefore, we find a solution within the revision budget with objective $2|A(C,\overbar{C})| + |A \setdiff A(C,\overbar{C})| = |A| + |A(C,\overbar{C})| \geqslant |A| + \eta$.
% \byjp{Should we provide the vector $\pi$ for this solution?}

Second, we show that \ref{dicutcase2} implies \ref{dicutcase1}. 
Let $\x = \set{\x{v}}_{v \in \nodes}$ be an optimal strategic policy for $HC_1(\tree,c)$ with objective at least $|A| + \eta$, and $\plan = \{\plan{v, \cdot}\}_{v\in \nodes}$ be a plan adjustment policy that is compatible with $\x$ and is $1$-revisable. 
Consider the plan at the root for the second stage $\plan{\root}{2} \in \Bin^{|V|}$.
Construct the cut $C:=\{i \in V \mid \plan{\root}{2}_i = 1\}$.
We argue that $|A(C, \overbar{C})| \geqslant \eta$.
To see this, consider each $v_a$ such that $a=(i,j) \in A \backslash A(C, \overbar{C})$.
If the policy $\pi$ does not have a revision at $v_a$, the objective at $v_a$ is $\cc{v_a}\tr \x{v_a} = (\indicator{i}-\indicator{j})\tr \plan{\root}{2} \leqslant 0$, because $\indicator{i}\tr \plan{\root}{2}=1$ and $\indicator{j}\tr \plan{\root}{2}=0$ cannot hold simultaneously.
If the policy $\pi$ has a revision at $v_a$, then the objective at $v_a$ may be 1, but the policy $\pi$ could not have a revision at either of the children, and hence the total objective at the children is $0$.
In both cases, the objective obtained at $\tree(v_a)$ is at most $1$.
It is easy to see that the objective obtained by the policy $\pi$ at any subtree $\tree(v_a)$ for any $a \in A$ is at most $2$.
Thus, the total objective achieved by the policy $\pi$ is at most $2|A(C, \overbar{C})| + |A\setdiff A(C, \overbar{C})| = |A| + |A(C, \overbar{C})|$. 
By definition, the policy $\pi$ achieves an objective of at least $|A| + \eta$, so it must be true that $\eta \leqslant|A(C, \overbar{C})|$.

%Hence, we reduce the MAX-DICUT to solving $\hypercube{1}{\tree, c}$, which shows that the latter is $\NPhard$. 
The above-described reduction thus shows that $\hypercube{1}{\tree, c}$ is $\NPhard$.
Moreover, it is strongly $\NPhard$ because the numerical parameters $\cc$ in the construction satisfy $\cc(v)_i \in \{\pm 1, 0\}$ for all $v \in \nodes$ and $i \in d_{\stage{v}}^1$.
\end{proof}

%\vspace{-0.5cm}
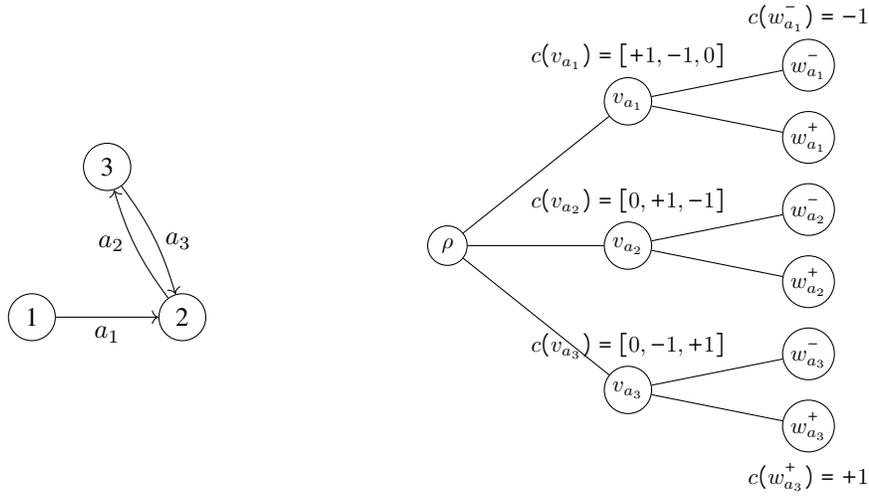
\begin{figure}[tbp]
    \centering
    \begin{minipage}{0.4 \textwidth}
    \raisebox{0\height}{
        \begin{tikzpicture}
        \tikzset{
            vertex/.style={circle,draw,minimum size=1.5em},
            edge/.style={->,> = latex'}
        }
        
            \node[vertex] (1) at (0,0) {1};
            \node[vertex] (2) at (2,0) {2};
            \node[vertex] (3) at (1,2) {3};
            \path [->] (1) edge node [below]{$a_1$} (2);
            \path [->, bend left=10] (2) edge node [pos=0.7, below, xshift=-2mm] {$a_2$} (3);
            \path [->, bend left=10] (3) edge node [pos=0.7, above, xshift=2mm] {$a_3$} (2);
        \end{tikzpicture}
       	}
    \end{minipage}
    \hspace{-1cm}
    \begin{minipage}{0.5 \textwidth}
        \centering
        \begin{tikzpicture}[
            scale=0.8,
            grow=right,	
            every node/.style={general node, minimum size=0.8, scale=0.95}
            ]
    		\tikzstyle{level 1}=[level distance=30mm, sibling distance=24mm]
		    \tikzstyle{level 2}=[level distance=30mm, sibling distance=12mm]
        \node{$\root$}
            child {node [inner sep=2pt, label={[yshift=-3.8em] $\cc{v_{a_3}}=[0,-1,+1]$}] {$v_{a_3}$}
                child {node[inner sep=1.0pt, label={[yshift=-6.6em] $\cc{w_{a_3}^+}=+1$}] {$w_{a_3}^+$}}
                child {node[inner sep=1.0pt] {$w_{a_3}^-$}}
            }
            child {node [inner sep=2pt, label={[yshift=-3.8em] $\cc{v_{a_2}}=[0,+1,-1]$}] {$v_{a_2}$}
                child {node[inner sep=1.0pt] {$w_{a_2}^+$}}
                child {node[inner sep=1.0pt] {$w_{a_2}^-$}}
            }
            child {node [inner sep=2pt, label={[yshift=-3.8em] $\cc{v_{a_1}}=[+1,-1,0]$}] {$v_{a_1}$}
                child {node[inner sep=1.0pt] {$w_{a_1}^+$}}
                child {node[inner sep=1.0pt, label={[yshift=-2.2em] $\cc{w_{a_1}^-}=-1$}] {$w_{a_1}^-$}}
            };
        \end{tikzpicture}
    \end{minipage}
    \vspace{-0.35cm}
    \caption{An example of the reduction from MAX-DICUT to the 1-revision hypercube problem.} 
    \label{fig: proof of np-complete}
    % \begin{minipage}{\textwidth}
    % \tablefootnotefont
    % The left panel shows a directed graph $G=(V, A)$ where $V = \{1, 2, 3\}$ and $A = \{ (1, 2), (2, 3), (3, 2)\}$. 
    % The right panel shows the scenario tree $\tree$: on the second level, there are three nodes $v_{a_1}$, $v_{a_2}$, and $v_{a_3}$. 
    % The objective vector associated with each second-stage node is indicated above it.
    % \end{minipage}
\end{figure}

% By \Cref{thm: nphard K=1 T=3}, the following result can be established directly.
% We refer the readers to Appendix~\ref{sec: proof of nphard cor} for a detailed proof.

In the above results, we consider situations where $\set{\dimstrategic{t}}_{t \in [\nstages]}$ is part of the input.
%\red{We now assume \Cref{assumption: 1-dimension} holds true and by the transformation shown in \Cref{fig: transform multi-d to 1-d}, we have the following corollary.}
Applying the transformation shown in \Cref{fig: transform multi-d to 1-d} to ensure that \Cref{assumption: 1-dimension} holds, we obtain the following corollary. 
\begin{corollary} \label[corollary]{cor: NP-hard d=1}
The $\nrevisions$-revision hypercube problem where $\dimstrategic{t} = 1$ for all $t \in [\nstages]$ is $\NPhard$ for any fixed integer $\nrevisions$ such that $1 \leqslant \nrevisions \leqslant \nstages-2$ when both $\nstages$ and $\set{|\nodes_t|}_{t \in [\nstages]}$ are part of the input.
\end{corollary}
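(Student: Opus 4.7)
My plan is to derive \Cref{cor: NP-hard d=1} as a direct consequence of \Cref{thm: nphard general} via the stage-splitting reduction sketched in \Cref{fig: transform multi-d to 1-d}. Given an instance $(\tree, \cc)$ of the $\nrevisions$-revision hypercube problem with fixed $\nrevisions$, fixed $\nstages$, and dimensions $\{\dimstrategic{t}\}_{t \in [\nstages]}$ part of the input, I would construct a new instance $(\tree', \cc')$ by replacing each node $v$ at stage $t$ in $\tree$ with a chain $v^1 \to v^2 \to \cdots \to v^{\dimstrategic{t}}$ in $\tree'$, each carrying a single binary decision, and gluing the chains end-to-end according to the parent-child structure of $\tree$. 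Defining $\cc'(v^k) := \cc(v)_k$ componentwise ensures that $\sum_{k=1}^{\dimstrategic{t}} \cc'(v^k)\, x'(v^k) = \cc(v)\tr \x{v}$ whenever $x'(v^k) = \x{v}{k}$. The transformed tree has $\nstages' = \sum_{t=1}^{\nstages} \dimstrategic{t}$ stages (now part of the input), satisfies $\dimstrategic{t'} = 1$ at every stage $t'$ and $\nrevisions \leq \nstages' - 2$, and has polynomial size in the encoding of $(\tree, \cc)$.

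The heart of the argument is the equality $\hypercube{\nrevisions}{\tree, \cc} = \hypercube{\nrevisions}{\tree', \cc'}$, from which strong NP-hardness transfers immediately. For the forward direction, given any $\nrevisions$-revisable pair $(\plan, \x)$ on $\tree$, I would set $x'(v^k) := \x{v}{k}$, declare $v^1$ revised in $\tree'$ precisely when $v$ is revised in $\tree$ (with no in-chain revisions elsewhere), and let $\pi'(v^1, \cdot)$ be the flattening of $\plan{v}{\cdot}$ obtained by listing its multi-dimensional components as successive single-bit entries. Each revision in an original scenario produces exactly one revision in the corresponding transformed scenario, so the $\nrevisions$-budget is respected and the objective values coincide.

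For the reverse direction, starting from a $\nrevisions$-revisable pair $(\pi', x')$ on $\tree'$, I would set $\x{v}{k} := x'(v^k)$, mark $v$ as revised in $\tree$ iff at least one of $v^1, \ldots, v^{\dimstrategic{\stage{v}}}$ is revised in $\tree'$, and define $\plan{v}{\cdot}$ by re-grouping the one-dimensional plan values specified by $\pi'$ along the chains descending from $v$. Because a single multi-dimensional revision at $v$ in $\tree$ can independently change any subset of the components of $\x{v}$, any number of in-chain revisions on $v^1, \ldots, v^{\dimstrategic{\stage{v}}}$ collapses into a single revision at $v$, so the revision count along each scenario of $\tree$ is bounded above by that along the corresponding scenario of $\tree'$, and $(\plan, \x)$ remains $\nrevisions$-revisable with matching objective.

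The step I expect to require the most care is this backward reconstruction: when several revisions occur inside a single chain of $\tree'$ they each override the active plan on a different suffix, so I must define $\plan{v}{\cdot}$ carefully — agreeing with $\x{v}$ on its $\dimstrategic{\stage{v}}$ leading entries while compatibly propagating, to descendants of $v$ in $\tree$, the tail plan chosen by the latest in-chain revision of $\tree'$ — and then verify that the constructed $r_\plan$ is indeed the indicator that at least one chain node is revised. Once this bookkeeping is checked, the equality of optimal values holds; since \Cref{thm: nphard general} supplies a strongly NP-hard family for every fixed $\nrevisions$ with $1 \leq \nrevisions \leq \nstages - 2$, the reduction yields the asserted NP-hardness of the one-dimensional problem when $\nstages$ and $\{|\nodes_t|\}_{t \in [\nstages]}$ are part of the input.
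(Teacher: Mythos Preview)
Your proposal is correct and takes essentially the same approach as the paper: the paper's entire proof of \Cref{cor: NP-hard d=1} is the one-line remark that it follows from \Cref{thm: nphard general} by applying the stage-splitting transformation of \Cref{fig: transform multi-d to 1-d}, and you have spelled out precisely that transformation together with the forward and backward bijection between $\nrevisions$-revisable policies needed to conclude $\hypercube{\nrevisions}{\tree,\cc}=\hypercube{\nrevisions}{\tree',\cc'}$. Your identification of the backward direction (collapsing multiple in-chain revisions into a single multi-dimensional revision and reconstructing a compatible $\plan$) as the only nontrivial step is accurate, and your sketch of how to handle it is sound.
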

% \byjp{?plus?}
% \byjp{For any $K$? What about $K=0$ or $K=-1$?}

In contrast to \Cref{cor: NP-hard d=1}, we show next that, when $\nstages$ is fixed, there is a polynomial algorithm for the problem even if the number of nodes in the tree grows.
%\Cref{cor: NP-hard d=1} shows that the problem is $\NPhard$ when $\nstages$ is part of the input.
%However, when $\nstages$ is fixed, there is a polynomial algorithm for the problem even if the number of nodes in the tree grows:
%\byjp{In the following proposition, should we say ``$T$ and $K$ are fixed so that $1 \le K \le T-2$"}
\begin{proposition}
There is a polynomial algorithm for the $\nrevisions$-revision hypercube problem 
%\hypercube{\nrevisions}{\tree, \cc}$, 
where $\nrevisions$ and $\nstages$ are fixed such that $1 \leqslant \nrevisions \leqslant \nstages-2$ but $\set{|\nodes_t|}_{t\in[\nstages]}$ is part of the input.
\end{proposition}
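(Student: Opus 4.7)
The plan is to design a bottom-up dynamic program on the scenario tree $\tree$. The key observation is that, under \Cref{assumption: 1-dimension} with $\nstages$ fixed, any plan $\plan{v,\cdot} \in \{0,1\}^{\nstages-\stage{v}+1}$ takes at most $C := 2^{\nstages}$ distinct values, so the number of admissible plans at any node is a constant; more generally, $C = 2^{\sum_t \dimstrategic{t}}$ remains constant as long as $\nstages$ and $\{\dimstrategic{t}\}$ are fixed. This small and uniformly bounded plan space is what makes the state space tractable.

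I would define the DP state as a triple $(v, \pi, k)$ consisting of a node $v \in \nodes$, an ``active plan'' $\pi$ inherited at $v$ (that is, $\pi$ equals $\plan{\parent{v},\cdot}$ truncated to stages $\stage{v},\ldots,\nstages$; for $v=\root$, $\pi$ is any freely chosen initial plan), and the number $k \in \{0,1,\ldots,\nrevisions\}$ of revisions already used along the path from $\root$ to $\parent{v}$. Let $V(v,\pi,k)$ denote the maximum objective attainable over the subtree $\tree(v)$ while allowing at most $\nrevisions - k$ additional revisions along every scenario descending from $v$. The Bellman recursion is the maximum of the ``no revision at $v$'' value
\begin{equation*}
A(v,\pi,k) = \cc{v}\tr \pi_{\stage{v}} + \sum_{w \in \children{v}} V(w,\pi,k)
\end{equation*}
and the ``revise at $v$'' value
\begin{equation*}
B(v,\pi,k) = \max_{\pi' \neq \pi}\Bigl\{\cc{v}\tr \pi'_{\stage{v}} + \sum_{w \in \children{v}} V(w, \pi', k+1)\Bigr\},
\end{equation*}
with $B$ set to $-\infty$ whenever $v = \root$ or $k = \nrevisions$. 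Leaves constitute the base case. The final answer is $\max_{\pi_{\root}} V(\root, \pi_{\root}, 0)$, enumerating the $C$ candidate initial plans at the root.

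Correctness would follow by induction on subtree height: because the $\nrevisions$-revision constraint is scenario-local, the only information from the prefix needed to choose optimally from $v$ onward is (i) the plan currently in force (to detect whether a revision occurs at $v$) and (ii) how much of the global budget $\nrevisions$ has already been consumed on this scenario's prefix. Both are encoded in $(\pi, k)$. For complexity, the state space has cardinality $|\nodes| \cdot C \cdot (\nrevisions+1) = O(|\nodes|)$, and each transition costs $O(|\children{v}| \cdot C) = O(|\children{v}|)$ because the enumeration of $\pi' \neq \pi$ in $B$ is over $O(C)$ plans. Summing $\sum_v |\children{v}| = |\nodes| - 1$ yields a total running time of $O(|\nodes|)$, polynomial in the input size. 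The main subtlety, rather than a serious obstacle, lies in formalizing that the state need not record \emph{which} ancestors triggered revisions but only their count; this reduces to the fact that the $\nrevisions$-revision constraint depends only on the total number of revisions along each scenario, and that the active plan $\pi$ is a sufficient summary for deciding whether a new plan induces a revision.
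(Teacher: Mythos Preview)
Your proposal is correct and takes essentially the same dynamic-programming approach as the paper: both exploit that with $\nstages$ fixed, the set of possible plans at any node has constant size, and both use a state consisting of a node, an active plan, and a revision count. The only cosmetic differences are that you track revisions \emph{used} rather than \emph{remaining} and place the revision decision at the child rather than the parent; your running-time analysis via $\sum_v |\children{v}| = |\nodes|-1$ is in fact tighter than the paper's looser $O(|\nodes|^2 \nrevisions\, 2^{2\nstages})$ bound.
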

% \byjp{same remark about $K$}
\begin{proof}
We consider the subproblem associated with $\hypercube{\nrevisions}{\tree, \cc}$ that is a $\nrevisions$-revision hypercube problem on the subtree $\tree(v)$ with objective $\cc$ restricted to the nodes of $\tree(v)$, \emph{i.e.}, $\hypercube{k}{\tree(v), \cc{\descendant{v} \cup \{v\}}}$. 
%\byjp{Here the notation HC is used for problem and not value}
We denote the value of this optimization problem given a plan $\plan{v, \cdot}$ active at $v$  as $G(v, k, \plan{v,\cdot})$.
Then, there is a direct dynamic programming (\DP)  recursive formula:
\begin{equation} \label{eq: dp-HC}
\begin{aligned}
        & G\big(v,k,\plan{v,\cdot}\big) = 
         \cc{v}\cdot \plan{v}{\stage{v}} \\
        + &\sum_{ u \in \children{v} } \max \left\{ G\big(u,k,\plan{v}{[\stage{v}+1:\nstages]}\big),  \max_{\plan{u,\cdot} \in \Bin^{\nstages-\stage{v}}} G\big(u,k-1,\plan{u, \cdot}\big)  \right\},
\end{aligned}
\end{equation}
where we consider the subproblem rooted at every $u \in \children{v}$. 
The first term in the $\max$ represents the case where the plan is not revised at $u$ and the second term represents the case where it is revised.
We also need the initial condition $G\big(v, 0, \plan{v,\cdot}\big)=\cc{v}\cdot \plan{v,\cdot}$ for each node $v \in \nodes$.
%\byjp{IS $\ell$ means leaf here?}
Further, the optimal objective is $\hypercube{\nrevisions}{\tree, c}= \max_{\plan{\root,\cdot}\in\Bin^\nstages} \{G(\root, \nrevisions, \plan{\root,\cdot})\}$.
Since there are $\order{|\nodes|\cdot\nrevisions\cdot 2^\nstages}$ states in the \DP{} algorithm and each state requires $\order{|\nodes|\cdot 2^\nstages}$ operations to compute, % \byjp{time? or operations?}
the algorithm's running time is bounded by $\order{|\nodes|^2 \cdot\nrevisions \cdot 2^{2\nstages}}$, which is polynomial in $|\nodes|$ when $\nstages$ is fixed.
\end{proof}
Although the worst case running time of the above algorithm depends on the revision budget $\nrevisions$, we do not classify the algorithm as pseudo-polynomial since $\nrevisions$ is bounded by $\nstages$, which is clearly bounded by the input size of the problem.

%%%%%%%%%%%%%%%%%%%%%%%
%%%%%%%%%%%%%%%%%%%%%%%%%%%%%%%%%%%%%%%%%%%%%%
\subsection{Effect of the \texorpdfstring{$\nrevisions$}--revision constraint}
%%%%%%%%%%%%%%%%%%%%%%%%%%%%%%%%%%%%%%%%%%%%%%
%%%%%%%%%%%%%%%%%%%%%%%

% \byjp{Should we write the previous paragraph as follows?}
% \color{red}
In this section, we compare our $\nrevisions$-revision approach with fully adaptive and with partially adaptive \MSP. 
In particular, we determine how different the optimal objective values of these problems can be in the worst case.
The answer to this question depends strongly on the presence of constraints in the base problem. 
These theoretical results will serve as a basis and will be in stark contrast with the results we obtain on practical problems in \Cref{sec: experiments}.

Consider an \MSP{} problem and denote its optimal value by $z_{\text{MS}}$. 
As introduced earlier, for $\nrevisions \in \Int_+$, we use $z_{\nrevisions}$ to denote the optimal value of the problem when a $\nrevisions$-revision constraint is added. 
As $z_{\text{MS}} \geqslant z_\nrevisions$, we define $\frac{|z_{\text{MS}}-z_\nrevisions|}{|z_\nrevisions|}$ as the \emph{relative loss} of $\nrevisions$-revision when $z_\nrevisions \neq 0$. 
We next show that the relative loss of $\nrevisions$-revision is bounded above for the hypercube problem.
\begin{proposition} \label[proposition]{prop: effect of revision} ~\
For the $\nrevisions$-revision hypercube problem where $1 \leqslant \nrevisions \leqslant \nstages-2$, we have $z_{\nrevisions} \geqslant \frac{\nrevisions}{\nstages-1} z_{\textnormal{MS}}$, which implies that $\frac{z_{\textnormal{MS}} - z_{\nrevisions}}{\vert z_\nrevisions \vert} \leqslant \frac{\nstages-1}{\nrevisions}-1$.
\end{proposition}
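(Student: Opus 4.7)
My plan is to exhibit an explicit $\nrevisions$-revisable strategic policy whose objective value recovers at least a $\frac{\nrevisions}{\nstages-1}$ fraction of the fully adaptive optimum. Because the hypercube problem has no constraints beyond binary integrality (and \Cref{assumption: 1-dimension} is in force in this section), a fully adaptive optimum $x^*$ is given by $x^*(v)=1$ whenever $c(v)>0$ and $x^*(v)=0$ otherwise, so that, decomposing by stage,
\[
z_{\textnormal{MS}} \;=\; \sum_{t=1}^{\nstages} S_t, \qquad \text{where } S_t := \sum_{v \in \nodes_t} \max\{c(v),0\} \;\geq\; 0.
\]

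Next, I would let $T = \{t_1 < \cdots < t_{\nrevisions}\} \subseteq [2:\nstages]$ index the $\nrevisions$ largest values among $S_2,\dots,S_{\nstages}$. Since these are the top $\nrevisions$ among $\nstages-1$ non-negative numbers, a standard averaging argument yields $\sum_{i=1}^{\nrevisions} S_{t_i} \geq \frac{\nrevisions}{\nstages-1}\,(z_{\textnormal{MS}} - S_1)$.

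Then I would construct a plan adjustment policy $\plan$ that captures exactly those contributions: at the root, set $\plan{\root}{1} = x^*(\root)$ and $\plan{\root}{s} = 0$ for $s \in [2:\nstages]$; at every node $v$ with $\stage{v} \in T$, trigger a revision by setting $\plan{v}{\stage{v}} = x^*(v)$ and $\plan{v}{s} = 0$ for $s \in [\stage{v}+1:\nstages]$; at every other non-root node $w$, copy the parent's plan, i.e.\ $\plan{w}{s} = \plan{\parent{w}}{s}$ for every $s \in [\stage{w}:\nstages]$, so that no revision occurs at $w$. By construction, along any scenario exactly $\nrevisions$ revisions occur (one at each stage in $T$), hence $\plan$ is $\nrevisions$-revisable, and the induced strategic policy $x$ matches $x^*$ at every node whose stage lies in $\{1\} \cup T$ and is zero elsewhere. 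Consequently,
\[
z_{\nrevisions} \;\geq\; S_1 + \sum_{i=1}^{\nrevisions} S_{t_i} \;\geq\; S_1 + \tfrac{\nrevisions}{\nstages-1}\,(z_{\textnormal{MS}} - S_1) \;\geq\; \tfrac{\nrevisions}{\nstages-1}\, z_{\textnormal{MS}},
\]
where the last step uses $S_1 \geq 0$ together with $1 - \frac{\nrevisions}{\nstages-1} \geq 0$ (which holds since $\nrevisions \leq \nstages-2$). The relative-loss bound is then immediate by rearrangement, with the degenerate case $z_{\nrevisions}=0$ forcing $z_{\textnormal{MS}}=0$ so that both sides of the inequality are handled consistently.

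The main (and essentially the only) obstacle is definitional bookkeeping: I must verify that ``copying the parent's plan'' at every non-revising node truly satisfies the revision-free condition $\plan{w}{t} = \plan{\parent{w}}{t}$ for all $t \in [\stage{w}:\nstages]$, so that the zero padding set at the root (and at each revision) propagates consistently through the stretches of a scenario lying between two consecutive stages of $T$ (or preceding $t_1$). This check is routine but is the one place where a careless specification of $\plan$ could spuriously inflate the revision count beyond $\nrevisions$.
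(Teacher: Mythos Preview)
Your proof is correct and follows essentially the same route as the paper: restrict the fully adaptive optimum to $\nrevisions$ selected stages (keeping the root for free), observe that the resulting policy is $\nrevisions$-revisable, and compare its value to $z_{\textnormal{MS}}$ via an averaging bound. The only cosmetic difference is that you pick the top-$\nrevisions$ stages directly whereas the paper averages over all size-$\nrevisions$ subsets of $[2{:}\nstages]$; your explicit treatment of the root stage is, if anything, slightly cleaner than the paper's presentation.
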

\begin{proof}
Let $\bar{x}$ be an optimal solution of the problem with no revision limit, \emph{i.e.}, $z_{\text{MS}} = \cc\tr \bar{\x}$.
% \byjp{In the above sentence, "the optimal" implies that you can prove unicity. If this is not the case, use "an optimal"}
Also let $\bar{\x}_L$ denote $\bar{\x}$ with all components corresponding to stages not in $L = \set{t_1, \ldots, t_{\nrevisions}}$ set to zero, \emph{i.e.}, $\bar{\x}_L(v) = \bar{\x}(v)$ for all $v \in \nodes$ such that $\stage{v} \in L$ and $\bar{x}_L(u)=0$ for all $u \in \nodes$ such that $\stage{u} \notin L$.
Then, $\bar{\x}_L$ is feasible to the $\nrevisions$-revision constraint. 
% \byjp{Did we define what the reduction stochastic programming model is?}
Hence, $z_{\nrevisions} \geqslant \max_{L = \set{t_1, \ldots, t_{\nrevisions}}}\{\cc\tr \bar{\x}_L \}$.
We multiply both sides by $\binom{\nstages-1}{\nrevisions}$ and write
\begin{equation*}
\begin{aligned}
      z_{\nrevisions} \cdot \binom{\nstages-1}{\nrevisions} \geqslant & \max_{L = \set{t_1, \ldots, t_{\nrevisions}}}\{\cc\tr \bar{\x}_L \} \cdot \binom{\nstages-1}{\nrevisions}
     \geqslant   \sum_{L = \set{t_1, \ldots, t_{\nrevisions}}} \cc\tr (\bar{x}|_L)  \\
     = &   \sum_{L = \set{t_1, \ldots, t_{\nrevisions}}} \sum_{t \in L} \cc\tr \bar{x}|_{\set{t}} 
     =   \binom{\nstages-2}{\nrevisions-1} \sum_{t \in [\nstages]} \cc\tr \bar{x}|_{\set{t}} 
     =  \binom{\nstages-2}{\nrevisions-1} \cc\tr \bar{x},
\end{aligned}
\end{equation*}
which implies $z_\nrevisions  \geqslant \frac{\nrevisions}{\nstages-1}z_{\text{MS}}$.
%\byjp{If we start with ${T-1 \choose \nrevisions} z_K$, does that make the above display nicer?}
% \byjp{This conclusion is more than what is stated in the statement of the Proposition.}
\end{proof}

Next, we introduce our notation for partially adaptive \MSP{} models. 
We consider the $\nrevisions$-revision \MSP{} problem, and consider a set of stages $L = \{t_1, \ldots, t_{\nrevisions}\} \subseteq [\nstages]$ where $1 \leqslant t_1 < \ldots < t_{\nrevisions} \le \nstages$.
Let $z^{\text{PA}}_\nrevisions(L)$ denote the optimal value of the model in which revisions can only be made at stages belonging to $L$. 
We refer to $L$ as the \emph{adaptive stages} of the \emph{partially adaptive} MSP model.
% \byjp{Should we say \MSP}
We let $z_{\nrevisions}^{\text{PA}}:= \max_{L \subseteq [\nstages] : |L|=\nrevisions} z^{\text{PA}}_\nrevisions(L)$ to be the objective achieved by the partially adaptive \MSP{} model with $K$ adaptive stages chosen optimally.
A detailed discussion of a similar version of this adaptive \MSP{}  model is conducted in \cite{basciftci2024} and \cite{kayacik2025}.
\Cref{fig: ms_ts_1revision} gives an example of the difference among the three models when $\nrevisions=1$.
It follows the definition of $z^{\text{PA}}_\nrevisions$ that $z^{\text{PA}}_\nrevisions \leqslant z_\nrevisions$.
We define $\frac{z_{\nrevisions}-z^{\text{PA}}_\nrevisions}{|z^{\text{PA}}_{\nrevisions}|}$ as the \emph{relative value} of $\nrevisions$-revision, when $z^{\text{PA}}_{\nrevisions} \neq 0$.

\begin{figure}[tbp]
    \centering
    \begin{minipage}[t]{0.45\textwidth}
    \centering
    \begin{tikzpicture}
        \node[rounded corners=3pt, inner sep=5pt] {
        \begin{tikzpicture}[
    	scale=0.8,
            grow=right,
            every node/.style={small node}
        ]
        % Define styles for each level
        \tikzstyle{level 1}=[level distance=12mm, sibling distance=26mm]
        \tikzstyle{level 2}=[level distance=12mm, sibling distance=12mm]
        \tikzstyle{level 3}=[level distance=12mm, sibling distance=7mm]
        \node{}
            child {node {}
                child {node {}
                    child {node {}
                    }
                    child {node[small revised node] {}
                    }
                }
                child {node[small revised node] {}
                child {node {}
                    }
                    child {node {}
                    }
                }
            }
            child {node[small revised node] {}
                child {node {}
                child {node {}
                    }
                    child {node {}
                    }
                }
                child {node {}
                child {node {}
                    }
                    child {node {}
                    }
                }
            };
        \draw[dashed] (24mm,-25mm) -- (24mm,25mm);
        \end{tikzpicture}
        };
        \node[anchor=south west, inner sep=2pt] at (-1.7,-1.8) {$z_{\text{MS}}$};
    \end{tikzpicture}
    \end{minipage}
    % \begin{minipage}[t]{0.05\textwidth}
    % \centering
    % \vspace*{-6.5em}
    % \begin{tikzpicture}
    %     \draw[->, thick] (0,0) -- (0.7,0);
    % \end{tikzpicture}
    % \end{minipage}
    \hspace*{-3em}
    \vspace*{+1em}
    \begin{minipage}[b]{0.45\textwidth}
    \centering
    \begin{minipage}[t]{0.45\textwidth}
        \centering
        \begin{tikzpicture}
            \node[rounded corners=3pt, inner sep=5pt] {
            \begin{tikzpicture}[
                scale=0.8,
                grow=right,
                every node/.style={small node}
            ]
            \tikzstyle{level 1}=[level distance=12mm, sibling distance=6mm]
            \tikzstyle{level 2}=[level distance=12mm, sibling distance=6mm]
            \tikzstyle{level 3}=[level distance=12mm, sibling distance=6mm]
    
            \node{}
                child {node {}
                    child {node {} 
                        child {node {}}
                    }
                    child {node {} 
                        child {node {}}
                    }
                    child {node {} 
                        child {node {}}
                    }
                    child {node {} 
                        child {node {}}
                    }
                };
            \end{tikzpicture}
            };
            \node[anchor=south west, inner sep=2pt] at (-1.7,-1) {$z^{\text{PA}}_1$};
        \end{tikzpicture}
    \end{minipage}

    \vspace{2em}

    \begin{minipage}[t]{0.45\textwidth}
        \centering
        \begin{tikzpicture}
        \node[rounded corners=3pt, inner sep=5pt] {
            \begin{tikzpicture}[
                scale=0.8,
                grow=right,
                every node/.style={small node}
            ]
            \tikzstyle{level 1}=[level distance=12mm, sibling distance=10mm]
            \tikzstyle{level 2}=[level distance=12mm, sibling distance=8mm]
            \tikzstyle{level 3}=[level distance=12mm, sibling distance=6mm]
    
            \node{}
                child {node {}
                    child {node {}
                        child {node {}}
                        child {node {}}
                    }
                    child {node {}
                        child {node {}}
                    }
                }
                child {node {}
                    child {node {}
                        child {node {}}
                    }
                };
            \end{tikzpicture}
        };
        \node[anchor=south west, inner sep=2pt] at (-1.5,-0.8) {$z_1$};
        \end{tikzpicture}
    \end{minipage}
\end{minipage}
\caption{Illustration of the distinct decisions allowed under partially adaptive and $\nrevisions$-revision models.}
\label{fig: ms_ts_1revision}
\begin{minipage}{\textwidth}
\footnotesize
Note. In the left panel, we describe the scenario tree of an \MSP{} in which decisions may differ at every node. To form a partially adaptive model with one adaptive stage $t_1^*=3$ (marked by a dashed line), we only allow decisions to vary among nodes at that stage, leading to the top tree of the right panel, where the number of possible distinct decisions is reduced. To create a $1$-revision \MSP{} with revisions on the star nodes in the left panel, we restrict plans to be changed only at these nodes, leading to the bottom tree of the right panel.
\end{minipage}
\end{figure}

The proof of \Cref{prop: effect of revision} actually constructs feasible solutions to the partially adaptive model and uses the fact that any binary solution $\x\in\Bin^{\nodes}$ satisfying the revision constraint is a feasible solution to the $\nrevisions$-revision hypercube problem.
However, in the presence of a more complex base problem, this property may no longer hold, leading to different relationships between $z_{\nrevisions}$ and $z_{\text{MS}}$. 
To illustrate the difference, we consider an \MSP{} model for the uncapacitated lot-sizing problem, see \Cref{subsec: lot-sizing formulation}, and claim that both the relative loss and the relative value of the $1$-revision can be large.
A proof is given in \Cref{proof: large loss and value}.

\begin{restatable}{proposition}{PropLargeLossValue}
\label[proposition]{prop: large loss and value}
For stochastic lot-sizing problems, as demands and costs grow:
    \begin{enumerate}[label=(\Roman*)]
        \item the relative loss of $\nrevisions$-revision can be arbitrarily large;
        \item the relative value of $\nrevisions$-revision can be arbitrarily large.
    \end{enumerate}
\end{restatable}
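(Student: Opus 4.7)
The plan is to prove both parts by constructing explicit parametrized families of lot-sizing instances on which the stated ratios grow without bound. In both cases the basic mechanism is to design scenario trees and cost/demand schedules so that (a) the fully adaptive \MSP{} (respectively, the $\nrevisions$-revision model) can avoid a specific unavoidable expense paid by the more restricted policy, and (b) the optimum of the restricted model stays bounded in absolute value, so that the ratio in question diverges.

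For part (I), I would take $\nstages = \nrevisions + 2$ stages and a balanced scenario tree on which $\nrevisions + 1$ ``decisive events'' occur along every root-to-leaf path. At each decisive event, the demand and setup structure is arranged so that failing to revise the current plan costs $\Theta(M)$ for a scaling parameter $M$ (for instance, by forcing either an unnecessary setup or a large inventory carry). The fully adaptive \MSP{} can revise at every decisive event and avoid all such penalties, whereas the $\nrevisions$-revision model is one revision short and must incur a $\Theta(M)$ penalty in at least one branch. The remaining design work is to choose the baseline demands and costs so that positive and negative cost contributions cancel at the $\nrevisions$-revision optimum (for example, pairing setup savings with holding or backlog penalties that balance out under the best restricted plan). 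This keeps $|z_\nrevisions|$ bounded in $M$ while $z_{\text{MS}} - z_\nrevisions = \Theta(M)$, and the ratio diverges.

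For part (II), I would use the same skeleton but position the decisive events at stages that vary from scenario to scenario. Concretely, build a tree with several critical branches, each containing a decisive event at a distinct stage, arranged so that more distinct critical stages occur than the $\nrevisions$ adaptive stages allowed by the partially adaptive model. A $\nrevisions$-revision policy can align its single allotted revision per critical branch with the realized branch's decisive event and avoid every penalty, whereas any partially adaptive model must commit to $\nrevisions$ adaptive stages in advance, and no such commitment can cover every branch's critical stage; it therefore incurs a $\Theta(M)$ penalty in at least one scenario. Calibrating the baseline costs as in part (I) keeps $|z_\nrevisions^{\text{PA}}|$ bounded while $z_\nrevisions - z_\nrevisions^{\text{PA}} = \Theta(M)$.

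The main obstacle is controlling the denominator in each ratio: the growth of the numerator with $M$ is transparent from the construction, but ensuring that $|z_\nrevisions|$ (respectively $|z_\nrevisions^{\text{PA}}|$) remains bounded --- or at least grows strictly more slowly than the gap --- requires a careful accounting of setup, production, inventory, and (where applicable) backlog costs so that positive and negative contributions to the restricted-model optimum cancel as $M$ grows. Once a suitable instance is in hand, verifying the claim reduces to writing down the restricted optimum explicitly and invoking the lot-sizing feasibility constraints to lower-bound the penalty that any $\nrevisions$-revisable (respectively, partially adaptive) strategy must incur.
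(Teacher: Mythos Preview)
Your proposal contains an internal contradiction that would prevent the construction from working. In Part~(I) you correctly say that the $\nrevisions$-revision model, being one revision short, ``must incur a $\Theta(M)$ penalty,'' but then immediately claim that the design ``keeps $|z_\nrevisions|$ bounded in $M$.'' These two statements cannot both hold: in the uncapacitated lot-sizing model of the paper, every cost term (setup, production, holding) is nonnegative, so if the $\nrevisions$-revision policy unavoidably pays a $\Theta(M)$ penalty then $z_\nrevisions = \Theta(M)$ as well. There are no negative cost contributions to ``pair'' against the penalty and cancel it out, so the cancellation mechanism you rely on is simply unavailable in this model. The same flaw appears in Part~(II): you want the partially adaptive model to pay a $\Theta(M)$ penalty while keeping $|z_\nrevisions^{\text{PA}}|$ bounded, which is again impossible with nonnegative costs.

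The paper's construction works the other way around, and this reversal is the essential idea you are missing. For Part~(I), the instance is designed so that $z_{\text{MS}}$ remains bounded (the fully adaptive policy pays only a fixed holding cost $1.5S$) while $z_1$ is forced to be at least $\min\{SD,F\}$, which is driven to infinity by taking $D,F$ large relative to $S$; the ratio $(z_1 - z_{\text{MS}})/|z_{\text{MS}}|$ then diverges. For Part~(II), the roles shift: the instance keeps $z_1$ bounded (equal to $4.25S$) while $z_1^{\text{PA}}$ is at least $0.25 D_2 S$, and $D_2 \to \infty$ makes $(z_1^{\text{PA}} - z_1)/|z_1|$ diverge. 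In each case the denominator is the objective of the \emph{less} restricted, \emph{better-performing} model, and the numerator grows because the more restricted model is forced into an expensive choice. Your high-level intuition about ``decisive events at scenario-dependent stages'' for Part~(II) is sound and matches the paper's idea, but the accounting of which quantity stays bounded must be flipped.
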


Finally, although the $\nrevisions$-revision constraint may lead to considerable losses in objective value for some artificially constructed examples, its performance on practical instances is often satisfactory, as we will demonstrate numerically in 
\Cref{sec: experiments}.

%%%%%%%%%%%%%%%%%%%%%%%
%%%%%%%%%%%%%%%%%%%%%%%%%%%%%%%%%%%%%%%%%%%%%%
\subsection{Combinatorial characterization}
\label{subsec: combinatorial characterization}
%%%%%%%%%%%%%%%%%%%%%%%%%%%%%%%%%%%%%%%%%%%%%%
%%%%%%%%%%%%%%%%%%%%%%%
In \Cref{subsec: definition}, we define the $\nrevisions$-revision constraint, or the set $\revisableset{\nrevisions}$, by introducing the notions of plans and revisions. 
Here, we present an equivalent combinatorial characterization of the $\nrevisions$-revision constraint, which requires only the strategic policy.
This characterization will help in deriving formulations for the $\nrevisions$-revision constraint that do not require the introduction of plan variables.
To obtain the desired characterization, we first introduce a special family of subtrees of the scenario tree $\tree$, which we refer to as \emph{equi-level binary embedded subtrees} (ELBE subtrees).

\begin{definition}[ELBE subtree]
Given a scenario tree $\tree$, 
let $\calS$ be a perfect binary tree with nodes $\nodes(\calS)$ and root $\subroot{\calS}$.
We say that $\calS$ is an \emph{ELBE subtree} of $\tree$ if 
\begin{enumerate}[label=(\arabic*)]
    \item
    $\nodes(\calS) \subseteq \nodes(\tree)$,  $\subroot{\calS}$ is the join of $\nodes(\calS) \setdiff \{\subroot{\calS}\}$ in $\tree$;
    \item 
    For $v, w \in \nodes(\calS)$, $v$ is an ancestor of $w$ in $\calS$ if an only if $v$ is an ancestor of $w$ in $\tree$;
    \item 
    For each sibling pair $\{p, q\}$, nodes $p$ and $q$ belong to the same stage when viewed as nodes in $\tree$.
\end{enumerate}
\end{definition}
%\byjp{Should we comment on what to do in the case where the tree is not full-binary?}

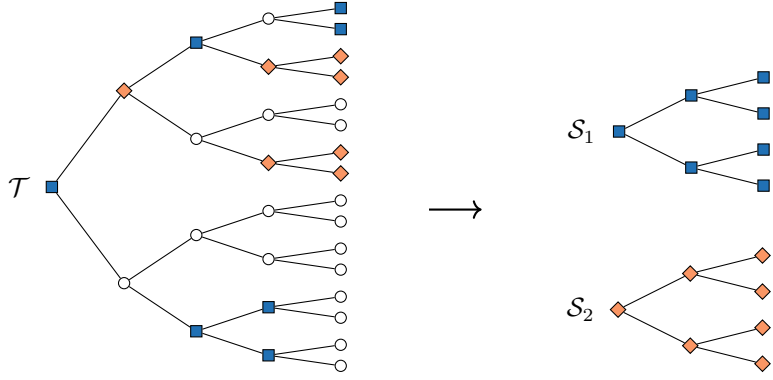
\begin{figure}[tbp]
    \centering
    \begin{minipage}[t]{0.45\textwidth}
    \centering
    \begin{tikzpicture}[
	scale=0.8,
        grow=right,
        every node/.style={small node}
    ]
    % Define styles for each level
    \tikzstyle{level 1}=[level distance=12mm, sibling distance=32mm]
    \tikzstyle{level 2}=[level distance=12mm, sibling distance=16mm]
    \tikzstyle{level 3}=[level distance=12mm, sibling distance=8mm]
    \tikzstyle{level 4}=[level distance=12mm, sibling distance=3.5mm]
    \node[subtree1 node]{}
        child {node {}
            child {node[subtree1 node] {}
            child {node[subtree1 node] {}
                    child {node {}
                    }
                    child {node {}
                    }
                }
                child {node[subtree1 node] {}
                    child {node {}
                    }
                    child {node {}
                    }
                }
            }
            child {node {}
            child {node {}
                    child {node {}
                    }
                    child {node {}
                    }
                }
                child {node {}
                    child {node {}
                    }
                    child {node {}
                    }
                }
            }
        }
        child {node[subtree2 node] {}
            child {node {}
            child {node[subtree2 node] {}
                    child {node[subtree2 node] {}
                    }
                    child {node[subtree2 node] {}
                    }
                }
                child {node {}
                    child {node {}
                    }
                    child {node {}
                    }
                }
            }
            child {node[subtree1 node] {}
            child {node[subtree2 node] {}
                    child {node[subtree2 node] {}
                    }
                    child {node[subtree2 node] {}
                    }
                }
                child {node {}
                    child {node[subtree1 node] {}
                    }
                    child {node[subtree1 node] {}
                    }
                }
            }
        };
    \node[draw=none] [left=5pt] {$\tree$};
    \end{tikzpicture}
    \end{minipage}
    \begin{minipage}[t]{0.05\textwidth}
    \centering
    \vspace*{-6.5em}
    \begin{tikzpicture}
        \draw[->, thick] (0,0) -- (0.7,0);
    \end{tikzpicture}
    \end{minipage}
    \hspace*{-3em}
    \vspace*{+1em}
    \begin{minipage}[b]{0.45\textwidth}
    \centering
    \begin{minipage}[t]{0.45\textwidth}
        \centering
        \begin{tikzpicture}[
            scale=0.8,
            grow=right,
            every node/.style={small node}
        ]
        \tikzstyle{level 1}=[level distance=12mm, sibling distance=12mm]
        \tikzstyle{level 2}=[level distance=12mm, sibling distance=6mm]

        \node[subtree1 node]{}
            child {node[subtree1 node] {}
                child {node[subtree1 node] {}}
                child {node[subtree1 node] {}}
            }
            child {node[subtree1 node] {}
                child {node[subtree1 node] {}}
                child {node[subtree1 node] {}}
            };
            \node[draw=none] [left=5pt] {$\calS_1$};
        \end{tikzpicture}
    \end{minipage}
    
    \vspace{2em}
    \begin{minipage}[t]{0.45\textwidth}
        \centering
        \begin{tikzpicture}[
            scale=0.8,
            grow=right,
            every node/.style={small node}
        ]
        \tikzstyle{level 1}=[level distance=12mm, sibling distance=12mm]
        \tikzstyle{level 2}=[level distance=12mm, sibling distance=6mm]

        \node[subtree2 node]{}
            child {node[subtree2 node] {}
                child {node[subtree2 node] {}}
                child {node[subtree2 node] {}}
            }
            child {node[subtree2 node] {}
                child {node[subtree2 node] {}}
                child {node[subtree2 node] {}}
            };
            \node[draw=none] [left=5pt] {$\calS_2$};
        \end{tikzpicture}
    \end{minipage}
\end{minipage}

\caption{Illustration of ELBE subtrees.}
\label{fig: subtree example}
%\byjp{In Figure 4, S1 and S2 are only highlighted with colors. I think we need some other signs (stars, moons, ...) so that they can still be differentiated when printed in black and white}
\end{figure}

% \byjp{Do we refer to Figure 6 anywhere? We should if we keep the figure}
% \byjp{Do we need to say something about the root of $\mathcal{S}$?}
% In this definition, two CLOBE subtrees $\calS$ and $\calS'$ are different when they share the same nodes but have different order on some sibling pairs.
We use $\sib{\calS}$ to denote the set of all the sibling pairs in $\calS$.
% We let $\calL(\calS)$ and $\calR(\calS)$ to be the set of all left and right nodes, respectively.
We use $\subheight{\calS}$ to denote the height of $\calS$ and let $\subtreefamily{h}{\tree}$ be the set of all ELBE subtrees of $\tree$ with height $h$.
\Cref{fig: subtree example} shows examples of ELBE subtrees of $\tree$.
Although nodes in a sibling pair belongs to the same stage, nodes at the same level of an ELBE subtree do not all need to come from the same stage in the original tree; e.g., square nodes in Figure~\ref{fig: subtree example}.
%\byjp{Is it clear what maximum common ancestor means?}

\begin{definition}[$x$-inconsistent ELBE subtree]
Given a binary strategic policy $x = \{\x{v}\}_{v \in \nodes}$, 
% \byjp{Should we say $v \in ??$} 
we say that an ELBE subtree $\calS$ is \emph{$x$-inconsistent} if for any sibling pair $\{p,q\} \in \sib{\calS}$, $x(p) \neq x(q)$. 
\end{definition}
% \byjp{Is it clear in the above definition how to choose p and q in the pair?}

The next theorem establishes that $\nrevisions$-revisable policies $x$ do not exhibit $x$-inconsistent ELBE subtrees of large height and vice-versa.
\begin{theorem} \label{thm: subtree char}
A binary strategic policy $x$ is not $\nrevisions$-revisable if and only if there exists an $x$-inconsistent ELBE subtree with height $\nrevisions+1$.
\end{theorem}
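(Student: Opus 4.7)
The plan is to prove each direction separately, with the converse being substantially more involved than the forward direction.

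For the forward direction ($\Leftarrow$), I would show that if an $\x$-inconsistent ELBE $\calS$ of height $\nrevisions+1$ exists, then any plan adjustment policy $\plan$ compatible with $\x$ produces some scenario with at least $\nrevisions+1$ revisions. I would walk down $\calS$ starting at $v_0 := \subroot{\calS}$: at step $j$, the $\calS$-children of $v_j$ form a sibling pair $\{p_{j+1}, q_{j+1}\}$ at some common $\tree$-stage $\tau_{j+1}$ with $\x{p_{j+1}} \neq \x{q_{j+1}}$, so the entry $\plan{v_j}{\tau_{j+1}}$ matches at most one of them; I would pick $v_{j+1}$ to be the other. Compatibility forces $\plan{v_{j+1}}{\tau_{j+1}} = \x{v_{j+1}} \neq \plan{v_j}{\tau_{j+1}}$, so the plan's $\tau_{j+1}$-entry must flip somewhere along the $\tree$-path from $v_j$ to $v_{j+1}$, producing at least one revision on that segment. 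Because the successive segments lie on disjoint stage ranges $(\tau_j, \tau_{j+1}]$, the $\nrevisions+1$ resulting revisions are at distinct nodes; extending the partial path $v_0, v_1, \ldots, v_{\nrevisions+1}$ to a complete scenario in $\tree$ gives the required witness.

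For the converse ($\Rightarrow$), I would prove the contrapositive: assuming no $\x$-inconsistent ELBE of height $\nrevisions+1$ exists in $\tree$, construct a compatible plan adjustment policy with at most $\nrevisions$ revisions per scenario. For each $v \in \nodes$, define $h(v)$ to be the maximum height of an $\x$-inconsistent ELBE contained in $\tree(v)$, so $h(\root) \leq \nrevisions$. The construction is recursive: at each revision point, beginning at $\root$, choose a plan whose \emph{region of validity}---the maximal connected subtree containing the point on which the plan's stage-$\stage{w}$ entry equals $\x{w}$---has the property that every boundary node $w$ (a $\tree$-child of a region node that itself lies outside the region) satisfies $h(w) \leq h(v)-1$. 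Recursing at each boundary $w$ then yields a policy in which $h$ strictly decreases at every revision along any scenario, bounding the total revisions by $h(\root) \leq \nrevisions$; the base case $h(v) = 0$ is immediate, since no sibling pair in $\tree(v)$ at a common stage has distinct $\x$-values, so $\x$ is constant per stage and a single plan suffices.

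The main obstacle is the \emph{key lemma} underpinning the construction: whenever $h(v) \geq 1$, a plan at $v$ with the decrease property exists. I would argue this by contradiction. If every plan at $v$ produces a boundary node with $h$-value equal to $h(v)$, then by comparing the regions associated with plans that differ only at a single stage entry---specifically, toggling the plan's value at the stage of a ``bad'' boundary node---one can extract two boundary nodes $w, w'$ lying at the same $\tree$-stage with $\x{w} \neq \x{w'}$, each rooting an $\x$-inconsistent ELBE of height $h(v)$. Taking their $\tree$-join $u^\star$ as a new root and attaching these two sub-ELBEs as its children produces an $\x$-inconsistent ELBE of height $h(v)+1$ inside $\tree(v)$, contradicting the definition of $h(v)$. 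The delicate part is arranging the plan comparison so that the extracted $w, w'$ indeed share a common $\tree$-stage and that their sub-ELBEs legitimately assemble into a perfect binary embedded subtree in which each sibling pair is equi-level in $\tree$.
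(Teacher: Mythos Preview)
Your overall plan is correct and closely parallels the paper's proof. Your walk-down argument for ($\Leftarrow$) is a direct unwinding of the paper's induction on $\nrevisions$; for ($\Rightarrow$), your height function $h(\cdot)$ together with the recursive plan construction amounts to the paper's induction on $\nrevisions$ combined with its auxiliary \Cref{lm: inconsistent pair}, which is precisely your ``key lemma'' rephrased from ``sub-policy not $(\nrevisions{-}1)$-revisable'' to ``$h(w)=h(v)$''.

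The one genuine gap is in your sketch of the key lemma. Toggling the plan at the stage $\tau$ of a bad boundary node $w$ need not produce a new bad boundary node at \emph{that same stage}: after the flip, stage-$\tau$ nodes carrying the old plan value become boundary nodes, but none of them need have $h$-value equal to $h(v)$; the new bad boundary can sit at an entirely different stage, and iterating the toggle can cycle without ever producing a same-stage pair. The clean fix---which is exactly what the paper does in proving \Cref{lm: inconsistent pair}---bypasses plan-toggling altogether. For each stage $t>\stage{v}$, ask whether \emph{both} binary values are realized at stage $t$ within $\tree(v)$ by some node with $h$-value $h(v)$. If this holds for some $t$, those two nodes are your $w,w'$; joining them (with their respective height-$h(v)$ $x$-inconsistent ELBEs attached below) yields a height-$(h(v){+}1)$ $x$-inconsistent ELBE inside $\tree(v)$, contradicting the definition of $h(v)$. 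If it fails for every $t$, then for each $t$ one value $\theta_t\in\{0,1\}$ is ``safe'' (every stage-$t$ node in $\tree(v)$ with $x$-value $\theta_t$ has $h<h(v)$); setting $\plan{v}{t}=1-\theta_t$ yields a plan all of whose boundary nodes $w$ satisfy $x(w)=\theta_{\stage{w}}$ and hence $h(w)<h(v)$. This stage-by-stage dichotomy, not a local toggle, is the missing ingredient you flagged as ``delicate''.
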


\addtocounter{example}{-1}
\begin{example}[continued]
    The strategic policy $x$ shown in the right panel of \Cref{fig: 1-revision illustration} is not $1$-revisable as the tree itself yields an $x$-inconsistent ELBE subtree with height $2$.
\end{example}

In the proof of \Cref{thm: subtree char}, we make use of the following ancillary lemma, whose proof is given in \Cref{proof: inconsistent pair}. 
In this lemma, we define the \emph{sub-policy} of a strategic policy $\x$ to subtree $\tree(v)$ as $\x|_{\descendant{v}\cup \{v\}}$.

% \byjp{same here... Is it implied that $x$ is binary? Also in theorem 3.8?}
\begin{restatable}{lemma}{LmInconsistentPair}
\label{lm: inconsistent pair}
Let $\x$ be a binary strategic policy that is not $\nrevisions$-revisable on scenario tree $\tree$. 
Then, there must exist a pair of same-stage nodes $p, q \in \nodes$ with $\x{p} \neq \x{q}$, such that the sub-policies for both $\tree(p)$ and $\tree(q)$ are not $(\nrevisions-1)$-revisable.
\end{restatable}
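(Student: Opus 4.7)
The plan is to prove the contrapositive. Assuming that for every pair of same-stage nodes $p, q \in \nodes$ with $\x{p} \neq \x{q}$ at least one of the sub-policies on $\tree(p)$ or $\tree(q)$ is $(\nrevisions-1)$-revisable, I will construct a $\nrevisions$-revisable plan adjustment policy compatible with $\x$, establishing that $\x$ is $\nrevisions$-revisable.

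The starting observation is a ``one bad side'' principle at each stage. For $t \in [\nstages]$ and $b \in \Bin$, set $V_t^b = \set{v \in \nodes_t : \x{v} = b}$. If both $V_t^0$ and $V_t^1$ contained a node whose sub-policy is not $(\nrevisions-1)$-revisable, those two nodes would form a same-stage pair violating the hypothesis. Hence for every $t$ there exists $b_t \in \Bin$ such that every node $v \in \nodes_t$ with $\x{v} \neq b_t$ has a $(\nrevisions-1)$-revisable sub-policy; at the root stage this forces $b_{\stage{\root}} = \x{\root}$, since the opposite side is empty.

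I would then build the plan adjustment policy by setting $\plan{\root}{t} = b_t$ for every $t \in [\stage{\root}:\nstages]$ and propagating it down the tree. Along any scenario, if the root plan agrees with $\x$ everywhere, no revision occurs; otherwise, at the first mismatch $v_i$ with $\x{v_i} \neq b_{\stage{v_i}}$, the choice of $b_t$ guarantees that the sub-policy on $\tree(v_i)$ is $(\nrevisions-1)$-revisable. I would revise at $v_i$ and adopt a witnessing $(\nrevisions-1)$-revisable plan adjustment policy on $\tree(v_i)$ wholesale. The ``first mismatch'' nodes across scenarios are pairwise incomparable, so their subtrees are disjoint and these local constructions do not interfere, yielding a globally consistent plan adjustment policy.

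The main verification step is the per-scenario revision count: exactly one revision at $v_i$ plus at most $\nrevisions-1$ revisions within $\tree(v_i)$ totals $\nrevisions$ revisions. I expect the most delicate point to be the bookkeeping at $v_i$, namely confirming that the revision at $v_i$ in the composite policy is not implicitly recounted by the sub-policy on $\tree(v_i)$. This holds because, per \Cref{def: revision constraint}, revisions are defined only at non-root nodes of the subtree under consideration; the plan at the root of a subtree is simply an initial plan, not a revision. With this clean accounting, the composite policy is $\nrevisions$-revisable, completing the proof.
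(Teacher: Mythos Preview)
Your proposal is correct and takes essentially the same approach as the paper: define a ``good side'' $b_t$ at each stage via pigeonhole, set the root plan to those values, propagate until the first mismatch, and splice in a witnessing $(\nrevisions-1)$-revisable sub-policy on each first-mismatch subtree. The paper formalizes the ``first mismatch'' structure with explicit sets $U$ (nodes with no mismatch yet) and a boundary $B$, but the argument and the revision count are identical to yours.
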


\begin{proof}[Proof of \Cref{thm: subtree char}]
We first prove the reverse implication. 
%\textit{The ``if'' part.} 
We use induction on $\nrevisions$. 
First, assume that $\nrevisions=0$, and there is an $\x$-inconsistent ELBE subtree with height $1$.
Then, there is a pair of nodes $p$, $q$ on the same level in $\tree$ such that $\x{p} \neq \x{q}$. 
% \byjp{Should these nodes be on the same level?}
Then, for any plan given at the root $\root$, there has to be one revision on either the path from $\root$ to $p$ or on the path from $\root$ to $q$. 
Hence, $\x$ is not $0$-revisable. 
Next, assume the statement holds for $\nrevisions-1$ where $\nrevisions \geqslant 1$.
% \byjp{Should we add ``where $\nrevisions \ge 1$"?}
Suppose there is a $(\nrevisions + 1)$-height $\x$-inconsistent subtree $\calS$.
Let the two children of the root of $\calS$ be $p$ and $q$. 
Then $\x{p} \neq \x{q}$, and the subtrees of $\calS$ rooted at $p$ and $q$ are $\nrevisions$-height $\x$-inconsistent subtrees.
By the induction hypothesis, the sub-policy $\x|_{\descendant{p}\cup \{p\}}$ and $\x|_{\descendant{q}\cup \{q\}}$ are not $(\nrevisions-1)$-revisable, which means that for any plan adjustment policy given at $\tree(p)$ (or $\tree(q)$), there is a path in $\tree(p)$ (or $\tree(q)$) that requires at least $\nrevisions-1$ revisions.
Thus, for any plan given for $\tree$, there has to be a revision on the path from the root $\root$ to $p$ or $q$, showing that $\x$ is not $\nrevisions$-revisable.

We next prove the direct implication using induction on $\nrevisions$.
When $\nrevisions=0$ and $x$ is not $0$-revisable, clearly there must be same-stage nodes $p$ and $q$ such that $x(p) \neq x(q)$, which forms an inconsistent ELBE subtree with $p \join q$ with height $1$.
Assume the statement holds true for $\nrevisions-1$.
Given that $x$ is not $\nrevisions$-revisable, by \Cref{lm: inconsistent pair}, there are same-stage nodes $p$ and $q$ such that $x(p)\neq x(q)$ and $x|_{\descendant{p}\cup \{p\}}$ and $x|_{\descendant{q} \cup \{q\}}$ are not $(\nrevisions-1)$-revisable for $\tree(p)$ and $\tree(q)$.
Then, there is a height-$\nrevisions$ $x$-inconsistent ELBE subtree of $\tree(p)$ and a height-$\nrevisions$ $x$-inconsistent ELBE subtree of $\tree(q)$, respectively.
Combining them yields a height-$(\nrevisions+1)$ $x$-inconsistent ELBE subtree of $\tree$. 
%forms clearly.
\end{proof}

%%%%%%%%%%%%%%%%%%%%%%%
%%%%%%%%%%%%%%%%%%%%%%%%%%%%%%%%%%%%%%%%%%%%%%
%%%%%%%%%%%%%%%%%%%%%%%%%%%%%%%%%%%%%%%%%%%%%%%%%%%%%%%%%%%%%%%%%%%%%
\section{MIP formulations}	\label{sec: formulations}
%%%%%%%%%%%%%%%%%%%%%%%%%%%%%%%%%%%%%%%%%%%%%%%%%%%%%%%%%%%%%%%%%%%%%
%%%%%%%%%%%%%%%%%%%%%%%%%%%%%%%%%%%%%%%%%%%%%%
%%%%%%%%%%%%%%%%%%%%%%%

In this section, we introduce two basic MIP formulations \Cref{eq: cp} and \Cref{eq: st} to model $\revisableset{\nrevisions}$ under \Cref{assumption: 1-dimension}. 
We investigate their strength, discussing their integrality gap or establishing conditions under which they are ideal.
We present in the top five blocks of \Cref{fig: summary} the roadmap we follow to derive and analyze these MIP formulations.
%for $\revisableset{\nrevisions}$.
%These results are described in the top five blocks of \Cref{fig: summary}.
The content of the other blocks is discussed in \Cref{sec: methods}.
% \byjp{Should we say Section~5 instead of "next section" just to be more precise?}

\begin{figure}[tbp]
\centering
\begin{tikzpicture}[
    node distance=0.8cm and 1.2cm, % Default vertical and horizontal node distance for 'of' placement
    % Node styles for flowchart
    flow_start/.style={
        rectangle, draw, fill=blue!10,
        text width=4.5cm, minimum height=0.8cm, align=center, rounded corners=2pt,
        font=\small
    },
    formulation/.style={
        rectangle, draw, fill=green!10,
        text width=3.2cm, minimum height=0.8cm, align=center, rounded corners=2pt,
        font=\small
    },
    variant_formulation/.style={
        rectangle, draw, fill=orange!10,
        text width=3.2cm, minimum height=0.7cm, align=center, rounded corners=2pt,
        font=\small
    },
    properties/.style={ % For Size/Strength details
        rectangle, draw, fill=yellow!10, rounded corners=2pt,
        text width=3.5cm, align=left, inner sep=1.5mm,
        dashed, font=\footnotesize\itshape
    },
    variant_properties/.style={ % For Size/Strength details
        rectangle, draw, fill=yellow!10, rounded corners=2pt,
        text width=2.5cm, align=left, inner sep=1.5mm,
        dashed, font=\footnotesize\itshape
    },
    edge_info/.style={ % For the block branching from an edge
        rectangle, draw, fill=purple!10,
        text width=3cm, align=center, 
        % font=\scriptsize\itshape, 
        rounded corners=2pt,
        inner sep=1.5mm,
        font=\small
    },
    % Arrow style
    arrow/.style={
        draw, thick, -{Stealth[length=2.5mm, width=1.5mm]}
    },
    info_connector/.style={
        draw, gray, dashed, shorten <=0.5mm, shorten >=0.5mm
    }
]

% Root Node
\node[flow_start] (root) {MIP formulations for $\revisableset{\nrevisions}$};

% Level 1: Formulation Types
\node[formulation, below left=2.5cm and -2.6cm of root] (cp) {Complete plan formulation \Cref{eq: cp}};
\node[formulation, below right=2.5cm and -1.7cm of root] (subtree) {Subtree formulation \Cref{eq: st}};

\draw[arrow] (root.south) -- (cp.north);
\draw[arrow] (root.south) -- (subtree.north);

% Side blocks for Complete Plan formulation (Block 1)
\node[properties, above left=0.3cm and -0.5cm of cp] (cp_info) {
    \begin{itemize}[nosep,leftmargin=*,itemsep=0pt,topsep=0pt,partopsep=0pt]
        \item Not necessarily ideal for short trees (\Cref{prop: cpf is not sharp})
        \item Empirical results suggest it is strong for tall trees (\Cref{conj: cpf bound})
    \end{itemize}
};
\draw[info_connector] (cp.west) -| (cp_info.south);

% Side blocks for subtree formulation (Block 2)
\node[properties, above right=0.3cm and -1.3cm of subtree] (subtree_info) {
    \begin{itemize}[nosep,leftmargin=*,itemsep=0pt,topsep=0pt,partopsep=0pt]
        \item Strong for short trees (ideal for $\nstages=\nrevisions+2$  \Cref{thm: subtree})
        \item Can be weak for tall trees (\Cref{ex: st for tall tree})
        \item Large size
    \end{itemize}
};
\draw[info_connector] (subtree.east) -| (subtree_info.south);

\node[variant_formulation, below=0.6cm of cp] (cp+) {Size reduction technique  \Cref{eq: cp+}};
\node[variant_formulation, below=0.6cm of cp+] (cp++) {Facet-defining inequalities \Cref{eq: cp++}};

\draw[arrow] (cp.south) -- (cp+.north); % Using |- for right-angle paths
\draw[arrow] (cp+.south) -- (cp++.north); % Using |- for right-angle paths

\node[variant_properties, left=0.3cm of cp+] (cp+_info) {
    Size reduced from $\order{|\nodes| \nstages}$ to $\order{|\nodes|}$ (\Cref{prop: cpf size reduction})
};
\draw[info_connector] (cp+.west) -- (cp+_info.east);
\node[properties, below left=0.4cm and -0.7cm of cp++] (cp++_info) {
    Strengthen significantly for short trees (\Cref{thm: cp_facets})
};
\draw[info_connector] (cp++_info.east) -| (cp++.south);

\node[variant_formulation, below=2cm of subtree, yshift=-0.5cm] (stdp) {Subtree DP formulation \Cref{eq: stdp}};
\draw[arrow] (subtree.south) -- (stdp.north) coordinate[pos=0.5](edge_midpoint);

\node[edge_info, right=0.5cm of edge_midpoint] (edge_side_info) {Efficient constraint generation algorithm for Subtree constraints \Cref{eq: DP relation}};
\draw[info_connector] (edge_midpoint) -- (edge_side_info.west);

\node[properties, below right=0.3cm and -1.3cm of stdp] (stdp_info) {
    \begin{itemize}[nosep,leftmargin=*,itemsep=0pt,topsep=0pt,partopsep=0pt]
        \item Size is reduced to $\order{|\nodes|^2\nrevisions}$ \\ (\Cref{cor: stdp size})
        \item Strong for short trees (sharp for $\nstages=\nrevisions+2$, \Cref{cor: stdp sharp})
    \end{itemize}
};
\draw[info_connector] (stdp_info.west) -| (stdp.south);

\end{tikzpicture}
\vspace{+0.2cm}
\caption{Summary of MIP formulations for $\revisableset{\nrevisions}$ and their properties.}
\label{fig: summary}
\end{figure}

%%%%%%%%%%%%%%%%%%%%%%%
%%%%%%%%%%%%%%%%%%%%%%%%%%%%%%%%%%%%%%%%%%%%%%
\subsection{Complete plan formulation (CP)}
%%%%%%%%%%%%%%%%%%%%%%%%%%%%%%%%%%%%%%%%%%%%%%
%%%%%%%%%%%%%%%%%%%%%%%

We derive the first formulation by introducing variables representing the plan adjustment policy $\plan$ and the associated revision policy $r_{\plan}$. 
Then, $\revisableset{\nrevisions}$ can be described as
%through the MIP formulation:
\begin{equation} \label{eq: cp}
\begin{aligned}
    &\x{v} = \plan{v, \stage{v}}, \quad && \forall v \in \nodes,  \\
    &\rv{v} \geqslant \plan{v,t} - \plan{\parent{v},t}, \quad && \forall v \in \nodes \setdiff \set{\root}, \forall t \in [\stage{v}:\nstages],  \\
    &\rv{v} \geqslant \plan{\parent{v},t} - \plan{v,t}, \quad && \forall v \in \nodes \setdiff \set{\root}, \forall t \in [\stage{v}:\nstages], \\
    &\textstyle \sum_{v \in \scenario} \rv{v} \leqslant \nrevisions, \quad && \forall \scenario \in \scenarioset, \\
    &\x{v} \in \Bin,\;  \plan{v,t} \in \Bin,\; \rv{v} \in \Bin, \quad && \forall v \in \nodes, \forall t \in [\stage{v}:\nstages].  \\
\end{aligned}
\tag{CP}
\end{equation}
The first constraint states that strategic policy variables $\x$ are compatible with the plan adjustment policy variables $\plan$. 
The second and third constraints link the revision policy variables $r$ with $\plan$, so that if the plan adjustment policy indicates a change of plan at a particular node, then this forces the corresponding $r$ variable to be equal to $1$.
The fourth constraint imposes that the number of revisions for each scenario is bounded by $\nrevisions$.
We call this formulation the \emph{complete plan formulation} and refer to the polytope corresponding to its LP relaxation as the \emph{complete plan polytope} $P_{CP}$.
% \byjp{I guess we want to say the polytope is obtained as the convex hull of integer solutions to CP... }

We first show that \Cref{eq: cp} is not sharp for the $\nrevisions$-revisable set, even for simple scenario trees.
% \byjp{If we put integrality conditions in CP, then taking about vertices being fractional might not be that clear...}

\begin{figure}[tbp]
\centering
\begin{tikzpicture}[
	scale=0.8,
    grow=right,
    every node/.style={general node}
    ]
    \node [value-1 node, label={[yshift=-6.2em]$[1, \color{mycolor1} \sfrac{1}{2} \color{black}, \color{mycolor2} \sfrac{1}{2} \color{black} ]$}]{$\root$}
        child {node [level-1 node, value-1 node, half-revised node, label={[yshift=-5.5em] $[\color{mycolor1}1 \color{black}, \color{mycolor2} \sfrac{1}{2} \color{black} ]$}] {$v_2$}
            child {node [level-2 node, value-1 node, half-revised node] {$v_6$}}
            child {node [level-2 node, half-revised node] {$v_5$}}
        }
        child {node [level-1 node, half-revised node, label={[yshift=-5.5em]$[\color{mycolor1}0 \color{black}, \color{mycolor2} \sfrac{1}{2} \color{black} ]$}] {$v_1$}
            child {node [level-2 node, half-revised node, value-1 node] {$v_4$}}
            child {node [level-2 node, half-revised node] {$v_3$}}
        };
\end{tikzpicture}
\caption[]{A fractional point of $P_{\textnormal{CP}}$.}
\label{fig:cpf is not sharp when h=2}
\begin{minipage}{\textwidth}
\footnotesize
Note. Nodes filled with gray have $x$-value $1$.  Otherwise, nodes have $x$-value $0$. 
The vectors beneath the nodes represent $\plan$-values. 
The fraction of the star shaded matches its $\rv$-value, \emph{i.e.},  $\rv{v_i}=\sfrac{1}{2}$ for each node $v_i$ where $i \in [6]$.
\end{minipage}
\end{figure}
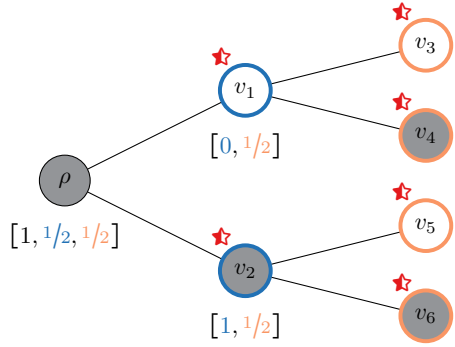

\begin{proposition} \label[proposition]{prop: cpf is not sharp}
\Cref{eq: cp} is not sharp for $\revisableset{\nrevisions}$ even when $\nstages=3$ and $\nrevisions=1$.
\end{proposition}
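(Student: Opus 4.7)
The plan is to exhibit a specific fractional point in $P_{\textnormal{CP}}$ whose projection onto the $x$-variables is a $0/1$ vector that does not belong to $\revisableset{1}$, and therefore cannot belong to $\conv{\revisableset{1}}$. The candidate fractional point is precisely the one depicted in \Cref{fig:cpf is not sharp when h=2}, on the scenario tree from \Cref{fig: 1-revision illustration}: set $\x{\root}=\x{v_2}=\x{v_4}=\x{v_6}=1$ and $\x{v_1}=\x{v_3}=\x{v_5}=0$, with plans $\plan{\root,\cdot}=[1,\tfrac12,\tfrac12]$, $\plan{v_1,\cdot}=[0,\tfrac12]$, $\plan{v_2,\cdot}=[1,\tfrac12]$, plan components at the leaves equal to their respective $x$-values, $\rv{\root}=0$, and $\rv{v}=\tfrac12$ for every non-root node $v$.

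First, I would verify that this point lies in $P_{\textnormal{CP}}$. The compatibility constraint $\x{v}=\plan{v,\stage{v}}$ is immediate from the construction. For the revision-linking inequalities I would just compute $|\plan{v,t}-\plan{\parent{v},t}|$ at each relevant $(v,t)$; by inspection every such difference is either $0$ or $\tfrac12$, so $\rv{v}=\tfrac12$ dominates. Finally, the scenario inequalities $\sum_{v\in\scenario}\rv{v}\le 1$ hold because each of the four root-to-leaf paths contributes exactly $\rv{\root}+\tfrac12+\tfrac12=1$.

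Second, I would argue that the projection onto the $x$-variables is not in $\conv{\revisableset{1}}$. Since that projection is already a $\Bin$-vector, it is enough to show it does not lie in $\revisableset{1}$. This is where \Cref{thm: subtree char} does the work: the entire scenario tree is itself an ELBE subtree of height $\nrevisions+1=2$, and the three sibling pairs $\{v_1,v_2\}$, $\{v_3,v_4\}$, $\{v_5,v_6\}$ all satisfy $\x{p}\neq\x{q}$, so the tree is $\x$-inconsistent. By \Cref{thm: subtree char}, the underlying strategic policy is not $1$-revisable.

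The combinatorial obstruction is routine to spot; the only mildly delicate step is checking that the scenario revision inequalities remain tight-but-feasible after the fractional splitting of revisions across every edge, which is really the crux of why the LP relaxation fails to be sharp: the fractional $\plan$-values allow revisions to be spread evenly over ancestors and descendants of the inconsistent pairs, effectively hiding what, over the integers, must appear as at least two full revisions on some scenario.
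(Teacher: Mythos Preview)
Your proposal is correct and is essentially identical to the paper's own proof: the same scenario tree, the same fractional $(\plan,r,x)$ point from \Cref{fig:cpf is not sharp when h=2}, the same feasibility checks, and the same appeal to \Cref{thm: subtree char} to conclude that the integral $x$-projection is not $1$-revisable. The only difference is that you spell out explicitly why a $0/1$ vector outside $\revisableset{1}$ is also outside $\conv{\revisableset{1}}$, which the paper leaves implicit.
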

\begin{proof}
Consider the scenario tree presented in \Cref{fig:cpf is not sharp when h=2} where $\nstages=3$.
Also consider the solution where $\x{\root}=\x{v_2}=\x{v_4}=\x{v_6}=1$, $\x{v_1}=\x{v_3}=\x{v_5}=0$, $\rv{v_1}=\rv{v_2}=\ldots=\rv{v_6}=\sfrac{1}{2}$,
and $\plan{\root}=[1, \sfrac{1}{2}, \sfrac{1}{2}]$, $\plan{v_1}=[0, \sfrac{1}{2}]$, $\plan{v_2}=[1, \sfrac{1}{2}]$, $\plan{u}=[\x{u}]$ for all $u \in \nodes_3$.
This solution belongs to $P_{\text{CP}}$.
Its projection onto the $\x$ variables does not belong to  $\revisableset{1}$
% To show it is a vertex, we can construct an objective vector $c$ such that the solution is the unique optimum 
% For instance, we can let $c_1=c_3=c_5=-1$, $c_2=c_4=c_6=1$ and then the optimal solution $x^*$ of $\max c\tr x$ is $x^*_1=x^*_3=x^*_5=0$, $x^*_2=x^*_4=x^*_6=1$, \emph{i.e.}, the strategic decision for each pair of siblings is inconsistent.
%To see that 
because the tree itself is an $\x$-inconsistent subtree with height 2 and thus by \Cref{thm: subtree char}, $\x \notin \revisableset{1}$.
\end{proof}
%We also note that in the above example 
Indeed, the argument in the proof of Proposition~\ref{prop: cpf is not sharp} establishes that, for the scenario tree of \Cref{fig:cpf is not sharp when h=2}, any $\x \in \{0,1\}^6$ can be extended into a feasible solution to $P_{\text{CP}}$.
This extension is obtained by assigning the $\plan$ and $r$ variables as in the proof, after which it is easy to verify that any choice of binary values of $x$ yields a feasible solution to \Cref{eq: cp}. 
%\byjp{Should we say "choice of binary values of $x$" instead?}
Consequently the optimal objective of the LP relaxation of $\operatorname{HC}_{1}$, where \Cref{eq: cp} is used to model the $1$-revision constraint, $z_1^{\LP}$, equals $z_2 = \max_{x \in \Bin^6} c\tr x = z_{\textrm{MS}}$.
%\byjp{Should we use the full name or condense?}
% , \emph{i.e.}, the problem where we optimize a linear objective over $\x$ variables, %is such that the optimal objective of the LP relaxation 
Let $\bar{x}$ be an optimal solution to the latter problem.
Define
$\dot{x}=(\bar{x}(v_1), \bar{x}(v_2), \bar{x}(v_3), \bar{x}(v_4), 0, 0)$, $\breve{x}=(\bar{x}(v_1), \bar{x}(v_2), 0, 0, \bar{x}(v_5), \bar{x}(v_6))$, and $\ddot{x}=(0, 0, \bar{x}(v_3), \bar{x}(v_4), \bar{x}(v_5), \bar{x}(v_6))$.
Then, we write
% By assuming $\bar{x} \in \argmax_{x \in \Bin^6} c\tr x$, we have
% \begin{equation*}
% \begin{aligned}
%     z_1 &\geqslant \max \big\{ c(v_1) \bar{x}(v_1) + c(v_2) \bar{x}(v_2) + c(v_3) \bar{x}(v_3) + c(v_4) \bar{x}(v_4), \\
%     &  \hphantom{\max \{\} } \quad c(v_1) \bar{x}(v_1) + c(v_2) \bar{x}(v_2) + c(v_5) \bar{x}(v_5) + c(v_6) \bar{x}(v_6), \\
%     &  \hphantom{\max \{\} } \quad c(v_3) \bar{x}(v_3) + c(v_4) \bar{x}(v_4) + c(v_5) \bar{x}(v_5) + c(v_6) \bar{x}(v_6) \big\} \\
%     & \geqslant \frac{1}{3} \cdot \sum_{i=1}^6 2 c(v_i) \bar{x}(v_i) \\
%     & = \frac{2}{3} z_\infty,
% \end{aligned}
% \end{equation*}
\begin{equation*}
\begin{aligned}
    z_1 \geqslant \max \big\{ c\tr \dot{x}, c\tr \breve{x}, c\tr \ddot{x} \big\}
     \geqslant \frac{1}{3} \cdot \sum_{i=1}^6 2 c(v_i) \bar{x}(v_i)
     = \frac{2}{3} z_2,
\end{aligned}
\end{equation*}
where the first inequality holds because $\dot{x}$, $\breve{x}$, and $\ddot{x}$ are all $1$-revisable solutions, the second inequality holds because the maximum of several quantities is at least as large as their average, and the equality is because of the definition of $\bar{x}$.
It follows that $z_1^{\LP} \leqslant \sfrac{3}{2}\, z_1$, showing that the integrality gap is bounded in this case.
%\byjp{Should we add a sentence to discuss what this says, i.e., that the LP relaxation is not that far from the IP solution}

Although the above argument is specific to case where $\nrevisions=1$ and $\nstages=3$, we have observed in our numerical experiments that the integrality gap appears to also be bounded for other values of $\nrevisions$ and $\nstages$. 
In fact, we believe the following conjecture.
\begin{conjecture} \label{conj: cpf bound}
For the $\nrevisions$-revision hypercube problem \Cref{eq: hypercube} with $\revisableset{\nrevisions}$ formulated by \Cref{eq: cp}, $z_{\nrevisions}^{\LP} \leqslant \frac{2\nrevisions+1}{\nrevisions+1} z_\nrevisions$.
\end{conjecture}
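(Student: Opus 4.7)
The plan is to generalize the averaging argument that established $z_1^{\LP} \leqslant \frac{3}{2} z_1$ for the scenario tree of \Cref{fig:cpf is not sharp when h=2}. In that analysis, the LP relaxation was shown to project onto the full hypercube, so $z_1^{\LP} = c\tr \bar{x}$ for an integer unconstrained optimum $\bar{x}$, and three $1$-revisable integer solutions $\dot{x}$, $\breve{x}$, $\ddot{x}$ were exhibited, each obtained by zeroing the two entries of $\bar{x}$ corresponding to a sibling pair inside a height-$2$ ELBE subtree. Every non-root node was zeroed in exactly one of the three, so $\dot{x} + \breve{x} + \ddot{x} \geqslant 2 \bar{x}$ componentwise and pigeonhole supplied the ratio $\frac{2}{3} = \frac{\nrevisions+1}{2\nrevisions+1}$.

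Following the same template, the general plan is to (i) find a binary witness $\bar{x} \in \Bin^{|\nodes|}$ with $c\tr \bar{x} \geqslant z_\nrevisions^{\LP}$ by a structural analysis of the LP-feasible region projected onto the $x$-variables; (ii) prove the key combinatorial lemma that there exist $2\nrevisions+1$ integer $\nrevisions$-revisable solutions $x^1, \ldots, x^{2\nrevisions+1} \in \revisableset{\nrevisions}$ satisfying $\sum_{i=1}^{2\nrevisions+1} x^i \geqslant (\nrevisions+1)\bar{x}$ componentwise; and (iii) apply pigeonhole to conclude that some $x^i$ satisfies $c\tr x^i \geqslant \frac{\nrevisions+1}{2\nrevisions+1} c\tr \bar{x}$, yielding the claimed bound.

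The principal obstacle is step (ii). The natural attempt, generalizing the $\nrevisions=1$ construction, is to assign each $x^i$ a ``zeroing pattern'' that destroys every $\bar{x}$-inconsistent ELBE subtree of height $\nrevisions+1$, which by \Cref{thm: subtree char} ensures $x^i \in \revisableset{\nrevisions}$, while choosing $2\nrevisions+1$ patterns so that each non-root node is left untouched in at least $\nrevisions+1$ of them. Constructing such a covering on arbitrary scenario trees is delicate, because ELBE subtrees overlap in complex ways and a single zeroing may be needed to break many subtrees at once; a plausible refinement is a recursive construction that splits the scenario tree at the root and applies the $(\nrevisions-1)$-revision version of the lemma to each child subtree. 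A secondary difficulty concerns step (i): unlike in the case of \Cref{fig:cpf is not sharp when h=2}, the LP projection is generally strictly smaller than the full hypercube, so either an extreme-point analysis of $P_{\text{CP}}$ is required to identify suitable binary witnesses, or one must bypass integer $\bar{x}$ entirely via a dependent randomized rounding of the fractional optimum that preserves the factor $\frac{\nrevisions+1}{2\nrevisions+1}$ in expectation.
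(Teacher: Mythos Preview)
The statement you are attempting to prove is presented in the paper as an open \emph{conjecture}; the paper offers no proof and supports it only by numerical experiments. There is therefore no paper proof to compare against, and your proposal is an attempt at an open problem.

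Your plan is explicitly incomplete at steps (i) and (ii), and there is a concrete obstruction showing that the template cannot succeed without substantial new input from the \Cref{eq: cp} structure. If the covering lemma of step (ii) held for an \emph{arbitrary} binary $\bar{x}$, then taking $\bar{x}$ to be the unconstrained integer optimum (which trivially satisfies $c\tr\bar{x}=z_{\text{MS}}\geqslant z_\nrevisions^{\LP}$) would, via step (iii), yield $z_\nrevisions\geqslant\tfrac{\nrevisions+1}{2\nrevisions+1}\,z_{\text{MS}}$, a bound independent of $\nstages$. This is false: the tall-tree family in \Cref{ex: st for tall tree} has $z_1=2^{\nstages}-1$ and $z_{\text{MS}}=(\nstages-1)\,2^{\nstages-2}$, so $z_1/z_{\text{MS}}=\Theta(1/\nstages)\to 0$. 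Hence step (ii) fails for general binary $\bar{x}$; the $\nrevisions=1$, $\nstages=3$ argument you are extrapolating worked only because, on that particular tree, the CP projection coincides with the full hypercube.

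The consequence is that steps (i) and (ii) are not separable: any witness $\bar{x}$ admitting a $(2\nrevisions+1)$-fold covering by $\nrevisions$-revisable points must already encode structure inherited from $P_{\text{CP}}$, and your sketch does not identify what that structure is or how to extract it. The ``dependent randomized rounding'' alternative you mention at least keeps the fractional CP optimum (and hence the $\plan$ and $r$ variables) in play, but as written it is only a name for what is needed, not a method. Until a rounding or covering that genuinely exploits the plan and revision variables is produced, the proposal remains a restatement of the conjecture rather than a path to its resolution.
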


%In the above, we observe that $z_{2} \leqslant \sfrac{3}{2}\, z_1$ and $z_1^{\textrm{LP}} \leqslant z_{2}$ when  $\nrevisions=1$, $\nstages=3$, and  \Cref{eq: cp} is used as the formulation, with \Cref{fig:cpf is not sharp when h=2} illustrating the worst case.
%First, from \Cref{prop: effect of revision}, we have $z_2 = z_{\text{MS}} \leqslant 2 z_1$, which is looser than the bound $\sfrac{3}{2} \, z_1$.
%This means the bound in \Cref{prop: effect of revision} is not tight.
%We conjecture that a tighter bound is $z_{\text{MS}} \leqslant \sfrac{\nstages}{(\nrevisions+1)}\, z_\nrevisions$, which would require a more refined estimation than that used in the proof of \Cref{prop: effect of revision}.
%We further conjecture that the relation $z_{\nrevisions}^{\text{LP}} \leqslant z_{2\nrevisions}$ holds for larger $\nstages$.
%\byjp{I don't like the structure of the above paragraph. Should we just start by saying in practice we see this formulation although not sharp tends to be good. The relaxation bound we believe is close to optimal. In fact, we beieve that the bound gotten before extend to conjecture. One possible way to prove would be to ...}
%If the above two conjectures hold, we have the following statement:
\Cref{conj: cpf bound} suggests that the objective value of the LP relaxation is close to the true optimum, with \Cref{fig:cpf is not sharp when h=2} providing an example that achieves the worst-case bound.
%\byjp{seems like we have said this before, no?}

%%%%%%%%%%%%%%%%%%%%%%%
%%%%%%%%%%%%%%%%%%%%%%%%%%%%%%%%%%%%%%%%%%%%%%
\subsection{Subtree formulation}
%%%%%%%%%%%%%%%%%%%%%%%%%%%%%%%%%%%%%%%%%%%%%%
%%%%%%%%%%%%%%%%%%%%%%%

Our second MIP formulation for $\revisableset{\nrevisions}$ is built solely on the strategic policy variables $\x$ and does not require the introduction of auxiliary variables $\plan$ and $r$. 
\Cref{thm: subtree char} establishes that a strategic policy $x$ is $\nrevisions$-revisable if and only if there does not exist an $x$-inconsistent ELBE subtree with height $\nrevisions+1$.
For any ELBE subtree with height $\nrevisions+1$, there are $2^{\nrevisions+1}-1$ sibling pairs. 
Hence, a strategic policy $\x$ is $\nrevisions$-revisable if and only if for any ELBE subtree $\calS$ with height $\nrevisions+1$, there are at most $2^{\nrevisions+1}-2$ sibling pairs $\{u, v\} \in \sib{\calS}$ such that $\x{u}\neq \x{v}$.
This condition can be expressed as
\begin{equation} \label{eq: st abs}
\begin{aligned}
    &\textstyle \sum_{\{u,v\} \in \sib{\calS}} \big\vert \x{u} - \x{v} \big\vert \leqslant 2^{\nrevisions+1} - 2, \quad && \forall \calS \in \subtreefamily{\nrevisions+1}{\tree}.
\end{aligned}
\end{equation}
This formulation is not linear due to the use of absolute values.
To linearize it, 
%To create a MILP formulation, we must linearize them.
%We achieve this by 
we introduce the concept of an \emph{orientation}.
For a given ELBE subtree $\calS$, an oriented ELBE subtree, denoted $\vec{\calS}$, is formed by assigning a specific order $(u,v)$ to each unordered sibling pair $\{u,v\} \in \sib{\calS}$. 
Let $\operatorname{ori}(\calS)$ denote the set of all possible oriented versions of $\calS$.
We obtain
%This leads to the following MIP formulation:
\begin{equation} \label{eq: st}
\begin{aligned}
    &\textstyle \sum_{(u, v) \in \sib{\vec{\calS}}} \big( \x{u} - \x{v} \big) \leqslant 2^{\nrevisions+1} - 2, \quad && \forall \calS \in \subtreefamily{\nrevisions+1}{\tree},\: \forall\vec{\calS} \in \operatorname{ori}(\calS) \\
    &\x{v} \in \Bin, \quad && \forall v \in \nodes.
\end{aligned}
\tag{ST}
\end{equation}
% \byjp{The alignment of this display does not match of others}
We refer to formulation \Cref{eq: st} as the \emph{subtree formulation} and to each nontrivial constraint in \Cref{eq: st} as a \emph{subtree constraint}.
Further, we refer to the polytope obtained by relaxing integrality requirements in \Cref{eq: st} as the \emph{subtree polytope} $P_{\text{ST}}$.

We next argue that subtree constraints are strong.
%in the sense that they define facets of the polytope $\conv{\revisableset{\nrevisions}}$.

\begin{theorem} \label{thm: subtree facet}
Subtree constraints define facets of $\conv{\revisableset{\nrevisions}}$ for any $1 \leqslant  \nrevisions \leqslant \nstages-2$.
\end{theorem}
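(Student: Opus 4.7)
The plan is to prove each subtree inequality is facet-defining for $\conv{\revisableset{\nrevisions}}$ via the classical route: establish full dimensionality of $\conv{\revisableset{\nrevisions}}$ and exhibit $|\nodes|$ affinely independent $\nrevisions$-revisable strategic policies that satisfy the subtree inequality at equality. For dimensionality, the zero vector together with the unit vectors $\mathbf{1}_{\{v\}}$ for each $v\in\nodes$ provide $|\nodes|+1$ affinely independent members of $\revisableset{\nrevisions}$ when $\nrevisions\geqslant 1$, since the single-support policy $\mathbf{1}_{\{v\}}$ is implementable with a single revision at $v$. Validity of the subtree inequality follows immediately from \Cref{thm: subtree char}: a violation would force every term $x(u)-x(v)$ to equal $1$, making $\calS$ itself a height-$(\nrevisions+1)$ $x$-inconsistent ELBE subtree and contradicting $x\in\revisableset{\nrevisions}$.

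Writing $V_{\calS}=\nodes(\calS)$, a short counting argument shows that a policy $\bar{x}$ attains the bound $2^{\nrevisions+1}-2$ if and only if exactly one pair $(p^{\star},q^{\star})\in\sib{\vec{\calS}}$ satisfies $\bar{x}(p^{\star})=\bar{x}(q^{\star})$, while every other oriented pair $(u,v)\in\sib{\vec{\calS}}$ satisfies $\bar{x}(u)=1$ and $\bar{x}(v)=0$; the value $\bar{x}(\rho(\calS))$, the common value of the equal pair, and the values $\bar{x}(w)$ for $w\notin V_{\calS}$ remain unconstrained by the inequality. I would then define a canonical base $\bar{x}^{\star}$ by selecting some $(p^{\star},q^{\star})$ as the equal pair with both entries $0$, setting $\bar{x}^{\star}(\rho(\calS))=0$, and setting $\bar{x}^{\star}(w)=0$ for all $w\notin V_{\calS}$. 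Its $\nrevisions$-revisability follows from \Cref{thm: subtree char}: any height-$(\nrevisions+1)$ $x$-inconsistent ELBE subtree would need every sibling pair to be split, but the support of $\bar{x}^{\star}$ is confined to $V_{\calS}$, and inside $\calS$ the pair $(p^{\star},q^{\star})$ is consistent.

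From $\bar{x}^{\star}$ I would produce three families of tight $\nrevisions$-revisable policies whose pairwise differences span the hyperplane of tight points. The first family toggles $\bar{x}(\rho(\calS))$, which is harmless because $\rho(\calS)$ is not part of any sibling pair of $\calS$. The second family varies each $\bar{x}(w)$ for $w\notin V_{\calS}$: by appropriately re-selecting the equal pair and, if necessary, toggling the sibling of $w$ simultaneously, I would construct tight policies that, via pairwise differences, recover each coordinate direction $e_{w}$. The third family fixes each sibling pair $\{p,q\}\in\sib{\calS}$ in turn as the equal pair, with its common value taking values $0$ or $1$, and modifies the orientation-based assignments on the remaining pairs accordingly; differences between these tight policies span all coordinates of $V_{\calS}\setminus\{\rho(\calS)\}$ modulo the linear relation imposed by the inequality itself. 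A final rank check then verifies that the resulting collection yields the required $|\nodes|-1$ linearly independent differences.

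The hard part will be the revisability verification in the second family: as small examples readily show, naively flipping $\bar{x}(w)$ from $0$ to $1$ can create a height-$(\nrevisions+1)$ $x$-inconsistent ELBE subtree of $\tree$ built from the sibling pair $\{w,\sib{w}\}$ together with pairs inherited from $\calS$ that happen to be inconsistent under $\bar{x}^{\star}$. The key technical step is to show that by re-selecting the equal pair of $\calS$, and when needed jointly toggling $w$ with another node $w'\notin V_{\calS}$, one can always guarantee that any such putative new inconsistent subtree retains a consistent pair, so that the resulting policy lies in $\revisableset{\nrevisions}$; linear combinations of the constructed policies then recover $e_{w}$.
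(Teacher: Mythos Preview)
Your proposal is correct and follows essentially the same route as the paper: the paper's Lemma~\ref{lm: st facet - x} constructs exactly your base solution $\bar{x}^{\star}$ (the paper's $x^{u,v}_0$), your ``third family'' (the paper's $x^{u,v}_\theta$ for varying $(u,v)$ and $\theta$), and your ``second family'' (the paper's $x^{u,v}_0+e_w$ for a suitably chosen equal pair $(u,v)$ depending on the position of $w$), and then the paper uses the dual multiplier-argument framing rather than counting affinely independent points. One minor simplification: the paper shows that re-selecting the equal pair inside $\calS$ is always sufficient to make $x^{u,v}_0+e_w$ $\nrevisions$-revisable, so the extra ``jointly toggling $w$ with another node $w'$'' mechanism you mention is never needed.
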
 

To establish this result, we consider an ELBE subtree of height $\nrevisions+1$, $\calS$, with orientation $\vec{\calS}$.
The corresponding subtree constraint is
\begin{equation} \label{eq: st constr facet}
    \sum_{(u,v) \in \sib{\vec{\calS}}} (\x{u}-\x{v}) \leqslant 2^{\nrevisions+1}-2.
\end{equation}
By our previous argument, \Cref{eq: st constr facet} is valid for $\revisableset{\nrevisions}$,
hence it is valid for $\conv{\revisableset{\nrevisions}}$.
Let $F_{\text{ST}}$ be the face of $\conv{\revisableset{\nrevisions}}$ defined by \Cref{eq: st constr facet}.
The following lemma, whose proof can be found in \Cref{proof: st facet - x}, provides ways of constructing points on $F_{\text{ST}}$.

\begin{restatable}{lemma}{LmSTFacetX}
\label[lemma]{lm: st facet - x}
Given an oriented ELBE subtree $\vec{\calS}$ of height $\nrevisions+1$ where $1 \le \nrevisions \le \nstages-2$,
\begin{enumerate}[label=(\Roman*)]
    \item For any $(u,v) \in \sib{\vec{\calS}}$ and $\theta \in \{0,1\}$, there is a solution $x^{u,v}_\theta \in F_{\textnormal{ST}}$ where $x^{u,v}_\theta(u')=1$ and $x^{u,v}_\theta(v')=0$ for all $(u',v') \in \sib{\vec{\calS}} \setdiff \{(u,v)\}$ and where all the other entries are equal to $\theta$. \label{LmSTFacetX-PartI}
    \item For any node $w \notin \nodes(\calS) \setdiff \{\subroot{\calS}\}$, there is $(u,v) \in \sib{\vec{\calS}}$ such that the point $x^{u,v}_0+e_w$ obtained by switching the value of $x^{u,v}_0(w)$ from $0$ to $1$ belongs to $F_{\textnormal{ST}}$. \label{LmSTFacetX-PartII}
\end{enumerate}
\end{restatable}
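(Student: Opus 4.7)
The plan is to construct each point explicitly and verify two properties: (a) it satisfies the subtree inequality \eqref{eq: st constr facet} defining $F_{\text{ST}}$ with equality, and (b) it belongs to $\revisableset{\nrevisions}$. Property (a) will follow from a direct computation of the left-hand side. Property (b) will be established through \Cref{thm: subtree char}: a binary strategic policy lies in $\revisableset{\nrevisions}$ if and only if no ELBE subtree of height $\nrevisions+1$ is inconsistent with it. Thus the task reduces to ruling out such inconsistent subtrees in each case.

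For Part I, given $(u,v) \in \sib{\vec{\calS}}$ and $\theta \in \{0,1\}$, I define $x^{u,v}_\theta$ as described. Property (a) is immediate: the $2^{\nrevisions+1}-2$ pairs $(u',v') \in \sib{\vec{\calS}}\setdiff\{(u,v)\}$ each contribute $1-0 = 1$ to the LHS of \eqref{eq: st constr facet}, while $(u,v)$ contributes $\theta-\theta = 0$, summing to $2^{\nrevisions+1}-2$. For property (b), I will invoke a counting argument. The set of nodes of value $1-\theta$ under $x^{u,v}_\theta$ has size exactly $2^{\nrevisions+1}-2$: it consists of all the $u'$'s (when $\theta=0$) or all the $v'$'s (when $\theta=1$) across the $2^{\nrevisions+1}-2$ pairs different from $(u,v)$. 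Every $x^{u,v}_\theta$-inconsistent ELBE subtree of height $\nrevisions+1$ has $2^{\nrevisions+1}-1$ sibling pairs, each requiring a distinct node of value $1-\theta$. Since $\tree$ only contains $2^{\nrevisions+1}-2$ such nodes, this is impossible; by \Cref{thm: subtree char}, $x^{u,v}_\theta \in \revisableset{\nrevisions}$.

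For Part II, property (a) follows again from direct inspection: since $w \notin \nodes(\calS)\setdiff\{\subroot{\calS}\}$ and every sibling pair of $\calS$ is contained in $\nodes(\calS)\setdiff\{\subroot{\calS}\}$, toggling the value at $w$ leaves the LHS of \eqref{eq: st constr facet} unchanged at $2^{\nrevisions+1}-2$. Property (b), however, requires a more refined argument, because the 1-valued set of $x^{u,v}_0 + e_w$ has size exactly $2^{\nrevisions+1}-1$, matching the number of sibling pairs in a candidate inconsistent ELBE subtree $\calS'$ of height $\nrevisions+1$; the slack in the counting from Part I disappears. The plan is to choose $(u,v)$ based on $w$'s position relative to $\calS$ and argue structurally that no inconsistent $\calS'$ can accommodate $\{w\} \cup \{u' : (u',v') \in \sib{\vec{\calS}} \setdiff \{(u,v)\}\}$ as its 1-valued node set paired with stage-matched 0-valued siblings. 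A natural candidate to try is $(u,v)$ equal to the pair of children of $\subroot{\calS}$ in $\vec{\calS}$, so that all the $u'$'s are proper descendants of $\subroot{\calS}$; this forces the root of any hypothetical $\calS'$ to be an ancestor (in $\tree$) of the join of $\{w, \subroot{\calS}\}$. Case analysis splits $w = \subroot{\calS}$ from $w \notin \nodes(\calS)$, and within each case uses the stage-consistency requirement on sibling pairs and the ancestry-preservation condition of ELBE subtrees to preclude a valid $\calS'$, adjusting the choice of $(u,v)$ where necessary.

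The main obstacle will be the structural analysis in Part II: for each admissible position of $w$, exhibiting a pair $(u,v) \in \sib{\vec{\calS}}$ such that no ELBE subtree of height $\nrevisions+1$ can spread the 1-valued nodes of $x^{u,v}_0 + e_w$ across its sibling pairs while simultaneously respecting stage-consistency within pairs, the ancestry-preserving condition, and the rigid perfect-binary shape demanded of $\calS'$. This will require tracking how the stages of the $u'$ nodes interact with $\tau(w)$ and with the descendants available in $\tree$ to serve as 0-valued partners, and will likely exploit the fact that once the root of $\calS'$ is forced high enough, there are not enough stage-compatible 0-valued descendants in one branch of $\calS'$ to complete the construction.
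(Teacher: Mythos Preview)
Your Part~I argument is correct and in fact sharper than the paper's: the paper simply asserts ``there is no $x^{u,v}_\theta$-inconsistent ELBE subtree by our construction,'' whereas your counting observation (only $2^{\nrevisions+1}-2$ nodes carry the minority value $1-\theta$, one short of the $2^{\nrevisions+1}-1$ required) supplies the missing justification.

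For Part~II your high-level plan is right---the forced equality $\calL(\vec{\calS'}) = \{w\}\cup\calL(\vec{\calS})\setminus\{u\}$ is exactly the starting point the paper uses---but the execution you sketch diverges from the paper and is not yet a working argument. The paper does \emph{not} take $(u,v)$ to be the children $(p,q)$ of $\subroot{\calS}$; instead it introduces the count $\sigma^{\calS'}_{\bar x}(v) = \sum_{i\in\nodes(\calS')\cap(\descendant{v}\cup\{v\})}\bar x(i)$ and proves that in any $\bar x$-inconsistent height-$(\nrevisions+1)$ subtree $\calS'$ there is a \emph{unique} node $v$ with $\sigma^{\calS'}_{\bar x}(v)=2^{\nrevisions}$, namely the value-$1$ child of $\subroot{\calS'}$. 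The case split is then on $\stage{w}$ versus $\stage{p}$ and on whether $w\in\descendant{p}$, and in each case $(u,v)$ is chosen from the ``left part'' or ``right part'' of $\calS$ so that $\sigma^{\calS'}_{\bar x}(p)$ is forced away from $2^{\nrevisions}$ while a stage argument simultaneously forces $p$ to be the value-$1$ child of $\subroot{\calS'}$---a contradiction with the claim. Your proposed contradiction mechanism (``not enough stage-compatible $0$-valued descendants in one branch'') is not the one that works here; the obstruction is a mismatch in the $\sigma$-count at a specific node, not a shortage of partners. Your case split ($w=\subroot{\calS}$ versus $w\notin\nodes(\calS)$) also does not align with the structure that makes the argument go through.
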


% \byjp{In the statement, we use $\sib{\vec{\calS}}$. In the proof, we use $\sib{\calS}$}
\begin{proof}[Proof of \Cref{thm: subtree facet}] 
Consider any inequality that defines $F_{\textnormal{ST}}$:
\begin{equation}   \label{eq: st facet - any ineq}
\sum_{w \in \nodes}\alpha_w\x{w} \leqslant \eta.
\end{equation}
We will show that this inequality is a scalar multiple of \Cref{eq: st constr facet}.

First, consider any node $w \notin \nodes(\calS) \setdiff \{\subroot{\calS}\} $.
We show that $\alpha_w=0$.
By \Cref{lm: st facet - x}, there is $(u,v) \in \vec{\calS}$ such that $x^{u,v}_0$ and $x^{u,v}_0 + e_w \in F_{\textnormal{ST}}$.
Since \Cref{eq: st facet - any ineq} defines $F_{\text{ST}}$, we plug these two solutions into the equality of \Cref{eq: st facet - any ineq} to obtain
$\sum_{(u',v')\in\sib{\vec{\calS}} \setdiff \{(u,v)\}}\alpha_{u'} + \alpha_w = \eta$ and $\sum_{(u',v')\in\sib{\vec{\calS}} \setdiff \{(u,v)\}}\alpha_{u'}=\eta$.
Subtracting one from the other implies $\alpha_w=0$. 

Second, we show that $\alpha_u=-\alpha_{v}=\alpha_{\bar{u}}$ for any $(u,v)$ and $(\bar{u},\bar{v})$ in $\sib{\vec{\calS}}$.
\Cref{lm: st facet - x} provides $x^{u,v}_0$ and $x^{u,v}_1$, both of which must satisfy \Cref{eq: st facet - any ineq} at equality.
%\red{We consider $x^{u,v}_0$ and $x^{u,v}_1$ by \Cref{lm: st facet - x}, both of which must satisfy \Cref{eq: st facet - any ineq} at equality.}
It follows that $\sum_{(u',v')\in\sib{\vec{\calS}} \setdiff \{(u,v)\}}\alpha_{u'} = \eta$ and $\sum_{(u',v')\in\sib{\vec{\calS}} \setdiff \{(u,v)\}}\alpha_{u'} + \alpha_u + \alpha_v = \eta$ since $\alpha_w=0$ for any node $w \notin \nodes(\calS) \setdiff \{\subroot{\calS}\}$.
%\red{Since $\alpha_w=0$ for any node $w \in \nodes \setdiff (\nodes(\calS) \setdiff \{\subroot{\calS}\})$, plugging in $x^{u,v}_0$ and $x^{u,v}_1$ we get $\sum_{(u',v')\in\sib{\calS} \setdiff \{(u,v)\}}\alpha_{u'} = \eta$ and $\sum_{(u',v')\in\sib{\calS} \setdiff \{(u,v)\}}\alpha_{u'} + \alpha_u + \alpha_v = \eta$.}
Subtracting one equation from the other yields $\alpha_u + \alpha_v = 0$, so that $\alpha_u = -\alpha_v$ for every $(u,v) \in \sib{\vec{\calS}}$.
For every $(\bar{u},\bar{v}) \neq (u,v) \in \sib{\vec{\calS}}$, both $x^{u,v}_0$ and $x^{\bar{u},\bar{v}}_0$ must satisfy \Cref{eq: st facet - any ineq} at equality. 
Subtracting the equation for $x^{\bar{u},\bar{v}}_0$ from that for $x^{u,v}_0$ yields $\alpha_u = \alpha_{\bar{u}}$.
It follows that there exists $\lambda$ such that $\alpha_u = \lambda$ and $\alpha_v=-\lambda$ for all $(u,v) \in \sib{\vec{\calS}}$.
Plugging $x^{u,v}_0$ for any $(u,v) \in \sib{\vec{\calS}}$ in \Cref{eq: st facet - any ineq} yields that $\lambda (2^{\nrevisions+1}-2) = \eta$. 
This implies that $\alpha_u=-\alpha_v=\sfrac{\eta}{ (2^{\nrevisions+1}-2)}$, \textit{i.e.}, \Cref{eq: st facet - any ineq} is a scalar multiple of \Cref{eq: st constr facet}.
% }
\end{proof}

In addition to having fewer variables than \Cref{eq: cp}, $\Cref{eq: st}$ also has the advantage that it is an ideal formulation for trees with at most $(\nrevisions+2)$ stages.
% \byjp{Should we say at least?}
\begin{theorem} \label{thm: subtree}
Formulation \Cref{eq: st} is ideal for all revision budget $\nrevisions \geqslant 1$ and scenario trees for which $\nstages \leqslant \nrevisions + 2$.
\end{theorem}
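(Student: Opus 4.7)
The plan is to split the proof into two regimes based on whether $\nstages < \nrevisions+2$ or $\nstages = \nrevisions+2$. In the first regime, I would argue that no ELBE subtree of height $\nrevisions+1$ exists in $\tree$: such a subtree's root-to-leaf path has $\nrevisions+1$ edges, each traversing at least one stage of $\tree$, so the subtree spans at least $\nrevisions+2$ stages, contradicting $\nstages < \nrevisions+2$. Hence formulation \Cref{eq: st} reduces to just the bounds $x(v) \in [0,1]$, whose polytope is trivially ideal, consistent with $\revisableset{\nrevisions} = \Bin^{|\nodes|}$ in this full-adaptivity regime.

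For the main case $\nstages = \nrevisions+2$, the same span argument forces every height-$(\nrevisions+1)$ ELBE subtree to be rooted at $\root$ with edges matching the original parent-child relation of $\tree$ (no stage-skipping), so these subtrees correspond exactly to choices of two children at each of their internal nodes. To establish ideality, I would suppose for contradiction that $\bar x$ is an extreme point of $P_{\text{ST}}$ with some fractional coordinate, and construct a nonzero direction $d \in \Real^{|\nodes|}$ such that $\bar x \pm \epsilon d \in P_{\text{ST}}$ for sufficiently small $\epsilon > 0$, contradicting extremity. The direction $d$ must satisfy (i) $d(v) = 0$ whenever $\bar x(v) \in \{0,1\}$, to preserve the bound constraints, and (ii) $\sum_{(u,v) \in \sib{\vec{\calS}}}(d(u) - d(v)) = 0$ for every orientation $\vec{\calS}$ whose subtree constraint is tight, to preserve these tight constraints.

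To produce such a $d$, I would use a pairing-of-orientations argument together with \Cref{thm: subtree char}: a tight constraint for $\vec{\calS}$ satisfies $\sum_{(u,v)}(x(u) - x(v)) = 2^{\nrevisions+1}-2$, and since each term lies in $[-1,1]$ with $2^{\nrevisions+1}-1$ terms in the sum, most pairs must saturate with $x(u)-x(v)=1$, leaving essentially one ``flexible'' pair per tight subtree. Reversing the orientation of any single pair produces a companion constraint whose slack quantifies the flexibility available on that pair; combining these flexibilities across all sibling pairs yields, via linear algebra on the system defining $d$, a nonzero solution supported on the fractional coordinates. An alternative strategy is induction on $\nrevisions$ (equivalently on $\nstages = \nrevisions+2$), with base case $\nrevisions=1$, $\nstages=3$ handled directly in sibling-pair difference coordinates $a_i = x(u_i) - x(v_i)$: the feasible region in those coordinates is the intersection of $\ell_1$-balls with a box whose vertices have $a_i \in \{-1,0,+1\}$, which forces integrality back in $x$-space. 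The inductive step would exploit the decomposition of a height-$(\nrevisions+1)$ ELBE subtree into two height-$\nrevisions$ ELBE subtrees rooted at children of $\root$.

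The main obstacle I anticipate is the combinatorial bookkeeping required to carry out either argument, because tight subtree constraints from different ELBE subtrees share sibling pairs and their orientations impose requirements on $d$ simultaneously. The key insight that should resolve this is the structural observation that the right-hand side $2^{\nrevisions+1}-2$ permits exactly one ``flexible'' sibling pair per tight ELBE subtree; by choosing these flexible pairs coherently along the nested subtree structure, enough degrees of freedom remain to build a nonzero $d$ supported only on the fractional coordinates, which in turn contradicts the extremity of $\bar x$ and completes the proof.
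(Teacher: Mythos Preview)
Your split into the regimes $\nstages<\nrevisions+2$ and $\nstages=\nrevisions+2$, and the structural observation that in the latter case every height-$(\nrevisions+1)$ ELBE subtree is rooted at $\root$ with sibling-pair parents coinciding in $\calS$ and in $\tree$, are both correct and match the paper. The overall strategy---assume a fractional extreme point $\bar x$ and exhibit $\bar x^\pm$ with $\bar x=\tfrac12\bar x^++\tfrac12\bar x^-$---is also the same as the paper's.

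The gap is in how you propose to manufacture the direction $d$. Your ``key insight'' that a tight subtree constraint $\sum_{(u,v)}(\bar x(u)-\bar x(v))=2^{\nrevisions+1}-2$ forces all but one term to equal $1$ is false for fractional $\bar x$: with $2^{\nrevisions+1}-1$ terms in $[-1,1]$ summing to $2^{\nrevisions+1}-2$, one can have every term equal to $\tfrac{2^{\nrevisions+1}-2}{2^{\nrevisions+1}-1}$, or any convex combination of saturated patterns. There is no single ``flexible'' pair, and the pairing-of-orientations idea built on that claim collapses. Your alternative $\ell_1$-ball picture for $\nrevisions=1$ in difference coordinates $a_i=x(u_i)-x(v_i)$ is also not quite right: distinct ELBE subtrees share some (but not all) sibling pairs, so the $a_i$ are neither independent coordinates nor do the subtree constraints decouple into separate $\ell_1$-balls; the combinatorial bookkeeping you flag as the main obstacle is precisely where the argument has to happen.

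The paper closes this gap with substantially more machinery. It introduces $\Delta_{\bar x}(v)=\max_{\calS}\sum_{\{p,q\}\in\sib{\calS}}|\bar x(p)-\bar x(q)|$ and $\delta_{\bar x}(v)=\Delta_{\bar x}(v)-(2^{\nstages-\stage{v}}-2)$, together with the recurrence $\delta_{\bar x}(v)=\max_{p,q\in\children{v}}\{|\bar x(p)-\bar x(q)|+\delta_{\bar x}(p)+\delta_{\bar x}(q)\}-2$, so that feasibility is just $\delta_{\bar x}(\root)\le 0$. Working on a \emph{smallest} counterexample tree, it argues $\delta_{\bar x}(v)\ge 0$ everywhere and that every second-stage node lies in some ``tight pair'' $(p,q)$ achieving $\delta_{\bar x}(\root)=|\bar x(p)-\bar x(q)|+\delta_{\bar x}(p)+\delta_{\bar x}(q)-2=0$. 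A separate perturbation claim (proved by induction on the subtree height) shows that whenever $\delta_{\bar x}(v)\in(0,1)$ and $\tree(v)$ contains a fractional entry, one can produce $y^\pm$ on $\tree(v)$ with $\delta_{y^\pm}(v)=\delta_{\bar x}(v)\pm\varepsilon$. The proof then performs a three-case analysis on the $\bar x$- and $\delta_{\bar x}$-values of second-stage nodes, coordinating the $\pm\varepsilon$ perturbations across tight pairs so that $\delta_{\bar x^\pm}(\root)$ stays at $0$. The inductive control of $\delta$ on subtrees is exactly the device that replaces your ``one flexible pair'' heuristic; without something of that kind, the direction $d$ cannot be constructed.
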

\begin{proof}
    The case $\nstages < \nrevisions + 2$ is trivial as the polytope $\conv{\revisableset{\nrevisions}}$ reduces to $[0,1]^{\nodes}$, and \Cref{eq: st} does not contain subtree constraints as there are no ELBE subtrees with height $\nrevisions+1$.
    For the case $\nstages = \nrevisions + 2$, any ELBE subtree with height $\nrevisions + 1 = \nstages - 1$ must contain nodes from all stages of $\tree$.
    In other words, for any $\calS \in \subtreefamily{\nrevisions+1}{\tree}$ and any sibling pair $\{p,q\}\in\sib{\calS}$, the parent of $p$ and $q$ is the same node in $\calS$ and $\tree$.
    Therefore, we further assume that every non-leaf node $v$ of $\tree$ has at least 2 children, otherwise $v$ does not show up in any subtree constraints.
    Define
    \begin{equation} \label{eq: Delta v}
        \Delta_x(v) := \max_{\calS \in \subtreefamily{\nstages-\stage{v}}{\tree(v)}} \bigg\{\sum_{\{p,q\} \in \sib{\calS}} \big|\x{p} - \x{q}\big|\bigg\},
    \end{equation}
    where $\subtreefamily{\nstages-\stage{v}}{\tree(v)}$ is the set of all ELBE subtrees of $\tree(v)$ that have nodes on every stage from $\stage{v}$ to $\nstages$.
    Therefore, $x \in P_{\text{ST}}$ is equivalent to $\Delta_x(\root) \leqslant 2^{\nrevisions+1} - 2$. 
    Further, we have the recurrence relation
    \begin{equation} \label{eq: Delta v recurrence}
        \Delta_x(v) = \max_{p, q \in \children{v}} \big\vert \x{p}-\x{q} \big\vert + \Delta_x(p) + \Delta_x(q).
    \end{equation}
    For convenience, we also introduce
    \begin{equation} \label{eq: delta v}
        \delta_x(v):=\Delta_x(v)-(2^{\nstages-\stage{v}}-2).
    \end{equation}
    It is clear that $\delta_x(v) \leqslant 1$ for any $v \in \nodes$ as the summation defining $\Delta_x(v)$ in \eqref{eq: Delta v} contains $(2^{\nstages-\stage{v}}-1)$ terms of value at most $1$.
    Furthermore, $x \in P_{\text{ST}}$ is equivalent to $\delta_x(\root)\leqslant 0$, and 
    \begin{align}\delta_x(v)=\max_{p,q\in\children{v}} \big\{ \big\vert \x{p}-\x{q} \big\vert + \delta_x(p) + \delta_x(q) \big\} - 2. \label{eqn: delta-recurs}
    \end{align}
    %We will make use of these results next. 

    We next show that the polytope $P_{\text{ST}}$ is integral by contradiction.
    Suppose that there are scenario trees for which $P_{\textnormal{ST}}$ is not integral when $\nrevisions=\nstages-2$.
    Among all such trees, let $\tree$ be one that has the minimum $\nstages$ and, among all trees having minimum $\nstages$, one where $|\nodes|$ is minimum, \emph{i.e.}, we consider a smallest counterexample.
    Let $\bar{x}$ be a fractional vertex of $P_{\text{ST}}$.
First, we argue that $\delta_{\bar{x}}(v)\geqslant 0$ for all $v \in \nodes$.
Assume by contradiction that there exists a node $v$ such that $\delta_{\bar{x}}(v)<0$.
Then the subtree constraint of any oriented ELBE subtree containing $v$ has a positive slack in its corresponding subtree constraint.
Since any fractional $\bar{x}$-value within the subtree $\tree(v)$ could then be shifted upward or downward while preserving feasibility, no such fractional component can exist in $\tree(v)$.
Deleting all vertices $v$ with $\delta_{\bar{x}}(v)<0$ and their descendants, the resulting $\bar{x}$ restricted to the reduced tree remains a fractional vertex of the corresponding subtree polytope.
This is a contradiction to the assumption that $\tree$ is a smallest counterexample.
%    {\color{red}
%    If for a node $v$, $\delta_{\bar{x}}(v)<0$, then the subtree constraint of any oriented ELBE subtree containing $v$ has a positive slack in its corresponding subtree constraint.
%    Since any fractional $\bar{x}$-value within the subtree $\tree(v)$ can be shifted upward or downward while preserving feasibility, no such fractional component can exist in $\tree(v)$.
%    We delete all vertices $v$ with $\delta_{\bar{x}}(v)<0$ and their descendants.
%    Then, $\bar{x}$ restricted to the reduced tree remains a fractional vertex of the corresponding subtree polytope.
%    Therefore, since we assumed that $\tree$ is a smallest counterexample, we have $\delta_{\bar{x}}(v)\geqslant 0$ for all $v \in \nodes$.}
Next, since $\bar{x} \in P_{\text{ST}}$ implies $\delta_{\bar{x}}(\root)\leqslant 0$, we must have $\delta_{\bar{x}}(\root) = 0$.

Our goal is to show that we can construct points $\bar{x}^+ \neq \bar{x}^- \in P_{\text{ST}}$ such that $\bar{x}=\sfrac{1}{2} \, \bar{x}^+ + \sfrac{1}{2} \, \bar{x}^-$, 
%and $\bar{x}^+ \neq \bar{x}^-$
leading to a contradiction.
We start with a claim that construct points restricted on a subtree $\tree(v)$ for $v \in \nodes_2$, where $\nodes_2$ denotes the set of second-stage nodes.
The proof of this claim is provided in \Cref{proof: st integrality perturb delta}.
Intuitively, \Cref{claim: st integrality - perturb delta} allows us to treat the subtree rooted at any second-stage node as a single ``unit'' whose $\delta_{\bar{x}}$-value can be slightly increased or decreased.
    
\begin{restatable}{claim}{ClaimSTIntegralityPerturbDelta}
\label{claim: st integrality - perturb delta}
    For any $v\in \nodes_2$, and any $y \in [0,1]^{\descendant{v}}$ such that $y$ has at least one fractional entry and $0 < \delta_y(v) < 1$, there exists an $\bar{\varepsilon} > 0$, such that for any $0 < \varepsilon < \bar{\varepsilon}$, we can construct distinct vectors $y^\plus$ and $y^\minus \in [0,1]^{\descendant{v}}$ satisfying $y = \sfrac{1}{2}\, y^+ + \sfrac{1}{2}\, y^-$ and  $\delta_{y^\pm}(v) = \delta_y(v) \pm \varepsilon$.
\end{restatable}

    We now define a \emph{tight pair} to be a pair of second-stage nodes $(p,q)$ such that $0=\delta_{\bar{x}}(\root) = |\bar{x}(p)-\bar{x}(q)|+ \delta_{\bar{x}}(p)+\delta_{\bar{x}}(q)-2$.
    Let $\calE$ denote the set of all such tight pairs.
    If a second-stage node $v$ does not appear in any tight pair, then there cannot exist any fractional $\bar{x}$-value node in $\{v\} \cup \descendant{v}$.
    Indeed, if a fractional value exists within $\tree(v)$, we can perturb that fractional value and maintain feasibility, even without the knowledge of \Cref{claim: st integrality - perturb delta}.
    By the same reasoning used earlier, we can delete $v$ and all its descendants from $\tree$, and the restriction of $\bar{x}$ to the reduced tree remains a vertex of the corresponding subtree polytope.
    Thus, every second-stage node must participate in at least one tight pair, as we assume $\tree$ is a smallest counterexample.
    
    Next, observe that if two second-stage nodes $p$ and $q$ share the same $\bar{x}$-value (\textit{i.e.} $\bar{x}(p)=\bar{x}(q)$) but have different $\delta_{\bar{x}}$ values, say $\delta_{\bar{x}}(p) < \delta_{\bar{x}}(q)$, then node $p$ cannot be part of any tight pair, which is impossible.
    Therefore, two second-stage nodes with the same $\bar{x}$-value must also have the same $\delta_{\bar{x}}$-value. 
    %\byjp{Does that not simply say that tight pair + xp=xq implies that $\delta_{\bar{x}}(p)=\delta_{\bar{x}}(q)=1?$}

    The above discussion implies that the structure we consider is one in which every second-stage node participates in at least one tight pair and all nodes with the same $\bar{x}$-value have the same $\delta_{\bar{x}}$-value.
    %Further, we define size of the perturbation $\varepsilon$.
    Let $\bar{\varepsilon}$ denote a positive value for which \Cref{claim: st integrality - perturb delta} holds for all $v \in \nodes_2$ with $\delta_{\bar{x}}(v) \in (0,1)$.
    We then set 
    \begin{equation} \label{eq: define epsilon}
    \begin{aligned}
        \varepsilon := \min \Bigg\{ 
        \bar{\varepsilon},  &\min_{v\in \nodes_2, \bar{x}(v) \in (0,1)} \big\{ \min \{\bar{x}(v), 1-\bar{x}(v)\} \big\}, \\
        &\frac{1}{4} \min_{\substack{u,v \in \nodes_2,  (u,v) \notin \calE}} \Big\{ 2- \big(|\bar{x}(u)-\bar{x}(v)| + \delta_{\bar{x}}(u) + \delta_{\bar{x}}(v) \big) \Big\} 
        \Bigg\}, 
    \end{aligned}
    \end{equation}
    that is, we choose $\varepsilon$ sufficiently small so that \Cref{claim: st integrality - perturb delta} remains applicable, at the same time, less than both the minimum distance from any fractional component of $\bar{x}$ to its bounds and one quarter of the minimum slack among all untight pairs $(u,v) \notin \calE$.
    There are three cases to consider.
    %We next consider three cases:
    \begin{enumerate}[label=(\roman*)]
        \item All $\bar{x}(v) \in \nodes_2$ are integral, \emph{i.e.}, the fractional $\bar{x}$-values are only on the other stages.
        Then $\nodes_2$ contains at most two types of nodes: those with $\bar{x}$-value $0$ and those with $\bar{x}$-value $1$.
        Assume $p \in \nodes_2$ is such that $\bar{x}(p)=0$.
        By previous arguments, all $\bar{x}$-value-$0$ nodes in $\nodes_2$ share the same $\delta_{\bar{x}}$-value.
        We first argue that if $\delta_{\bar{x}}(p)\in \{0,1\}$ then there is no fractional $\bar{x}$-values in $\tree(p)$. 
        If $\delta_{\bar{x}}(p)=1$, then every two $\bar{x}$-value-$0$ nodes form a tight pair.
        However, no node in $\descendant{p}$ (or in the set of descendants of any $\bar{x}$-value-$0$ node) can be fractional—otherwise, perturbing its value would leave $\delta_{\bar{x}}(p)$ unchanged, contradicting the vertex property of $\bar{x}$.
        Hence, there must exists $q \in \nodes_2$ such that $\bar{x}(q)=1$ and $\delta_{\bar{x}}(q)=0$, which forms a tight pair with $p$, and there exists some fractional values within $\tree(q)$.
        This implies that $\bar{x}|_{\descendant{q}\cup\{q\}}$ is a fractional point of the subtree polytope corresponding to $\tree(q)$, and hence $\bar{x}|_{\descendant{q}\cup\{q\}}$ is not a vertex as $\tree$ is a smallest counterexample.
        Thus, there exist $\bar{x}^\pm$ such that $\delta_{\bar{x}^\pm}(q)=0$, which implies $\delta_{\bar{x}^\pm}(\root)=0$.
        % Therefore, we conclude that a fractional vertex cannot exist given that there is a second-stage $\bar{x}$-value-$0$ node with $\delta_{\bar{x}}$-value equal to $1$.
        The case with a second-stage node having $\bar{x}$-value $1$ and $\delta_{\bar{x}}$-value $1$ follows by symmetric arguments.
        
        It thus remains to consider the case in which there exists $p$ and $q \in \nodes_2$ such that $\bar{x}(p)=0$, $\bar{x}(q)=1$, with $\delta_{\bar{x}}(p), \delta_{\bar{x}}(q) \in (0,1)$.
        We use \Cref{claim: st integrality - perturb delta} by letting $y=\bar{x} |_{\descendant{v}}$ to create $\bar{x}^\pm$ for each $v \in \nodes_2$.
        If $\bar{x}(v) = 0$, 
        %\emph{i.e.}, $v$ has the same type as $p$, 
        then we set $\bar{x}^\pm$ restricted on $\tree(v)$, $\bar{x}^\pm |_{\descendant{v}}$, such that it leads to $\delta_{\bar{x}^\pm|_{\descendant{v}}}(v) = \delta_{\bar{x}|_{\descendant{v}}}(v) \pm \varepsilon$, where $\varepsilon$ is defined as \Cref{eq: define epsilon}.
        If $\bar{x}(v) = 1$, 
        %\emph{i.e.}, $v$ has the same type as $q$, 
        we set $\bar{x}^\pm |_{\descendant{v}}$ such that $\delta_{\bar{x}^\pm|_{\descendant{v}}}(v) = \delta_{\bar{x}|_{\descendant{v}}}(v) \mp \varepsilon$.
        Then, $|\bar{x}^\pm(p) - \bar{x}^\pm(q)| + \delta_{\bar{x}^\pm}(p) + \delta_{\bar{x}^\pm}(q) = 1 + \delta_{\bar{x}}(p) \pm \varepsilon + \delta_{\bar{x}}(q) \mp \varepsilon = 1 + \delta_{\bar{x}}(p) + \delta_{\bar{x}}(q) = 2$.
        Thus, each tight pair $(p,q) \in \calE$ remains feasible after the perturbation.
        For the other pairs, $(p,p')$ or $(q,q')$ where $p'$, and $q'$ have the same $\bar{x}$-values as $p$ and $q$, respectively, they are not tight pairs for $\bar{x}$.
        Thus, they will remain not tight under $\bar{x}^\pm$ because of the way we choose $\varepsilon$.
        Hence, $\delta_{\bar{x}^\pm}(\root) = \delta_{\bar{x}}(\root)$.
        
        \item There is a $v \in \nodes_2$ such that $\bar{x}(v) \in (0,1)$ and $\delta_{\bar{x}}(v) \in (0,1)$.
        It is clear that $\bar{x}(u) \neq \bar{x}(v)$ for all $u$ such that $(u,v) \in \calE$ as $\delta_{\bar{x}}(v) < 1$.
        We claim that either $\bar{x}(u) < \bar{x}(v)$ for all $u$ such that $(u,v) \in \calE$, or $\bar{x}(u) > \bar{x}(v)$ for all $u$ such that $(u,v) \in \calE$.
        Otherwise, if $\bar{x}(u) < \bar{x}(v) < \bar{x}(u')$ for some $u, u'$ such that $(u,v) \in \calE$ and $(u',v) \in \calE$, then we have
        \begin{equation} \label{eq: st integrality - tight pair}
        \begin{aligned}
        0 = \delta_{\bar{x}}(\root)
        & \geqslant
        \bar{x}(u') - \bar{x}(u) + \delta_{\bar{x}}(u') + \delta_{\bar{x}}(u) - 2 \\
        & = \bar{x}(u') - \bar{x}(v) + \bar{x}(v) - \bar{x}(u) + \delta_{\bar{x}}(u') + 
        \delta_{\bar{x}}(u) - 2 \\
        & = \big( \bar{x}(u') - \bar{x}(v) + \delta_{\bar{x}}(u') - 2 \big) + \big( \bar{x}(v) - \bar{x}(u) + \delta_{\bar{x}}(u)  - 2 \big) + 2 \\
        & = 2 \delta_{\bar{x}}(\root) - 2 \delta_{\bar{x}}(v) + 2 > 2 \delta_{\bar{x}}(\root) = 0,
        \end{aligned}
        \end{equation}
        which is impossible.
        The last inequality holds as $\delta_{\bar{x}}(v) < 1$.
        We now only focus on the case where $\bar{x}(u) < \bar{x}(v)$ for all $u$ such that $(u,v) \in \calE$ here, as the other case is similar.
        In this case, we create $\x^\pm$ in two steps. 
        First, we construct $\x^\pm|_{\descendant{v}}$ on $\tree(v)$ with $\delta_{x^\pm|_{\descendant{v}}}(v) = \delta_{\bar{x}}(v) \pm \varepsilon$ by \Cref{claim: st integrality - perturb delta}. 
        Second, we let $\bar{x}^\pm(v) = \bar{x}(v) \mp \varepsilon$.
        Third, for all the other nodes $w \notin \descendant{v} \cup \{v\}$, we set $\bar{x}^\pm(w) = \bar{x}(w)$.
        This leads to 
        $
        |\bar{x}(u) - \bar{x}^\pm(v)| + \delta_{\bar{x}}(u) + \delta_{\bar{x}^\pm}(v) - 2 = |\bar{x}(u) - \bar{x}(v)| + \delta_{\bar{x}}(u) + \delta_{\bar{x}}(v) - 2 
        = \delta_{\bar{x}}(\root),
        $
        for all $u$ such that $(u,v) \in \calE$.
        Hence, we have $\delta_{\bar{x}^\pm}(\root) = \delta_{\bar{x}}(\root)$.
        
        \item There are nodes $v \in \nodes_2$ such that $\bar{x}(v) \in (0,1)$ and for any such node $v$, $\delta_{\bar{x}}(v) \in \{0,1\}$.
        We assume $\delta_{\bar{x}}(v)=1$ for all such $v$ because otherwise $v$ does not appear in any tight pair, which is not possible.
        Further, every such node $v$ must share the same $\bar{x}$-value, otherwise, if $v'$ is such that $\delta_{\bar{x}}(v')=1$ and $\bar{x}(v') \neq \bar{x}(v)$, then $\delta_{\bar{x}}(\root) \geqslant |\bar{x}(v) - \bar{x}(v')|+\delta_{\bar{x}}(v) + \delta_{\bar{x}}(v') -2 > 0$, which is impossible.
        First, we adjust the $\bar{x}$-value on each $v$ such that $\delta_{\bar{x}}(v)=1$ by $\bar{x}^\pm(v) = \bar{x}(v) \mp \varepsilon$.
        Then, for any $u$ such that $(u,v) \in \calE$ such that $\bar{x}(u) < \bar{x}(v)$ (\emph{resp.} $\bar{x}(u) > \bar{x}(v)$), we have $\delta_{\bar{x}}(u) \in (0,1)$ and we use \Cref{claim: st integrality - perturb delta} to adjust the $\bar{x}$-value on $\tree(u)$ such that $\delta_{\bar{x}^\pm}(u) = \delta_{\bar{x}}(u) \pm \varepsilon$ (\emph{resp.} $\delta_{\bar{x}^\pm}(u) = \delta_{\bar{x}}(u) \mp \varepsilon$).
        
        To show the above construction maintains feasibility, \emph{i.e.} $\delta_{\bar{x}^\pm}(\root) \leqslant 0$, we show that for any $u$ such that $(u,v)\in\calE$ and $\bar{x}(u) < \bar{x}(v)$, if $u$ forms a tight pair with another node $v' \neq v$ and $v' \in \nodes_2$, then $(v',v) \in \calE$ and $\bar{x}(v') \geqslant \bar{x}(v)$.
        To see this, we first argue that $\bar{x}(v') \geqslant \bar{x}(v)$.
        Because if otherwise, either $\bar{x}(v')<\bar{x}(u)$, thus $(v',u) \in \calE$, $(u,v) \in \calE$, and $\bar{x}(v') < \bar{x}(u) < \bar{x}(v)$, which is impossible by an argument similar to \Cref{eq: st integrality - tight pair}, or $\bar{x}(v') \geqslant \bar{x}(u)$, which implies $\bar{x}(v)-\bar{x}(u)+\delta_{\bar{x}}(v)+\delta_{\bar{x}}(u) > \bar{x}(v')-\bar{x}(u)+\delta_{\bar{x}}(v')+\delta_{\bar{x}}(u)$ as $1=\delta_{\bar{x}}(v)\geqslant \delta_{\bar{x}}(v')$, which is also impossible.
        Thus, we only have to show $(v',v) \in \calE$.
        This is because 
        \begin{equation}
        \begin{aligned}
        & \bar{x}(v')-\bar{x}(v)+\delta_{\bar{x}}(v')+\delta_{\bar{x}}(v) \\
        = &\bar{x}(v')-\bar{x}(u)+\delta_{\bar{x}}(v')+\delta_{\bar{x}}(u) - (\bar{x}(v)-\bar{x}(u) + \delta_{\bar{x}}(u) + \delta_{\bar{x}}(v)) + 2\delta_{\bar{x}}(v) \\
        = & 2 - 2 + 2\delta_{\bar{x}}(v) = 2,
        \end{aligned}
        \end{equation}
        where the second equality holds as $(v',u)$ and $ (u,v)\in\calE$.
        Hence, when we adjust $\delta_{\bar{x}^\pm}(u)$, any node $v'$ that forms a tight pair with $u$ is also adjusted to maintain feasibility.
        For $u$ such that $(u,v)\in \calE$ and $\bar{x}(u)>\bar{x}(v)$, similar arguments hold. 
        Therefore, we have $\delta_{x^\pm}(\root) = \delta_{\bar{x}}(\root)$.
    \end{enumerate}
    Thus, for all cases, there exists $\bar{x}^+ \neq \bar{x}^- \in P_{\text{ST}}$ with $\bar{x}=\sfrac{1}{2}\, \bar{x}^+ + \sfrac{1}{2}\, \bar{x}^-$, which violates the fact that $\bar{x}$ is a vertex and completes the proof.
\end{proof}

\begin{figure}[tbp]
    \centering
    \begin{minipage}[c]{0.42\textwidth}
        \centering
        \begin{tikzpicture}[
            grow=right,
            every node/.style={general node, minimum size=0.4cm}
            ]
        \tikzstyle{level 1}=[level distance=16mm, sibling distance=16mm]
        \tikzstyle{level 2}=[level distance=14mm, sibling distance=12mm]
        \tikzstyle{level 3}=[level distance=14mm, sibling distance=6mm]
        \node{}
            child {node {}
            child {node[value-1 node] {}
                child {node {}}
                child {node[value-half node]  {}}
            }
            }
            child {node[value-1 node] {}
            child {node[value-1 node] {}
                child {node {}}
                child {node[value-half node] {}}
            }
            }
            child {node[value-half node]  {}
            child {node[value-half node]  {}
                child {node {}}
                child {node[value-1 node] {}}
            }
            };
        \end{tikzpicture}
    \end{minipage}%
    \hfill
    \begin{minipage}[c]{0.56\textwidth}
        \centering
        \begin{tikzpicture}[
        grow=right,
        every node/.style={general node, minimum size=0.4cm}
        ]
        \tikzstyle{level 1}=[level distance=15mm, sibling distance=28mm]
        \tikzstyle{level 2}=[level distance=15mm, sibling distance=14mm]
        \tikzstyle{level 3}=[level distance=15mm, sibling distance=6mm]
        \node{}
            child {node {}
            child {node {}
                child {node {}}
                child {node[value-23rd node] {}}
            }
            child {node[value-23rd node]{} % Removed incomplete label from here
                child {node {}}
            child {node[value-23rd node, label={[xshift=+3em, yshift=-1em]\ldots \ldots}] {}}
            }
            }
            child {node[value-23rd node] {}
            child {node {}
                child {node {}}
                child {node[value-23rd node] {}}
            }
            child {node[value-23rd node]  {}
                child {node {}}
                child {node[value-23rd node] {}}
            }
            };
        \end{tikzpicture}
        \vspace{+0.2cm}
    \end{minipage}
    \caption[]{Fractional solutions showing \Cref{eq: st} is not ideal and can have large integrality gaps.} \label{fig:st fractional extreme points}
    \begin{minipage}{\textwidth} 
    \footnotesize
    Note. In the left panel, the striped nodes have $\x$-value $\sfrac{1}{2}$.
    In the right panel, the striped nodes have $\x$-value $\sfrac{2}{3}$.
%    \byjp{Is it just a fractional solution or is it a fractional vertex?}
    In both panels, the gray nodes have $\x$-value $1$ and the white nodes have $\x$-value $0$.
    \end{minipage}
\end{figure}
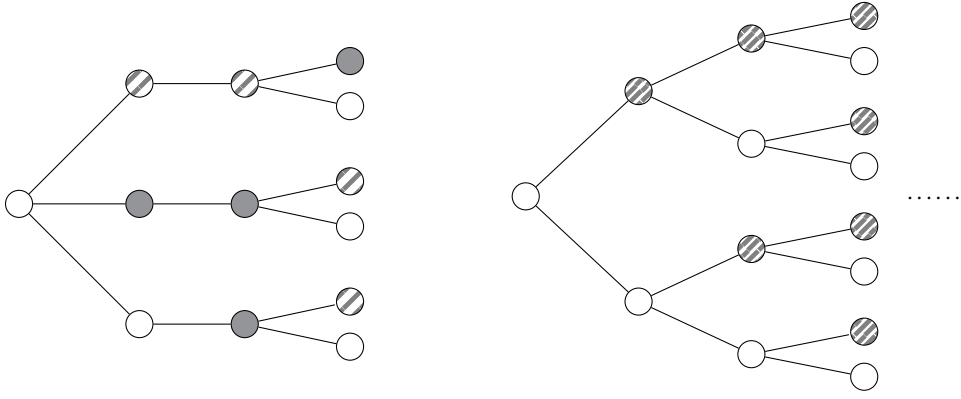

In contrast to \Cref{thm: subtree}, \Cref{eq: st} is typically not ideal
for scenario trees with more than $\nrevisions+2$ stages.
\begin{proposition}
    \Cref{eq: st} is not ideal for $\revisableset{\nrevisions}$, even when $\nstages=4$ and $\nrevisions=1$.
\end{proposition}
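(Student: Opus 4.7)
The plan is to exhibit the fractional point $\bar{x}$ depicted in the left panel of \Cref{fig:st fractional extreme points} and certify it is a vertex of $P_{\text{ST}}$, which will establish non-idealness. Concretely, I take $\tree$ to be the scenario tree with root $r$, three stage-2 children $a_1,a_2,a_3$, a unique stage-3 descendant $b_i$ of each $a_i$, and two stage-4 children $c_{i,1},c_{i,2}$ of each $b_i$, for a total of $|\nodes|=13$ nodes. The candidate point is $\bar{x}(r)=\bar{x}(a_1)=\bar{x}(c_{i,1})=0$ for $i\in[3]$, $\bar{x}(a_2)=\bar{x}(b_1)=\bar{x}(b_2)=\bar{x}(c_{3,2})=1$, and $\bar{x}(a_3)=\bar{x}(b_3)=\bar{x}(c_{1,2})=\bar{x}(c_{2,2})=\sfrac{1}{2}$.

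First I would verify $\bar{x}\in P_{\text{ST}}$ by enumerating all ELBE subtrees of height $\nrevisions+1=2$ in $\tree$. Because each stage-2 node has a unique stage-3 descendant, such a subtree must have root $r$, its two subtree-children at a common stage (2 or 3), and its four subtree-leaves at stage 4. This yields exactly six ELBE subtrees: three with subtree-children drawn from pairs of $\{a_1,a_2,a_3\}$ and three from pairs of $\{b_1,b_2,b_3\}$. Evaluating $\sum_{\{u,v\}\in\sib{\calS}}|\bar{x}(u)-\bar{x}(v)|$ on each of the six subtrees shows every sum is at most $2^{\nrevisions+1}-2=2$, confirming $\bar{x}\in P_{\text{ST}}$; moreover, exactly five of the sums equal $2$, the only slack occurring for the subtree with subtree-children $\{b_1,b_2\}$.

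Finally, I would certify that $\bar{x}$ is a vertex. The nine integer-valued coordinates of $\bar{x}$ contribute nine linearly independent tight bound inequalities. Each of the five tight subtrees admits an orientation $\vec{\calS}$ under which the linear inequality $\sum_{(u,v)\in\sib{\vec{\calS}}}(\bar{x}(u)-\bar{x}(v))\leqslant 2$ holds at equality. Substituting the nine integer coordinates reduces these to a linear system on the four fractional coordinates $(a_3,b_3,c_{1,2},c_{2,2})$; I expect a direct inspection of the associated $5\times 4$ coefficient matrix to confirm it has rank $4$ with unique solution $(\sfrac{1}{2},\sfrac{1}{2},\sfrac{1}{2},\sfrac{1}{2})$. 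Combined with the bound equalities, this yields $13$ linearly independent tight constraints in $\mathbb{R}^{13}$, certifying $\bar{x}$ as a fractional vertex of $P_{\text{ST}}$ and hence that \Cref{eq: st} is not ideal. The only step requiring care is selecting the appropriate orientations for the five tight subtrees and verifying the rank of the resulting small system, which I anticipate to be a routine check presenting no substantive obstacle.
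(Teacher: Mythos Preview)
Your proposal is correct and follows exactly the paper's approach: you exhibit the same fractional point from the left panel of \Cref{fig:st fractional extreme points} and verify it is a vertex of $P_{\text{ST}}$. The paper's own proof simply asserts that the point ``can be verified to be a vertex,'' whereas you spell out the verification (the six height-$2$ ELBE subtrees, the five tight ones, and the rank computation on the four fractional coordinates), all of which checks out.
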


\begin{proof}
Consider the scenario tree presented in the left panel of \Cref{fig:st fractional extreme points} and consider the solution $\x$ where the decision at each node $v$ is represented by its shading: $x(v)=1$ for gray nodes, $x(v)=\sfrac{1}{2}$ for striped gray nodes, and $x(v)=0$ for white nodes.
%\blue{
%By considering subtree constraints associated with all oriented ELBE subtrees with height 2, we can verify that this fractional solution belongs to $P_{\text{ST}}$.
% \byjp{Maybe add, "By considering subtree constraints associated with all CLOBE subtree with height 2, we can verify that this fractional solution belongs to $P_{\text{ST}}$."}
%It is simple to verify that this fractional solution is a vertex of $P_{\text{ST}}$.}
%\red{By considering subtree constraints associated with all oriented ELBE subtrees with height 2 and variable bounds, 
This fractional solution can be verified to be a vertex of $P_{\text{ST}}$.
\end{proof}

Further, we observe that \Cref{eq: st} may get weaker when the tree grows taller.
%\byjp{Previous sentence is odd. Is it something we see in practice? If so, use that as a segue...}
\Cref{ex: st for tall tree} shows an example where optimizing over the LP relaxation of \Cref{eq: st} can lead to an objective that is arbitrarily far from the true objective of optimizing over $\revisableset{\nrevisions}$.
% \byjp{But (ST) has integrality requirements included... so there should not be differences... I guess you mean the LP relaxation...}

\begin{example}  \label[example]{ex: st for tall tree}
Consider the $1$-revision Hypercube problem defined on the perfect binary scenario tree that is depicted on the right panel of \Cref{fig:st fractional extreme points}, where the tree has height $\nstages$.
% \byjp{is it 1-revision hypercube problem?}
% \byjp{I guess we want to say Figure 9,  not Example 4 here.}
%Consider the scenario tree is a perfect binary tree with a large $\nstages$ and .
Construct an objective vector that takes value $+2^{\nstages-t}$ and $-2^{\nstages-t}$ respectively for each sibling pair in stage $t\in[\nstages]$.
% \byjp{For the next sentence, maybe say: The right panel of Figure 9 presents a feasible fractional solution for PST}
The right panel of \Cref{fig:st fractional extreme points} presents a fractional solution of $P_{\text{ST}}$, in which the value at each node $v$ is represented by its shading: $x(v)=\sfrac{2}{3}$ if $v$ is striped gray, and $x(v)=0$ if $v$ is white. 
For each 2-height ELBE subtree $\calS$, we have $\sum_{\{u,v\} \in \sib{\calS}} \big\vert \x{u} - \x{v} \big\vert = 3 \cdot \sfrac{2}{3} = 2$.
% \byjp{Should we say $\le 3 \cdot 2/3$ since we may get $0-2/3$ instead of $2/3-0$.}
Evaluating the objective at this fractional solution shows that the optimal value of the LP relaxation is at least $\sfrac{2}{3} \cdot \nstages \,2^{\nstages-1}$, whereas we can show that the optimal integer solution has objective value $\sum_{t=1}^\nstages 2^{\nstages-t} = 2^{\nstages}-1$ by induction on $\nstages$.
% \byjp{Should we specifically say that evaluating the objective at this fractional solutions shows that the LP optimal value is at least... ?}
% \byjp{Instead of "it is simple to show that the optimal integer ..." should we give an intuitive reason for it? I think that can be seen by induction... Is there something simpler?}
Hence, the optimal objective of the LP relaxation is at least a factor of order $\nstages$ times the true objective.
\end{example}

%%%%%%%%%%%%%%%%%%%%%%%
%%%%%%%%%%%%%%%%%%%%%%%%%%%%%%%%%%%%%%%%%%%%%%
%%%%%%%%%%%%%%%%%%%%%%%%%%%%%%%%%%%%%%%%%%%%%%%%%%%%%%%%%%%%%%%%%%%%%
\section{Tightening and reformulation techniques} \label{sec: methods}
%%%%%%%%%%%%%%%%%%%%%%%%%%%%%%%%%%%%%%%%%%%%%%%%%%%%%%%%%%%%%%%%%%%%%
%%%%%%%%%%%%%%%%%%%%%%%%%%%%%%%%%%%%%%%%%%%%%%
%%%%%%%%%%%%%%%%%%%%%%%

In \Cref{sec: formulations} we introduced two formulations for the $\nrevisions$-revision constraint. 
We also established that \Cref{eq: cp} is not sharp and that \Cref{eq: st} has a large number of constraints. 
In this section, we derive results that alleviate these limitations. 
%further analyze these formulations to alleviate these limitations. 

%%%%%%%%%%%%%%%%%%%%%%%
%%%%%%%%%%%%%%%%%%%%%%%%%%%%%%%%%%%%%%%%%%%%%%
\subsection{Enhancing the subtree formulation} \label{subsec: subtree enhancement}
%%%%%%%%%%%%%%%%%%%%%%%%%%%%%%%%%%%%%%%%%%%%%%
%%%%%%%%%%%%%%%%%%%%%%%
The main limitation of \Cref{eq: st} in practice is its size, as it requires the introduction of a constraint for each oriented ELBE subtree of $\tree$.
In fact, for the case where $\tree$ is a perfect binary scenario tree with $\nstages$ stages, the number of $(\nrevisions+1)$-height oriented ELBE subtrees can be shown to be $\Theta(|\nodes|^{2^{\nrevisions+1}})$ when $\nrevisions \ll \nstages$.
To handle the rapid growth in the number of subtree constraints in \Cref{eq: st}, we derive a separation algorithm. 
Then, inspired by that procedure, we propose an extended formulation of the same strength as \Cref{eq: st} that requires much fewer constraints, without adding too many variables.  
% \byjp{Should we say without adding too many variables or something like that?}

%%%%%%%%%%%%%%%%%%%%%%%
\subsubsection{Subtree constraint generation algorithm} \label{subsubsec: st separation}
%%%%%%%%%%%%%%%%%%%%%%%
% \byjp{Should x be fractional everywhere in this section? NOT CLEAR FROM THE WRITING}
% \blue{To find a violated subtree constraint given a fractional solution $\x$, it is sufficient to find a corresponding $\x$-inconsistent ELBE subtree.
% Once such an ELBE subtree is identified, a violated subtree constraint is derived by the orientation where each pair $(u,v)$ is ordered such that $\x{u} \geqslant \x{v}$.
% We define the \emph{inconsistent value} of an ELBE subtree $\calS$ for a given solution $\x$ to be
% \begin{equation} \label{eq: x-inconsistent-value}
%     \Delta^\calS_x := \sum_{\{u,v\} \in \sib{\calS}} \big\vert \x{u} - \x{v} \big\vert.
% \end{equation}
% We can determine if a violated subtree constraint exists by finding a height-$(\nrevisions+1)$ ELBE subtree with largest inconsistent value in the scenario tree $\tree$.
% A violated subtree constraint exists if and only if that value is larger than $2^{\nrevisions+1}-2$.
% }

To find a violated subtree constraint given a fractional solution $\x$, we define the \emph{inconsistent value} of an ELBE subtree $\calS$ for a given solution $\x$ to be
\begin{equation} \label{eq: x-inconsistent-value}
    \Delta^\calS_x := \sum_{\{u,v\} \in \sib{\calS}} \big\vert \x{u} - \x{v} \big\vert.
\end{equation}
We then search a height-$(\nrevisions+1)$ ELBE subtree with largest inconsistent value in the scenario tree $\tree$.
A violated subtree constraint exists if and only if that value is larger than $2^{\nrevisions+1}-2$.
Once such an ELBE subtree is identified, a violated subtree constraint is derived by the orientation where each pair $(u,v)$ is ordered such that $\x{u} \geqslant \x{v}$.

Next, we describe how to compute largest inconsistent value using \DP.
% \byjp{for the subtree inequalities of (ST)?}
The core idea of this \DP{} algorithm is that the most inconsistent ELBE subtree in $\tree(v)$ can be derived from the most inconsistent ELBE subtrees in $\{\tree(u)\}_{u \in \children{v}}$.
We have used this idea in the proof of \Cref{thm: subtree}, where in \Cref{eq: Delta v} we define the largest inconsistent value among ELBE subtrees rooted at a node $v$ in the tree.
There, since $\nstages = \nrevisions+2$, we only need to consider ELBE subtrees with height the same as $\tree(v)$ for the node $v$.
However, for the general case $\nstages > \nrevisions+2$, this is not sufficient.
We therefore extend the notation of \Cref{eq: Delta v} as follows. 
Let $\Delta_x(v,h)$ denote the inconsistent value of the most inconsistent ELBE subtree with height $h$ in $\tree(v)$.
We extend \Cref{eq: Delta v recurrence} to obtain the following recurrence relation for $\Delta_x(v,h)$:
\begin{equation} \label{eq: DP relation}
\begin{aligned}
     \Delta_x(v,h) &:= \max_{\calS \in \subtreefamily{h}{\tree(v)}} \Delta^\calS_x \\
    & = \max \left\{  \max_{\substack{p, q \in \descendant{v}: \\ \stage{p}=\stage{q},  p \join q = v}} \left\{ \begin{array}{c} \Delta_x(p,h-1) \\ + \Delta_x(q,h-1) \\ + (\x{p} - \x{q}) \end{array}\right\}, \max_{u \in \children{v}} \big\{ \Delta_x(u,h) \big\}  \right\}.
\end{aligned}
\end{equation}
The recurrence captures two cases for the most inconsistent height-$h$ ELBE subtree within $\tree(v)$: either its root is $v$ itself, or it is entirely contained within a smaller $\tree(u)$ where $u$ is a child of $v$.
With the initial values $\Delta_x(v,0)=0$ for all $v \in \nodes$,
a direct implementation of recurrence \eqref{eq: DP relation} yields an algorithm to compute $\Delta(\root, \nrevisions+1)$.

For each $v$, not all $\Delta_x(v,h)$, where $h$ in $[\nrevisions+1]$ are needed.
For example, it is not possible to find a subtree of height $h$ when $h > \nstages-\stage{v}$.
%trees with height $h > \nstages-\stage{v}$ are out of the height of $\tree(v)$, and thus can be excluded.
Further, trees with height $h \leqslant \nrevisions - \stage{v}$ are not important because they are too small to be a part of a $\nrevisions+1$-height ELBE subtree.
Using $\calH(v,\nrevisions)$ to denote a set of heights that it is sufficient to consider at a node $v$ for the recurrence, the above discussion shows that we can choose $\calH(v,\nrevisions) = [\max\{1, \nrevisions-\stage{v}+1\} : \min\{\nrevisions+1, \nstages-\stage{v}\}]$.
%\byjp{If we define H to be THE set of things sufficient... then can we really give an equality sign in the last expression?}
%We can further record the most inconsistent ELBE subtree associated with each $\Delta_x(v, h)$, which leads to a separation algorithm for subtree constraints.

We refer to \Cref{alg: subtree separation alg} in Appendix~\ref{sec: constraint generation algorithm} for a pseudo-code description of the resulting separation algorithm. 
The following proposition establishes the correctness of \Cref{eq: DP relation}.
\begin{proposition} \label[proposition]{prop: stdp correctness}
Given a solution $x \in [0,1]^\nodes$, the largest inconsistent value $\Delta_x(\root, \nrevisions+1)$ can be found through the recurrence \Cref{eq: DP relation} with initial values $\Delta_x(v,0)=0$ for all $v \in \nodes$.
It requires $\calO(|\nodes|^2\nrevisions)$ steps to compute.
\end{proposition}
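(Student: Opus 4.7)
The plan is to establish correctness by induction on the height parameter $h$, and then to bound the work by summing over states of the \DP. Throughout, I keep in mind that \Cref{eq: DP relation} writes $(\x{p}-\x{q})$ rather than $|\x{p}-\x{q}|$ because the outer maximum ranges over \emph{ordered} pairs $(p,q)$: swapping the roles of $p$ and $q$ recovers the absolute value, so the recurrence effectively maximizes $\Delta_x(p,h-1)+\Delta_x(q,h-1)+|\x{p}-\x{q}|$.

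For correctness, I would first verify the base case $h=0$, where $\subtreefamily{0}{\tree(v)}$ consists of the single-node ELBE subtree $\{v\}$ with no sibling pairs, giving $\Delta_x(v,0)=0$. For the inductive step, fix $h\ge 1$ and consider any $\calS\in\subtreefamily{h}{\tree(v)}$ achieving the maximum inconsistent value. Exactly one of the following holds: either (i) $\subroot{\calS}=v$, in which case the two children of $v$ in $\calS$ are nodes $p,q\in\descendant{v}$ with $\stage{p}=\stage{q}$ (by the equi-level property of sibling pairs) and $p\join q=v$, and the two sub-ELBE-subtrees of $\calS$ rooted at $p$ and $q$ are height-$(h{-}1)$ ELBE subtrees of $\tree(p)$ and $\tree(q)$ respectively; or (ii) $\subroot{\calS}\neq v$, and since $\nodes(\calS)\subseteq\descendant{v}\cup\{v\}$ and $\subroot{\calS}$ is the join of the other nodes of $\calS$, all of $\calS$ lies within $\tree(u)$ for exactly one $u\in\children{v}$. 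In case (i), $\Delta^\calS_x=\Delta^{\calS_p}_x+\Delta^{\calS_q}_x+|\x{p}-\x{q}|$, which by the induction hypothesis is bounded above by the first branch of the max in \Cref{eq: DP relation} and is attainable by choosing the inner maxima at $p$ and $q$. In case (ii), $\Delta^\calS_x\le\Delta_x(u,h)$, matching the second branch. Conversely, each branch of the max in \Cref{eq: DP relation} can be realized by a concrete ELBE subtree assembled from the arg-max subtrees of the sub-recurrences, so the recurrence equals $\max_{\calS\in\subtreefamily{h}{\tree(v)}}\Delta^\calS_x$.

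For complexity, there are at most $|\nodes|$ nodes $v$ and, for each, at most $|\calH(v,\nrevisions)|=\calO(\nrevisions)$ relevant heights, so the \DP{} table has $\calO(|\nodes|\cdot\nrevisions)$ entries. The cost of each entry is dominated by the first branch, whose evaluation scans ordered pairs $(p,q)$ with $\stage{p}=\stage{q}$ and $p\join q=v$. Summing over $v$, every ordered pair of same-stage nodes in $\tree$ contributes to exactly one node $v$ (its join), so the total number of pair evaluations across all $v$ for a fixed $h$ is $\calO(|\nodes|^2)$. The second branch contributes at most $\calO(|\children{v}|)$ per state, which sums to $\calO(|\nodes|)$ per $h$ and is dominated by the pair enumeration. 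Multiplying by $\calO(\nrevisions)$ values of $h$ yields the claimed $\calO(|\nodes|^2\nrevisions)$ bound. Finally, to recover $\Delta_x(\root,\nrevisions+1)$ one reads off the root entry; if one also wants to report a violating ELBE subtree, one stores the arg-max choices during the \DP{} and reconstructs the subtree in $\calO(|\nodes|)$ additional time.

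The only delicate step is the bijective accounting in case (i) of the induction: one must justify that the two children of $\subroot{\calS}$ in an ELBE subtree $\calS$ are indeed same-stage nodes whose join in $\tree$ is exactly $\subroot{\calS}$, and that the two sub-ELBE-subtrees of $\calS$ they root are themselves valid ELBE subtrees of $\tree(p)$ and $\tree(q)$ of height $h-1$. This is immediate from the three defining conditions of an ELBE subtree, but it is the conceptual crux of the argument; the rest is routine induction and a counting bound.
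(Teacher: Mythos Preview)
Your proposal is correct and follows essentially the same approach as the paper's own proof: the two-case decomposition (root of $\calS$ is $v$ versus root lies in some $\tree(u)$ for $u\in\children{v}$) and the complexity bound via the observation that each same-stage pair $(p,q)$ is charged to the unique node $p\join q$. Your write-up is slightly more explicit than the paper's (you spell out the base case, the converse direction, and the ordered-pair interpretation of $(\x{p}-\x{q})$), but the substance is identical.
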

\begin{proof}
    We prove that the recurrence \Cref{eq: DP relation} is correct by showing that the outer $\max$ operation exactly describes  all possible ways a height-$h$ ELBE subtree can be formed within $\tree(v)$.
    For an ELBE subtree $\calS$ within $\tree(v)$, there are only two possibilities.
    On the one hand, it could be that $\calS$ has root $v$. 
    Then $v$ is the join of all the nodes of $\calS$ and thus the join of $p$ and $q$ where $(p,q)$ is a direct sibling pair with parent $v$ in $\calS$.
    This explains the first inner $\max$ in \eqref{eq: DP relation}. 
    On the other hand, it could be that the root of $\calS$ is a descendant of $v$.
    In this case, $\calS$ must be contained in $\tree(u)$ for some child $u$ of $v$.
    This explains the second inner $\max$ in \eqref{eq: DP relation}.  
%    Hence, \Cref{eq: DP relation} is correct because the $\max$ operation covers all possible cases.
%\byjp{Is the only thing to argue is that all cases are covered, or do we also want to argue that we computing something right for each case?}

    Equation \eqref{eq: DP relation} shows that the value of $\Delta_x(\root, \nrevisions+1)$ can be obtained from the values of $\Delta_x(v,h)$, of which there are no more than $|\nodes|\cdot (\nrevisions+1)$. 
    % To compute $\Delta(\root, \nrevisions+1)$, at most $|\nodes|\cdot (\nrevisions+1)$ entries of $\Delta$ are needed.
    Each entry requires the computation of two inner $\max$ operations on entries that can be accessed or calculated in constant time. 
    Hence, it remains to bound the number of terms considered in these inner maximum operations. 
    % For each entry, it requires to enumerate some other pre-calculated entries.
    For each pair $(p,q)$ at the same stage in $\tree$ and a given $h \leqslant \nrevisions$, $\Delta_x(p, h)$ and $\Delta_x(q, h)$ are enumerated exactly once, in the computation of $\Delta_x(p \join q, h+1)$.
    Similarly, for any non-root node $u \in \nodes \setdiff \{\root\}$ and $h \leqslant \nrevisions$, $\Delta_x(u,h)$ only appears once in the computation of $\Delta_x(\parent{u},h)$.
    Therefore, the total number of $\Delta$ terms arising in inner max computations 
    %computation steps for all these $\Delta$ entries 
    is bounded above by $|\nodes|^2 \nrevisions + |\nodes| \nrevisions = \calO(|\nodes|^2 \nrevisions)$.
    %\byjp{Let's rewrite the above paragraph}
\end{proof}
% \byjp{In the proof above, are we assuming two children per node? do we not need to look at all pairs of siblings? TALK ABOUT THIS...}

Comparing the value of $\Delta_x(\root, \nrevisions+1)$ with $2^{\nrevisions+1}-2$ provides a polynomial procedure to determine if a strategic policy $\x$ is $\nrevisions$-revisable. 
Combining this algorithm with binary search on the possible values of $\nrevisions$ from $0$ to $\nstages-1$, we obtain
%From another perspective,
%Through a binary search on the values of $\nrevisions$ from $0$ to $\nstages-1$, we can compute,
%the algorithm to calculate $\Delta_x(\root, \nrevisions+1)$ using recurrence \Cref{eq: DP relation} can also be used to compute,  
%in polynomial time, the minimum revisability number of a strategic policy $\x$.
%determine,  whether  is $\nrevisions$-revisable.
% \byjp{Is it polynomial or pseudo-polynomial? Dependence on $K$ seems to indicate pseudo-polynomial. However, since $K$ is bounded by $T$, polynomial is probably ok. Maybe discuss this more.}
%To this end, we execute this procedure for $\nrevisions$ taking all values from $0$ to $\nstages-1$ until we find the first $\nrevisions$ such that $\x$ is $\nrevisions$-revisable. 
%In this way, we can determine the minimum revisability number of $\x$.
% \byjp{We run for each $K$ and then do what? Some detail is missing...} 
\begin{corollary} \label[corollary]{cor: minimum revisability number}
    % There is a polynomial algorithm to find the minimum revisability number of a given strategic policy $\x$.
    The minimum revisability number of a given binary strategic policy $\x$ can be determined in polynomial time.
\end{corollary}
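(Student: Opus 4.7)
The plan is to combine the separation algorithm implied by \Cref{prop: stdp correctness} with the combinatorial characterization of revisability in \Cref{thm: subtree char}. First, I would observe that for a binary strategic policy $\x$ and any ELBE subtree $\calS$ of height $h$, each sibling pair in $\sib{\calS}$ contributes exactly $1$ to $\Delta_\x^\calS$ defined in \Cref{eq: x-inconsistent-value} when its two endpoints disagree and $0$ otherwise. Since a height-$h$ perfect binary tree contains $2^h-1$ sibling pairs, $\calS$ is $\x$-inconsistent if and only if $\Delta_\x^\calS = 2^h - 1$.

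Combining this observation with \Cref{thm: subtree char}, $\x$ is $\nrevisions$-revisable if and only if every height-$(\nrevisions+1)$ ELBE subtree has inconsistent value at most $2^{\nrevisions+1}-2$, which by the definition of $\Delta_\x(\root, \nrevisions+1)$ in \Cref{eq: DP relation} is equivalent to $\Delta_\x(\root, \nrevisions+1) \leqslant 2^{\nrevisions+1}-2$. Hence the minimum revisability number of $\x$ is exactly the smallest integer $\nrevisions \in [0:\nstages-1]$ for which this inequality holds; note that $\nrevisions = \nstages-1$ always works since no ELBE subtree has height exceeding $\nstages-1$.

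Next, I would compute, by a single bottom-up sweep of $\tree$, the table $\set{\Delta_\x(v, h) : v \in \nodes,\; 1 \leqslant h \leqslant \nstages - \stage{v}}$ from the initial values $\Delta_\x(v, 0) = 0$ using the recurrence \Cref{eq: DP relation}. An accounting argument analogous to the one in the proof of \Cref{prop: stdp correctness}, applied without the height restriction induced by a fixed $\nrevisions$, shows that this table can be filled in $\order{|\nodes|^2 \nstages}$ time. Having produced the row $\set{\Delta_\x(\root, h)}_{h \in [\nstages]}$, I would then scan $\nrevisions = 0, 1, \ldots, \nstages-1$ in order and return the first $\nrevisions$ for which $\Delta_\x(\root, \nrevisions+1) \leqslant 2^{\nrevisions+1}-2$; the scan itself costs only $\order{\nstages}$.

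There is no substantive conceptual obstacle here: \Cref{thm: subtree char} supplies the exact criterion for $\nrevisions$-revisability, and \Cref{prop: stdp correctness} already provides a polynomial procedure to evaluate that criterion. The only mild subtlety is that the recurrence in \Cref{eq: DP relation} must now be evaluated at all relevant heights simultaneously rather than at a single target $\nrevisions+1$; verifying that the polynomial complexity bound survives this broadened computation, and that the linear scan over candidate budgets does not require a binary search, is the main item to check carefully, but it amounts to only an extra factor of $\nstages$ over \Cref{prop: stdp correctness}.
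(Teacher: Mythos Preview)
Your proposal is correct and follows essentially the same approach as the paper: use the DP of \Cref{prop: stdp correctness} to test $\nrevisions$-revisability via the criterion from \Cref{thm: subtree char}, then search over the candidate budgets $\nrevisions \in [0:\nstages-1]$. The only cosmetic difference is that the paper invokes a binary search over $\nrevisions$ (calling the separation routine $\order{\log \nstages}$ times), whereas you compute the full table of $\Delta_\x(\root,h)$ values in one sweep and then do a linear scan; both are clearly polynomial and rest on the same ingredients.
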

% \byjp{Or should we simply say. The minimum revisability number of a given strategic policy $x$ can be determined in polynomial time?}
%\byjp{is x integer or fractional?}

%%%%%%%%%%%%%%%%%%%%%%%
\subsubsection{An extended formulation}
%%%%%%%%%%%%%%%%%%%%%%%

The concepts behind the above constraint generation algorithm serve as inspiration for a new formulation for $\revisableset{\nrevisions}$.
Specifically, we can turn $\Delta_x(v, h)$ into variables and the recurrence \Cref{eq: DP relation} together with the requirement that $\Delta_x(\root, \nrevisions+1) \leqslant 2^{\nrevisions+1}-2$ into constraints. This leads to the formulation:
\begin{equation} \label{eq: stdp}
\begin{aligned}
    & \mathrlap{\Delta(v, h)  \geqslant \Delta(p, h-1) + \Delta(q, h-1) + (\x{p} - \x{q}),}  \\
    & \hspace{+1cm} && \forall v \in \nodes \setminus \nodes_{\nstages}, \forall h \in \calH(v,\nrevisions),\\
    &&& \forall p, q \in \descendant{v}, p \join q = v, \\
    & \Delta(v, h) \geqslant \Delta(u, h),  
    \quad  && \forall v \in \nodes \setminus \nodes_{\nstages}, \forall u \in \children{v}, \\ 
    &&& \forall h \in \calH(v, \nrevisions) \cap \calH(u, \nrevisions), \\
    & \Delta(v, 0) = 0, \quad && \forall v \in \nodes, \\
    & \Delta(\root, \nrevisions+1)  \leqslant 2^{\nrevisions+1} - 2.
\end{aligned}
\tag{STDP}
\end{equation}
We refer to \Cref{eq: stdp} as the \emph{subtree DP formulation}. 
The following results are immediate consequences of \Cref{thm: subtree} and \Cref{prop: stdp correctness}.
\begin{corollary} \label[corollary]{cor: stdp size}
It holds that $P_{\text{ST}} = \proj{x}{P_{\text{STDP}}}$.
Further, the size of \Cref{eq: stdp} is $\order{|\nodes|^2\nrevisions}$.
\end{corollary}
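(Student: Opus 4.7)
The plan is to establish the two claims separately. For the projection equality $P_{\text{ST}} = \proj{x}{P_{\text{STDP}}}$, I will prove containment in both directions, leveraging \Cref{prop: stdp correctness} which already guarantees that the DP recursion computes the correct maximum inconsistent value.

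For $\proj{x}{P_{\text{STDP}}} \subseteq P_{\text{ST}}$, I would take any $(x, \Delta) \in P_{\text{STDP}}$ and any oriented ELBE subtree $\vec{\calS}$ of height $\nrevisions+1$ rooted at some node $r_\calS$. By induction on $h$, using the first type of constraint applied to each sibling pair in $\vec{\calS}$ with the orientation-consistent sign choice $(p,q) = (u,v)$, I obtain $\Delta(r_\calS, \nrevisions+1) \geqslant \sum_{(u,v) \in \sib{\vec{\calS}}}(x(u) - x(v))$. Iterating the second type of constraint along the $\root$-to-$r_\calS$ path yields $\Delta(\root, \nrevisions+1) \geqslant \Delta(r_\calS, \nrevisions+1)$, and combining with the last inequality of \Cref{eq: stdp} gives the subtree constraint of \Cref{eq: st}. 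I need to check that $\nrevisions+1 \in \calH(v, \nrevisions)$ for every ancestor $v$ of $r_\calS$ so the propagation is actually available, but this follows from the definition of $\calH$.

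For $P_{\text{ST}} \subseteq \proj{x}{P_{\text{STDP}}}$, I would take $x \in P_{\text{ST}}$ and define $\Delta(v,h)$ by the DP recursion of \Cref{eq: DP relation} (with absolute values) as a candidate lift. The first constraint of \Cref{eq: stdp} follows because $\Delta(v,h) \geqslant \Delta(p,h-1) + \Delta(q,h-1) + |x(p)-x(q)| \geqslant \Delta(p,h-1)+\Delta(q,h-1)+(x(p)-x(q))$, and the second follows directly from the recurrence. The key step is observing that the oriented family of subtree constraints defining $P_{\text{ST}}$ is equivalent to the absolute-value bound $\Delta_x(\root,\nrevisions+1) \leqslant 2^{\nrevisions+1}-2$, since taking the best orientation $(u,v)$ with $x(u)\geqslant x(v)$ in each sibling pair recovers the absolute values; hence the final constraint of \Cref{eq: stdp} is satisfied by this lift.

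For the size bound, I would count variables and constraints directly. The $\Delta$ variables are indexed by $(v,h)$ with $h \in \calH(v,\nrevisions)$, giving at most $|\nodes|(\nrevisions+1)$ of them; together with $|\nodes|$ strategic variables this is $\order{|\nodes|\nrevisions}$. The first family of constraints is indexed by triples $(v,h,\{p,q\})$ with $p\join q = v$; since every same-stage pair $\{p,q\}$ in $\tree$ has a unique join, there are at most $\binom{|\nodes|}{2}$ such pairs, each used with at most $\nrevisions+1$ choices of $h$, contributing $\order{|\nodes|^2\nrevisions}$. The second family is indexed by parent-child pairs in $\tree$ (of which there are $|\nodes|-1$) and heights, contributing $\order{|\nodes|\nrevisions}$. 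Summing gives $\order{|\nodes|^2\nrevisions}$.

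The main obstacle I anticipate is the direction $\proj{x}{P_{\text{STDP}}} \subseteq P_{\text{ST}}$: specifically, cleanly formalizing the inductive claim that for every ELBE subtree, choosing the right $(p,q)$ orientations in the first family of constraints yields the oriented inconsistent value, and then correctly threading the ``lifting'' of the height-$(\nrevisions+1)$ bound from $r_\calS$ to $\root$ via the second family. The rest is bookkeeping.
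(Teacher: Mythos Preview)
Your proposal is correct and matches the paper's own (terse) justification, which simply invokes \Cref{prop: stdp correctness}: the minimum feasible $\Delta(\root,\nrevisions+1)$ for a given $x$ is exactly the DP value $\Delta_x(\root,\nrevisions+1)$, and the running-time analysis there is precisely the variable-plus-constraint count you give. One small point to watch when you formalize the direction $\proj{x}{P_{\text{STDP}}} \subseteq P_{\text{ST}}$: for an internal node $w$ of $\calS$ with children $w_1,w_2$ in $\calS$, the ELBE definition only forces $w_1\join w_2 = w$ at the \emph{root} of $\calS$; at deeper levels one may have $w_1\join w_2$ a proper descendant of $w$ in $\tree$. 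Hence the first family of \Cref{eq: stdp} applies at $w_1\join w_2$, and you need the second family to propagate from $w_1\join w_2$ up to $w$ \emph{inside} the induction, not only in the final lift from $r_\calS$ to $\root$. This is exactly the two-case structure of the recurrence \eqref{eq: DP relation}, so it poses no difficulty once noticed.
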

\begin{corollary}  \label[corollary]{cor: stdp sharp}
    \Cref{eq: stdp} is sharp for $\revisableset{\nrevisions}$ when $\nstages\leqslant \nrevisions+2$.
\end{corollary}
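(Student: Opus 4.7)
The plan is to deduce the corollary directly by chaining together two results that have already been established in the excerpt: \Cref{thm: subtree}, which asserts that \Cref{eq: st} is ideal whenever $\nstages \leqslant \nrevisions+2$, and \Cref{cor: stdp size}, which gives the projection identity $P_{\text{ST}} = \proj{x}{P_{\text{STDP}}}$. No new combinatorial or polyhedral argument appears to be needed.

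First, I would unpack what idealness of \Cref{eq: st} gives us. By definition, every basic feasible solution of the LP relaxation $P_{\text{ST}}$ is integral, so $P_{\text{ST}} = \conv(P_{\text{ST}} \cap \{0,1\}^{|\nodes|})$. Since \Cref{eq: st} is a valid MIP formulation for $\revisableset{\nrevisions}$, its set of integer feasible points is exactly $\revisableset{\nrevisions}$, and therefore $P_{\text{ST}} = \conv(\revisableset{\nrevisions})$ in the regime $\nstages \leqslant \nrevisions+2$.

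Next, I would apply the projection identity from \Cref{cor: stdp size} to obtain
\begin{equation*}
\proj{x}{P_{\text{STDP}}} = P_{\text{ST}} = \conv(\revisableset{\nrevisions}).
\end{equation*}
This is precisely the definition of \Cref{eq: stdp} being sharp for $\revisableset{\nrevisions}$, so the corollary follows.

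There is no real obstacle, since the heavy lifting has been done in \Cref{thm: subtree} (whose proof supplies the delicate perturbation argument via \Cref{claim: st integrality - perturb delta}) and in \Cref{prop: stdp correctness} (which underwrites \Cref{cor: stdp size}). The only minor point worth verifying is that the projection identity $P_{\text{ST}} = \proj{x}{P_{\text{STDP}}}$ is not implicitly restricted to any particular range of $\nstages$ or $\nrevisions$; inspection of its derivation shows that it follows from the correctness of the DP recurrence \Cref{eq: DP relation}, which holds on any scenario tree. Hence the short-tree regime $\nstages \leqslant \nrevisions+2$ inherits the identity unchanged, and the sharpness of \Cref{eq: stdp} is immediate.
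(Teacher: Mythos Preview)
Your proposal is correct and matches the paper's approach exactly: the paper states that \Cref{cor: stdp sharp} is an immediate consequence of \Cref{thm: subtree} and \Cref{prop: stdp correctness} (via \Cref{cor: stdp size}), and you have spelled out precisely that chain of implications.
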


%%%%%%%%%%%%%%%%%%%%%%%
%%%%%%%%%%%%%%%%%%%%%%%%%%%%%%%%%%%%%%%%%%%%%%
\subsection{Enhancing the complete plan formulation}
%%%%%%%%%%%%%%%%%%%%%%%%%%%%%%%%%%%%%%%%%%%%%%
%%%%%%%%%%%%%%%%%%%%%%%
% In addition, since (CP) is not sharp as stated in \Cref{prop: cpf is not sharp}, we aim to fix this issue by incorporating ideas from subtree formulations.
% % \byjp{Using "On the other hand" makes sense when the previous sentence opens with "On the one hand"}
% We develop a cutting plane approach to strengthen the (CP) in \Cref{subsubsec: cp facet}.

%%%%%%%%%%%%%%%%%%%%%%%
\subsubsection{A size reduction technique} \label{subsubsec: cp size reduction}
%%%%%%%%%%%%%%%%%%%%%%%

%We consider the size of \Cref{eq: cp}.
%The number of constraints of \Cref{eq: cp} is the same order of the number of its variables. 
The number of constraints and variables of \Cref{eq: cp} are of the same order. 
The number of variables of \Cref{eq: cp} is mainly determined by the number of $\plan$ variables.
Since for a node $v \in \nodes$ we define $\plan{v,\stage{v}}$ for all $t \in [\stage{v}:\nstages]$, the total number of $\plan$ variables is
$\sum_{t \in [\nstages]} |\nodes_t| \cdot (\nstages - t + 1) = \calO(|\nodes| \nstages)$,
where the order bound is tight for ``tall and thin'' trees.
%\byjp{Should we use [:] instead of ...?}
%\red{Specifically, consider a tree that is a path, where every non-leaf node has exactly one child.
%It is easy to verify that the number of $\plan$ variables is $\sfrac{|\nodes| \nstages}{2}$.}
Specifically, it is tight for paths, where the number of $\plan$ variables is $\sfrac{|\nodes| \nstages}{2}$.
% \byjp{The path discussion seems a bit laborious, can we shorten?}

However, in the path example, there is no need to introduce that many $\plan$ variables, as no revisions are necessary. 
In fact, the formulation only requires $\plan$ variables at the root node for the entire path.
% \byjp{Should we write: "However, in the path example, we do not need to introduce that many $\plan$ variables, as none of the nodes require revision.}
%\red{Using this idea, we can reduce the size of \Cref{eq: cp} and improve the bound on the number of $\plan$ variables and thus the entire formulation $\order{|\nodes|\nstages}$.}
Using this idea, we can reduce the size of \Cref{eq: cp} by removing specific $\plan$ variables.
%\byjp{suitable seems like a strange word here...}
% \byjp{What is this bound on? Number of $\pi$ variables?}
We say that a non-root node $v$ is an \emph{only child} node if and only if its parent has only one child, $v$.
Let
\begin{equation*}
    \nodes_{\textnormal{only}} := \{ v \in \nodes \setdiff \{\root\} : \vert \children{\parent{v}} \vert = 1 \}
\end{equation*}
denote the set of only child nodes of $\tree$.
%For example, every non-leaf node in the above described path tree is an only child node.
% \byjp{Why is a leaf not an only child here? Did you mean to say non-root?}
For example, in the left panel of \Cref{fig:st fractional extreme points}, all third-stage nodes are only child nodes.
% \byjp{There are no hearts and diamonds in Figure 4...}
The plan adjustment variables $\plan{v,t}$ for an only child node $u$ and stage $t \in [\stage{v}:\nstages]$ can be fixed to the value of its parent node $\parent{u}$ at stage $t$, \emph{i.e.}, we can enforce $\plan{v,t}=\plan{\parent{v},t}$ and $r(v)=0$.
These restrictions change the feasible region in the space of $\plan$ and $r$ variables but do not change the projection onto the space of $\x$ variables, \emph{i.e.}, the resulting model is still a formulation of $\revisableset{\nrevisions}$.
%\byjp{is there a better way to say this?}
%\byjp{Is that true? In the path example, does that no prevent us from implementing useless revisions?}
%\red{We refer to the formulation obtained after applying the above technique to \Cref{eq: cp} as \Cref{eq: cp+}.
%Specifically, we can discard variables $\plan{v, t}$ and $r(v)$ for all $v \in \nodes_{\textnormal{only}}$ from \Cref{eq: cp}, and have the following formulation:}
Discarding variables $\plan{v, t}$ and $r(v)$ for all $v \in \nodes_{\textnormal{only}}$ from \Cref{eq: cp} yields the following formulation:

\begin{equation} \label{eq: cp+}
\begin{aligned}
    &\x{v} = \plan{v, \stage{v}}, \quad && \forall v \in \nodes \setdiff \nodes_{\textnormal{only}}, \\
    &\x{v} = \plan{\overbar{\operatorname{pa}}(v), \stage{v}}, \quad && \forall v \in \nodes_{\textnormal{only}}, \\
    &\rv{v} \geqslant \plan{v,t} - \plan{\overbar{\operatorname{pa}}(v),t}, \quad && \forall v \in \nodes \setdiff (\set{\root} \cup \nodes_{\textnormal{only}}), t \in [\stage{v}:\nstages],  \\
    &\rv{v} \geqslant \plan{\overbar{\operatorname{pa}}(v),t} - \plan{v,t}, \quad && \forall v \in \nodes \setdiff (\set{\root}  \cup \nodes_{\textnormal{only}}), t \in [\stage{v}:\nstages], \\
    &\textstyle \sum_{v \in \scenario \setdiff \nodes_{\textnormal{only}}} \rv{v} \leqslant \nrevisions, \quad && \forall \scenario \in \scenarioset, \\
    &\x{v} \in \Bin, \quad && \forall v \in \nodes, \\  
    & \plan{v,t} \in \Bin,\; \rv{v} \in \Bin, \quad && \forall v \in \nodes \setdiff \nodes_{\textnormal{only}}, t \in [\stage{v}:\nstages],  \\
\end{aligned}
\tag{CP+}
\end{equation}
where the operator $\overbar{\operatorname{pa}}(v)$ finds the nearest ancestor of $v$ that is not an only child.
%\red{\Cref{eq: cp+} successfully reduces the size by a factor $\nstages$ over the size bound for \Cref{eq: cp}.
%The following proposition formally establishes this property:}
The size of \Cref{eq: cp+} is smaller than that of \Cref{eq: cp} by a factor $\nstages$, as we record next. 
\begin{proposition} \label[proposition]{prop: cpf size reduction}
    \Cref{eq: cp+} has size $\order{|\nodes|}$.
\end{proposition}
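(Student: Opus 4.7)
The plan is to enumerate every class of variables and constraints appearing in \Cref{eq: cp+}, bound each class in terms of $|\nodes|$, and observe that the only non-trivial contribution reduces to bounding
\begin{equation*}
S(\tree) := \sum_{v \in \nodes \setdiff \nodes_{\textnormal{only}}} (\nstages - \stage{v} + 1).
\end{equation*}
First I would note the easy contributions: the $\x$ variables and the compatibility constraints each contribute $|\nodes|$ terms; the $\rv$ variables contribute $|\nodes \setdiff (\nodes_{\textnormal{only}} \cup \{\root\})| \leqslant |\nodes|$; the revision-budget constraints contribute $|\scenarioset|$, which equals the number of leaves of $\tree$ and so is at most $|\nodes|$. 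The remaining two classes---the $\plan$ variables and the revision-linking inequalities---are indexed exactly by pairs $(v,t)$ with $v \in \nodes \setdiff \nodes_{\textnormal{only}}$ and $t \in [\stage{v}:\nstages]$, so their contribution is $\order{S(\tree)}$. Thus the total size is $\order{|\nodes| + S(\tree)}$, and it suffices to prove $S(\tree) = \order{|\nodes|}$.

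To bound $S(\tree)$ I would prove the sharper inequality $S(\tree) \leqslant 2|\nodes| - \nstages$ by induction on $|\nodes|$. The base case $|\nodes|=1$ is immediate. For the inductive step I would split on the degree of the root. If $\root$ has a unique child $c$, then $c \in \nodes_{\textnormal{only}}$ and the only-child status of every other node is the same in $\tree$ and in the subtree $\tree' := \tree(c)$ (viewed as a tree with $\nstages-1$ stages). Since $c$ contributes $\nstages - 1$ to $S(\tree')$ as the root of $\tree'$ but is excluded from $S(\tree)$, while $\root$ contributes $\nstages$ to $S(\tree)$, one obtains $S(\tree) = 1 + S(\tree')$, and applying the inductive hypothesis closes the case. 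If $\root$ has $k \geqslant 2$ children $c_1,\dots,c_k$ with subtrees $\tree_i$ of sizes $n_i$, then each $c_i \notin \nodes_{\textnormal{only}}(\tree)$ and the decomposition $S(\tree) = \nstages + \sum_i S(\tree_i)$ followed by the inductive hypothesis yields
\begin{equation*}
S(\tree) \leqslant \nstages + \sum_{i=1}^k \bigl(2n_i - (\nstages-1)\bigr) = 2|\nodes| - \nstages - (k-2)(\nstages-1),
\end{equation*}
which is at most $2|\nodes| - \nstages$ whenever $k \geqslant 2$ and $\nstages \geqslant 1$.

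Combining this bound with the easy counts from the first step gives that \Cref{eq: cp+} has $\order{|\nodes|}$ variables and $\order{|\nodes|}$ constraints, as claimed.

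The main obstacle is choosing the right form of the inductive hypothesis. The ``obvious'' bound $S(\tree) \leqslant 2|\nodes|$ is \emph{not} preserved by the branching-case recursion because the root contributes an extra $\nstages$ that the inductive hypothesis does not offset. Strengthening the hypothesis to $S(\tree) \leqslant 2|\nodes| - \nstages$ creates exactly the right amount of slack: each of the $k \geqslant 2$ child subtrees provides a saving of $\nstages - 1$, and these savings collectively dominate the $\nstages$ contributed by the root.
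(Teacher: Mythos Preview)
Your proposal is correct and essentially identical to the paper's argument: both reduce the problem to bounding the number of $\plan$ variables, prove the strengthened estimate $S(\tree)\le 2|\nodes|-\nstages$ by induction, and split the inductive step according to whether the root has one child or at least two. The only cosmetic difference is that you induct on $|\nodes|$ while the paper inducts on $\nstages$, and you spell out the ``easy'' variable and constraint counts that the paper leaves implicit.
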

% \byjp{Same question about useless revisions.}
\begin{proof}
%\red{    It suffices to show that after discarding the specified $\plan$ variables, the total number of variables decreases to $\calO(|\nodes|)$.}
It suffices to count the number of $\plan{\cdot,\cdot}$ variables, where the size of the vector $\plan$ at each node $v$ is either $\nstages-\stage{v}+1$ if $v$ is not an only child node, and  $0$ otherwise.

    Let $F$ denotes the number of $\plan(\cdot, \cdot)$ variables used in the formulation. 
    We use induction on $\nstages$ to prove that $F \leqslant 2|\nodes|-\nstages$, which will directly establish that $F = \calO(|\nodes|)$.
    For $\nstages=1$, the statement clearly holds as the formulation requires the single plan variable $\plan{\root, 1}$.
    For $\nstages>1$, let $F_v$ denote the number of $\plan$ variables used in the subtree $\tree(v)$ for $v \in \children{\root}$. 
    Then $F = \nstages + \sum_{v \in \children{\root}} F_v$.
    If $\children{\root} = \{v\}$, then we discard $\nstages-1$ variables at $v$, and thus it is easy to see (through induction) that $F_v \leqslant 2|\nodes(v)|-(\nstages-1) - (\nstages-1) = 2(|\nodes|-1)-2(\nstages-1) = 2|\nodes|-2\nstages$.
    Thus, $F \leqslant 2|\nodes| - \nstages$.
    If $ |\children{\root}| \geqslant 2$, we have $F_v \leqslant 2|\nodes(v)| - (\nstages-1)$ for every $v \in \children{\root}$ by induction, which implies $F \leqslant \nstages + \sum_{v \in \children{\root}} 2|\nodes(v)| - |\children{\root}|(\nstages-1) = 2|\nodes| - \nstages - (|\children{\root}|-2)(\nstages-1)$, where the equality holds because $\sum_{v \in \children{\root}} |\nodes(v)| = |\nodes|-1$.
    Since $|\children{\root}| \geqslant 2$, we conlcude that $F \leqslant 2|\nodes| - \nstages$.
\end{proof}
%\byjp{Let's discuss the proof}

% \byjp{Should we create a display to formally define \Cref{eq: cp+}?}

%%%%%%%%%%%%%%%%%%%%%%%
\subsubsection{A new class of facet-defining inequalities} \label{subsubsec: cp facet}
%%%%%%%%%%%%%%%%%%%%%%%
% \byjp{Title is weird. Why methods?}
In \Cref{prop: cpf is not sharp}, we have shown that \Cref{eq: cp} is not sharp for the set $\revisableset{\nrevisions}$, even for the case $\nstages=3$ and $\nrevisions=1$.
%This may lead to solution with bad quality for short trees.
%Specifically, there exists integer solution $\x$ such that $\x \in P_{\text{CP}}$ but $\x \notin \revisableset{\nrevisions}$.
A key strength of \Cref{eq: st} in this case is that it is ideal. 
We could therefore think of using the subtree constraints to strengthen \Cref{eq: cp}.
Unfortunately, even though these constraints are facet-defining for $\revisableset{\nrevisions}$ in the space of $\x$, they are not facet-defining in the space of $(\x, \plan, r)$.
The following theorem shows that we can indeed modify them so as to make them facet-defining in the space of $(\x, \plan, r)$. 
% \byjp{Is the sentence above necessary?}

\begin{theorem} \label{thm: cp_facets}
    Let $\calS$ be an ELBE subtree with height $\subheight{\calS} \in [\nrevisions]$. 
    Assume that the stage of root $\subroot{\calS}$ is strictly larger than $\nrevisions-\subheight{\calS}$. 
    Then for any oriented version $\vec{\calS} \in \operatorname{ori}(\calS)$, the inequality 
    \begin{equation} \label{eq: cp_facets}
        \sum_{w \leq \subroot{\calS}} r(w) + \sum_{(u,v) \in \sib{\vec{\calS}}} (x(u)-x(v)) \leqslant \nrevisions - \subheight{\calS} + 2^{\subheight{\calS}} - 1
    \end{equation}
    defines a facet of $\conv{P_{\textnormal{CP}}^I}$.
\end{theorem}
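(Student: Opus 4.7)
My plan is to prove \Cref{thm: cp_facets} in two stages: first, establish the validity of \eqref{eq: cp_facets} through a combinatorial lemma generalizing \Cref{thm: subtree char}, and second, show that the face $F$ it defines has codimension one in $\conv{P_{\textnormal{CP}}^I}$ by exhibiting a rich family of feasible points on $F$ that pin down any valid inequality tight on $F$. The overall template parallels the proof of \Cref{thm: subtree facet}, but the argument will be more delicate because of the coupling between the auxiliary $\plan$ and $r$ variables in \Cref{eq: cp}.

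For validity, I would prove the following lemma: if a binary strategic policy on $\tree(\subroot{\calS})$ admits an $r'$-revisable plan adjustment policy relative to some fixed plan $\plan{\subroot{\calS},\cdot}$, then for any ELBE subtree of $\tree(\subroot{\calS})$ rooted at $\subroot{\calS}$ of height $h$, the number of inconsistent sibling pairs (call it $I$) is at most $f(h, r') := 2^h - 2^{\max(h-r', 0)}$. This follows by induction on $h$ via the recurrence $f(h, r') = \max\{2 f(h-1, r'),\, 1 + f(h-1, r'-1) + f(h-1, r')\}$: the two terms account for whether the root sibling pair of the ELBE subtree is consistent (both height-$(h-1)$ branches retain the full budget $r'$) or inconsistent (at least one branch absorbs an extra revision at its root); a direct computation shows the first term always dominates. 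Applying the lemma with $r' = \nrevisions - r_0$, where $r_0 = \sum_{w \le \subroot{\calS}} r(w)$, yields validity of \eqref{eq: cp_facets}: when $r_0 \le \nrevisions - \subheight{\calS}$ the lemma gives $r_0 + I \le (\nrevisions - \subheight{\calS}) + (2^{\subheight{\calS}} - 1)$ trivially, and when $r_0 > \nrevisions - \subheight{\calS}$, setting $m = \subheight{\calS} - \nrevisions + r_0 \ge 1$ and using $2^m \ge m + 1$ completes the estimate.

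For the facet argument, I would first exhibit a canonical tight point $\bar{z} = (\bar{x}, \bar{\pi}, \bar{r}) \in F$: place revisions at exactly $\nrevisions - \subheight{\calS}$ nodes on the ancestor path $[\root, \subroot{\calS}]$, which is feasible by the hypothesis $\stage{\subroot{\calS}} > \nrevisions - \subheight{\calS}$, and then within $\tree(\subroot{\calS})$ design a plan adjustment policy that uses exactly $\subheight{\calS}$ revisions per scenario to make every oriented pair $(u,v) \in \sib{\vec{\calS}}$ satisfy $\bar{x}(u) = 1$, $\bar{x}(v) = 0$. Given any valid inequality $\alpha^\top(x,\pi,r) \le \beta$ tight on $F$, I would construct perturbations of $\bar{z}$ in $F$ to determine each component of $\alpha$: (a) for any node $w$ outside $\nodes(\calS) \setminus \{\subroot{\calS}\}$ and not on the ancestor path, toggle $\x{w}$ together with the coupled $\plan{w, \stage{w}}$ to force the joint coefficient to zero; (b) for any plan component $\plan{w, t}$ with $t > \stage{w}$, toggle the plan value while compensating via descendant $r$ variables to maintain feasibility, forcing its coefficient to zero; (c) swap $1$ and $0$ between $u$ and $v$ within a pair of $\sib{\vec{\calS}}$ to show the $x$-coefficients are $\pm\lambda$ for a common scalar $\lambda$; and (d) slide the $\nrevisions - \subheight{\calS}$ revisions among ancestor nodes to show that the $r$-coefficients along $[\root, \subroot{\calS}]$ all equal $\lambda$.

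The main obstacle lies in steps (b) and (d). Because the $r$ and $\plan$ variables are tightly coupled by the constraints $r(v) \ge |\plan{v,t} - \plan{\parent{v},t}|$, moving a revision or toggling a plan component is rarely a single-coordinate perturbation; each such change forces a cascade of synchronized updates to descendant variables. The hypothesis $\stage{\subroot{\calS}} > \nrevisions - \subheight{\calS}$ is precisely what provides enough spare room on the ancestor path to redistribute the $\nrevisions - \subheight{\calS}$ revisions without disturbing the tightness of \eqref{eq: cp_facets}, while the $\subheight{\calS}$ revisions per scenario inside $\tree(\subroot{\calS})$ leave sufficient flexibility to toggle deep plan entries. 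A subtler issue is orientation-dependence: exchanging $1$ and $0$ within a pair $(u,v) \in \sib{\vec{\calS}}$ demands a compensating rearrangement of revisions within the subtrees rooted at $u$ and $v$ so that both sides of \eqref{eq: cp_facets} remain saturated after the swap.
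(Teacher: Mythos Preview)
Your validity argument via the bound $f(h,r')=2^h-2^{\max(h-r',0)}$ is correct and in fact a bit sharper than the paper's treatment. (One slip: the first recurrence term does not always dominate—for $r'\ge h$ the second term gives $2^h-1$ while the first gives $2^h-2$—but the closed form is right regardless.)

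The facet argument has a genuine gap at step~(b). Toggling a single $\plan{w,t}$ and ``compensating via descendant $r$ variables'' does not work in general: in your canonical point every scenario passing through $\tree(\subroot{\calS})$ already consumes all $\nrevisions$ revisions, so bumping $r(c)$ for a child $c$ of $w$ inside that subtree violates the budget; and when $c$ lies on the ancestor path to $\subroot{\calS}$, bumping $r(c)$ alters the left side of~\eqref{eq: cp_facets} through a coefficient $\alpha_c$ you have not yet shown vanishes. The paper's device is different in kind: a \emph{$t$-diffusion} starts from some node $s$ with $r(s)=1$ (or $s=\root$) and flips $\plan{\cdot,t}$ on every descendant of $s$ reachable without crossing another revised node, stopping at stage $t$. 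Because the flipped region is bounded only by already-revised nodes, every constraint $r(v)\ge|\plan{v,t}-\plan{\parent{v},t}|$ remains satisfied \emph{with $r$ unchanged}, and this is precisely what isolates a single $\gamma_{i,t}$ (after an inductive ordering on the stage of $i$). Your proposal is missing this idea.

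Two further omissions: you do not address how to force $\alpha_i=0$ for $r(i)$ with $i$ off the ancestor path (the paper exhibits tight points at which such $r(i)$ can be incremented freely); and your ``swap $1$ and $0$ within a pair'' in step~(c) does not by itself yield two face points, since the swap shifts the left side of~\eqref{eq: cp_facets} by $2$. The paper instead makes one pair $(u,v)$ consistent at a common value $\theta\in\{0,1\}$ while spending one extra ancestor revision, and compares the $\theta=0$ and $\theta=1$ solutions to obtain $\beta_u+\beta_v=0$.
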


\def\ancestorset{\calS^{\leq}}
In the rest of this section, we suppose $\calS$ is an ELBE subtree of $\tree$ satisfying the condition in \Cref{thm: cp_facets}.
To streamline notation, we use $\stage{\calS}$ to denote $\stage{\subroot{\calS}}$.
% and use $\ancestor{\calS}$ and $\descendant{\calS}$) to denote $\ancestor{\subroot{\calS}}$ and $\descendant{\subroot{\calS}}$, respectively. 
We use $\calS$ instead of $\vec{\calS}$ to denote the oriented ELBE subtree.
We use $(p,q)$ to denote the pair of children of $\subroot{\calS}$ in $\calS$.
We define $\ancestorset :=\{w: w \leq \subroot{\calS} \}$.
The assumptions of \Cref{thm: cp_facets} guarantee 
that the root node $\subroot{\calS}$ has level $\stage{\calS} \geqslant \nrevisions-\subheight{\calS}+1$
and
$|\ancestorset| = \stage{\calS} \geqslant \nrevisions - \subheight{\calS} + 1$.
These assumptions ensure that the first summation in \Cref{eq: cp_facets} is not empty.

To prove the theorem, we make use of the following two ancillary lemmas, whose proofs can be found in \Cref{proof: cp facet - valid}.
%The first lemma establishes that \Cref{eq: cp_facets} is a valid inequality.
\begin{restatable}{lemma}{LmCPFacetValid}
\label[lemma]{lm: cp facet - valid}
    Inequality \Cref{eq: cp_facets} is valid for $P_\textnormal{CP}^I$.
\end{restatable}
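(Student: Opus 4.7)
The plan is to fix an arbitrary integer feasible point $(x,\pi,r)$ of \Cref{eq: cp}, set $R := \sum_{w \leq \subroot{\calS}} r(w)$, and $k' := K - R$. The scenario-level constraint $\sum_{w \in \omega} r(w) \leq K$, applied to any scenario $\omega$ passing through $\subroot{\calS}$, yields $k' \geq 0$ and shows that along any $\tree$-path from $\subroot{\calS}$ to a node in $\calS$, at most $k'$ revisions can occur. A first easy step is to observe that since $x$ is binary,
\[
    \sum_{(u,v) \in \sib{\vec{\calS}}} \bigl(x(u) - x(v)\bigr) \;\leq\; \Psi(\calS) := \bigl| \{ \{u,v\} \in \sib{\calS} : x(u) \neq x(v) \} \bigr|.
\]

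The technical core is the claim that $\Psi(\calS) \leq 2^h - 1 - \max\{h - k', 0\}$, which I would prove by induction on $h$. Let $p,q$ denote the two children of $\subroot{\calS}$ in $\calS$, and let $\calS_p, \calS_q$ denote the sub-subtrees of $\calS$ rooted at $p$ and $q$, both of height $h-1$. If $x(p) = x(q)$, the pair is consistent and each of $\calS_p, \calS_q$ retains the full budget $k'$, giving by induction $\Psi(\calS) \leq 2 \bigl[ 2^{h-1} - 1 - \max\{h-1-k', 0\} \bigr]$. If $x(p) \neq x(q)$, then $p$ and $q$ cannot both retain the plan value inherited from $\subroot{\calS}$ at stage $\stage{p}$, so at least one revision must occur between $\subroot{\calS}$ and one of $p, q$ in $\tree$; placing this revision on the branch to $q$ without loss of generality leaves $\calS_p$ with budget $k'$ and $\calS_q$ with budget $k' - 1$, yielding $\Psi(\calS) \leq 1 + \bigl[ 2^{h-1} - 1 - \max\{h-1-k', 0\} \bigr] + \bigl[ 2^{h-1} - 1 - \max\{h-k', 0\} \bigr]$. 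A short case split on whether $k' \geq h$ or $k' < h$ then verifies that both expressions are dominated by $2^h - 1 - \max\{h - k', 0\}$.

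Assuming the claim, validity follows by substitution. When $k' \geq h$, one has $R + \Psi(\calS) \leq R + 2^h - 1 = (K - k') + 2^h - 1 \leq K - h + 2^h - 1$; when $k' < h$, one has $R + \Psi(\calS) \leq R + 2^h - 1 - (h - k') = K - h + 2^h - 1$ with equality. The main obstacle I anticipate is the bookkeeping of revisions in the $x(p) \neq x(q)$ case, since $p$ and $q$ are children of $\subroot{\calS}$ only inside $\calS$: in $\tree$ they may be deep descendants and can even share a common ancestor strictly below $\subroot{\calS}$ that is not in $\calS$, so a revision on that shared $\tree$-segment would deplete both sub-budgets simultaneously. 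I expect to handle this by arguing that, to maximize $\Psi(\calS)$, any mandatory revision may always be pushed onto a dedicated (non-shared) branch, so that the recurrence used above captures the worst case.
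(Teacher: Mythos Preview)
Your proof is correct, and your worry about shared $\tree$-segments is essentially unfounded. At the very top level the concern does not arise at all: by definition of an ELBE subtree, $\subroot{\calS}$ is the join of all nodes of $\calS\setminus\{\subroot{\calS}\}$, hence in particular $\subroot{\calS}=p\vee q$, so the $\tree$-paths to $p$ and to $q$ diverge immediately after $\subroot{\calS}$. For the recursive steps the concern is legitimate---when you recurse into $\calS_p$, the node $p$ need not be the $\tree$-join of its $\calS$-children $p_1,p_2$---but it still does not break the argument. All you actually use is the pair of inequalities $b_{p_1}\le b$ and $b_{p_2}\le b-1$ (in the inconsistent case), where $b_{p_i}=b-\sum_{p<w\le p_i}r(w)$. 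The first is monotonicity of the budget along a path; for the second, $x(p_1)\ne x(p_2)$ forces a revision strictly after $p_1\vee p_2$ on one of the two branches, and any such revision lies on the $\tree$-path from $p$ to that child, so the bound holds regardless of how many revisions sit on the shared segment from $p$ down to $p_1\vee p_2$. Your case analysis then goes through verbatim; in fact Case~2 is immediate, since the Case~2 bound equals the target minus the nonnegative term $\max\{h-1-k',0\}$.

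Your approach is genuinely different from the paper's. The paper argues by contradiction: assuming \Cref{eq: cp_facets} fails with residual budget $\zeta=k'$, it uses a pigeonhole count on $\subheight{\calS}-\zeta$ disjoint height-$(\zeta{+}1)$ sub-subtrees of $\calS$ to locate one that is fully $x$-inconsistent, and then invokes \Cref{thm: subtree char} to conclude that the sub-policy below $\subroot{\calS}$ is not $\zeta$-revisable, so some scenario overruns the budget. Your route is a direct, self-contained budget-tracking induction on the height of $\calS$ that never appeals to \Cref{thm: subtree char}. The paper's argument is shorter once that theorem is in hand; yours is more elementary and makes the dependence on $k'$ explicit at every level.
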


\Cref{lm: cp facet - valid} implies that \Cref{eq: cp_facets} defines a face of $P_\textnormal{CP}^I$, which we refer to as $F_{\textnormal{CP}}$ in the following derivations.
%Let $F_{\textnormal{CP}}$ denote the face defined by \Cref{eq: cp_facets}.
%    This is legitimate to do as it only modifies the inequality outside of the affine hull of the set $P_{\textrm{CP}}$.  
Next, \Cref{lm: cp facet - construct solutions} identifies families of 
integer points on $F_{\textnormal{CP}}$. 
%We consider a formulation of $\revisableset{\nrevisions}$ using only variables $r$ and $x$.

% By \Cref{lm: cp facet - project cp to path}, we can construct integer solutions $(r,x)$ that satisfy constraints \Cref{eq: path} and also satisfy \Cref{eq: cp_facets} at equality, and these solutions can be directly extended to solutions $(\plan, r, x) \in F_{\textnormal{CP}}$.
%The following lemma gives the construction of solutions we need.
\begin{restatable}{lemma}{LmCPFacetConstructSolutions}
\label[lemma]{lm: cp facet - construct solutions}
Given an ELBE subtree $\calS$ with height in $[\nrevisions]$ 
    and $\stage{\subroot{\calS}} > \nrevisions-\subheight{\calS}$.
    \begin{enumerate}[label=(\Roman*)]
    \item \label{cp_facet_pointI}
    %For each $\theta \in \{0, 1\}$ and each $J \subset \ancestorset$ with $|J|=\nrevisions-\subheight{\calS}$, there exists a solution $(\hat{\plan}, \hat{r}_J, \hat{x}_\theta) \in F_{\textnormal{CP}}$  such that the following properties hold:
    %first, $\hat{x}_\theta(u)=1$ and $\hat{x}_\theta(v)=0$ for all $(u,v) \in \sib{\calS}$, $\hat{x}_\theta(w)=\theta$ for all $w \in \nodes \setdiff \nodes(\calS) \cup \{\subroot{\calS}\}$;
    %second, $\hat{r}_J(w)=1$ iff $x(w)=1-\theta$ or $w \in J$;
    For each $\theta \in \{0, 1\}$ and each $J \subset \ancestorset$ with $|J|=\nrevisions-\subheight{\calS}$, there exists a solution $(\hat{\plan}, \hat{r}_J, \hat{x}_\theta) \in F_{\textnormal{CP}}$  such that (i) $\hat{x}_\theta(u)=1$ and $\hat{x}_\theta(v)=0$ for all $(u,v) \in \sib{\calS}$, $\hat{x}_\theta(w)=\theta$ for all $w \in \nodes \setdiff \nodes(\calS) \cup \{\subroot{\calS}\}$, and (ii) $\hat{r}_J(w)=1$ iff $x(w)=1-\theta$ or $w \in J$.
    \item \label{cp_facet_pointII}
    For each $\theta \in \{0, 1\}$, each $J \subset \ancestorset$ with $|J|=\nrevisions-\subheight{\calS}+1$, and any $(u,v) \in \sib{\calS}$, there exists a solution $(\breve{\plan}, \breve{r}_J, \breve{x}^{u,v}_\theta) \in F_{\textnormal{CP}}$ such that (i) $\breve{x}^{u,v}_\theta(u)=\breve{x}^{u,v}_\theta(v)=\theta$, and $\breve{x}^{u,v}_\theta(u')=1$, $\breve{x}^{u,v}_\theta(v')=0$ for other pairs $(u',v') \in \sib{\calS} \setdiff \{(u,v)\}$, and (ii)
     for any $w \in \nodes \setdiff \nodes(\calS) \cup \{\subroot{\calS}\}$,  $\breve{r}_J(w)=1$ iff $w \in J$.
    \item \label{cp_facet_pointIII}
    For any $i \in \nodes \setdiff \ancestorset$, there exists a solution $(\dot{\plan}, \dot{r}, \dot{x}) \in F_{\textnormal{CP}}$ such that $(\dot{\plan}, \dot{r}+e_i, \dot{x}) \in F_{\textnormal{CP}}$.
    \item \label{cp_facet_pointIV}
    For any node $i \in \nodes$, there exists a solution $(\ddot{\plan}, \ddot{r}, \ddot{x}) \in F_{\textnormal{CP}}$ such that $\ddot{r}(i)=1$.
    \end{enumerate}
\end{restatable}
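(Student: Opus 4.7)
The plan is to prove each part via an explicit construction of $(\plan, r, \x)$ and a scenario-by-scenario verification of the $\nrevisions$-revision constraint. A common construction template, parametrized by $\theta \in \{0,1\}$, drives every case: at each node $w$, I would set $\plan{w, \stage{w}} = \x{w}$ and $\plan{w, t} = \theta$ for every $t > \stage{w}$. Under this template, the later-stage components of the plans at $w$ and $\parent{w}$ both equal $\theta$, so the linking constraint simplifies to $r(w) \geqslant |\x{w} - \theta|$, and we may freely raise $r(w)$ to $1$ above this forced minimum at selected nodes.

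For part \ref{cp_facet_pointI}, I would take $\hat\x_\theta$ equal to $1$ on winners $u'$ and $0$ on losers $v'$ of each pair in $\sib{\calS}$ and equal to $\theta$ elsewhere, build $\hat\plan$ via the template, and set $\hat r_J(w) = 1$ iff $\hat\x_\theta(w) = 1 - \theta$ or $w \in J$. The LHS of \Cref{eq: cp_facets} then equals $|J| + (2^{\subheight{\calS}} - 1) = \nrevisions - \subheight{\calS} + 2^{\subheight{\calS}} - 1$, matching the RHS. Every scenario $\omega$ collects at most $|J \cap \omega| \leqslant |J|$ revisions in $\ancestorset$ and at most $\subheight{\calS}$ forced revisions inside $\calS$ (one per level along the root-to-leaf path in $\calS$), giving a total at most $\nrevisions$. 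Part \ref{cp_facet_pointII} is analogous, but now the pair $(u,v)$ is \emph{nullified}: $\breve\x(u) = \breve\x(v) = \theta$. The template then forces no revision at the nullified level, so any scenario through $\subroot{\calS}$ picks up at most $\subheight{\calS} - 1$ subtree revisions, which exactly absorbs the extra ancestor revision from the enlarged $|J| = \nrevisions - \subheight{\calS} + 1$; the face equation still holds because the sibling-pair sum loses $1$ while $|J|$ gains $1$.

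For part \ref{cp_facet_pointIII}, the idea is to adapt the part \ref{cp_facet_pointI} construction by choosing $\theta$ based on the position of $i$ so that the scenario(s) through $i$ carry strictly fewer than $\nrevisions$ revisions under $\dot r$; since $i \notin \ancestorset$, flipping $r(i)$ from $0$ to $1$ does not change the LHS of \Cref{eq: cp_facets}, so both copies of the solution remain on the face. Three cases arise: if $i \in \nodes(\calS) \setdiff \{\subroot{\calS}\}$, take $\theta = \hat\x_\theta(i)$ so that $i$ is not a forced-revision node and the scenario through $i$ collects at most $\subheight{\calS} - 1$ subtree revisions; if $i$ lies outside $\nodes(\calS)$ but descends from some leaf $\ell$ of $\calS$, take $\theta$ to match $\ell$'s orientation value so that the path from $\subroot{\calS}$ to $\ell$ contributes at most $\subheight{\calS} - 1$ revisions, after which the scenario continues through non-revision nodes to $i$; if $i$ sits on a side branch, the scenario through $i$ does not enter $\calS$ and incurs at most $|J| = \nrevisions - \subheight{\calS}$ revisions from $\ancestorset$ alone. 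Part \ref{cp_facet_pointIV} follows by small adaptations: for $i \in \ancestorset$ include $i$ in $J$ in the part \ref{cp_facet_pointI} construction (feasible since $|\ancestorset| \geqslant \nrevisions - \subheight{\calS} + 1 > |J|$); for $i \in \nodes(\calS) \setdiff \{\subroot{\calS}\}$ pick $\theta$ with $\hat\x_\theta(i) = 1 - \theta$ to force $\ddot r(i) = 1$; otherwise reuse the part \ref{cp_facet_pointIII} construction with $r(i) = 1$. The main obstacle is the case analysis driving part \ref{cp_facet_pointIII}: for each possible location of $i$ (inside $\calS$, below a leaf of $\calS$, or on a side branch), one has to identify the worst-case scenario through $i$ and verify that its revision count is at most $\nrevisions - 1$ under the chosen $\theta$ before the flip of $r(i)$ can be justified.
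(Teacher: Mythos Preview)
Your template is clean and works for Part~\ref{cp_facet_pointI}, but Part~\ref{cp_facet_pointII} has a genuine gap. The claim ``any scenario through $\subroot{\calS}$ picks up at most $\subheight{\calS}-1$ subtree revisions'' is false when $(u,v)$ is not the top-level pair of $\calS$. Under your template with $\theta=0$, every winner $u'$ with $\breve{x}(u')=1$ is \emph{forced} to have $r(u')=1$. If $(u,v)$ is, say, a leaf pair of $\calS$, then a scenario following the all-winners path in $\calS$ to a \emph{different} leaf visits $\subheight{\calS}$ winners, none of them nullified, and hence accrues $\subheight{\calS}$ forced revisions inside $\calS$; together with $|J|=\nrevisions-\subheight{\calS}+1$ ancestor revisions this violates the budget. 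Concretely, with $\subheight{\calS}=2$, $\nrevisions=2$, pairs $(p,q),(a,b),(c,d)$, and $(u,v)=(c,d)$, the scenario through $p$ then $a$ collects revisions at $j\in J$, at $p$, and at $a$, totalling $3$. The paper avoids this by a more delicate construction: it tracks the path $Q^{u,v}$ of common ancestors of $u,v$ inside $\calS$ and, for each $\mu\in Q^{u,v}$ with sibling $\nu$, sets $\breve{r}(\nu)=0$ while adjusting the $x$-values of all non-$\calS$ nodes at stage $\stage{\mu}$ to match $\nu$. This guarantees that every root-to-leaf path in $\calS$ either contains $u$ or $v$ (no revision there) or branches off $Q^{u,v}$ through some $\nu$ (no revision there), so at most $\subheight{\calS}-1$ revisions arise along any path. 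Your template cannot reproduce this, because it pins $x(w)=\theta$ everywhere outside $\calS$ and therefore cannot suppress the forced revision at an internal winner of $\calS$.

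A smaller issue: in Part~\ref{cp_facet_pointIV}, for $i\in\ancestorset$ you propose to include $i$ in $J$ via the Part~\ref{cp_facet_pointI} construction, but there $|J|=\nrevisions-\subheight{\calS}$, which is $0$ when $\subheight{\calS}=\nrevisions$; then $J=\varnothing$ and $i$ cannot be included. The paper handles this case by invoking Part~\ref{cp_facet_pointII} (where $|J|=\nrevisions-\subheight{\calS}+1\ge 1$) with the root-level pair $(p,q)$, which always permits $i\in J$.
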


\begin{proof}[Proof of \Cref{thm: cp_facets}]
Consider any inequality that defines $F_{\textnormal{CP}}$:
\begin{equation} \label{eq: any inequality}
    \sum_{i \in \nodes} \alpha_i r(i) + \sum_{i \in \nodes} \beta_i \x{i} + \sum_{\substack{i \in \nodes \\ t \in [\stage{i}+1:\nstages]}}  \gamma_{i,t} \plan{i, t} \leqslant \eta.
\end{equation}
We assume without loss of generality that there are no variables $\{\plan(v, \stage{v})\}_{v \in \nodes}$ since they can be projected out of the formulation through the equalities $x(v)=\plan(v, \stage{v})$.
We prove that \Cref{eq: any inequality} is a scalar multiple of \Cref{eq: cp_facets} by showing
\begin{enumerate}[label=(\alph*)]
    \item $\alpha_i = 0, \forall i \in \nodes\setdiff \ancestorset$. \label{cp_facets_cond1}
    \item $\beta_i=0, \forall i \in \nodes \setdiff \nodes(\calS) \cup \{\subroot{\calS}\}$. \label{cp_facets_cond2}
    \item $\gamma_{i,t} = 0, \forall i \in \nodes, t \in [\stage{i}+1 : \nstages]$. \label{cp_facets_cond3}
    \item $\alpha_w = \beta_u = -\beta_v, \forall w \in \ancestorset, (u,v) \in \sib{\calS}$. \label{cp_facets_cond4}
\end{enumerate}
%\red{Since coefficients in \Cref{eq: any inequality} will equal those in \Cref{eq: cp_facets} (up to a scalar multiple), the inequality will be proven to be facet-defining.}
This will imply that the inequality is facet-defining

For \ref{cp_facets_cond1}, \Cref{lm: cp facet - construct solutions}\ref{cp_facet_pointIII} directly implies that $\alpha_i=0$.
For \ref{cp_facets_cond2}, it suffices to show that there exists a solution $(\plan, r, x) \in F_{\textnormal{CP}}$ such that $r(i)=1$.
This is because the solution $(\plan', r', x')$ obtained from $(\plan, r, x)$ by setting $x'(i) = 1-x(i)$ and keeping all other components of $x$ unchanged is feasible and satisfies \Cref{eq: cp_facets} at equality, showing that $\beta_i=0$.
Therefore, \Cref{lm: cp facet - construct solutions}\ref{cp_facet_pointIV} proves the claim.
To prove \ref{cp_facets_cond4}, assume \ref{cp_facets_cond3} holds.
Consider $(u,v) \in \sib{\calS}$ and choose $J \subset \ancestorset \setdiff \{w\}$ with cardinality $K-\subheight{\calS}$. 
We then take the three solutions $(\hat{\plan}, \hat{r}_{J}, \hat{x}_0)$, $(\breve{\plan}, \breve{r}_{J\cup \{w\}}, \breve{x}_0^{u,v})$, and $(\breve{\plan}', \breve{r}_{J\cup \{w\}}, \breve{x}_1^{u,v})$ given by \Cref{lm: cp facet - construct solutions}\ref{cp_facet_pointI} and \ref{cp_facet_pointII}.
Note that \Cref{lm: cp facet - construct solutions}\ref{cp_facet_pointII} guarantees the existence of such a tuple for each $\theta \in \{0,1\}$, but it does not require the same plan adjustment policy to work for both values of $\theta$. 
Hence, we allow the policies to differ and denote them by $\breve{\plan}$ for $\theta=0$ and by $\breve{\plan}'$ for $\theta=1$.
%\byjian{revised}
Plugging these solutions into \Cref{eq: any inequality} yields the following equalities: $\sum_{i \in J} \alpha_i + \sum_{(u',v') \in \sib{\calS}} \beta_{u'} = \eta$, $\sum_{i \in J} \alpha_i + \alpha_w + \sum_{(u',v') \in \sib{\calS}} \beta_{u'} - \beta_u = \eta$, and $\sum_{i \in J} \alpha_i + \alpha_w + \sum_{(u',v') \in \sib{\calS}} \beta_{u'} + \beta_v = \eta$.
By subtracting the second (\textit{resp.} third) equality from the first (\textit{resp.} second), we obtain that $\alpha_w = \beta_u = -\beta_v$.
%Hence, \ref{cp_facets_cond4} is proved and 

It thus remains to prove \ref{cp_facets_cond3}.
We prove $\gamma_{i,t}=0$ for any $t$ and any $i$ such that $\stage{i} < t$. 
To this end, we next introduce the concept of a t-diffusion. 
Consider a solution $(\tilde{\plan},\tilde{r},\tilde{x}) \in F_{\textnormal{CP}}$ and a node $s \in \nodes$ such that $s=\root$ or $\tilde{r}(s)=1$. 
A $t$-\emph{diffusion} starting at $s$ is defined on $\tilde{r}$ as $D:=\{v\in \descendant{s} \cup \{s\} \mid \stage{v} \leqslant t, \tilde{r}(u)=0, \forall u \in P(s,v) \setdiff \{s\} \}$, where $P(s,v)$ represents the unique path from $s$ to $v$.
Equivalently, $D$ is obtained by exploring every branch below $s$ and collecting nodes until either a revised node is encountered or stage $t$ is reached.

Given a t-diffusion $D$, we construct a new solution $(\tilde{\plan}',\tilde{r},\tilde{x}')$ by performing flipping operations on $\tilde{\plan}(\cdot, t)$ along $D$.
To be specific, let $\tilde{\plan}'(l,t)=1-\tilde{\plan}(l,t)$ for $l \in D$ and $\stage{l}<t$; let $\tilde{x}'(l)=1-\tilde{x}(l)$ for $l \in D$ and $\stage{l}=t$; keeping all other variables unchanged.
The solution $(\tilde{\plan}', \tilde{r}, \tilde{x}')$ remains feasible.
Further, for the solution $(\tilde{\plan}', \tilde{r}, \tilde{x}')$ to remain in $F_{\textnormal{CP}}$, we need to make sure that if $v \in (\nodes(\calS) \setdiff \{\subroot{\calS}\} ) \cap \nodes_t $, then $v \notin D$.
This prevents any $\tilde{x}(v)$ to be flipped for $v$ in $\calS$.
If $D$ is such that $(\tilde{\plan}',\tilde{r},\tilde{x}') \in F_{\textnormal{CP}}$, we say that $D$ is a \emph{feasible} $t$-diffusion.
Further, if $\stage{i}<t$ for some $i \in D$, and $\gamma_{j,t}=0$ for all $j \in D$, $j \neq i$ such that $\stage{j} < t$, then we derive $\gamma_{i,t}=0$ from plugging in $(\tilde{\plan},\tilde{r},\tilde{x})$ and $(\tilde{\plan}',\tilde{r},\tilde{x}')$ into \Cref{eq: any inequality}.
Hence, we next construct a solution $(\tilde{\plan},\tilde{r},\tilde{x})$ and a $t$-diffusion for each $i \in \nodes$ and $t \in [\stage{v}+1 : \nstages]$ to show that $\gamma_{i,t}=0$. 

Let $\descendant{i}_t$ denote the descendants of $i$ that are on stage $t$, \emph{i.e.}, $\nodes_t \cap \descendant{i}$.
First, we discuss the case where $\descendant{i}_t \cap \nodes(\calS) \setdiff \{\subroot{\calS}\} = \varnothing$.
% In this case, for any $j \in \descendant{i}$ such that $\stage{j} < t$, $\descendant{j}_t \cap \nodes(\calS) \setdiff \{\subroot{\calS}\} = \varnothing$, as $\descendant{j} \subset \descendant{i}$. 
Suppose that $i$ is any node at \emph{maximum} stage with $\gamma_{i,t} \neq 0$.
By \Cref{lm: cp facet - construct solutions}\ref{cp_facet_pointIII}, there exists a solution $(\dot{\plan}, \dot{r}, \dot{x})$ such that $\dot{r}(i)=1$.
Let $(\tilde{\plan},\tilde{r},\tilde{x}) = (\dot{\plan}, \dot{r}, \dot{x})$,
then the $t$-diffusion starting at $i$ is feasible since any non-root node in $\calS$ is not contained in $\descendant{i}_t$ and thus not in this $t$-diffusion.
Hence, we derive $\gamma_{i,t}=0$.

Second, if $\descendant{i}_t \cap \nodes(\calS) \setdiff \{\subroot{\calS}\} \neq \varnothing$, then $i$ is either an ancestor of $\subroot{\calS}$, a descendant of $\subroot{\calS}$, or $\subroot{\calS}$ itself.
We first prove the case where $i > \subroot{\calS}$.
Suppose that $i$ is the node at \emph{maximum} stage with $\gamma_{i,t} \neq 0$.
Either $i$ is an ancestor or descendant of $p$, or $i$ is an ancestor or descendant of $q$, where $p$ and $q$ are the left and right child of $\subroot{\calS}$ in $\calS$, respectively.

If $i \geq p$ (\emph{resp.} $i \geq q$), consider $(\tilde{\plan},\tilde{r},\tilde{x})=(\hat{\plan}, \hat{r}_J+e_i+e_{R_\theta}, \hat{x}_\theta)$ where  $(\hat{\plan},\hat{r},\hat{x})$ is defined in \Cref{lm: cp facet - construct solutions}\ref{cp_facet_pointI} with $\theta=1$ (\emph{resp.} $\theta=0$), $J$ is any eligible set, and we let $R_\theta$ to be all the left nodes (\emph{resp.} right nodes) excepting $p$ (\emph{resp.} $q$).
This solution is feasible as at most $\subheight{\calS}$ revisions are taken over any downstream path in $\tree(\subroot{\calS})$.
Hence, $(\tilde{\plan},\tilde{r},\tilde{x}) \in F_{\textnormal{CP}}$ and there is a feasible $t$-diffusion starting at $i$, which proves $\gamma_{i,t}=0$.

If $i < p$ (\emph{resp.} $i < q$). 
When $t < \stage{p}$ (\emph{resp.} $\stage{q}$), it reduces to the case $\descendant{i}_t \cap \nodes(\calS) \setdiff\{\subroot{\calS}\} = \varnothing$.
When $t > \stage{p}$ (\emph{resp.} $\stage{q}$), we repeat the previous construction for the case $i \geq p$ (\emph{resp.} $i \geq q$) and we have $(\tilde{\plan},\tilde{r},\tilde{x}) \in F_{\textnormal{CP}}$, and $t$-diffusion starting at $i$ is feasible.
When $t = \stage{p}$ (\emph{resp.} $\stage{q}$), consider $(\hat{\plan}, \hat{r}_J, \hat{x}_\theta)$ where $\theta=0$ (\emph{resp.} $\theta=1$) and $J$ to be any eligible set.
Then, we define $(\hat{\plan}',\hat{r}_J',\hat{x}'_\theta)$ by setting $\hat{\plan}'(p, \cdot)=\hat{\plan}(\parent{p}, \cdot)$, $\hat{r}_J'(p)=0$ and $\hat{x}'_\theta(p)=0$ (\emph{resp.} $\hat{\plan}'(p, \cdot)=\hat{\plan}(\parent{p}, \cdot)$, $\hat{r}_J'(q)=0$ and $\hat{x}'_\theta(q)=1$) and by keeping the other entries unchanged.
This solution is still feasible but not on the face $F_{\textnormal{CP}}$ since we changed the $x$-value on $p$ (\emph{resp.} $q$).
Let $(\tilde{\plan},\tilde{r},\tilde{x})=(\hat{\plan}',\hat{r}_J'+e_i,\hat{x}'_\theta)$, which is feasible and consider the $t$-diffusion starting at $i$.
This diffusion, even starting from a non-feasible solution, creates a feasible solution since $x$-value on $p$ (\emph{resp.} $q$) is flipped back.
We plug in the solution defined by the $t$-diffusion $(\tilde{\plan}',\tilde{r}',\tilde{x}')$ and the solution $(\hat{\plan},\hat{r}_J,\hat{x}_\theta)$ to \Cref{eq: any inequality}, we get $\gamma_{i,t}=0$ as we have proved $\gamma_{j,t'}=0$ for all $j > \subroot{\calS}$ and $t'\neq t$.

For the case $i < \subroot{\calS}$.
Suppose $i$ is the node with $\gamma_{i,t}=0$ that lies at \emph{minimum} stage.
We let $i^+$ denote the child of $i$ that is in $\ancestorset$.
We consider $(\tilde{\plan}, \tilde{r}, \tilde{x})=(\breve{\plan}, \breve{r}_J, \breve{x}^{p,q}_0)$ as defined in \Cref{lm: cp facet - construct solutions}\ref{cp_facet_pointII} such that $J$ is a subset of $\ancestorset$ with $i^+ \in J$.
Then, the $t$-diffusion defined on $\tilde{r}$ starting at $\root$ is feasible and all the nodes other than $i$ in the diffusion have $\gamma_{\cdot, t}$-value $0$.
Hence $\gamma_{i,t}=0$.

Last, for the case where $i =\subroot{\calS}$, let $(\tilde{\plan},\tilde{r},\tilde{x})=(\hat{\plan}, \hat{r}_J+e_q, \hat{x}_0)$.
The $t$-diffusion starting at $\root$ is feasible and since $i=\subroot{\calS}$ is the only node where $\gamma_{\cdot,t}$ is not fixed, $\gamma_{i,t}=0$.

Hence, we showed that \Cref{eq: any inequality} is a scalar multiple of \Cref{eq: cp_facets}, which proves that \Cref{eq: cp_facets} is facet-defining.
\end{proof}

A special case of inequalities \eqref{eq: cp_facets} is when the ELBE subtree $\calS$ reduces to a single leaf node $\ell$.
In this case, \Cref{eq: cp_facets} reduces to the revision budget constraint $\sum_{v \leq \ell} r_v \leqslant \nrevisions$ for scenario $\ell$.
Although the proof of \Cref{thm: cp_facets} does not explicitly cover this special case, a simple separate argument can show that these revision budget constraints are facet-defining for $\conv{P_{\textnormal{CP}}^I}$.

Secondly, as it was the case for subtree constraints, there is a large number of inequalities \eqref{eq: cp_facets}.
Using ideas similar to those we developed in \Cref{subsec: subtree enhancement}, we can derive a \DP{} algorithm to separate these  inequalities.
Alternatively, we can introduce new variables $\Delta(v, h)$ as in \Cref{eq: stdp} and then reformulate \Cref{eq: cp_facets} as:
\begin{equation} \label{eq: cp facet (stdp)}
\sum_{i \leq v} r(i) + \Delta(v, h) \leqslant \nrevisions - h + 2^{h} - 1, \quad \forall v \in \nodes \setdiff \nodes_\nstages, \forall h \in \calH(v,\nrevisions).
\end{equation}
% where the inequalities in \Cref{eq: stdp} are needed to define $\Delta$ variables. 
Combining \Cref{eq: cp+}, \Cref{eq: stdp}, and \Cref{eq: cp facet (stdp)}, we obtain the following formulation \Cref{eq: cp++}:
\begin{equation} \label{eq: cp++}
\begin{aligned}
    &\x{v} = \plan{v, \stage{v}}, \quad && \forall v \in \nodes \setdiff \nodes_{\textnormal{only}}, \\
    &\x{v} = \plan{\overbar{\operatorname{pa}}(v), \stage{v}}, \quad && \forall v \in \nodes_{\textnormal{only}}, \\
    &\rv{v} \geqslant \plan{v,t} - \plan{\overbar{\operatorname{pa}}(v),t}, \quad && \forall v \in \nodes \setdiff (\set{\root} \cup \nodes_{\textnormal{only}}), \forall t \in [\stage{v}:\nstages],  \\
    &\rv{v} \geqslant \plan{\overbar{\operatorname{pa}}(v),t} - \plan{v,t}, \quad && \forall v \in \nodes \setdiff (\set{\root} \cup \nodes_{\textnormal{only}}), \forall t \in [\stage{v}:\nstages], \\
    &\textstyle \sum_{v \in \scenario \setdiff \nodes_{\textnormal{only}}} \rv{v} \leqslant \nrevisions, \quad && \forall \scenario \in \scenarioset, \\
    &\textstyle \sum_{i \leq v, i \notin \nodes_{\textnormal{only}}} r(i) + \Delta(v, h) \leqslant \nrevisions - h + 2^{h} - 1, \quad && \forall v \in \nodes \setdiff \nodes_\nstages, h \in \calH(v,\nrevisions), \\
    & \mathrlap{\Delta(v, h)  \geqslant \Delta(p, h-1) + \Delta(q, h-1)}  \\
    & \hspace{+3.3cm} + (\x{p} - \x{q}), \hspace{+1cm} && \forall v \in \nodes \setminus \nodes_{\nstages}, \forall h \in \calH(v,\nrevisions),\\
    &&& \forall p, q \in \descendant{v}, p \join q = v, \\
    & \Delta(v, h) \geqslant \Delta(u, h),  
    \quad  && \forall v \in \nodes \setminus \nodes_{\nstages}, \forall u \in \children{v}, \\
    &&& \forall h \in \calH(v, \nrevisions) \cap \calH(u, \nrevisions), \\
    & \Delta(v, 0) = 0, \quad && \forall v \in \nodes \\
    &\x{v} \in \Bin, \quad && \forall v \in \nodes, \\  
    & \plan{v,t} \in \Bin,\; \rv{v} \in \Bin, \quad && \forall v \in \nodes \setdiff \nodes_{\textnormal{only}}, \forall t \in [\stage{v}:\nstages].  \\
\end{aligned}
\tag{CP++}
\end{equation}

%%%%%%%%%%%%%%%%%%%%%%%
%%%%%%%%%%%%%%%%%%%%%%%%%%%%%%%%%%%%%%%%%%%%%%
%%%%%%%%%%%%%%%%%%%%%%%%%%%%%%%%%%%%%%%%%%%%%%%%%%%%%%%%%%%%%%%%%%%%%
\section{Computational experiments} \label{sec: experiments}
%%%%%%%%%%%%%%%%%%%%%%%%%%%%%%%%%%%%%%%%%%%%%%%%%%%%%%%%%%%%%%%%%%%%%
%%%%%%%%%%%%%%%%%%%%%%%%%%%%%%%%%%%%%%%%%%%%%%
%%%%%%%%%%%%%%%%%%%%%%%
In this section, we conduct computational experiments to 
examine the impact of adding $\nrevisions$-revision constraints in \MSP{} models
%evaluate the effect of our $\nrevisions$-revision approach 
and to compare different MIP formulations for the $\nrevisions$-revisable set $\revisableset{\nrevisions}$.
To provide a comprehensive comparison, we consider different sizes and structures of scenario trees together with different base problems.
We will start with the $\nrevisions$-revision hypercube problem, which provides a direct assessment of strength of different formulations, independent of the presence of additional constraints.
%gives a direct comparison of the tightness of different formulations.
Then, we will test the performance of the $\nrevisions$-revision approach on uncapacitated lot-sizing problems, capacity planning problems, and SAGHP, showcasing its practical applicability and providing further insights into formulations.

We run all the experiments with commercial solver Gurobi (12.0.0) \cite{gurobi}. 
Models are formulated using Julia 1.9.3 \cite{bezanson2017julia} with package JuMP 1.23.6 \cite{dunning2017jump} and run on a macOS system (Ventura 13.0) with a hardware configuration of 64 GB RAM and an Apple M1 Max chip.
% All our codes are available from the GitHub repository \cite{LimitRevisionIP}.
% \byjp{Should we add references to Gurobi and Julia?}

Our experiments require the construction of scenario trees. 
We mainly consider two types of scenario trees: perfect binary trees (\Btree) and sparse trees with a fixed height and a target number of nodes (\Stree).
Details about how we generate these trees can be found in \Cref{subsec: generate trees}.

%%%%%%%%%%%%%%%%%%%%%%%
%%%%%%%%%%%%%%%%%%%%%%%%%%%%%%%%%%%%%%%%%%%%%%
\subsection{\texorpdfstring{$\nrevisions$}--revision hypercube problems}
%%%%%%%%%%%%%%%%%%%%%%%%%%%%%%%%%%%%%%%%%%%%%%
%%%%%%%%%%%%%%%%%%%%%%%

AAs the $\nrevisions$-revision hypercube problem has only bounds on the $\x$ variables in addition to the $\nrevisions$-revision constraint, it offers a strong reference point for comparing the strength of formulations for $\revisableset{\nrevisions}$.
%The optimal objective values of the LP relaxation can be used as a measure of the tightness of the formulation.

We perform our numerical experiment on both \Btree{s} and \Stree{s}.
%For the latter, we set the target number of nodes to be in the range $[200\times 0.95,200\times 1.05]$.
%\byjp{Are all scenarios equally likely?}
% \textbf{Data.}
We assume that there is a single decision variable for each node as in  \Cref{assumption: 1-dimension}.
Hence, the only data needed is the objective coefficient $c(v)$ for the variable associated with each node $v \in \nodes$.
We generate $c(v)$ uniformly at random  from the integers between $-10$ and $10$.
%for each node $v \in \nodes$.

\begin{sidewaystable}
\setlength{\belowcaptionskip}{0pt}
\caption{Computational results on Hypercube $\nrevisions$-revision problems.
} \label{tab:hypercube}
\subcaption*{\footnotesize\textbf{(a)} Perfect binary trees (\Btree{s}).}
\setlength{\tabcolsep}{2.33mm}{
\begin{tabular}{cccccccccccccccccc}
\toprule
\multirow{2}{*}{$\nrevisions$} & \multirow{2}{*}{$\nstages$} & \multirow{2}{*}{$|\nodes|$} 
 & \multicolumn{5}{c}{time}  
 & \multicolumn{5}{c}{relative gap}
 & \multicolumn{5}{c}{node count}\\
\cmidrule(r){4-8}\cmidrule(r){9-13}\cmidrule(r){14-18}
 & & 
 & CP & CP+ & CP++ & ST & STDP  
 & CP & CP+ & CP++ & ST & STDP  
 & CP & CP+ & CP++ & ST & STDP \\
\midrule
\multirow{7}{*}{1}
& 3 & 7 
 & 0.0004 & 0.0004 & 0.0006 & \textbf{0.0003} & 0.0003
 & 3.5 & 3.5 & \textbf{0.0} & 0.0 & 0.0
 & \textbf{0.0} & 0.0 & 0.0 & 0.1 & 0.0 \\
& 4 & 15  
 & 0.001 & 0.001 & 0.003 & \textbf{0.0007} & 0.001
 & 7.2 & 7.2 & \textbf{0.0} & 0.0 & 0.0
 & \textbf{0.3} & 0.3 & 1.0 & 0.8 & 0.5 \\
& 5 & 31 
 & 0.005 & 0.005 & 0.006 & 0.009 & \textbf{0.004}
 & 10.0 & 10.0 & \textbf{0.0} & 0.4 & 0.4
 & \textbf{1.0} & 1.0 & 1.0 & 45.5 & 1.0 \\
& 6 & 63 
 & 0.01 & \textbf{0.01} & 0.02 & 0.2 & 0.03
 & 8.9 & 8.9 & \textbf{0.08} & 3.1 & 3.1
 & \textbf{1.0} & 1.0 & 1.0 & 2485.7 & 1.0 \\
& 7 & 127 
 & 0.04 & \textbf{0.04} & 0.09 & 4.8 & 0.1
 & 9.0 & 9.0 & \textbf{0.2} & 4.1 & 4.1
 & 3.8 & 3.8 & \textbf{1.0} & 19190.3 & 5.7 \\
& 8 & 255 
 & 0.1 & \textbf{0.1} & 0.6 & 474.9 & 2.2
 & 10.8 & 10.8 & \textbf{0.4} & 6.9 & 6.9
 & 11.2 & 11.2 & \textbf{1.0} & 406004.5 & 23.0 \\
& 9 & 511 
 & 0.4 & \textbf{0.4} & 6.8 & 1000 & 27.9
 & 12.8 & 12.8 & \textbf{1.7} & 11.5 & 11.5
 & 56.1 & 56.1 & \textbf{3.0} & 816934.0 & 149.6 \\
\midrule
\multirow{6}{*}{2}
& 4 & 15  
 & 0.0005 & 0.0005 & 0.001 & \textbf{0.0004} & 0.0005
 & \textbf{0.0} & 0.0 & 0.0 & 0.0 & 0.0
 & \textbf{0.0} & 0.0 & 0.0 & 0.0 & 0.0 \\
& 5 & 31
 & 0.004 & 0.003 & 0.004 & \textbf{0.001} & 0.002
 & 0.9 & 0.9 & \textbf{0.0} & 0.0 & 0.0
 & 0.8 & 0.8 & 1.0 & \textbf{0.4} & 1.0 \\
& 6 & 63
 & 0.01 & 0.01 & 0.01 & 0.03 & \textbf{0.007}
 & 3.0 & 3.0 & \textbf{0.0} & 0.03 & 0.03
 & \textbf{1.0} & 1.0 & 1.0 & 125.9 & 1.0 \\
& 7 & 127
 & 0.06 & 0.06 & \textbf{0.05} & 7.1 & 0.05
 & 3.7 & 3.7 & \textbf{0.08} & 0.6 & 0.6
 & 53.1 & 53.1 & \textbf{1.0} & 13333.2 & 1.0 \\
& 8 & 255 
 & 0.2 & \textbf{0.2} & 0.3 & 605.8 & 0.6
 & 5.5 & 5.5 & \textbf{0.2} & 1.2 & 1.2
 & 181.7 & 181.7 & \textbf{1.0} & 251072.6 & 17.6 \\
& 9 & 511
 & 0.5 & \textbf{0.5} & 1.8 & 1000 & 5.2
 & 6.3 & 6.3 & \textbf{0.4} & 2.4 & 2.4
 & 420.1 & 420.1 & \textbf{1.0} & 584415.8 & 26.9 \\
\bottomrule
\end{tabular}
}

\vspace{2ex}

\subcaption*{\footnotesize\textbf{(b)} Sparse trees with a target number of nodes (\Stree{s}).}
\vspace{0.5ex}
\setlength{\tabcolsep}{2.52mm}{
\begin{tabular}{cccccccccccccccccc}
\toprule
\multirow{2}{*}{$\nrevisions$} & \multirow{2}{*}{$\nstages$} & \multirow{2}{*}{$|\nodes|$}
  & \multicolumn{5}{c}{time}
  & \multicolumn{5}{c}{relative gap}
  & \multicolumn{5}{c}{node count}\\
\cmidrule(r){4-8}\cmidrule(r){9-13}\cmidrule(r){14-18}
 & & 
 & CP & CP+ & CP++ & ST & STDP 
 & CP & CP+ & CP++ & ST & STDP  
 & CP & CP+ & CP++ & ST & STDP  \\
\midrule
\multirow{6}{*}{1}
 & 3  & \multirow{6}{*}{200}
     & 0.02 & \textbf{0.01} & 0.1 & 9.2 & 0.1
     & 10.5 & 10.5 & \textbf{0.0} & 0.5 & 0.5
     & \textbf{1.0} & 1.0 & 1.0 & 23916.6 & 1.0 \\
 & 5  & 
     & 0.04 & \textbf{0.04} & 0.3 & 46.8 & 0.6
     & 10.8 & 10.8 & \textbf{0.6} & 3.0 & 3.0
     & 7.3 & 2.4 & \textbf{1.0} & 100039.5 & 1.0 \\
 & 7  & 
     & \textbf{0.07} & 0.07 & 0.3 & 41.1 & 0.5
     & 10.9 & 10.9 & \textbf{0.1} & 3.0 & 3.0
     & 9.3 & 5.9 & \textbf{1.0} & 75479.4 & 1.0 \\
 & 9  & 
     & 0.06 & \textbf{0.06} & 0.2 & 37.9 & 0.3
     & 6.6 & 6.6 & \textbf{0.06} & 1.8 & 1.8
     & 4.2 & 8.6 & \textbf{1.0} & 97124.8 & 1.0 \\
 & 11 &
     & 0.07 & \textbf{0.06} & 0.2 & 45.9 & 0.2
     & 10.6 & 10.6 & \textbf{0.9} & 4.4 & 4.4
     & 9.8 & 17.3 & \textbf{1.0} & 110043.1 & 1.0 \\
 & 13 &
     & 0.07 & \textbf{0.06} & 0.1 & 35.1 & 0.1
     & 5.9 & 5.9 & \textbf{0.4} & 1.0 & 1.0
     & 3.6 & 7.8 & \textbf{1.0} & 72520.4 & 1.0 \\
 \midrule
 \multirow{6}{*}{2}
 & 3  & \multirow{6}{*}{200}
     & 0.006 & \textbf{0.006} & 0.04 & 0.5 & 0.05
     & 1.1 & 1.1 & \textbf{0.0} & 0.0 & 0.0
     & 0.7 & 0.7 & 1.0 & 1249.8 & \textbf{0.4} \\
 & 5  & 
     & \textbf{0.06} & 0.07 & 0.09 & 276.6 & 0.09
     & 3.6 & 3.6 & \textbf{0.03} & 0.1 & 0.1
     & 15.9 & 26.9 & \textbf{1.0} & 121215.2 & 1.0 \\
 & 7  & 
     & 0.1 & \textbf{0.09} & 0.1 & 61.5 & 0.1
     & 3.2 & 3.2 & \textbf{0.06} & 0.1 & 0.1
     & 255.6 & 67.5 & \textbf{1.0} & 52203.5 & 1.0 \\
 & 9  & 
     & 0.09 & \textbf{0.08} & 0.1 & 153.2 & 0.1
     & 2.4 & 2.4 & \textbf{0.07} & 0.2 & 0.2
     & 50.3 & 35.0 & \textbf{1.0} & 76567.6 & 1.0 \\
 & 11  & 
     & 0.1 & \textbf{0.08} & 0.09 & 219.4 & 0.1
     & 3.1 & 3.1 & \textbf{0.1} & 0.2 & 0.2
     & 35.3 & 34.0 & \textbf{1.0} & 121281.7 & 1.0 \\
 & 13 &
     & 0.08 & 0.06 & 0.07 & 2.6 & \textbf{0.06}
     & 0.7 & 0.7 & \textbf{0.004} & 0.006 & 0.006
     & 3.4 & \textbf{1.0} & 1.0 & 5284.9 & 1.0 \\
\bottomrule
\end{tabular}
}
\begin{minipage}{\textwidth}
\footnotesize
Note. The column $|\nodes|$ represents the exact number of nodes in the scenario tree for part (a) and represents the target number of nodes from which actual instances can deviate by $\pm 5\%$ for part (b).
The column “time” reports the solution time in seconds.
The column “relative gap” gives the percentage optimality gap between the LP relaxation and the IP formulation.
The column “node count” indicates the number of nodes explored in the branch-and-bound algorithm.
All reported values are averages over ten instances.
\end{minipage}
\end{sidewaystable}

% \textbf{Formulations.}
% We consider four formulations: the complete plan formulation with the size reduction method \Cref{eq: cp+}, the subtree formulation \Cref{eq: st}, the subtree DP formulation \Cref{eq: stdp}, and the complete plan formulation with facet-defining constraints in the DP form \Cref{eq: cp++}, 
We conduct experiments to compare our five proposed formulations: \Cref{eq: cp} \Cref{eq: cp+}, \Cref{eq: cp++}, \Cref{eq: st}, and \Cref{eq: stdp}.
For \Cref{eq: st}, which contains a large number of inequalities, we employ a separation algorithm adapted to $0$-$1$ solutions from that described in \Cref{subsubsec: st separation}. 
The algorithm is implemented via JuMP's lazy constraint callback, generating subtree constraints on the fly; see \Cref{paragraph: lazy constraint st} for details. 
In contrast, all the other formulations are static; we add all of their constraints to the model at the beginning.
We evaluate the performance of each formulation through the following metrics, with all results averaged over 10 distinct instances for each problem configuration.
First, we report the average time to solve the instances to optimality, subject to a 1000-second time limit. 
Second, we present the relative gap of the initial LP relaxation, computed as $\sfrac{|b-f|}{|f|}$, where $b$ is LP relaxation's objective value and $f$ is the objective of the optimal objective value of the integer problem.
Third, we list the number of nodes explored by Gurobi's branch-and-bound algorithm.
%which is combined with \Cref{eq: cp}, \Cref{eq: stdp}, inequalities \Cref{eq: cp facet (stdp)}, and the size reduction method.
% as presented in \Cref{def: cp++}.
% \byjp{that reference link is broken}
%\byjp{For the last part of the sentence, was \Cref{eq: cp++} not already defined before. Why do we need to redefine it?}
% \byjp{I guess it is not clear why we do not do (CP)...}
% \textbf{Measurements.}
% \byjp{Are we reporting time for each instead or the average over 10 instances?}
%\byjp{Should we say that we average over 10 instances for each metric?}
% \byjp{Best solution found with that formulation or across all formulations... DOUBLE CHECK}
%\byjp{What is the time limit imposed on the solution?}

Table \ref{tab:hypercube} presents the results of our experiments.
The top section is for \Btree{s} whereas the bottom section is for \Stree{s}.
In the \Btree{} setting, the performance difference between \Cref{eq: cp} and \Cref{eq: cp+} is small, as the reduction techniques used in \Cref{eq: cp+} are most effective for sparse trees.
The results also highlight a clear improvement of \Cref{eq: stdp} over \Cref{eq: st} as trees grow larger. 
Consequently, we focus our subsequent experiments on \Cref{eq: stdp}.
In the \Stree{} setting, as the number of nodes is approximately fixed, the solution time of each formulation is relatively stable in response to changes in tree heights.
% \Cref{eq: cp+} shows slight advantages over \Cref{eq: cp} and all the other formulations.
% \byjp{Should we say something about Strees?}
The results in both sections highlight a trade-off between different formulations. 
The subtree-based formulations, \Cref{eq: st} and \Cref{eq: stdp}, provide stronger relaxations for smaller trees. 
However, their strength diminishes relative to complete plan type formulations as the tree size increases, with \Cref{eq: cp+} showing more stable performance.
As expected, the combined formulation \Cref{eq: cp++} is consistently the strongest, as it integrates the strengths of both types of formulation.
However, it does not always result in the fastest solution times as it also has the largest size.
% that \Cref{eq: st} and \Cref{eq: stdp} are stronger than \Cref{eq: cp+}
% for small trees, but they become weaker as the tree size increases, whereas \Cref{eq: cp+} is relatively stable with respect to changes in tree sizes.
% Formulation \Cref{eq: cp++} is always the strongest, which is expected as it combines the strength of \Cref{eq: cp} and \Cref{eq: st}.
%\byjp{Is it only tightness that Table 1 give?}
% \byjp{I guess we need to decide if we capitalize or not the name of formulations and stay consistent}
%\byjp{Do we do that? It seems that there are number given}
% The results demonstrate that \Cref{eq: cp+} is the fastest formulation and \Cref{eq: cp++} is the strongest formulation in terms of LP relaxation bound.
% \byjp{Is it at the root or globally?}

%%%%%%%%%%%%%%%%%%%%%%%
%%%%%%%%%%%%%%%%%%%%%%%%%%%%%%%%%%%%%%%%%%%%%%
\subsection{Uncapacitated lot-sizing problems}
%%%%%%%%%%%%%%%%%%%%%%%%%%%%%%%%%%%%%%%%%%%%%%
%%%%%%%%%%%%%%%%%%%%%%%

We next consider instances where the stochastic lot-sizing problem is used as the base model; see \Cref{subsec: lot-sizing formulation} for the mathematical formulation and how we generate instances.
% \byjp{Should we say stochastic LS?}
In the model, we impose an $\nrevisions$-revision constraint on the binary variables $\x$ indicating whether production occurs in a node or not.
%\byjp{Maybe include a section number?}
% \byjp{Why do they fit?}
% We generate scenario trees with the number of stages $\nstages$ ranges from 6 to 16 and the target number of nodes being within the range $[3\,\nstages\cdot 0.95, 3\,\nstages^2 \cdot 1.05]$.
% \byjp{Remove previous sentence after modifying table caption...}
%\byjp{Did we define height of a tree? Should we simply say number of stages?}
%\byjp{Maybe write instead: We generate scenario trees with $\nstages$ stages using the method described in Section~\ref{??} so that $|\nodes| = 3\nstages^2 \pm 5 \%$. }
%\byjp{Are all scenarios equally likely?}
%\byjp{Should we move the above sentence to the previous paragraph?}

We use \Cref{eq: cp}, \Cref{eq: cp+}, \Cref{eq: cp++}, and \Cref{eq: stdp} as the MIP formulations for the $\nrevisions$-revisable set.
\Cref{tab:ls-results} demonstrates that \Cref{eq: cp+} is the fastest formulation and that there is a reduction in the running time for \Cref{eq: cp+} compared to \Cref{eq: cp}, showing the effectiveness of the size reduction technique.
We also solve the problem without the $\nrevisions$-revision constraint. 
Compared to the base problem (LS), \Cref{eq: cp+} is slower but remains within a reasonable range.
These results also corroborate our earlier observations that \Cref{eq: stdp} and \Cref{eq: cp++}, which respectively contain subtree constraints explicitly and implicitly, tend to be faster when $\nrevisions$ grows. 
This aligns with the fact that subtree formulations are stronger when $\nrevisions$ is close to $\nstages$.
%\byjp{Isn't the last sentence a contradiction?}

\begin{table}[htbp!]
\centering
\caption{Experiments on uncapacitated lot-sizing problems.}
\label{tab:ls-results}
\setlength{\tabcolsep}{1.9mm}{
\begin{tabular}{ccccccccccccc}
\toprule
\multirow{2}{*}{$\nrevisions$} 
 & \multirow{2}{*}{$\nstages$} 
 & \multirow{2}{*}{$|\nodes|$}
 & \multicolumn{5}{c}{time}
 & \multicolumn{5}{c}{node count} \\
\cmidrule(r){4-8}\cmidrule(r){9-13}
 & &
 & $\varnothing$ & CP & CP+ & CP++ & STDP  
 & $\varnothing$ & CP & CP+ & CP++ & STDP   \\
\midrule
\multirow{6}{*}{1}
 & 6  & 108
   & 0.02 & 0.10 & \textbf{0.08} & 0.21 & 0.14
   & 1.0  & 18.5 & 23.0  & \textbf{7.2} & 8.4 \\
 & 8  & 192
   & 0.05 & 0.42 & \textbf{0.33}  & 1.34 & 1.04   
   & 48.6 & 364.5 & 304.5  & 124.2& \textbf{95.1} \\
 & 10 & 300
   & 0.09 & 1.47 & \textbf{0.87}  & 4.07 & 3.86
   & 49.6 & 1237.1 & 650.1 & \textbf{252.0}  & 379.1\\
 & 12 & 432
   & 0.13 & 4.09 & \textbf{2.01} & 15.54 & 13.62
   & 171.0 & 2454.5 & 1847.6 & \textbf{986.1} & 1065.5 \\
 & 14 & 588
   & 0.24 & 12.11 & \textbf{5.50} & 58.04 & 38.43
   & 580.4 & 5096.6 & 3133.3 & \textbf{1545.1} & 1773.2 \\
 & 16 & 768
   & 0.50 & 89.67 & \textbf{29.82} & 309.43 & 341.02
   & 2366.7 & 8622.1 & 6613.1 & 7155.8 & \textbf{5421.9} \\
\midrule
\multirow{6}{*}{2}
 & 6  & 108
   & 0.02 & 0.05 & \textbf{0.04} & 0.09 & 0.08
   & 1.0  & \textbf{1.0} & \textbf{1.0} & \textbf{1.0} & \textbf{1.0} \\
 & 8  & 192
   & 0.05 & 0.30 & \textbf{0.23} & 0.68 & 0.66
   & 48.6 & 489.9 & 340.2 & 51.8 & \textbf{23.2} \\
 & 10 & 300
   & 0.09 & 1.29 & \textbf{0.87} & 1.78 & 1.75
   & 49.6 & 2418.3 & 1760.1 & 384.9 & \textbf{182.3} \\
 & 12 & 432
   & 0.13 & 5.40 & \textbf{3.03} & 7.04 & 6.62
   & 171.0 & 4486.7 & 4332.6 & 986.4 & \textbf{773.7} \\
 & 14 & 588
   & 0.24 & 12.23 & \textbf{6.14} & 29.42 & 28.05
   & 580.4 & 7084.1 & 8937.9 & \textbf{2948.0} & 3538.3 \\
 & 16 & 768
   & 0.50 & 49.57 & \textbf{18.32} & 128.34 & 161.78
   & 2366.7 & 9441.5 & \textbf{6462.6} & 6572.3 & 7663.4 \\
\bottomrule
\end{tabular}
}
\begin{minipage}{\textwidth}
\footnotesize{Note. The ``$\varnothing$'' column represents the stochastic lot-sizing problem without revision constraints.
The column $|\nodes|$ represents the target number of nodes from which actual instances can deviate by $\pm5 \%$.  
All reported values are averages over 10 instances.}
\end{minipage}
\end{table}

%%%%%%%%%%%%%%%%%%%%%%%
%%%%%%%%%%%%%%%%%%%%%%%%%%%%%%%%%%%%%%%%%%%%%%
\subsection{Capacity planning problems} \label{subsec: capacity planning}
%%%%%%%%%%%%%%%%%%%%%%%%%%%%%%%%%%%%%%%%%%%%%%
%%%%%%%%%%%%%%%%%%%%%%%

Capacity planning problems involve making decisions regarding acquiring, expanding, or reallocating productive or service capacities over a multi-period planning horizon \cite{huang2005, huang2009}. 
These decisions are typically capital-intensive and have long-term implications, making robust planning under uncertainty essential.
\MSP{}  is thus a suitable framework for this problem, allowing for capacity adjustments in response to evolving conditions. 
%Key decisions may include investing in new facilities or technologies. 
%\byjp{Does this sentence say something different than the first sentence of the paragraph?}
Here, we consider a variation of the semiconductor tool planning model studied in \cite{huang2005}, which is challenging to solve even when the tree is small. Details of the formulation and instance generation can be found in \Cref{subsec: capacity planning formulation}.
% \byjp{In the above display, you list what set the indices belong to when you write $\forall v$ \ldots You do not do it here... Be consistent. Also the alignment of the $\forall$ statements is all over the place. }
We generate \Stree{s}.
%scenario trees with a target number of nodes. %byjp{Fill in}
Because the trees we generate are small, we report the exact number of nodes they have. 

% \byjp{are these number the same as in [12], as mentioned earlier?}
% \byjian{table of parameters in appendix.}
%\byjp{Are all scenarios equally likely?}
% \byjp{What are the variables that are considered strategic and subject to K-revision}

\Cref{tab: capacity planning} shows computational results for the capacity planning problems. 
Because several instances reach the time limit, we also present in  \Cref{tab: capacity planning appendix} additional information about the numbers of solved instances and the gaps at termination. %shows the appendix table for \Cref{tab: capacity planning}.
%\byjp{I am not sure what appendix table means...}
First, we observe that the relative loss of the $\nrevisions$-revision is very small, which indicates that the $\nrevisions$-revision constraints help create solutions that are more consistent across scenarios without significantly affecting the optimal value.
% \byjp{Less variable suggest that the other were very variable, which we do not know. Should we say more simply that "help impose limited revisions in the solution without significantly..." or something like that}
Second, we see that \Cref{eq: stdp} and \Cref{eq: cp++} are both faster than \Cref{eq: cp+} for this problem, which is expected as these formulations are stronger for short trees.
Specifically, \Cref{eq: stdp} is the fastest in solving the problems exactly and, for those instances that are not solved within the time limit, provides the smallest gaps.
%\byjp{Talk about the last sentence}

\begin{table}[htbp]
 \centering
 
 \begin{minipage}{0.95\linewidth} % <-- 调整这个宽度：略大于表格即可
\centering
\captionsetup{justification=raggedright,singlelinecheck=false} % 可选：局部覆盖
 \caption{Computational results for capacity planning problems.}
    \setlength{\tabcolsep}{2.9mm}{
\begin{tabular}{cccccccccc}
\toprule
\multirow{2}{*}{$\nrevisions$} 
 & \multirow{2}{*}{$\nstages$} 
 & \multirow{2}{*}{$|\nodes|$}
 & \multirow{2}{*}{loss(\%)}
 & \multicolumn{3}{c}{time}
 & \multicolumn{3}{c}{node count} \\
\cmidrule(r){5-7}\cmidrule(r){8-10}
 & & &
 & CP+ & CP++ & STDP
 & CP+ & CP++ & STDP\\
\midrule
\multirow{6}{*}{1}
 & 2  & 7
 & 0.07
   & \textbf{0.78} & 0.89 & 0.88
   & 35.8 & 36.2 & \textbf{23.6} \\
 & 3  & 15
 & 0.22
   & 8.21 & 7.61 & \textbf{6.36}
   & 835.4 & \textbf{646.2} & 696.4\\
 & 3 & 20
 & 0.25
   & 150.4 & 123.80 & \textbf{68.2}
   & 5298.2 & 5219.0 & \textbf{4455.4}\\
 & 4 & 25
 & 0.26
   & 415.89 & 255.73 & \textbf{166.59}
   & 10636.6 & 10714.2 & \textbf{10016.2}\\
 & 4 & 31
 & 0.48
   & 1000 & 1000 & 1000
   & \textbf{10558.4} & 16809.2 & 17089.6 \\
 & 5 & 36
 & 0.35
   & 1000 & 1000 & 1000
   & \textbf{7440.4}& 9341.2& 10351.2\\
\midrule
\multirow{5}{*}{2}
 & 3  & 15
 & 0.01
   & 4.952& 5.254& \textbf{4.801}
   & 663.2& 540.0& \textbf{285.4}\\
 & 3 & 20
 & 0.01
   & \textbf{50.21}& 64.32& 54.74
   & \textbf{3946.8}& 5013.8& 4749.0\\
 & 4 & 25
 & 0.02
   & 73.25  & 63.04  & \textbf{59.26}
   & 6321.4& 5534.8& \textbf{5150.6}\\
 & 4 & 31
 & 0.06
   & 966.97 & 809.56 & \textbf{759.88}
   & 51997.6& \textbf{34348.4}& 40691.8\\
 & 5 & 36
 & 0.04
   & 693.70  & 726.81 & \textbf{581.29}
   & 18808.8 & 16544.4 & \textbf{16119.0} \\
\bottomrule
\end{tabular}
    }
    \label{tab: capacity planning}
\begin{minipage}{\textwidth}
\vspace{+1pt}
\footnotesize{
Note. The ``loss \%'' column shows the relative loss of the $\nrevisions$-revision in percentage.
 The ``time'' column shows the solution time (in seconds), with a time limit set to 1000s.
 % The other columns can be interpreted similarly as in \Cref{tab:hypercube}.
All reported values are averages over 5 instances.}
\end{minipage}
\end{minipage}
\end{table}

\begin{table}[htbp]
\centering

\begin{minipage}{0.73\linewidth} % <-- 调整这个宽度：略大于表格即可
\centering
\captionsetup{justification=raggedright,singlelinecheck=false} % 可选：局部覆盖

\caption{Additional performance metrics for hard instances of \Cref{tab: capacity planning}.}
\label{tab: capacity planning appendix}

\setlength{\tabcolsep}{2.9mm}
\begin{tabular}{ccccccccc}
\toprule
\multirow{2}{*}{$\nrevisions$} 
 & \multirow{2}{*}{$\nstages$} 
 & \multirow{2}{*}{$|\nodes|$}
 & \multicolumn{3}{c}{\# solved problems}
 & \multicolumn{3}{c}{final integrality gap} \\
\cmidrule(r){4-6}\cmidrule(r){7-9}
 & & 
 & CP+ & CP++ & STDP
 & CP+ & CP++ & STDP \\
\midrule
\multirow{2}{*}{1}
 & 4 & 31
   & 0 & 0 & 0
   & 0.42 & 0.26 & \textbf{0.24} \\
 & 5 & 36
   & 0 & 0  & 0
   & 0.34 & 0.30 & \textbf{0.15} \\
\midrule
\multirow{2}{*}{2}
 & 4 & 31
   & 1 & 2  & \textbf{4}
   & 0.09 & 0.04 & \textbf{0.04} \\
 & 5 & 36
   & 2 & \textbf{3} & \textbf{3} 
   & 0.03 & 0.02 & \textbf{0.02}  \\
\bottomrule
\end{tabular}

\end{minipage}
\end{table}

%%%%%%%%%%%%%%%%%%%%%%%
%%%%%%%%%%%%%%%%%%%%%%%%%%%%%%%%%%%%%%%%%%%%%%
\subsection{Single airport ground-holding program (SAGHP) problems} \label{subsec: saghp}
%%%%%%%%%%%%%%%%%%%%%%%%%%%%%%%%%%%%%%%%%%%%%%
%%%%%%%%%%%%%%%%%%%%%%%
We next study instances of SAGHP to investigate the effect of $\nrevisions$-revision constraints on problems with real world data. 
%\red{We consider instances of SAGHP generated with real world data.
%We focus on the effect of the $\nrevisions$-revision constraints.}
Specifically, we compare the optimal values of the \MSP{} model without revision constraints, the $1$-revision model, 
and a partially adaptive \MSP{} model with 1 adaptive stage.
%The specific model introduced by \cite{mukherjee2007, estes2020} can be found in \Cref{subsec: SAGHP formulation}.
The base model we use for SAGHP is that introduced by \cite{mukherjee2007, estes2020}, which can be found in \Cref{subsec: SAGHP formulation}.
We impose the $\nrevisions$-revision constraint on variables representing whether flights depart at each node or not.
%\byjp{Should we say in words what the K-revision constraint is imposed on?}

% \byjp{Need to give the definition of the symbols used}
% \byjp{same comment about alignment and sets for the $\forall$. Also, if you give a model, you need to also describe what notation it uses...}
% \byjp{Not specific enough. Experiments need to be reproduceable by someone reading the paper.}
% \byjp{add citation}
% \byjp{Need to describe how}

We use real-world flight data from the San Francisco International Airport (SFO), the Newark Liberty International Airport (EWR), and the O'Hare International Airport (ORD).
We choose these three airports because they frequently experience a large number of ground delays.
Flight data are obtained from the Aviation System Performance Metrics (ASPM) database \cite{aspmdata}. 
All flights arriving at the studied airport on July 1st, 2024, within the designated planning horizon were included.
%\byjp{What does impacting mean here?}

The scenario tree captures the randomness of weather changes.
We consider meteorological condition codes: ``visual''(V), ``marginal''(M), ``instrument''(I), or ``ground stop required'' (S).
We examine three weather evolution patterns: ``VIV'', ``IMV'', and ``VSIV''.
Each pattern specifies a sequence of meteorological condition codes that may occur over the planning horizon.
For example, under the ``VIV'' pattern, a scenario starts with a meteorological condition code V. The code may then transition to the second code I in the sequence at some stage, or may remain at the first meteorological condition code V throughout. Once the code has transitioned to the second code I, it may subsequently transition to the final code V, or it may stay in the second code I for the remainder of the stages.
Details can be found in \Cref{subsec: SAGHP formulation}.

% \byjp{How long can we stay with one weather type?}
% \byjian{Scenario reduction: 30-minute.}
% \byjp{I would include the website as a reference and describe when it was accessed}
% \byjian{Need Alex to provide a citation for the data.}
%\byjp{What does that mean?}
% \byjp{Instead of a footnote, make this a reference}
%\byjp{How do we set the probabilities of the scenarios?}

 % \byjp{The size of the square, triangles, ... is not explained}

% \byjian{modify the table}
\begin{figure*}[tbp]
  \centering
  \begin{minipage}[t]{0.45\textwidth}
    \centering
    \includegraphics[width=\linewidth]{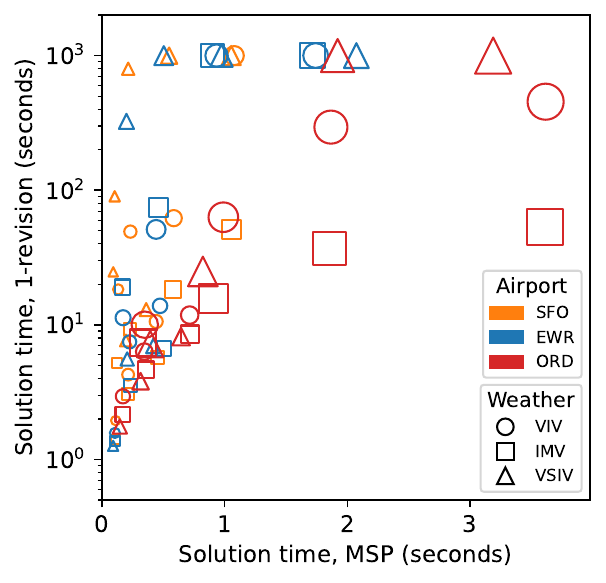}
    \refstepcounter{panel}\label{fig: saghp:a}
    \vspace{-0.9\baselineskip}
    \panelcaption{Runtime compared to MSP.}
  \end{minipage}\hfill
  \begin{minipage}[t]{0.45\textwidth}
    \centering
    \includegraphics[width=\linewidth]{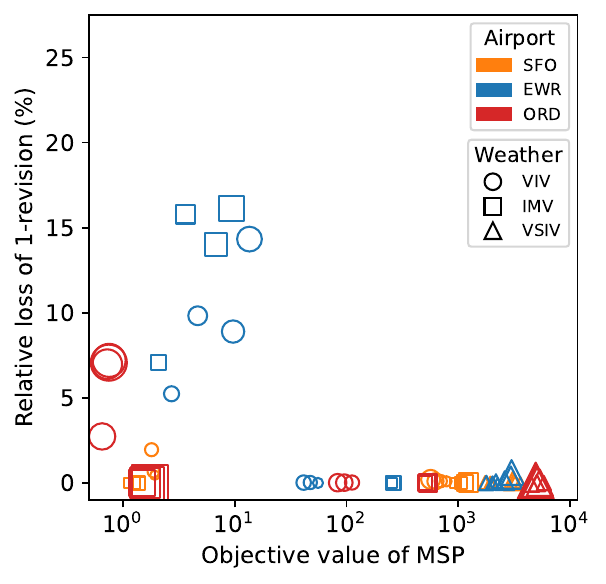}
    \refstepcounter{panel}\label{fig: saghp:b}
    \vspace{-0.9\baselineskip}
    \panelcaption{Relative loss of 1-revision.}
  \end{minipage}

  \medskip

  \begin{minipage}[t]{0.45\textwidth}
    \centering
    \includegraphics[width=\linewidth]{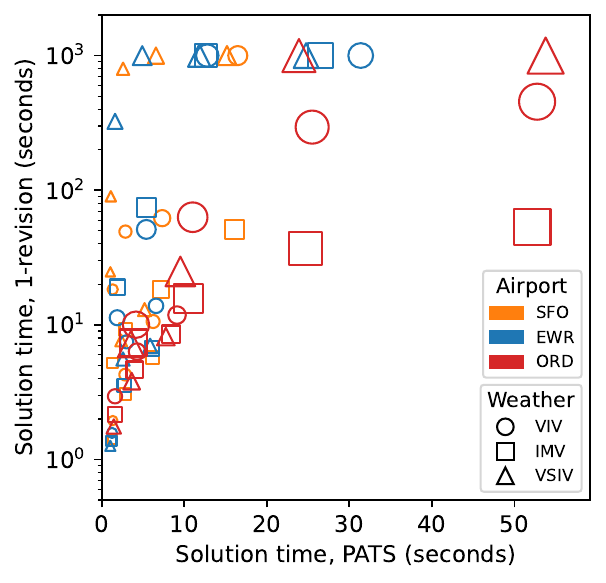}
    \refstepcounter{panel}\label{fig: saghp:c}
    \vspace{-0.9\baselineskip}
    \panelcaption{Runtime compared to PATS.}
  \end{minipage}\hfill
  \begin{minipage}[t]{0.45\textwidth}
    \centering
    \includegraphics[width=\linewidth]{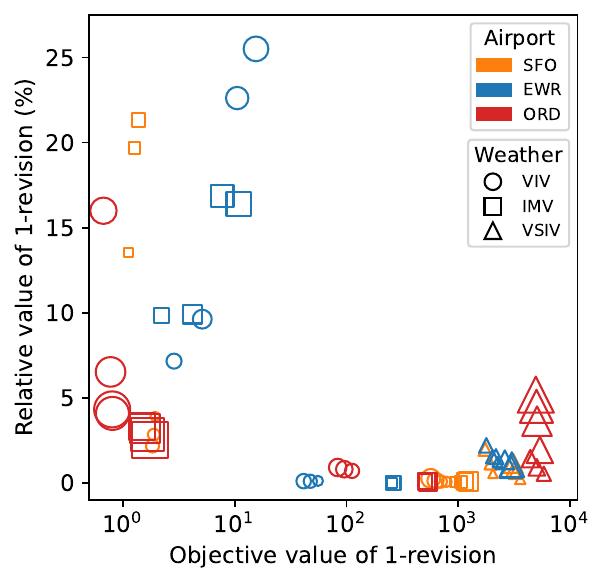}
    \refstepcounter{panel}\label{fig: saghp:d}
    \vspace{-0.9\baselineskip}
    \panelcaption{Relative value of 1-revision.}
  \end{minipage}
\vspace{+0.1cm}
  \caption{Computational results on SAGHP.}
  \begin{minipage}{\textwidth}
    \footnotesize
    Note. The top row compares MSP vs 1-revision; the bottom row compares partially adaptive \MSP{} with two adaptive stages (PATS) vs 1-revision. Marker color encodes airport; marker shape encodes weather-evolution pattern; marker size is proportional to the number of flights. A 1000-second time limit is imposed; if hit, the reported objective is that of the best solution found.
  \end{minipage}
  \label{fig: saghp}
\end{figure*}

\Cref{fig: saghp} graphically displays the results of the experiments for SAGHP.
The top row shows the comparison between the original SAGHP problem formulated using \MSP{} models and that of SAGHP with 1-revision constraints.
The upper left panel plots the solution time of the two models and the upper right panel represents the objective values.
We observe that the 1-revision model is much harder to solve than the original  \MSP{} model.
However, the right panel establishes that the objective value achieved by the 1-revision model is often very close to that of the original problem, even for instances where the 1-revision model is not solved to optimality due to the imposed time limit. 
This implies that limiting revisions causes little degradation in objective value for SAGHP problems.

In the bottom row, we compare the 1-revision SAGHP problem with the partially adaptive \MSP{} model with two adaptive stages.
The left panel shows that solving the partially adaptive model is faster than solving the 1-revision model, but also much slower than solving the original model.
The right panel shows that the relative value of 1-revision is generally larger than the relative loss.
Remarkably, for instances with large objective cost, \Cref{fig: saghp}(d) shows that the relative value of 1-revision is substantial in several cases, whereas \Cref{fig: saghp}(b) shows that the relative loss is negligible.
% \byjp{improve the sentence.}
%meaning that the loss of $1$-revision is acceptable.

\Cref{tab: statistics saghp} further demonstrates the value of 1-revision generally outweighs its loss by reporting summary statistics computed over all 63 instances.

\begin{table}[htbp]
\centering
\begin{minipage}{0.8\linewidth}
\caption{Statistics of the value and loss of 1-revision on SAGHP over 63 instances.}
\setlength{\tabcolsep}{5mm}{
\begin{tabular}{lcc}
\toprule
Statistic & Value of 1-revision & Loss of 1-revision \\
\midrule
Mean                        & 4.17\%  & 1.92\%  \\
Median                      & 1.37\%  & 0.03\% \\
% Standard deviation          & 6.32\%  & 4.21\%  \\
Maximum                     & 25.5\%  & 16.2\%  \\
\# of instances = 0      & 2       & 16           \\
\# of instances in $(0,1\%)$  & 25      & 34               \\
\# of instances in $[1\%,10\%)$  & 28      & 9             \\
\# of instances $\geqslant 10\% $  & 8      & 4             \\
\bottomrule
\end{tabular}}
\label{tab: statistics saghp}
\end{minipage}
\end{table}

%%%%%%%%%%%%%%%%%%%%%%%
%%%%%%%%%%%%%%%%%%%%%%%%%%%%%%%%%%%%%%%%%%%%%%
%%%%%%%%%%%%%%%%%%%%%%%%%%%%%%%%%%%%%%%%%%%%%%%%%%%%%%%%%%%%%%%%%%%%%
\section{Conclusion} \label{sec: conclusion}
%%%%%%%%%%%%%%%%%%%%%%%%%%%%%%%%%%%%%%%%%%%%%%%%%%%%%%%%%%%%%%%%%%%%%
%%%%%%%%%%%%%%%%%%%%%%%%%%%%%%%%%%%%%%%%%%%%%%
%%%%%%%%%%%%%%%%%%%%%%%
We introduced the $\nrevisions$-revision approach for \MSP{}, and established its theoretical underpinnings with respect to complexity, formulations, and impact on objectives.
Our theoretical analysis and computational results suggest that our formulations work best in complementary settings: complete plan formulations are strong and efficient for large scenario trees, whereas subtree formulations are particularly strong for short and small trees.
Beyond the technical contributions, our results highlight the value of controlling the trade-off between flexibility and predictability in practical \MSP{} models, offering new insights for future research on sequential decision-making under uncertainty.

\section*{Acknowledgement}
This work is partially supported by Air Force Office of Scientific Research via grant FA9550-23-1-0451.

\printbibliography

\begin{appendices}
%%=============================================%%
%% For submissions to Nature Portfolio Journals %%
%% please use the heading ``Extended Data''.   %%
%%=============================================%%

%%=============================================================%%
%% Sample for another appendix section			       %%
%%=============================================================%%

%% \section{Example of another appendix section}\label{secA2}%
%% Appendices may be used for helpful, supporting or essential material that would otherwise 
%% clutter, break up or be distracting to the text. Appendices can consist of sections, figures, 
%% tables and equations etc.

\makeatletter
\newenvironment{appendixproof}{%
  \pushQED{\qed}%
  \normalfont
  \topsep6\p@\@plus6\p@ \trivlist
  \item\relax
}{%
  \popQED\endtrivlist\@endpefalse
}
\makeatother

\crefalias{section}{appendix}
\crefalias{subsection}{appendix}

\section{MSP problem formulations and instance generations} \label{sec: problem formulations}

%In this section, we provide the formulations of the different problems we use to evaluate the $\nrevisions$-revision approach. 
%In addition to the formulation, we also describe the procedures we use to generate our test instances. 
%Because the difficulty of these problems can depend on the scenario tree structure and we want to understand its impact through our computational study, we start by describing the two procedures we use to generate the scenario that are needed to test our models numerically. 
This section details the problem formulations used to evaluate the $\nrevisions$-revision approach, alongside the procedures for generating our test instances. 
%Because problem difficulty is often contingent on the scenario tree structure, we begin by describing two generation procedures that allow us to systematically investigate this impact within our computational study.
%\byjp{Replace ``that allow us to systematically investigate this impact within our computational study." with ``for scenario trees"?}
Since problem difficulty often depends on scenario tree structure, we begin by describing two procedures for generating them.

\subsection{Scenario tree generation}
\label{subsec: generate trees}
We generate perfect binary trees (\Btree{s}) parameterized by their height $\nstages$. 
These trees have $2^{\nstages}-1$ nodes and we set the probability of each scenario to be $\sfrac{1}{2^{\nstages}}$.
We generate sparse trees (\Stree{s}) with a specified target number of nodes $|\nodes|$ and height $\nstages$ recursively from the root.
At each step of the recursive procedure, we randomly select a node without children. 
If the node is at level $\nstages$, we move on to the next step without taking action.
% \byjp{Should we say instead, "we move on to the next step without taking action."}
Otherwise, we generate a number of children randomly from the distribution $\max\{\Binomial(m,\varrho),1\}$, \emph{i.e.}, we first sample from a binomial distribution and then take the maximum with $1$.
Here, $m$ is a parameter that describes the maximum number of children (which we set to be $3$) and $\varrho$ is the probability of keeping a child.
The construction process terminates when all the leaf nodes are at stage $\nstages$.
In our computational experiments, we will adjust the value of $\varrho$ depending on the target number of nodes and the height of the trees we want to create. 
The process described above does not guarantee that, in any given realization, the number of nodes is what we desire.
Hence, we repeat this construction process until the number of nodes is in a specific interval.
% \byjp{Should we present this by saying that we generate trees with a given height and a number of nodes in a specified interval?}
Once the tree is created, we compute the probability of each scenario by assuming that, whenever a branching takes place in the tree, each of the children nodes are equally likely to occur.

\subsection{Lot-sizing problem} \label{subsec: lot-sizing formulation}
% The stochastic lot-sizing problem aims to determine an optimal production schedule over a finite number of periods to satisfy fluctuating and uncertain customer demand, while minimizing the sum of fixed setup costs, variable production costs, and variable inventory-holding costs. 
% In addition to inventory levels at each time, decisions include whether to set up production in a given period and, if so, how much to produce.

The data of this problem consists of a scenario tree $\tree$ with nodes set $\nodes$. 
For each $v \in \nodes$, we are given  
the demand $d(v)$ experienced at this node, the fixed 
setup cost $f(v)$ incurred for producing at this node, as well as the per-unit production cost $g(v)$ 
and the per-unit inventory-holding cost $h(v)$.

At node $v \in \nodes$, we use binary variable $\x{v}$ to indicate whether production occurs at this node, continuous variable $q(v)$ to denote the quantity produced at $v$, and continuous variable $s(v)$ to describe the inventory level for the product at the end of the period corresponding to node $v$.

Using this notation, the problem can be formulated as

\begin{equation} \label{eq: lot-sizing}
\begin{aligned}
\min \quad  
& \mathrlap{\sum_{\scenario \in \scenarioset}  p_{\scenario} \cdot \sum_{v \in \scenario}\big(f(v) \x{v} + g(v) q(v) + h(v) s(v)\big)}
&& \\
\text{s.t.} \quad
& s(v) = s(\parent{v}) + q(v) - d(v), \quad
&& \forall v \in \nodes \backslash \{\root\}, \\
& q(v) \leqslant M_v \cdot \x{v}, \quad
&& \forall v \in \nodes, \\
& s(\root) = 0, \quad && \\
& \x{v} \in \Bin, \; q(v), s(v) \geqslant 0, \quad
&& \forall v \in \nodes.
\end{aligned}
\tag{LS}
\end{equation}
where $M_v$ are sufficiently large constants. 
In particular, a valid choice of $M_v$ is the maximum remaining cumulative demand from node $v$ to the end of the decision horizon.
We consider the variables $\x{v}{i}$ to be strategic.
% \byjp{What is the bigM we use in the codes?}

\paragraph{Instance generation.} 
The data corresponding to each node $v$ is generated as follows:
the demand $d(v)$ is uniformly sampled from integers between $1$ and $10$ and then multiplied by $100$.
The fixed cost $f(v)$ is uniformly sampled from integers between $1$ and $20$ and then multiplied by $1000$.
The production cost $g(v)$ is uniformly sampled from integers between $1$ and $5$ and then multiplied by $40$.
Lastly, the holding cost $h(v)$ is uniformly sampled from integers between $1$ and $20$.

\subsection{Capacity planning problem} \label{subsec: capacity planning formulation}
The problem we consider is a variant of the tool planning problem studied in \cite{huang2005}.
This problem involves $N$ tools, $O$ operations, and $P$ products.
The manufacturer employs related tools and operations to make products.
The data of this problem consists of a scenario tree $\tree$ with nodes $\nodes$.
In addition, we are given an upper bound $U_i$ on the total production quantity of tool type $i$ and the capacity $V_i$ of tool type $i$. 
Further, for each node $v$, we are given 
the fixed cost $f(v)_i$ of producing or purchasing tool type $i$, the per-unit cost $g(v)_i$ of tool type $i$, 
the per-unit holding cost $h(v)_i$ of tool type $i$,
the penalty cost $l(v)_p$ for being short of product type $p$, 
and the time $t(v)_{ijp}$ required for using tool $i$ and operation $j$ on product $p$. 

The model has binary variables $\x{v}{i}$  to indicate if tool type $i$ is produced or purchased at node $v$  or not. 
In addition, for each node $v$, the model has continuous variables to describe the quantity $q(v)_i$ of tool type $i$ produced or purchased,
the inventory $s(v)_i$ of tool type $i$ at the end of the period, 
the quantity $w(v)_p$ of product $p$, 
the shortage $u(v)_p$ of product $p$,
and the quantity of product $p$ that pass through operation $j$ on tool $i$
at node $v$.

Using this notation, the problem can be formulated as the following 0-1 mixed integer program where the objective 
is to minimize the total cost of production and holding plus the shortage penalty:

\begin{equation} \label{eq:semiconductor tool planning}
\begin{aligned}
\min \quad 
& \mathrlap{ 
  \sum_{\scenario \in \scenarioset} p_{\scenario}
  \sum_{v \in \scenario}
  \Biggl(
    \sum_{i=1}^{N} 
      \bigl(
          f(v)_i \x{v}{i}
          + g(v)_i q(v)_i
          + h(v)_i s(v)_i
      \bigr) 
    \;+\;
    \sum_{p=1}^{P}
      l(v)_p \, u(v)_p
  \Biggr) } \\
\text{s.t.} \quad
& s(v)_i = s(\parent{v})_i + q(v)_i,  \quad
 && \forall\,v \in \nodes,\;  \forall\,i \in [N], \\
& q(v)_i \le M \, \x{v}{i}, \quad
 && \forall\,v \in \nodes,\; \forall\,i \in [N], \\
& \sum_{i=1}^{N} q(v)_i \le U_i, \quad
 && \forall\,v \in \nodes, \\
& w(v)_p + u(v)_p \ge d(v)_p, \quad
 && \forall\,v \in \nodes,\; \forall\,p \in [P], \\
& w(v)_p \le \sum_{i=1}^{N} o(v)_{ijp}, \quad
 && \forall\,v \in \nodes,\; \forall\,j \in [O],\; \forall\,p \in [P], \\
& \sum_{j=1}^{O} \sum_{p=1}^{P}
  t(v)_{ijp}\,o(v)_{ijp}
  \;\;\le\;\; s(v)_i\, V_i, \quad
 && \forall\,v \in \nodes,\; \forall\,i \in [N], \\
& s(\root)_i = 0, && \forall\, i \in [N], \\
& \x{v}{i} \in \Bin, \; q(v)_i, s(v)_i \geqslant 0, \quad
 && \forall\,v \in \nodes,\; \forall\,i \in [N], \\
& w(v)_p, u(v)_p \geqslant 0, \quad
 && \forall\,v \in \nodes,\; \forall\,p \in [P],
\end{aligned}
\tag{TP}
\end{equation}
where $M$ is a sufficiently large constant. 
We consider the variables $\x{v}{i}$ to be strategic.
As we discussed before, there are several ways to impose a $\nrevisions$-revision constraint on these variables.
In our numerical experiment, we assume that a revision is triggered whenever any of the variables $\x{v}{i}$ at node $v$ deviates from the plan at the previous node. 
Hence, the revision budget can be seen as shared across all $i \in [N]$ and not imposed individually on each $i \in [N]$. 

\paragraph{Instance generation.}
We set $N=10$, $O=50$, and $P=10$.
We generate the demand data as in \cite{huang2005}, where $d(v)_p$ is sampled from $\operatorname{LogNormal}(1+0.5t,0.5+0.1t) \cdot d_0$, where $t$ is the stage of the node and $d_0$ is the initial demand which is set to $1000$.
For the other parameters, we generate $f(v)_i$, $g(v)_i$, and $h(v)_i$ by drawing integers uniformly between $1$ to $5$, and then scaling them by factors $60$, $6$, and $1$ respectively for each $i$.
We set $l(v)_p=10$ for each $p$.
For $t(v)_{ijp}$, we first consider each node $v$ with a product $p$, and generate its required operations at $v$.
We choose a random number $\eta$ in the interval $[\lfloor O/2 \rfloor, O]$ and uniformly select a subset of $[O]$ of size $\eta$ as required operations for $p$.
Next, we generate required tools for $p$ and a required operation $j$ for $p$, where we first select a random number from $[N]$ and then uniformly select a subset of $[N]$ with that size.
Finally, for each required operation $j$ and required tool $i$, we set $t(v)_{ijp}$ uniformly random between 1 and 10.
For all the other $t(v)_{ijp}$ where $i$ or $j$ is not required for $p$, we set $t(v)_{ijp}=1\mathrm{e}8$.

\subsection{Single airport ground holding problem} \label{subsec: SAGHP formulation}

We consider the single airport ground holding problem (SAGHP) model described in \cite{mukherjee2007, estes2020} where, 
given a set of flights $\Theta$ destined to a specific airport, the goal is to determine the controlled departure time for each incoming flight in order to minimize a combination of ground delays and expected air delays.
Aside from the scenario tree $\tree$ with nodes $\nodes$, we are given four categories of parameters. 
First, we are given the cost $c_g$ of holding a flight on the ground per time period, the cost $c_a$ of holding a flight in the air per time period, the cost $c_d$ of diverting a flight, and the maximum number of flights $C_a$ that are allowed to be held in the air at the airport at any given time.
% \byjp{Is it "for a unit of time" or is it "for a time period"?}
Second, for every flight $f$, we are given its earliest departure time $\mu_f$ and its flight duration $\delta_f$.
Third, for every node $v$ in the scenario tree, we are given the maximum number $C_g(v)$ of flight landings that the airport can accommodate during the time period associated with $v$. 
Finally, for each stage $t$, we are given the number $E_{t}$ of exempted flights whose schedules cannot be altered and are required to land at the airport at stage $t$, and the set of flights $\Phi_t^{\text{arr}}$ whose earliest possible arrival is at stage $t$.
% \byjp{Should the last notation by indexed/referenced by $v$?}
% \byjp{I rewrote the above paragraph, please check.}

%\red{In each stage $t$, the decision maker can hold each flight $f$ that has not yet departed on the ground until the next stage or can allow the flight $f$ to depart.}
%In each stage $t$, the decision maker can allow any subset of flights that have not yet departed to depart, or can hold any flight on the ground until the next stage. 
At each stage $t$, the decision maker chooses a subset of undeparted flights for departure while retaining all others on the ground until the next stage.
If flight $f$ is permitted to depart at stage $t$, the flight reaches the terminal airspace in stage $t+\delta_f$. 

For a given scenario node $v$ in some stage $\tau(v)$, the capacity value $C_g(v)$ determines the number of flights present in the terminal airspace that can land in stage $\tau(v)$. If the number of flights present exceeds the capacity, the excess flights must either be held in the air until the next stage or, in extreme cases, be diverted. 

The model has binary decisions $\x{v}{f}$ that indicate whether flight $f$ departs at node $v$ or not.
% , which are the strategic decision variables.
In addition, the model has integer variables to describe the number $w(v)$ of flights held in the air at node $v$, the number $l(v)$ of flights that will land at the airport at node $v$, and the number $d(v)$ of flights that are selected to be diverted at node $v$.
We assume that the variables $\x{v}{f}$ are strategic decisions on which we impose a $\nrevisions$-revision constraint. 
We consider that a revision occurs whenever the planned takeoff time of any flight changes. 
Hence, the revision budget is shared across flights and not imposed on each flight individually. 

Using this notation, (SAGHP) can be formulated as the following 0-1 mixed integer program where the objective is to minimize the total cost of holding flights in the air and on the ground, as well as the cost of diverting flights:

\begin{equation} \label{eq:hard-lotsizing}
\begin{aligned}
\min \quad 
& \mathrlap{\sum_{\scenario \in \scenarioset} p_{\scenario}
  \Biggl(
  \sum_{f\in \Theta}
    \sum_{\substack{v \in \scenario: \\ \stage{v} \in [\mu_f..\nstages]}}
    % \sum_{t=\mu_f}^{\nstages-\delta_f}
      c_g\cdot(\stage{v}-\mu_f)\,\x{v}{f}
    +
    \sum_{v \in \scenario}
      c_a\, w(v)
    + \sum_{v \in \scenario}
        c_d\, d(v)
  \Biggr) }
  \\
\text{s.t.} \quad
& \sum_{\substack{v \in \scenario: \\ \stage{v} \in \Gamma_f}} \x{v}{f} = 1, \quad 
 && \forall\,f\in \Theta, \scenario \in \scenarioset, \\
& E_{\stage{v}} + w(\parent{v}) + \sum_{f \in \Phi_{\stage{v}}^{\text{arr}}} \x{\scenario(\stage{v}-\delta_f)}{f} \\
&\hspace{+75pt} = l(v) + w(v) + d(v), \quad 
 && \forall\, \scenario\in \scenarioset, v\in \scenario\\
& l(v) \leqslant C_{g}(v), \quad 
 && \forall\,v \in \nodes, \\
& w(v) \leqslant C_a, \quad 
 && \forall\,v\in \nodes, \\
& w(\root) = l(\root) = 0, && \\
& \x{v}{f} \in \Bin, \quad
 && \forall\, v\in \nodes, f\in \Phi_{\stage{v}}^{\text{arr}}, \\
& w(v), l(v), d(v) \geqslant 0, \quad
 && \forall\, v\in \nodes.
\end{aligned}
\tag{SAGHP}
\end{equation}

\paragraph{Instance generation.}
We use data from three airports: SFO, EWR, and ORD on July 1st, 2024 from the database \cite{aspmdata}.
% \byjp{We consider three airpots and we pick the flights on day ?? by consulting the website ??}
To generate problem instances, we select seven time windows: 7:00-12:00, 7:00-13:00, 7:00-14:00, 7:00-15:00, 17:00-22:00, 17:00-23:00, and 17:00-24:00.
Each time window is divided into 15-minute intervals, where each interval corresponds to a stage in the scenario tree.
Thus, the number of intervals determines the number of stages $\nstages$.
The scheduled departure of each flight is converted into the corresponding stage index, and the flight time is rounded to the nearest number of stages.
From this processed dataset, we extract the parameters $\Theta$, $\mu_f$, $\delta_f$, $E_t$, $\Phi_t^{\text{arr}}$, and $\Gamma_f$ for each problem, \emph{i.e.}, a combination of an airport and a time window.

% \byjp{Weather codes}
We use meteorological condition codes: ``visual''(V), ``marginal''(M), ``instrumental''(I), and ``ground stop required''(S).
We consider three weather evolution patterns: ``VIV'', ``IMV'', and ``VSIV''.
For a given weather-evolution pattern, we proceed sequentially along the pattern from the first to the last weather type.
The root node is assigned the first weather type in the pattern.
For each node at an even-numbered stage, we attach a single child, retaining the same meteorological condition code as the parent.
For each node at an odd-numbered stage, we create two child nodes: one retaining the same code, and one switching to the next code in the pattern.
If a node's weather type is the final code in the pattern, we create a single child with the same code.
% The construction terminates at nodes at stage $\nstages$.
The construction does not proceed beyond nodes at stage $\nstages$.
Hence, each condition code is assumed to persist for at least 30 minutes (two stages) and transitions are allowed only at nodes located at odd-numbered stages.
All scenarios are assigned equal probability.

% We consider meteorological condition codes: ``visual'' (V), ``marginal'' (M), ``instrumental'' (I), or ``ground stop required'' (S).
Hence, for each node $v$ in the scenario tree, there is a corresponding meteorological condition code. 
We then determine the arrival capacity $C_g(v)$ based on the assigned meteorological condition code. 
If a node $v$ is assigned the code S, the capacity $C_g(v)$ is set to zero; otherwise, it is set to the arrival capacity described in the FAA Airport Capacity Profile \cite{capacitydata}. 
There are multiple runway configurations for SFO and ORD. 
We use the data of 28/01 Side-Byes for SFO under visual and marginal condition and 28/01 Intrail under instrument condition.
For ORD, we use the data of West Flow for all three conditions.
The maximum number of flights permitted to be held in air simultaneously, $C_a$, is set to the difference between the arrival capacity under V and I for the given airport.
Finally, we set the penalty parameters $c_g=1$, $c_a=3$, and $c_d=300$.

\section{Omitted proofs}  \label{sec: omitted proofs}

\ThmNPhardGeneral*

\begin{proof}
\label{proof: nphard general}
First, we argue that for any positive integer $\nstages > 3$, the problem $\hypercube{1}{\tree, \cc}$ where $\tree$ has three stages can be reduced to a problem $\hypercube{1}{\tree', \cc'}$ where $\tree'$ has $\nstages$ stages.
This reduction can be done by simply adding a path of length $\nstages-3$ of ``blank'' nodes with objective coefficient $0$ to each leaf of the original tree $\tree$.
%until it becomes a new tree with $\nstages$ stages.

Second, we show that for any positive integer $\nrevisions \leqslant \nstages-2$, the problem $\hypercube{\nrevisions}{\tree', \cc'}$ with $\tree'$ having $\nstages$ stages can be reduced to a problem $\hypercube{\nrevisions+1}{\tree'', \cc''}$, where $\tree''$ has $\nstages+1$ stages. 
We can construct $\tree''$ by expanding $\tree'$ with one more stage.
Specifically, for each leaf $\ell$ in $\tree'$, we create two children for $\ell$ in $\tree''$.
For each of the two children, we add one decision variable with objective coefficients $M$ and $-M$ respectively, where %$M$ is a sufficiently large value. 
%For example, the value $M$ can be chosen to be 
$M = 1 + \sum_{v \in \nodes(\tree')} \sum_{i \in [\dimstrategic{\stage{v}}]} | c'(v)_i |$, which is polynomial in the size of the original data.
Since $\hypercube{\nrevisions+1}{\tree', \cc'} < M$, then in an optimal plan adjustment policy for $\hypercube{\nrevisions+1}{\tree'', \cc''}$, a revision must occur for at least one of the children of $\ell$. 
Otherwise, we incur a cost of $M$ in our objective, but gain no more than $\hypercube{\nrevisions+1}{\tree', \cc'} - \hypercube{\nrevisions}{\tree', \cc'}$, which is upper bounded by $\hypercube{\nrevisions+1}{\tree', \cc'} < M$.
Therefore, solving $\hypercube{\nrevisions}{\tree', \cc'}$ reduces to solving $\hypercube{\nrevisions+1}{\tree'', \cc''}$.

% \blue{Lastly, by induction, the above shows that for any $\nrevisions$ and $\nstages$ such that $1 \leqslant \nrevisions \leqslant \nstages-2$, solving $\hypercube{\nrevisions}{\tree', \cc'}$ with $\tree'$ has $\nstages$ stages can be reduced from solving $\hypercube{1}{\tree, c}$, where $\tree$ has $3$ stages.}
The above results show that there is a polynomial reduction from $\hypercube{1}{\tree, c}$, where $\tree$ has $3$ stages to $\hypercube{\nrevisions}{\tree', \cc'}$ where $\tree'$ has $\nstages$ for any value of $\nrevisions$ and $\nstages$ such that $1 \leqslant \nrevisions \leqslant \nstages-2$.
\end{proof}

\PropLargeLossValue*
\begin{proof}
\label{proof: large loss and value}
For the first statement, consider a stochastic lot-sizing problem (\Cref{subsec: lot-sizing formulation}) where all scenarios have equal probabilities; see the left panel of \Cref{fig:loss and value of 1-revision}.
% \byjp{The above sentence can go, right?}
We assume $g(v)=0$ for all $v \in \nodes$.
For the root node $\root$, we set $d(\root)=f(\root)=0$.
We let $D$, $F$, and $S$ be positive integers.
%\byjp{Does $\nodes$ contain $\root$? If so, why repeat $g(\root)=0$?}
%where $D$ and $F$ can be large.
For every node $v$ marked with a solid square, we set $h(v)=S$ and $d(v)=f(v)=0$.
For every node $v$ marked with a hollow square, we set $h(v)=d(v)=f(v)=0$.
For every node $v$ marked with a solid diamond, we set  $d(v)=D$ and $f(v)=h(v)=0$.
Lastly, for every node $v$ marked with a hollow diamond, we set  $d(v)=1$, $f(v)=F$, and $h(v)=0$.
On the one hand, one can verify that $z_{\operatorname{MS}}=1.5S$, where the optimal policy is to produce a quantity of $2$ at the root node, then to produce the amount needed to fulfill the demand at every solid diamond node, and not to produce at any of the other nodes.
On the other hand, $z_1 \geqslant \min\set{S\cdot D, F}$, because for each scenario, a $ 1$-revisable policy either chooses to produce at a hollow diamond node, or not produce at a solid diamond node, meaning that there is a holding cost at a solid square node.
Therefore, the loss of $1$-revision relative to $z_{\text{MS}}$, $\sfrac{(z_1-z_{\text{MS})}}{|z_{\text{MS}|}}$, can be made arbitrarily large by selecting $D$ large and $F$ large relative to $S$.
% \byjp{Is it more that $D$ is large and $F$ is large relative to $S$?}

For the second statement, consider the scenario tree shown in the right panel of \Cref{fig:loss and value of 1-revision}, where all scenarios have equal probabilities.
We assume that $d(\root)=f(\root)=g(\root)=0$, $h(\root)=S$ and also that $g(v)=0$, $h(v)=S$  for all $v \in \nodes\setdiff\set{\root}$.
%\byjp{The LS model says $s(\root)=0$... Should all of the $s$'s above be replaced with $h$'s?}
We set $d(v)=D_1$, $f(v)=F_1$ for nodes $v$ marked with a hollow circle and $d(v)=D_2$, $f(v)=F_2$ for nodes $v$ marked with a solid circle. 
%\byjp{If we use hollow and solid in the line above, should we not use shaded and nonshaded below?}
We choose $S$, $D_1$, $D_2$, $F_1$, and $F_2$ to be positive integers such that $D_1 < D_2$ and $D_1 S < F_2 < (D_2-D_1) S < F_1$.
Specifically, we let $D_1 = 1$, $D_2 \geqslant 3$, $F_1 = D_2 S$, and $F_2 = 2S$.
Then, the optimal strategy for the \MSP{} model would be to produce at the solid nodes whereas not to produce and to use stored products at the hollow nodes.
Then, to compute $z_1$, the plan adjustment policy corresponding to an optimal $1$-revisable policy is  to produce at each stage and to revise at each hollow node.
However, every partially adaptive model will force two nodes marked with different shades at the same stage to take the same strategic decision.
%\byjp{Above sentence is not clear}
One can verify that $z_1 = 2.75 D_1 S + 0.75 F_2 = 4.25 S$ and $z^{\text{PA}}_1 \geqslant \min \{ 0.25 D_2 S, 0.25 F_1 \} = 0.25 D_2 S$.
Therefore, when $D_2$ gets larger, the value of $1$-revision relative to $z_1$, $\sfrac{(z_1^{PA}-z_1)}{|z_1|}$, become arbitrarily large.
%\byjp{Can we just let $F_2=2S$, $D_1=1$, $D_2 \ge 3$ and $F_1=D_2S$? Easier to see it becomes big as $D_2$ gets bigger}
\end{proof}

\begin{figure}[tbp]
\centering
    \begin{minipage}[t]{0.5\textwidth}
    \begin{tikzpicture}[
        grow=right,
        every node/.style={},
        level 1/.style={level distance=14mm, sibling distance=18mm},
        level 2/.style={level distance=11mm, sibling distance=14mm},
        level 3/.style={level distance=14mm, sibling distance=9mm},
        level 4/.style={level distance=11mm}
    ]
        \node [small node] {}
            child {
                node[club node] {} % Using amssymb spade
                child {
                    node[diamond node] {} % Using amssymb heart
                    child {
                        node[club node] {} % Using amssymb club
                        child {
                            node[diamond node] {} % Using amssymb diamond
                        }
                    }
                    child {
                        node[spade node] {}
                        child {
                            node[heart node] {}
                        }
                    }
                }
            }
            child {
                node[spade node] {}
                child {
                    node[heart node] {}
                     child {
                        node[spade node] {}
                        child {
                            node[heart node] {}
                        }
                    }
                    child {
                        node[club node] {}
                        child {
                            node[diamond node] {}
                        }
                    }
                }
            };
    \end{tikzpicture}
    \end{minipage}
    % \hspace{+3pt}
    \begin{minipage}[t]{0.4\textwidth}
    \begin{tikzpicture}[
        grow=right,
    ]
    \tikzstyle{level 1}=[level distance=14mm, sibling distance=14mm]
    \tikzstyle{level 2}=[level distance=14mm, sibling distance=12mm]
    \tikzstyle{level 3}=[level distance=14mm, sibling distance=8mm]

    \node[small node]{}
        child {node[snowy node] {}
            child {node[snowy node] {}
                child {node[snowy node] {} }
                child {node[sunny node] {} }
            }
            child {node[sunny node] {}
                child {node[sunny node] {} }
            }
        }
        child {node[sunny node] {}
            child {node[sunny node] {}
                child {node[sunny node] {} }
            }
        };
    \end{tikzpicture}
    \end{minipage}
\caption{Examples of large loss and large value of $1$-revision for stochastic lot-sizing.}
\begin{minipage}{\textwidth}
    \footnotesize
    Note. For both panels, the nodes with the same shape and shading have the same input data.
\end{minipage}
\label{fig:loss and value of 1-revision}
\end{figure}

\LmInconsistentPair*

\begin{proof}
\label{proof: inconsistent pair}
We prove the result by contradiction.
Assume $(\tree,x)$ is an example where the statement does not hold.
Then, in each stage $t$, there is a value $\theta_t\in\{0,1\}$ such that all the nodes at stage $t$ with $x$-value $\theta_t$ corresponds to a $(\nrevisions-1)$-revisable sub-policy (or there is no nodes with $x$-value $\theta_t$).
Otherwise, we will find a same-stage pair that takes different value and both corresponds to a sub-policy that is not $(\nrevisions-1)$-revisable.

We now show that, in such case, $x$ is $\nrevisions$-revisable.
Define the plan adjustment policy at $\root$ by setting $\plan(\root,1)=x(\root)$ and $\plan(\root,t)=1-\theta_t$ for all $t \in [2:\nstages]$.
Let $U=\{u\in\nodes : x(v)=1-\theta_{\stage{v}}, \forall v \leq u \}$.
% \byjp{What is $\calS$ in $\subroot{\calS}$?}
The induced subgraph of $U$ is a tree rooted at $\root$ because if $u \in U$ then $\parent{u} \in U$.
Recursively, define the plan adjustment policy at each $u \in U \setdiff \{\root\}$ by setting $\plan(u,t)=\plan(\parent{u},t)$ for all $t \in [\stage{u}:\nstages]$.
Let $B:=\{w \in \nodes \setdiff U : \parent{w} \in U\}$ be the boundary children of $U$.
For any $w \in B$, by definition of $U$ we have $x(w) = \theta_{\stage{w}}$ whereas $\plan(\parent{w},\stage{w})=1-\theta_{\stage{w}}$, thus a revision occurs at $w$.
Also, by our assumption, $x|_{\descendant{w} \cup \{w\}}$ is $(\nrevisions-1)$-revisable in $\tree(w)$, so there exists a plan on $\tree(w)$ requiring at most $(\nrevisions-1)$ revisions per scenario below $w$.
Collect all these plans defined at $\tree(w)$ for all $w\in B$.
Since $(\tree(w))_{w\in B}$ are disjoint, plans created in this manner are uniquely defined.
%\byjp{Not sure what that sentence means}
Assembling the plans defined for nodes in $U$, we derive a plan adjustment policy $\plan$ that takes no more than $\nrevisions$ revisions in each scenario.
This is because for any scenario $\omega$, either $\omega$ contains no node of $B$, or it meets $B$ for the first time at some unique node $w\in B$.
In the former case, $\omega$ stays in $U$ and uses no revision.
In the latter case, $\omega$ incurs exactly one revision at $w$ and then at most $\nrevisions-1$ further revisions.
Therefore, $\plan$ makes $x$ $\nrevisions$-revisable, the desired contradiction.
\end{proof}
% \byjp{Let's talk about this proof and the next}

\LmSTFacetX*
\begin{proof}
\label{proof: st facet - x}
For any $(u,v) \in \sib{\vec{\calS}}$ and $\theta \in \{0,1\}$, we construct the solution $x$ where $x(u)=x(v)=\theta$, $x(u')=1$ and $x(v')=0$ for all oriented sibling pairs $(u',v')$ in $\vec{\calS}$ that are not $(u,v)$, and where all other entries of $x$ are also $\theta$. 
%\byjp{Is it $(p,q)$ or is it $(u,v)$?}
We refer to this solution as $x^{u,v}_{\theta}$. 
Since exactly $(2^{\nrevisions+1}-2)$ pairs of siblings are set to be inconsistent, \Cref{eq: st constr facet} holds at equality.
Also, there is no $x^{u,v}_\theta$-inconsistent ELBE subtree by our construction.
Hence, $x^{u,v}_\theta \in \revisableset{\nrevisions}$ and thus $x^{u,v}_\theta \in F_{\text{ST}}$, which shows \ref{LmSTFacetX-PartI}.

For \ref{LmSTFacetX-PartII}, it is clear that $x^{u,v}_0+e_w$ also satisfies \Cref{eq: st constr facet} at equality for any $w \notin  \nodes(\calS) \setdiff \{\subroot{\calS}\}$.
We now describe how, given $w \notin \nodes(\calS) \setdiff \{\subroot{\calS}\}$, we can select $(u,v)$ so that $x^{u,v}_0+e_w \in \revisableset{\nrevisions}$.
To this end, we introduce a notation and a claim.
We define $\sigma^\calS_x(v):=\sum_{i \in \nodes(\calS)\cap (\descendant{v} \cup \{v\})} x(i)$, for any ELBE subtree $\calS$, $v \in \nodes(\calS)$, and any integer solution $x$.
That is, $\sigma^\calS_x(v)$ counts the number of nodes in $\calS$ at or beneath $v$ that are assigned $x$-value $1$.
\begin{claim} \label{claim: st facet - left part}
    Let $\bar{x} \in \{0,1\}^\nodes$ and let $\calS'$ be an $\bar{x}$-inconsistent ELBE subtree of height $\nrevisions+1$, then there exists a node $v \in \nodes(\calS')$ such that $\smash{\sigma^{\calS'}_{\bar{x}}}(v)=2^{\nrevisions}$,
    and this happens iff $v$ is the child of the root $\subroot{\calS'}$ in $\calS'$ with $\bar{x}$-value $1$.
\end{claim}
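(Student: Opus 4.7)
The plan is to prove the claim by establishing a stronger inductive identity: for every node $v$ in $\calS'$ at height $h$ (where the root $\subroot{\calS'}$ is at height $\nrevisions+1$ and leaves are at height $0$), the partial sum $\sigma^{\calS'}_{\bar{x}}(v)$ equals $2^{h}$ when $\bar{x}(v)=1$ and equals $2^{h}-1$ when $\bar{x}(v)=0$. Once this identity is in hand, the claim drops out by solving the two resulting equations $2^{h}=2^{\nrevisions}$ and $2^{h}-1=2^{\nrevisions}$.

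First, I would dispose of the base case $h=0$: if $v$ is a leaf of $\calS'$ then $\sigma^{\calS'}_{\bar{x}}(v)=\bar{x}(v)\in\{0,1\}=\{2^{0}-1,2^{0}\}$, which is exactly the asserted dichotomy. Next I would carry out the inductive step. For $v$ at height $h\geq 1$ with children $p,q$ in $\calS'$, the definition of $\sigma$ and the fact that $\calS'$ is a (perfect) binary subtree give
\[
\sigma^{\calS'}_{\bar{x}}(v)=\bar{x}(v)+\sigma^{\calS'}_{\bar{x}}(p)+\sigma^{\calS'}_{\bar{x}}(q).
\]
Because $\calS'$ is $\bar{x}$-inconsistent and $\{p,q\}\in\sib{\calS'}$, we have $\bar{x}(p)\neq\bar{x}(q)$, so by the inductive hypothesis one of $\sigma^{\calS'}_{\bar{x}}(p),\sigma^{\calS'}_{\bar{x}}(q)$ equals $2^{h-1}$ and the other equals $2^{h-1}-1$, summing to $2^{h}-1$. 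Adding $\bar{x}(v)\in\{0,1\}$ yields the two allowed values $2^{h}$ and $2^{h}-1$, closing the induction.

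To conclude, I would observe that a node $v$ at height $h(v)$ satisfies $\sigma^{\calS'}_{\bar{x}}(v)=2^{\nrevisions}$ in exactly one of two ways: either $\bar{x}(v)=1$ and $h(v)=\nrevisions$, or $\bar{x}(v)=0$ and $2^{h(v)}=2^{\nrevisions}+1$. The second alternative is impossible for $\nrevisions\geq 1$, since $2^{\nrevisions}+1$ is odd and strictly greater than $1$, hence not a power of two. Therefore any $v$ with $\sigma^{\calS'}_{\bar{x}}(v)=2^{\nrevisions}$ must sit at height $\nrevisions$ in the height-$(\nrevisions+1)$ subtree $\calS'$ and carry $\bar{x}(v)=1$; equivalently, $v$ must be the unique child of $\subroot{\calS'}$ in $\calS'$ with $\bar{x}$-value $1$, whose existence is guaranteed by the inconsistency of the sibling pair at the root.

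The hard part, such as it is, will simply be bookkeeping around the notion of ``height in $\calS'$'' (as opposed to stage in $\tree$) and making sure the recurrence is invoked on the children in $\calS'$ rather than in $\tree$; both issues are resolved by the ELBE subtree definition, which guarantees a perfect binary structure with well-defined sibling pairs. Beyond that, the argument is a routine induction.
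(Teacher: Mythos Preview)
Your proposal is correct and follows essentially the same counting argument as the paper. The paper phrases it via an orientation (identifying $\bar{x}$-value-$1$ nodes with ``left'' nodes and then counting left nodes in each subtree), whereas you obtain the identical values $\sigma^{\calS'}_{\bar{x}}(v)\in\{2^{h(v)}-1,2^{h(v)}\}$ by a clean induction on height; both routes immediately isolate the unique child of the root with $\bar{x}$-value $1$ as the only node achieving $2^{\nrevisions}$, and your explicit handling of the $\nrevisions\geq 1$ restriction is appropriate.
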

\begin{proof}[Proof of \Cref{claim: st facet - left part}]
    %Since $\calS$ is $x$-inconsistent, there is an orientation $\vec{\calS}$ such that each left node is assigned with $x$-value $1$ and each right node is assigned with $x$-value $0$.
    Since $\calS'$ is $\bar{x}$-inconsistent, there is an orientation $\vec{\calS'}$ such that a node has an $\bar{x}$-value $1$ if and only if it is a left node. 
    % \byjp{In the above, should it be $x$ or $\bar{x}$?} \byjp{Also, the statement talks about $\mathcal{S}'$ while the proof talks about $\mathcal{S}$?}
    Let $p$ and $q$ be the left and right child of $\subroot{\calS'}$ in $\vec{\calS'}$, respectively.
    There are exactly $2^\nrevisions$ left nodes in the subtree rooted at $p$ and $2^{\nrevisions}-1$ left nodes in the subtree rooted at $q$ in $\vec{\calS'}$.
    Therefore, $\sigma^{\calS'}_{\bar{x}}(p)=2^\nrevisions$.
    Also, $\sigma^{\calS'}_{\bar{x}}(\subroot{\calS'}) = 2^{\nrevisions+1}-1$ and $\sigma^{\calS'}_{\bar{x}}(u)<2^\nrevisions$ for $u \in \nodes(\calS')\setdiff\{\subroot{\calS'}, p\}$.
    Hence, the unique node in $\calS'$ with $\sigma^{\calS'}_{\bar{x}}(v)=2^{\nrevisions}$ is precisely $v=p$.
\end{proof}
Our approach focuses on the set of left nodes of $\calS$, which we denote by $\calL(\vec{\calS}) := \set{u \mid (u,v) \in \sib{\vec{\calS}}}$.
Then, to choose the pair $(u,v)$, we only need to choose the left node $u$.
We use $(p,q)$ to denote the children of $\subroot{\calS}$ in $\vec{\calS}$, \emph{i.e.}, $p$ is the left node of $\vec{\calS}$ that is at the earliest stage.
We discuss how to select $u$ according to the position of $w$.
Then, we show $\bar{x} := x^{u,v}_0 + e_w \in \revisableset{\nrevisions}$ by showing there is no $(x^{u,v}_0 + e_w)$-inconsistent ELBE subtree with height $\nrevisions+1$.
\begin{enumerate}[label=(\roman*)]
    \item If $\stage{w} \leqslant \stage{p}$, we choose $u$ such that $u \in \calL(\vec{\calS}) \setdiff (\descendant{p} \cup \{p\})$, \emph{i.e.}, $u$ is a left node of $\vec{\calS}$ but not $p$ or a descendant of $p$.
    Since $p$ and its descendants consist of the ``left part'' of $\vec{\calS}$, $u$ is thus in the ``right part'' of $\vec{\calS}$.
    Suppose by contradiction that there is an oriented $\bar{x}$-inconsistent ELBE subtree $\vec{\calS'}$ of height $\nrevisions+1$ and call its first left node $p'$.
    Specifically, $\calL(\vec{\calS'})=\calL(\vec{\calS}) \cup \set{w} \setdiff \set{u}$ because, to be violated, the subtree constraint must include sufficiently many sibling pairs with $(1,0)$ values, and thus sufficiently many nodes with value $1$, which include $w$ and not $u$.
    Then, by our choice of $u$, $\smash{\sigma^{\calS'}_{\bar{x}}}(p) = \sigma^{\calS}_{\bar{x}}(p)=2^\nrevisions$.
    Thus by \Cref{claim: st facet - left part}, $p'=p$.
    This yields a contradiction to $\stage{w} \leqslant \stage{p}$ because $p$ must be the only left node at the earliest stage in $\vec{\calS'}$.
    
    \item If $\stage{w} > \stage{p}$ and $w \notin \descendant{p}$, we let $u \in \leftnodes{\vec{\calS}} \cap \descendant{p}$.
    Assume by contradiction that there is an oriented $(\nrevisions+1)$-height $\bar{x}$-inconsistent ELBE subtree $\vec{\calS'}$ with $p'$ being the left child of $\subroot{\calS'}$. 
    We must have $\calL(\vec{\calS'}) = \calL(\calS) \cup \set{w} \setdiff \set{u}$.
    Since $\stage{w} > \stage{p}$, $p$ is at the earliest stage in $\leftnodes{\vec{\calS'}}$, and thus $p'=p$.
    However, $\sigma^{\calS'}_{\bar{x}}(p) = \sigma^{\calS}_{\bar{x}}(p) - \bar{x}(u) = 2^\nrevisions-1$ as $u \in \leftnodes{\calS} \cap \descendant{p}$ and $w \notin \descendant{p}$, a contradiction.
    
    \item If $\stage{w} > \stage{p}$ and $w \in \descendant{p}$, 
    % \byjp{the first condition is never satisfied... Should the first $\dot{u}$ be a $v$?}
    we choose $u$ such that $u \in \calL(\vec{\calS}) \setdiff (\descendant{p} \cup \{p\})$, \emph{i.e.}, $u$ is in the ``right part'' of $\vec{\calS}$.
    Define $\vec{\calS'}$ and $p'$ as previously. 
    Similarly, we must have $p'=p$.
    Hence, $\sigma_{\bar{x}}^{\calS'}(p') = \sigma_{\bar{x}}^{\calS}(p) + \bar{x}(w) =  2^{\nrevisions}+1$, which is also impossible according to \Cref{claim: st facet - left part}, yielding the desired contradicition.
\end{enumerate}
Thus, for any $w \notin \nodes(\calS) \setdiff \{\subroot{\calS}\}$ there is a pair $(u, v)$ such that $x^{u,v}_0 + e_w \in \revisableset{\nrevisions}$.
\end{proof}

%\byjp{Kind of odd that claim 1 is after claim 2...}

\ClaimSTIntegralityPerturbDelta*
\begin{proof}
\label{proof: st integrality perturb delta}
We rephrase the claim as follows:
given a tree $\tree$ and a solution $y \in [0,1]^{\nodes \setdiff \{\root\}}$ with some fractional entry and $\delta_y(\root) \in (0,1)$, we can find $y^+ \neq y^- \in [0,1]^{\nodes \setdiff \{\root\}}$ such that $\delta_{y^\pm}(\root) = \delta_{y}(\root) \pm \varepsilon$ for sufficiently small $\varepsilon > 0$.
We use induction.

First, consider the base case $\nstages = 2$.
In this case, $\tree$ is composed of a root node and its children. 
Thus, the ELBE subtrees are just the root and a pair of its children.
Hence, $\delta_y(\root)$ captures the largest difference between the children of the root.
If $\delta_y(\root)$ is fractional, then we can pick a pair $(u,v)$ such that $|y(u) - y(v)| = \delta_y(\root)$, and at least one of $y(u)$ or $y(v)$ is fractional.
We only consider the case where the larger of the two is fractional, (\textit{i.e.}, we assume $y(u)$ is fractional and $y(u)>y(v)$), as the proof of the other case is similar. 
Then, for every node $v' \in \nodes_2$, $y(v') \leqslant y(u)$, otherwise if $y(v') > y(u)$ then $y(v') - y(v) > y(u) - y(v) = \delta_y(\root)$, which is a contradiction.
We set $\bar{\varepsilon}:= \min \{ 1-y(u), \min_{w \in \nodes_2: y(w) < y(u)} \{y(u) - y(w)\} \}$.
Next, we define $y^{\pm}$ such that $y^{\pm}(u') = y(u') \pm \varepsilon$ for $u' \in \nodes_2$ with $y(u')=y(u)$, and define $y^{\pm}(w)=y(w)$ for $w \in \nodes_2$ where $y(w) < y(u)$.
For any $\varepsilon$ positive such that $\varepsilon < \bar{\varepsilon}$, 
it follows directly that $\delta_{y^\pm}(\root) = \delta_y(\root) \pm \varepsilon$.

Next, we assume that this statement holds for any tree that has less than $\nstages$ stages, and prove that the result still holds for trees with $\nstages$ stages.
For such a tree $\tree$, let $y$ be a vector with at least one fractional entry.
Let $\calE$ denote the set of all tight pairs of second-stage nodes that determine the value of $\delta_y(\root)$, specifically,
\begin{equation} \label{eq: active edge set}
    \calE := \big\{ (u,v): u,v \in \nodes_2,\, \delta_y(\root) = |y(u) - y(v)| + \delta_y(u) + \delta_y(v) - 2 \big\}.
\end{equation}
%\byjp{We don't want this to be equal to 0, right? Only when the real root is considered...}
Then, $H = (\nodes_2', \calE)$ forms a graph, where $\nodes_2' \subseteq \nodes_2$ is the set of nodes appearing in at least one tight pair.
%{\color{red}Hence, $\delta_y(u) > 0$ for all $u \in \nodes_2'$ and $y(u) \neq y(v)$ for any $(u,v) \in \calE$ as $\delta_y(\root) > 0$.}
As $\delta_y(\root) > 0$, we must have that $\delta_y(u) > 0$ for all $u \in \nodes_2'$ and $y(u) \neq y(v)$ for any $(u,v) \in \calE$.
We consider one specific pair $(u,v)$ and assume wlog that $y(u) > y(v)$.
Suppose $(u,v')\in \calE$ is another tight pair such that $v' \neq v$, we claim that $y(u) > y(v')$.
Otherwise, if $y(v) < y(u) \leqslant y(v')$, then we have
\begin{equation} \label{eq: proof step in subtree thm}
\begin{aligned}
\delta_y(\root)
& \geqslant
y(v') - y(v) + \delta_x(v') + \delta_y(v) - 2 \\
& = y(v') - y(u) + y(u) - y(v) + \delta_y(v') + 
\delta_y(v) - 2 \\
& = \big( y(v') - y(u) + \delta_x(v') - 2 \big) + \big( y(u) - y(v) + \delta_y(v)  - 2 \big) + 2 \\
& = 2 \delta_y(\root) - 2 \delta_y(u) + 2 \geqslant 2 \delta_y(\root),
\end{aligned}
\end{equation}
which implies $\delta_y(\root) \leqslant 0$, contradicting our assumption that $0 < \delta_y(\root) < 1$.
Similarly, if $y(u) < y(v)$, then we can show that $y(u) < y(v')$ for all $v' \neq v$ such that $\{u,v'\} \in \calE$.
Therefore, there are only two types of nodes in $H$: for each $u \in \nodes_2'$, either $y(u) < y(v)$ for all $v$ such that $(u,v) \in \calE$, or $y(u) > y(v)$ for all $v$ such that $(u,v) \in \calE$.
We color the first type of nodes red and the second type blue.
Then, $H$ is a bipartite graph as there is no edge between two red (\textit{resp.} blue) nodes since they all have smaller (\textit{resp.} larger) $y$-values than their neighbors.
For $u \in \nodes_2'$, if $y(u)=0$, then $u$ is a red node, and if $y(u)=1$, then $u$ is a blue node.

Since, $\delta_y(\root) < 1$, the existence of a node with $y$-value $0$ and $\delta_y$-value $1$ implies that no node with $y$-value $1$ and $\delta_y$-value $1$ can exist (and vice versa), so the two cases are mutually exclusive.
We therefore assume wlog that no node satisfies $y=0$ and $\delta_y=1$.
In this case, for every red node $u \in \nodes_2$, either $y(u) \in (0,1)$ or $y(u)=0 $ and $\delta_y(u) \in (0,1)$.
We build vectors $y^{\pm}$ as follows.
For $u$ such that $y(u)=0$ and $\delta_y(u) \in (0,1)$, we adjust the $y$-value on $\tree(u)$ such that $\delta_{y^\pm}(u) = \delta_y(u) \pm \varepsilon$ by induction.
For every red node $u$ for which $y(u) \in (0,1)$, we define $y^\pm(u) = y(u) \mp \varepsilon$.
We set $y^{\pm}(v)=y(v)$ for all blue nodes $v$ and all $v \in \nodes_2 \setdiff \nodes_2'$.
In the above construction, we choose $\varepsilon$ in $(0, \bar{\varepsilon})$ where we define 
\begin{equation}
\begin{aligned}
    \bar{\varepsilon}:= 
    \min\Bigg\{ \hat{\varepsilon}, & \min_{v \in \nodes_2: y(v) \in (0,1)} \big\{ \min \{y(v), 1-y(v)\} \big\}, \\ 
    & \min_{u,v \in \nodes_2: (u,v) \notin \calE} \Big\{ \big| \delta_y(\root)- \big( |y(u) - y(v)| + \delta_y(u) + \delta_y(v) - 2 \big) \big| \Big\} \Bigg\}.
\end{aligned}
\end{equation}
and 
$\hat{\varepsilon}$ is the minimum of the constants that are provided by the induction hypothesis for all $u$ such that $y(u)=0$ and $\delta_y(u) \in (0,1)$.

For each active pair $(u,v) \in \calE$, after the adjustment, the value $|y(u) - y(v)| + \delta_y(u) + \delta_y(v) - 2$ is perturbed by $\pm \varepsilon$ exactly.
This is because each pair $(u,v)$ contains a red node and a blue node, and the values of $y(\cdot)$ and $\delta_{y}(\cdot)$ are not changed for a blue node, whereas the contributions of $y(\cdot)$ or $\delta_{y}(\cdot)$ change by $\varepsilon$ for a red node. 
Therefore, we have $\delta_{y^\pm}(\root) = \delta_y(\root) \pm \varepsilon$.
\end{proof}

\LmCPFacetValid*
\begin{proof}
\label{proof: cp facet - valid}
% \byjian{revised this proof}
Consider a binary vector $(\plan, r, \x)$ that satisfies the first three sets of constraints of \Cref{eq: cp}. 
Assume that it does not satisfy \Cref{eq: cp_facets}, then our goal is to show that $(\plan, r, \x) \notin P_{\text{CP}}^I$, \emph{i.e.}, the revision budget constraint for some scenario is violated.
% \byjp{Do we assume that this vector has plan, revision, and x defined consistently?}
Since $\calS$ is an ELBE subtree with height $\subheight{\calS}$, $\Delta_x^\calS \leqslant 2^{\subheight{\calS}} - 1$, where $\Delta_x^\calS$ is defined in \Cref{eq: x-inconsistent-value}.
% \byjp{Do we need to refer to where $\Delta$ is defined? Section 5.1.1?}
We let $\zeta=\nrevisions - \sum_{w \leq \subroot{\calS}} r(w)$, representing the remaining balance of our revision budget after node $\subroot{\calS}$.
%\byjp{Is $\calS^{\leq}$ defined anywhere? The statement says $w \le \rho(S)$... Defined after the statement... Restructure?}
It cannot be the case that $\zeta \geqslant \subheight{\calS}$ as otherwise inequality \Cref{eq: cp_facets} would not be violated. 
Further, $\zeta < 0$ implies that $(\plan, r, \x) \notin P_{\text{CP}}^I$ immediately.
It is therefore sufficient to consider the case $0 \leqslant \zeta \leqslant \subheight{\calS}-1$.
The assumption that \Cref{eq: cp_facets} is violated implies that $\Delta_x^\calS > 2^{\subheight{\calS}} - \subheight{\calS} - 1 + \zeta$.
We claim that $\tree(\subroot{\calS})$ is not $\zeta$-revisable, and therefore the revision budget will exceed the balance on some scenario.
% , showing that $(\plan, r, \x) \notin P_{\text{CP}}^I$.
To prove the claim, we only need to find an inconsistent ELBE subtree with height $\zeta+1$.
When $\zeta = \subheight{\calS} - 1$, the claim holds because $\calS$ itself is an $\subheight{\calS}$-height inconsistent ELBE subtree.
For $0 \leq \zeta < \subheight{\calS} - 1$, there are $2^{\subheight{\calS} - (\zeta+1)} \geqslant \subheight{\calS} - \zeta$  many disjoint ELBE subtrees with height $\zeta+1$ inside of $\calS$.
We refer to them as $\{\calS_i\}_{i \in [\subheight{\calS}-\zeta]}$.
% Observe that $2^{\subheight{\calS} - \zeta - 1} \geqslant (\subheight{\calS}-\zeta)$ for $\zeta < \subheight{\calS} - 1$, there are at least $\subheight{\calS} - 1$ many disjoint ELBE subtrees with height $\zeta+1$ inside of $\calS$, .
% \byjp{Easy to see?}
Hence, $\Delta_x^\calS \leqslant 2^{\subheight{\calS}} - 1 - \sum_{i } (2^{\zeta+2}-1 - \Delta_x^{\calS_i})$.
Since $\Delta_x^\calS > 2^{\subheight{\calS}}-1 - (\subheight{\calS}-\zeta)$, we have $\sum_{i} (2^{\zeta+2}-1-\Delta_x^{\calS_i}) < \subheight{\calS} - \zeta$. 
Thus, there is at least one $\calS_i$ such that $\Delta_x^{\calS_i}=2^{\zeta+2}-1$, which implies that it is $x$-inconsistent.
\end{proof}

\begin{lemma}
\label{lm: cp facet - project cp to path}
    Define formulation
    \begin{equation} \label{eq: path}
    \begin{aligned}
        &\textstyle \sum_{v \in P^*(\mu,\nu)} \rv{v} \geqslant \x{\mu} - \x{\nu}, \quad && \forall \mu, \nu \in \nodes: \stage{\mu}=\stage{\nu}, \\
        &\textstyle \sum_{v \in \scenario} \rv{v} \leqslant \nrevisions, \quad && \forall \scenario \in \scenarioset, \\
        &\x{v} \in \Bin,\;  \rv{v} \in \Bin, \quad && \forall v \in \nodes,  \\
    \end{aligned}
    \end{equation}
where $P^*(\mu,\nu):= \{v: v > \mu \join \nu, v \leq \mu, \nu\}$
% \descendant{\mu \join \nu} \cap ( \ancestor{\mu} \cup \ancestor{\nu} )$
, \emph{i.e.}, $P^*(\mu,\nu)$ represent the nodes on the unique path connecting the two same-stage nodes $\mu$ and $\nu$ in the tree from which node $\mu \join \nu$ is removed.

Formulation \Cref{eq: path} is an MIP formulation for $\revisableset{\nrevisions}$, and for any $(r,x)$ satisfying \Cref{eq: path} there is a plan adjustment policy $\plan$ such that $(\plan, r, x) \in P_{\textnormal{CP}}^I$.
\end{lemma}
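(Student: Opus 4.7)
The plan is to prove both claims by establishing an equivalence between integer-feasibility in \eqref{eq: path} and the existence of a compatible $\nrevisions$-revisable plan adjustment policy. For the inclusion $\revisableset{\nrevisions} \subseteq \proj{x}{\text{IP}\eqref{eq: path}}$, I would take any $\x \in \revisableset{\nrevisions}$, pick a compatible $\nrevisions$-revisable $\plan$ and set $r = r_{\plan}$. The revision-budget inequality holds by definition of $\nrevisions$-revisability. For a pair inequality on same-stage $\mu, \nu$, suppose no revision occurs on $P^*(\mu,\nu)$; then each node on the two sub-paths from $\mu \join \nu$ to $\mu$ and to $\nu$ inherits its plan from its parent, so $\plan{\mu, \stage{\mu}} = \plan{\mu \join \nu, \stage{\mu}} = \plan{\nu, \stage{\nu}}$. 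Hence $x(\mu) = x(\nu)$ and the inequality is satisfied trivially.

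For the reverse direction, which will simultaneously establish the second claim, I will construct $\plan$ explicitly from a binary feasible $(r, x)$. Partition $\nodes$ into \emph{revision blocks}, where each block is the maximal connected subgraph rooted at $\root$ or at a revised node $s$ (with $r(s) = 1$) and containing no other revised node in its interior. Within the block rooted at $s$, and for each stage $t \geqslant \stage{s}$, let $A(s, t)$ collect the nodes of the block lying at stage $t$. The key step is the claim that $x$ is constant on $A(s, t)$: for any $\mu, \nu \in A(s, t)$, the path $P^*(\mu, \nu)$ lies in the interior of the block (since $\mu \join \nu$ is in the block and no interior node is revised), so the pair inequalities applied in both orderings force $x(\mu) = x(\nu)$. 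I would then define $\plan{s, t}$ to equal this common value (or choose an arbitrary value in $\Bin$ if $A(s, t)$ is empty), and for every non-root node $v$ of the block set $\plan{v, t} := \plan{s, t}$ for all $t \in [\stage{v}:\nstages]$.

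Verifying that $(\plan, r, x) \in P_{\textnormal{CP}}^I$ is then routine: $x(v) = \plan{v, \stage{v}}$ holds since $v \in A(s, \stage{v})$ by construction; the linking inequalities $r(v) \geqslant |\plan{v, t} - \plan{\parent{v}, t}|$ are satisfied because $v$ and $\parent{v}$ lie in the same block exactly when $r(v) = 0$ (in which case both plans coincide), while otherwise $r(v) = 1$ dominates any binary difference; and the revision-budget constraints are inherited from the hypothesis. Since $(\plan, r, x)$ is binary and satisfies all constraints of \eqref{eq: cp}, it lies in $P_{\textnormal{CP}} \cap \Int^n \subseteq P_{\textnormal{CP}}^I$, and its $x$-component is thereby exhibited as $\nrevisions$-revisable, closing the equivalence.

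The main obstacle will be the constancy-of-$x$ claim on each $A(s, t)$, since it relies on the geometric observation that $P^*(\mu,\nu)$ stays inside the block whenever $\mu, \nu \in A(s, t)$; this reduces to checking that $\mu \join \nu$ is not itself a revised interior node, which follows from the maximality in the definition of a block (any revised node initiates a block of its own and therefore cannot sit strictly between $s$ and two nodes of the same block). Once this geometric fact is established, the rest of the argument is bookkeeping.
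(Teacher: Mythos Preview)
Your proposal is correct and follows essentially the same approach as the paper: both directions hinge on the observation that within a ``revision block'' (the paper phrases this via a longest non-revised path from each revised node, you phrase it via the stage-slices $A(s,t)$), the pair inequalities force all same-stage nodes to share a common $x$-value, which then uniquely determines a consistent plan. Your block-slice formulation is a mild reorganization of the paper's longest-path construction, but the key idea and the verification steps are the same.
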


\begin{proof}
We first argue that solutions in $\x \in \revisableset{\nrevisions}$ satisfy the constraints of \eqref{eq: path}.
Let $\x \in \revisableset{\nrevisions}$. 
Then,  by definition, there is a corresponding plan adjustment policy $\plan$ and a revision policy $r$.
Clearly, $(r,\x)$ satisfies the last two sets of inequalities of \Cref{eq: path}.
Consider $\mu$ and $\nu$ to be same-stage nodes. 
Then there is a unique path connecting $\mu$ and $\nu$, say $(v_1, v_2,\ldots, v_{2l+1})$, where $v_l = \mu \join \nu$.
% \byjp{In the sentence above and below, we use p and q. Should it be $\mu$ and $\nu$?}
Thus, $P^*(\mu,\nu)=(v_1,\ldots,v_{l-1}, v_{l+1}, \ldots, v_{2l+1})$.
It is clear that that the first inequality in \eqref{eq: path} is satisfied for these $\mu$ and $\nu$ when $\x{\mu}=\x{\nu}$. 
When $\x{\mu} \neq \x{\nu}$, at least one revision must occur on $P^*(\mu,\nu)$, otherwise $\x{\mu}=\x{\nu}=\plan{\mu\join \nu, \stage{\mu}}$, showing that this inequality is also satisfied in that case.
%If $\x{p} = \x{q}$, this inequality is also satisfied since its right-hand-side is equal to zero whereas its left-hand-side is nonnegative. 

We now show the converse.
%Assume that $(r,x)$ satisfies \Cref{eq: path}.  then $\x \in \revisableset{\nrevisions}$.
Let $(r,\x)$ be a feasible solution of \Cref{eq: path}.
We next describe a procedure to construct $\plan$.
% We construct $\plan$ for revised nodes first.
For a node $v$ such that $r(v)=1$, assume that each node $u\in\descendant{v}$ such that $r(u)=1$ has a valid $\plan(u,\cdot)$ defined. 
%\byjp{revised node here means $r=1$?}
Then we find the longest path $(u_0=v, u_1, \ldots, u_k)$ for which $r(u_i)=0$ and $u_{i-1}=\parent{u_i}$ for all $i \in [k]$.
We define $\plan(v, \stage{v}+i)=x(u_i)$ for all $i \in \{0\}\cup[k]$ and $\plan(v, t) = 0$ for all $t > \stage{v} + k$.
We also use the above procedure to define the plan at the root node $\root$ (even when $r(\root)$ is not equal to $1$.)
%We also treat the root node as a revised node and do the above procedure.
Hence the variables $\plan$ are completely specified for each node such that its $r$-value is $1$ and for the root node.
Next, for each node $v$ such that $r(v)=0$, we find the closest ancestor that is revised (or is the root), say $w$, and define $\plan(v,t)=\plan(w,t)$ for all $t \in [\stage{v}:\nstages]$.

We next show that $\plan$ is consistent with $\x$ and $r$.
The consistency between $\plan$ and $r$ is clear because our construction step ensures that if $r(v)=0$ for some non-root node $v$, then $\plan(v,t)=\plan(\parent{v},t)$ for all $t\in [\stage{v}:\nstages]$.
To show consistency between $\plan$ and $x$, we need to show $\plan{v, \stage{v}} = \x{v}$ for all $v\in \nodes$.
If $r(v)=1$, or $v$ is the root, then $\plan{v, \stage{v}}=\x{v}$ holds true by construction.
If $r(v)=0$, assume that $\plan{v, \stage{v}} \neq \x{v}$ by contradiction.
Let $w$ be the closest ancestor of $v$ that is revised (or is the root). 
Thus $\plan{w, \stage{v}} \neq \x{v}$.
Hence, there is a longest non-revision path $(w, u_1, \ldots, u_k)$ such that $\plan{w,\stage{v}}=\x{u_j}$ where $\stage{u_j}=\stage{v}$ and $j \leqslant k$, which implies $\x{u_j} \neq \x{v}$.
However, it is clear that there is no revision in $P^*(v, u_j)$, which violates the first constraint of \Cref{eq: path} for $\mu=v$ and $\nu=u_j$. 
This is a contradiction to the fact that $(r,x)$ satisfies \Cref{eq: path}.
\end{proof}

\LmCPFacetConstructSolutions*
\begin{proof}
\label{proof: cp facet - construct solutions}
\begin{enumerate}[label=(\Roman*)]
\item 
%\red{Define $\hat{x}_{\theta}$ by setting $\hat{x}_{\theta}(u)=1$ and $\hat{x}_{\theta}(v)=0$ for all $(u,v) \in \sib{\calS}$, $\hat{x}_{\theta}(w)=\theta$ for all $w \notin \nodes(\calS) \setdiff \{\subroot{\calS}\}$.
%Then, define $\hat{r}_{J}$ by letting $\hat{r}_J(j)=1$ iff $\hat{x}_{\theta}(j)=1-\theta$ or $j \in J$.}
The solution $(\hat{r}_J, \hat{x}_\theta)$ satisfies \Cref{eq: cp_facets} at equality by our definition.
We now show that it also satisfies \Cref{eq: path}.
Every same-stage pair with different $x$-values has a revision on the value-1 node, thus the first set of inequalities holds true.
Then, we check that the revision budget inequalities are satisfied.
Since a revision takes place either at nodes in $J$ or at a node in $\nodes(\calS) \setdiff \{\subroot{\calS}\}$, $\sum_{v \in \scenario} \hat{r}_J(v) \leqslant |J| + \subheight{\calS} = \nrevisions$ for any scenario $\scenario$.
Using \Cref{lm: cp facet - project cp to path}, we conclude that there exists some $\hat{\plan}$ such that $(\hat{\plan},\hat{r}_J,\hat{x}_\theta) \in F_{\textnormal{CP}}$.

\item  
%We define $\breve{x}^{u,v}_\theta$ by setting $\breve{x}^{u,v}_\theta(u)=\breve{x}^{u,v}_\theta(v)=\theta$, and $\breve{x}^{u,v}_\theta(u')=1$, $\breve{x}^{u,v}_\theta(v')=0$ for other pairs $(u',v') \in \sib{\calS} \setdiff \{(u,v)\}$.
%For any $w \in \nodes \setdiff \nodes(\calS) \cup \{\subroot{\calS}\}$, set $\breve{r}_J(w)=1$ iff $w \in J$.
By the above construction, \Cref{eq: cp_facets} holds at equality.
Next, to define the values of the other variables, we consider the set of common ancestors of $u$ and $v$ in $\nodes(\calS) \setdiff \{\subroot{\calS}\}$, which forms a path in $\calS$ that we  denote as $Q^{u,v}$.
For each node $\mu \in Q^{u,v}$ and its sibling pair $\nu$, set $\breve{r}_J(\mu)=1$, $\breve{r}_J(\nu)=0$.
Further, if $\mu$ is a left node (\emph{resp.} right node), set $\breve{x}^{u,v}_\theta(j)=0$ (\emph{resp.} $\breve{x}^{u,v}_\theta(j)=1$) and $\breve{r}_J(j)=0$ for all the other nodes on this stage that are not in $\calS$, \emph{i.e.}, $j \in \nodes_{\stage{\mu}} \setdiff \nodes(\calS)$.
Since nodes in $Q^{u,v}$ must be on different stages, the above definition will not lead to variables being assigned different values.
In addition, we set the other undefined pairs $\breve{r}_J(u')=\breve{r}_J(v')=1$ for all $(u',v') \in \sib{\calS} \setdiff \{(u,v)\}$ such that neither $u'$ nor $v'$ belongs to $Q^{u,v}$.
Then, set $\breve{r}_J(u)=\breve{r}_J(v)=0$.
For the rest of nodes $i$ that has no value defined, set $\breve{x}^{u,v}_\theta(i)=\theta$.

We first prove that the first set of inequalities in \Cref{eq: path} holds true. 
Consider any same-stage node pair $(o,o')$ with $\breve{x}^{u,v}(o) \neq \breve{x}^{u,v}(o')$, and note that such pair must contain one node in $\calS$, say $o$.
If $\breve{r}_J(o)=1$, then the first set of inequality holds true clearly.
If $\breve{r}_J(o) = 0$, then $o$ must be a sibling of a node in $Q^{u,v}$ and by definition $o$ has the same $\breve{x}^{u,v}$-value with the rest of nodes at stage $\stage{o}$ that have $\breve{r}_J$-value $0$, thus $o'$ is also in $\calS$ and $\breve{r}_J(o')=1$.
To show the revision budget constraints hold, we claim that any path excluding  $\subroot{\calS}$ in $\calS$ has at most $\subheight{\calS}-1$ revisions.
This is because, if this path contains $u$ or $v$, clearly it takes no more than $\subheight{\calS}-1$ revisions as $\breve{r}_J(u)=\breve{r}_J(v)=0$. 
If this path does not contain $u$ or $v$, it must contain some node $\nu$ such that the sibling of $\nu$ is an ancestor of $u$ and $v$. 
By our definition, $\breve{r}_J(\nu)=0$ and hence it also takes at most $\subheight{\calS}-1$ revisions.
Thus, $\sum_{v\in \scenario} \breve{r}_J(\scenario) \leqslant |J| + \subheight{\calS} - 1 = \nrevisions$ for any $\scenario\in \scenarioset$.
Therefore, we can extend it to $(\breve{\plan}, \breve{r}_J, \breve{x}^{u,v}_\theta) \in F_{\textnormal{CP}}$.

\item We show how to select $(\dot{\plan}, \dot{r}, \dot{x})$ depending on $i$.
%\red{Since $i \notin \ancestorset$, $i$ cannot be an ancestor of both $p$ and $q$, noting that we use $p$, $q$ to denote the children of $\subroot{\calS}$ in $\calS$.}
Denote by $p$ and $q$ the left and right children of $\subroot{\calS}$ in $\calS$, respectively. Since $i \notin \ancestorset$, $i$ cannot be an ancestor of both $p$ and $q$.
Also, $i$ cannot be a descendant of both $p$ and $q$.
We apply \ref{cp_facet_pointI} to construct the solution as follows.
If $i$ is an ancestor or descendant of $p$ (\emph{resp.} $q$) but not in $\nodes(\calS)$, then we will choose $(\dot{\plan}, \dot{r}, \dot{x}) = (\hat{\plan},  \hat{r}_{J}, \hat{x}_\theta)$ with $\theta=1$ (\emph{resp.} $\theta=0$).
If $i$ is a non-root node of $\calS$, then if $i$ is a left node (\emph{resp.} right node), we choose $(\dot{\plan}, \dot{r}, \dot{x}) = (\hat{\plan}, \hat{r}_{J}, \hat{x}_\theta)$ with $\theta=1$ (\emph{resp.} $\theta=0$).
If $i$ is neither an ancestor or descendant for both $p$ and $q$, we choose $(\dot{\plan}, \dot{r}, \dot{x}) = (\hat{\plan}, \hat{r}_{J}, \hat{x}_0)$ (here, setting $(\dot{\plan}, \dot{r}, \dot{x}) = (\hat{\plan}, \hat{r}_{J}, \hat{x}_1)$ would also work). 
For all the above cases, the solution satisfies $\dot{r}(i)=0$ and guarantees that each scenario containing $i$ has fewer than $\nrevisions$ revisions.

\item If $i\notin \ancestorset$, then we have shown in \ref{cp_facet_pointIII} that there is some $\theta \in \{0,1\}$ such that $(\hat{\plan}, \hat{r}_J + e_i, \hat{x}_\theta) \in F_{\textnormal{CP}}$.
We will use it as our solution $(\ddot{\plan}, \ddot{r}, \ddot{x})$. 
For the case where $i \leq \subroot{\calS}$.
We pick $J \subset \ancestorset$ such that $i \in J$ and $|J|=\nrevisions-\subheight{\calS}+1$.
Since $\nrevisions - \subheight{\calS} +1 \geqslant 1$, such $J$ exists.
Setting $(\ddot{\plan}, \ddot{r}, \ddot{x}) = (\breve{\plan}, \breve{r}_{J}, \breve{x}^{p,q}_0)$ by applying \ref{cp_facet_pointII} proves the claim.

\end{enumerate}
\end{proof}

% \subsection*{Proof of \Cref{claim: cp facet - gamma = 0}} \label{subsec: proof claim: cp facet - gamma = 0}
% \begin{appendixproof} 

% \end{appendixproof}

\section{Subtree constraint separation algorithm} \label{sec: constraint generation algorithm}

To separate subtree constraints, we
%We present the constraint generation algorithm to generate violated subtree constraints given a solution $\x \notin \revisableset{\nrevisions}$.
%We 
implement the recurrence \Cref{eq: DP relation} in \Cref{alg: subtree separation alg}, where we also implement a mechanism to record the corresponding ELBE subtree.
\begin{algorithm}[htbp]
\caption{A separation algorithm for subtree constraints}
\label{alg: subtree separation alg}
\begin{algorithmic}[1]
\Require A solution $\bar{x} \in [0,1]^{\nodes}$, a scenario tree $\tree$, and an integer $\nrevisions$.
\Ensure An oriented $\bar{x}$-inconsistent ELBE subtree with height $\nrevisions+1$ if exists.
\State $\Delta(v,0) \gets 0,  \quad \forall v \in \nodes$
\State $\vec{\calS}(v,0) \gets \operatorname{OriELBESubtree}(), \quad \forall v \in \nodes_\nstages$
\State $ l \gets \nstages - 1$
\While{$l \geqslant 1$}
\For{$v \in \nodes_l$}
\For{$h \in \calH(v, \nrevisions)$}
    \For{ $(p, q) \in \{(p, q):\, \stage{p}=\stage{q},\, p \join q = v \}$}
    \If{  $\Delta(p, h-1) + \Delta(q, h-1) + (\bar{x}(p) - \bar{x}(q)) > \Delta(v, h)$}
        \State $\Delta(v, h) \gets \Delta(p, h-1) + \Delta(q, h-1) + (\bar{x}(p) - \bar{x}(q))$
        \State $\vec{\calS}(v, h) \gets \textsc{Join}((p,q), \vec{\calS}(p, h-1), \vec{\calS}(q, h-1))\}$
    \EndIf
    \EndFor
    \If{$h < \nstages - l$}
    \For{$u \in \children{v}$}
    \If{$\Delta(u, h) > \Delta(v, h)$}
	    \State $\Delta(v,h) \gets \Delta(u, h)$
        \State $\vec{\calS}(v, h) \gets \vec{\calS}(u, h)$
    \EndIf
    \EndFor
    \EndIf
\EndFor
\EndFor
\State $l \gets l - 1$
\EndWhile
\If{$\Delta(\root, \nrevisions+1) > 2^{\nrevisions+1} - 2$}
    \State \Return $\vec{\calS}(\root, \nrevisions+1)$    
\Else
    \State \Return
\EndIf
\end{algorithmic}
\end{algorithm}

In the \DP, $\vec{\calS}(v,h)$ denotes one ELBE subtree achieving the largest inconsistency value among all subtrees rooted at or below node $v$ and with height $h$. 
During the recursion, when a pair $(p,q)$ of descendant nodes is selected to join, the function $\textsc{Join}( (p,q),\vec{\calS_1},\vec{\calS_2} )$, where we require $\vec{\calS}_1$ and $\vec{\calS}_2$ to be under $p$ and $q$ respectively, creates a new ELBE subtree with the following procedure:
first, select $p \join q$ as the root;
second, the original root nodes of $\vec{\calS}_1$ and $\vec{\calS}_2$ are replaced by the nodes $p$ and $q$, respectively;
and third, the nodes $p$ and $q$ are set to be the children of $p \join q$, and thus $\vec{\calS}_1$ and $\vec{\calS}_2$ are combined into the new tree.
% Hence, we require $\vec{\calS}_1$ and $\vec{\calS}_2$ to be under $p$ and $q$ respectively.
%\byjp{What does this last sentence mean?}

\paragraph{Separating binary solutions.}
\label{paragraph: lazy constraint st}
For an integer solution $\bar{x}$, \Cref{alg: subtree separation alg} can be simplified and be made more efficient.% \byjp{should we say directly about more efficient?}
We use $\vec{\calS}(v)$ to denote one oriented $\bar{x}$-inconsistent ELBE subtree with the largest height that is below $v$.
%\byjp{largest height or lowest height?}
We similarly traverse the scenario tree bottom-up and construct $\vec{\calS}(v)$ for each node $v$.
The algorithm identifies the maximum height of $\vec{\calS}(u)$ for all $u \in \children{v}$, denoted as $h_0$.
Then, among all children $u \in \children{v}$ such that $\vec{\calS}(u)$ has height $h_0$, we seek a pair $u_1, u_2$ such that $\bar{x}(u_1) \neq \bar{x}(u_2)$.
If such a pair exists, then we define $\vec{\calS}(v):=\textsc{Join}((u_1, u_2), \vec{\calS}(u_1), \vec{\calS}(u_2))$, which has height $h_0 + 1$.
If no such pair is found among the children of $v$, the algorithm recursively searches one level deeper, examining grandchildren of $v$ whose associated subtrees have height $h_0$. 
This step is necessary as the $\bar{x}$-inconsistent subtree might contain these grandchildren without containing their parents.
This procedure continues until either a pair with different $\bar{x}$-values is identified, allowing $\vec{\calS}(v)$ to be expanded by one level, or no such pair exists among descendants at levels $\stage{v}+1, \ldots, \nstages-h_0$. 
In the latter case, the algorithm propagates a tallest subtree unchanged by setting $\vec{\calS}(v):=\vec{\calS}(u)$ for one $u \in \children{v}$ such that the height of $\vec{\calS}(u)$ is $h_0$.
Finally, if $\vec{\calS}(\root)$ has height exceeding $\nrevisions$, a violated subtree constraint is identified.
%\red{
%This procedure, instead of the $\order{|\nodes|^2 \nrevisions}$ complexity of \Cref{alg: subtree separation alg}, can be completed in time $\order{|\nodes| \nstages}$.}
This procedure has running time $\order{|\nodes| \nstages}$, which improves upon the $\order{|\nodes|^2 \nrevisions}$ complexity of \Cref{alg: subtree separation alg}.

\end{appendices}

\end{document}